\numberwithin{equation}{section}
\theoremstyle{plain}
\newtheorem{theorem}{Theorem}[section]
\newtheorem{lemma}[theorem]{Lemma}
\newtheorem{corollary}[theorem]{Corollary}
\newtheorem{proposition}[theorem]{Proposition}
\theoremstyle{definition}
\newtheorem{definition} 
[theorem]{Definition} 
\newtheorem{example}[theorem]{Example}
\newtheorem{remark}{Remark}[section]
\newcommand{\tr}{\mathrm{Tr}}
\title{ { \bf Outliers for deformed inhomogeneous random matrices}}
\author{
Ruohan Geng\thanks{School of Mathematical Sciences, University of Science and Technology of China, Hefei, 230026, P.R.~China. E-mail: \texttt{ruohangeng@mail.ustc.edu.cn}}
\and
Dang-Zheng Liu\thanks{School of Mathematical Sciences, University of Science and Technology of China, Hefei, 230026, P.R.~China. E-mail: \texttt{dzliu@ustc.edu.cn}}
\and
Guangyi Zou\thanks{School of Mathematical Sciences, University of Science and Technology of China, Hefei, 230026, P.R.~China. E-mail: \texttt{zouguangyi2001@gmail.com}}
}
\begin{document}
 
\maketitle

\begin{abstract}
Inhomogeneous random matrices with non-trivial variance profiles determined by symmetric stochastic matrices and with independent sub-Gaussian entries up to Hermitian symmetry,  encompass 
 a wide range of important models, including sparse Wigner matrices and random band matrices. In these models, the maximum entry variance—a natural proxy for sparsity—serves both as a key structural feature and  a primary analytical obstacle. In this paper, we consider low-rank additive perturbations of such matrices and establish a sharp BBP phase transition for extreme eigenvalues at the level of the law of large numbers. Furthermore, in the Gaussian setting, we derive the fluctuations of spectral outliers under suitable conditions on the variance profile and perturbation. These fluctuations exhibit strong non-universality, depending on the eigenvectors, sparsity levels, and the underlying geometric structure. Our proof strategies  rely on ribbon graph expansions, upper bounds for diagram functions, large-moment estimates, and the enumeration of typical diagrams.

\end{abstract}

\tableofcontents

\section{Introduction}

\subsection{Deformed Wigner matrices}
In his  1996 Selected Papers of Freeman Dyson with Commentary \cite{dyson1996selected}, Dyson remarked   on the seminal paper \cite{dyson1962brownian} that ‘‘\textit{The Brownian-motion ensemble has every element of a matrix independently  executing a Brownian motion. The physical motivation for introducing it is that it represents a system whose Hamiltonian is a sum of two parts, one known and one unknown.}"  
Deformed models, consisting of sums of GOE (or GUE) and a deterministic matrix, not only  play  a key role   in     the resolution of the   \textit{Universality Conjecture} in Random Matrix Theory (RMT)\cite{erdHos2010bulk, johansson2001universality,tao2011random},
 but also   exhibit outlier and phase transition phenomena. These features highlight the richness of eigenvalue statistics in RMT and have attracted significant interest due to their broad applications in statistics, mathematical physics, random graphs, and signal processing. For a comprehensive overview, see the excellent survey \cite{peche2014deformed}. 
 
 The outlier   in RMT    was first observed numerically for  the rank-one perturbation of the  normalized GOE
 \begin{equation} X_{\mathrm{GOE},N}=H_{\mathrm{GOE}_N}+\frac{a}{N}{ {\bf 1}_{N}}  {\bf 1}_{N}^{t}
 \end{equation}
 in the early 1960s \cite{lang1964isolated}, where  ${\bf 1}_{N}$ denotes
  a  column vector with all entries equal to 1. 
According to Weyl's eigenvalue interlacing inequalities,   the empirical  spectral measure  of   $X_{\mathrm{GOE},N}$ still  
converges weakly, almost surely, 
 to the famous semicircular law    with density
 \begin{equation}
 \rho_{sc}(x)=\frac{1}{2\pi}\sqrt{4-x^2},\quad -2\leq x\leq 2.
 \end{equation}
  However,  when $a>1$,  such a deformation may create   a single outlier eigenvalue separating from the bulk spectrum  located at the value
\begin{equation} \label{rhoa}
    \rho_{a}:=a+\frac{1}{a}.
\end{equation}

This  exact formula    was first  derived   by Jones, Kosterlitz and Thouless  \cite{jones1978eigenvalue}, and  was later    independently rediscovered   by     F{\"u}redi   and  Koml\'{o}s   \cite{furedi1981eigenvalues} in a
more general setting motivated  by  applications in  random graphs.  Furthermore,       Gaussian  fluctuations of  these outlier eigenvalues
 were subsequently      established     for certain    non-centered Wigner random matrices in   \cite{furedi1981eigenvalues}. For a detailed review  on integrable structures hidden in rank-one perturbed random matrices, we refer to \cite{forrester2023rank}.  

   The appearance   of  an outlier  is associated with a sharp transition, where the eigenvalue  size of  the perturbed  matrix exceeds a   certain threshold $a_c=1.$    This transition was first established for spiked complex  Wishart ensembles with arbitrary finite-rank perturbations   and is now well-known as   BBP phase transition after the seminal work  of  Baik,  Ben Arous and P{\'e}ch{\'e} \cite{baik2005phase}. Similar results have been obtained for deformed GOE, GUE, and spiked real Wishart ensembles in \cite{bloemendal2013limits,bloemendal2016limits,peche2006largest}. For simplicity, we take  a rank-$q$ deformation of the normalized  GUE as an example 
   \begin{equation}
X_{\mathrm{GUE},N}=H_{\mathrm{GUE}_N}+ 
       \mathrm{diag}(
       \overbrace{ a,\ldots, a}^{q}
       , 0,\ldots,0), \quad a>0. 
      \end{equation}  
The first $q$ largest eigenvalues  $\lambda_1 \geq \cdots \geq \lambda_q $ of  $X_{\mathrm{GUE},N}$   exhibit  the following phase transitions in the large $N$ limit, both at the levels of the law of large numbers  (LLN)   and of fluctuations 
 as established  in \cite{bai2008central,baik2005phase,baik2006eigenvalues,benaych2011eigenvalues,capitaine2009largest,capitaine2012central,
peche2006largest} and \cite{baik2005phase,benaych2011fluctuations,capitaine2009largest,knowles2013isotropic,knowles2014outliers,pizzo2013finite}. 
 \begin{itemize}
   
   {\bf \item[ (I)] At the level of LLN},  for all $1\leq j\leq q $,     $\lambda_{j}\to  2  \ a.s.$    when  $a\leq 1$  while $\lambda_{j} \to  \rho_{a} \  a.s.$ when    $a>1$.
{\bf \item[(II)]  At the level of  fluctuations},  
 {\bf (Subcritical regime)} when   $0<a<1$, the vector  $N^{2/3}\big(\lambda_1-2,  \ldots,  \lambda_q-2\big )$ converges weakly to the Tracy-Widom law of
 the   first $q$ largest eigenvalues  of the GUE matrix;  
 {\bf (Critical regime)} when $a=1$,    $N^{2/3}\big(\lambda_1-2,  \ldots,  \lambda_q-2\big )$ converges weakly to the deformed Tracy-Widom law  depending on $q$;  {\bf (Supercritical regime)} when   $a>1$, for some  $C_{a}>0$,    $C_{a}N^{1/2}\big(\lambda_1-\rho_a,  \ldots,  \lambda_q-\rho_a\big )$ converges weakly to the eigenvalues of the   GUE$_{q}$  ensemble.
    \end{itemize}
    
Soon after the  seminal work of Baik, Ben Arous and Péché    \cite{baik2005phase},  much progress      has been made in understanding the behavior of outliers and BBP transition for additive and multiplicative deformations of Wigner and Wishart ensembles, with finite, large or full rank perturbations;  see, for instance,  \cite{bai2008central,baik2006eigenvalues,benaych2011eigenvalues,benaych2011fluctuations,capitaine2009largest,capitaine2012central, capitaine2014exact,  capitaine2009largest,ding2022edge,knowles2013isotropic,knowles2014outliers,loubaton2011almost,
peche2006largest, pizzo2013finite}.   We refer to   \cite{peche2014deformed,knowles2014outliers} for a  comprehensive   review on deformed random   matrices. Interestingly, unlike the universally limiting fluctuations in the subcritical and critical regimes, the fluctuations of outliers are not universal and may depend on both  the structure of the centered Wigner matrices and the geometry of eigenvectors of the perturbation matrix\cite{capitaine2009largest,capitaine2012central,knowles2013isotropic,knowles2014outliers,pizzo2013finite}.   At this point, it is worth mentioning  that all the deformed ensembles  mentioned above  are mean-field models, meaning that  the matrix entries   are either nearly i.i.d. random variables or have variances of comparable magnitude.

\subsection{Inhomogeneous random matrices}
 Inhomogeneous (or structured) random matrices ({IRM} for short)  $H_N$,   usually  referred  to  as  random  matrices with  a non-trivial variance profile $\sigma_{ij}^2$,  
include many    prominent   examples,  such as   
  {  Wigner matrices,     sparse 
  Wigner and Wishart   matrices,
  random band matrices in general dimension $d$, 
     diluted Wigner matrices  via   $d$-regular graphs}, etc.  
Compared to mean-field Wigner matrices,   two  new key features    highlighted in the IRM ensembles  
  are geometric structure and  sparsity.   
As to the former,  local  spectral properties of  random band matrices   have recently  been proved to 
  do depend on the spatial dimension $d$ \cite{liu2023edge,sodin2010spectral,spencer2011random,xu2022bulk,yang2021delocalization,yang2022delocalization}.  For the latter,      the maximum of the standard deviations of all entries  for  the IRM   matrix defined by  
\begin{equation}
\sigma^{*}_N=\max_{1 \le i,j \le N} \sigma_{ij},
\end{equation} 
serves as a natural measure of sparsity in (random) matrix theory.  Matrix sparsity is not only a key feature but also presents significant challenges across various disciplines, including random matrices,  statistical inference,  numerical linear algebra,  compressed sensing  
  and graph theory.   So these   have  been an active  subject of much recent interest   \cite{adamczak2024norms,altschuler2024spectral,Au23BBP,bandeira2016sharp,brailovskaya2024extremal,dai2024deviation,latala2018dimension,shou2024localization}.  
We refer to  \cite{brailovskaya2024extremal,van2017structured} for a detailed discussion  on  inhomogeneous  random matrices.

Since establishing local spectral statistics of random band matrices remains a challenging problem (see, e.g., \cite{bourgade2018random, liu2023edge, yang2021delocalization, xu2022bulk} for recent progress and surveys), a major challenge in investigating inhomogeneous random matrices lies in understanding how the given structure of the matrix is reflected in its spectral properties. Significant progress has been made recently on non-asymptotic matrix concentration inequalities \cite{adamczak2024norms, bandeira2016sharp, bandeira2023matrix, latala2018dimension} and on spectral outliers \cite{altschuler2024spectral}.
Closely related to the present  model   is    an essentially complete characterization     that  the   spectral measure   
 \begin{equation}
     \mu_{N}(H_N):=\frac{1}{N}\sum_{k=1}^N \delta_{\lambda_{k}(H_N)}
 \end{equation} 
  converges weakly, almost
surely, and in expectation to the semicircular law, provided that $\sigma^{*}_N \to 0$; see  \cite{gotze2015limit} and \cite{altschuler2024spectral} for alternative derivations.   
A recent sharp result for the IRM ensemble with i.i.d. sub-Gaussian entries~\cite[Theorems 1.2 \& 1.3]{altschuler2024spectral} provides a precise characterization of the emergence of spectral outliers in terms of the sparsity proxy:
\begin{equation}\label{spectralnorm} {  If \ \sigma^{*}_N \sqrt{\log N}  \to 0, \  then\ \ the\  spectral\  norm\  \|H_N\|_{\rm op} \to 2 \ a.s.} \end{equation}  
Otherwise, $H_N$ may exhibit outliers almost surely. This demonstrates a “structural” universality phenomenon: the presence of outliers depends solely on the sparsity level, independent of the specific structure of variance profiles. For related results on the absence of outliers in random band matrices under certain bandwidth conditions, see \cite{benaych2014largest,EK11Quantum,khorunzhiy2008estimates,liu2023edge,sodin2010spectral}. So this leads to two fundamental questions regarding extreme eigenvalues of the deformed IRM ensemble   
 $X_N$ defined  in Definition  \ref{def:inhomo} below:

\begin{itemize}
    \item 
{\bf   Question 1.}  {\it Under the sharp condition  $\sigma^{*}_N \sqrt{\log N}  \to 0$, does   the BBP phase transition   for the deformed IRM ensemble   $X_N$    hold true at the level of the law of large numbers? }
  \item 
{\bf  Question 2.}  {\it Under the same sharp condition, 
what is the limiting distribution  for the  fluctuations  of  extreme eigenvalues in 
  the deformed IRM ensemble $X_N$?     }
\end{itemize}

\subsection{Models and main results } 

Regarding these two questions the primary models under  consideration   are additive  deformations of symmetric  and Hermitian IRM with independent symmetrically  sub-Gaussian entries.

\begin{definition}[Inhomogeneous symmetric/Hermitian random matrices] \label{def:inhomo}
An inhomogeneous  symmetric/Hermitian random matrix $H_N=(H_{ij})_{i,j=1}^N$ with sub-Gaussian entries  is the Hadamard product of a Wigner matrix $W_N$ and  a deterministic  matrix  $\Sigma_N$
\begin{equation}
H_N=\Sigma_N \circ W_N,  
\end{equation}
   and correspondingly  the deformed IRM is 
\begin{equation}\label{eq:deformed-iid}
    X_N=H_N+A_N,
\end{equation} where  $W_N$,   $\Sigma_N$ and $A_N$ satisfy  the  following assumptions.
\begin{itemize}
   
     \item[{\bf(A1)}] ({\bf Wigner matrix})   $W_N=(W_{ij})_{i,j=1}^N$ is   a  real symmetric or complex Hermitian matrix  whose entries 
    on and above the diagonal    are  independent and  symmetrically distributed  random variables.  Also,  for all $i,j\in [N]:=\{1,2,\ldots,N\}$, the following conditions are  assumed to hold:  \\
    (i)  
   (Real  Wigner) 
   $\mathbb{E}[W_{ii}^{2}]=2, \mathbb{E}[W_{ij}^{2} ]=1 \ (i\neq j)$, 
   or   (Complex    Wigner) 
   $\mathbb{E}[W_{ii}^{2}]=1, \  \mathbb{E}[|W_{ij}|^{2} ]=1,\  \mathbb{E}[W_{ij}^{2} ]=0\ (i\neq j)$,\\
   (ii) uniform    higher  moments (sub-Gaussian)
$\mathbb{E}[|W_{ij}^{2k}| ]\le \gamma^{k-1}(2k-1)!!, \forall k\geq 2$, for    some fixed constant  
$\gamma>0$.

In particular,   the  classical Gaussian orthogonal/unitary ensemble  is denoted by
$W_{\text{G}\beta \text{E}_N}$ ($\beta=1,2$).
\item[{\bf(A2)}] ({\bf Transition 
matrix})   $\Sigma_N=(\sigma_{ij})_{i,j=1}^N$ is a symmetric matrix with non-negative entries  such that 
  $P_N=(\sigma^2_{ij})_{i,j=1}^N$ is a  stochastic matrix,  i.e., $\sum_{j=1}^N \sigma^2_{ij}=1$ for all $i$.
  
\item[{\bf(A3)}] ({\bf Rank-$r$ perturbation}) $A_N=(A_{ij})_{i,j=1}^N$ is  a  deterministic    matrix of
the same symmetry  as  $W_N$ admitting the representation
\begin{equation}
    A_N=\sum_{i,j=1}^r  a_{ij} E_{m_im_j},  
\end{equation}
 for some $r \in [N]$,   where  $m_1,  m_2, \ldots, m_r$ are distinct indices in 
  $[N]$   and $E_{ij}$ denotes a matrix with 1 only at $(i,j)$ and 0 otherwise.  

  \end{itemize}

\end{definition}

 The primary objective of this paper is twofold: 
\begin{itemize}
    \item   
  To completely characterize the BBP transition at the level of   the  law of large numbers;

  \item    
To establish the fluctuations   of outliers in the supercritical regime under the mild regularity condition $\sigma^{*}_N \log N \to 0$.
\end{itemize}

A complete resolution of {\bf Question 2} in the subcritical and critical regimes may present significant difficulties, as it involves the interplay of at least three crucial factors: sparsity, perturbation scale,  and geometric structure. 
Moreover, additional parameters, like  bandwidth, spatial dimension and symmetry class,  may also  profoundly influence the spectral properties.
This complexity is particularly evident in random band matrices, where extreme eigenvalues can exhibit a rich phase diagram   when the perturbation scale, bandwidth, and spatial dimension simultaneously approach their critical thresholds \cite{liu2023edge,sodin2010spectral}.

\bigskip

\begin{theorem}[BBP transition] \label{thm:LLN1}
With $X_N$ given in 
Definition  \ref{def:inhomo}, assume that the nontrivial eigenvalues of $A_N$  satisfy 
  \begin{equation}\label{equ:eigenvaluesa_i}
  a_1\ge a_2\ge \cdots \ge a_{r_+}> 0 > a_{-r_-}\ge \cdots \ge a_{-2}\ge a_{-1},
   \end{equation} where all $a_{j}$ and $a_{-j}$ are independent of $N$.  If $(r+1)\sigma^{*}_N  \sqrt{\log N} \to 0$ as $N\to \infty$, then 
for any fixed positive  integer $j$,  
the $j$-th  largest eigenvalue  of $X_N$ 

\begin{equation}\label{equ:lambdaj}
   \lambda_j(X_N) 
   \xrightarrow{\text{a.s.}}  
     \begin{cases}
        2, & a_j\leq 1,\\
        a_j+\frac{1}{a_j}, & a_j> 1,
    \end{cases}
\end{equation}
and the  $(N+1-j)$-th   largest eigenvalue  of $X_N$ 

\begin{equation}\label{equ:lambdaN-j}
   \lambda_{N+1-j}(X_N) \xrightarrow{\text{a.s.}}    \begin{cases}
        -2, & a_{-j}\geq -1,\\
        a_{-j}+\frac{1}{a_{-j}}, & a_{-j}<-1.
    \end{cases}
\end{equation}

\end{theorem}

Having established the law of large numbers, we   next turn to the fluctuations  of  
  the  first $q$  largest eigenvalues  of the  deformed IRM ensemble   $X_N$.  
Since $H_N$ is  not a mean-field  model, the local limits of  outliers   do  depend not only  on the eigenvalues of the perturbation $A_N$, but also on the geometric structures of both the variance profile  $\Sigma_N$ and the perturbation $A_N$.  
Our second main result, which establishes the non-universal fluctuations of the outlier eigenvalues in the Gaussian case, can now be stated as follows.
\begin{theorem}[Fluctuations of outliers]\label{main result}
Assume that   $X_N$   in 
Definition  \ref{def:inhomo} is a Gaussian  matrix     with $W_N=W_{\mathrm{G}\beta\mathrm{E}_N}$, and  that  the reduced matrix    $\widetilde{A}:=(a_{ij})_{ i,j=1}^r$  from   $A_N$ admits   a spectral decomposition  $\widetilde{A}=U^{*}  {\rm diag}(a_1,\cdots,a_r) U$ where $U$ is an $r \times r$ orthogonal/unitary matrix, and      $a>1, r$ and $  q\in \{1,\ldots,r\}$ are   fixed    such that    
   \begin{equation}   \label{finiteA}   
   a= a_1= a_2= \cdots=a_{q}>a_{q+1}\ge   \cdots   \ge a_{r} \ge -a. 
  \end{equation}   
 If 
\begin{equation}\label{assumption1}g_{ij}:=\lim_{N \to \infty}\frac{1}{(\sigma_{N}^{*})^2} \Big(-\delta_{ij}a^{-2}\big(P_{N}^2\big)_{m_im_j}+\sum_{k=2}^{\infty}\big(P_{N}^k\big)_{m_im_j} a^{-2k+2}\Big),
\end{equation}
\begin{equation}\label{assumption2}   \widetilde{\sigma}_{ij}:=\lim_{N \to \infty}\frac{\sigma_{m_im_j}}{\sigma^{*}_N}, 
\end{equation}
\begin{equation}\label{def:sigma_i}
 \chi_i:=\lim_{N \to \infty}\frac{1}{(\sigma_{N}^{*})^2 a^2}\sum_{y=1}^N\sigma_{m_iy}^4,
\end{equation}
and
\begin{equation}\label{def:tau_i}  
    \tau_i:=\lim_{N \to \infty}\frac{1}{(\sigma_{N}^{*})^2 a^2}\sum_{y=1}^N\sigma_{m_iy}^4\mathbb{E}[|W_{m_iy}|^4],
\end{equation}  
exist  for all $1 \le i,j \le r$, and also  if 
  $\sigma^{*}_N  \log N \to 0$ as $N\to \infty$, then  the  first $q$  largest eigenvalues  of $X_N$  
  
\begin{equation}
  \frac{a^2}{(a^2-1)\sigma^{*}_N }\Big(\lambda_1(X_N)-\rho_a\Big), \dots ,\ \frac{a^2}{(a^2-1)\sigma^{*}_N }
  \Big(\lambda_q(X_N)-\rho_a\Big)
\end{equation}  
converge weakly  to  the ordered  eigenvalues of a $q\times q$ random matrix 
\begin{equation}\label{equ:Z}
    Z_{\beta}=Q_{\beta}(H_{\mathrm{ID}}+{H_{\mathrm{Gaussian}}}+H_{\mathrm{Diag}}) Q_{\beta}^*,
\end{equation}
where  $Q_{\beta}$ is  a $q \times r$  sub-matrix from the first $q$ rows of 
$U$,  and  $H_{\mathrm{ID}},{H_{\mathrm{Gaussian}}},H_{\mathrm{Diag}}$ are independent random matrices of size $r\times r$ such  that 

\begin{itemize}
    \item $H_{\mathrm{ID}}$ is identically  distributed as   $ \big(\widetilde{\sigma}_{ij}W_{m_im_j}\big)_{i,j=1}^r$;
     \item  $  H_{\mathrm{Gaussian}}= \big(\sqrt{g_{ij}}\big)_{i,j=1}^r\circ W_{\mathrm{G}\beta\mathrm{E}_r}$;
    \item $H_{\mathrm{Diag}}=\mathrm{diag}\big( \mathcal{N}(0,\tau_1-\chi_1), \dots, \mathcal{N}(0,\tau_r-\chi_r)\big)$ with independent   Gaussian  diagonal  entries.   
\end{itemize}
\end{theorem}

We remark that the conclusion of Theorem~\ref{main result} is expected to remain valid for general sub-Gaussian inhomogeneous random matrices $X_N$ as  in Definition~\ref{def:inhomo}. The motivation for the decomposition of the matrix $Z_\beta$ is to isolate different sources of fluctuations. The sub-Gaussian generalization will be treated in a forthcoming paper. 

Several other immediate remarks are in order. 
\begin{remark} (Sparsity assumption) 
 Theorem \ref{thm:LLN1} establishes that when the rank $r$ is fixed, the condition $(r+1)\sigma^*_N\sqrt{\log N}\rightarrow 0$ on $\Sigma_N$ is sharp, since this constraint precisely prevents the emergence of spectral outliers in the unperturbed ensemble $H_N$. However, this condition is not optimal for potentially large low-rank perturbations $r$, as the weaker requirement $\sigma^*_N\sqrt{(r+1)\log N}\rightarrow 0$ suffices to guarantee the same conclusion in the Gaussian case.
Second, regarding Theorem \ref{main result}, the condition $\sigma^{*}_N \log N \to 0$   is required possibly only for technical reasons in the proofs of Theorem \ref{coro:expansion} and Lemma \ref{lemma:error_trace} and may not be optimal.
\end{remark}

\begin{remark} (Sub-Gaussian assumption) 
For the unperturbed IRM ensemble $H_N$ with entries possessing 
$\alpha$-exponential tails, the law of large numbers for the spectral norm $\|H_N\|_{\mathrm{op}}$ \eqref{spectralnorm} has been studied in \cite{benaych2014localization} and \cite[Remark 2.2]{altschuler2024spectral}, respectively. The relationship between the sharp sparsity threshold and the tail decay of the entries is, however, more subtle. For deformed random band matrices, Au \cite{Au23BBP} established a BBP transition at the level of the law of large numbers under  the assumptions of uniformly bounded finite moments and of bandwidth $W=N^{\epsilon}$. 
\end{remark}

\begin{remark} 
Recently, Bandeira, Cipolloni, Schröder, and van Handel \cite{bandeira2024matrixconcentrationinequalitiesfree}  independently established the BBP transition  for the largest eigenvalue   at the level of LLN, almost  under the condition  $\sigma^{*}_N  (\log N)^{2}  \to 0$; see, in particular,  Theorem 2.9, Theorem 3.1  along with Example 3.2 in \cite{bandeira2024matrixconcentrationinequalitiesfree}.  
Their approach also handles phase transitions in the anisotropic setting—that is, when the stochasticity condition (A2) in Definition \ref{def:inhomo} fails—as discussed in \cite[Section 2.3]{bandeira2024matrixconcentrationinequalitiesfree}. In contrast, our approach is fundamentally different and, moreover, yields results at the level of fluctuations.
\end{remark}

\subsection{Applications}

Inhomogeneous random matrices encompass many fundamental models through an appropriate specification of the variance profile $\Sigma_N$. To illustrate, we highlight several representative examples.

    {\bf\noindent  Model 1: Random band matrices in dimension $d$.}
    
  Introduce a  $d$-dimensional lattice 
 \begin{equation} 
    \Lambda_{L,d}=\Big( \big(-\frac{1}{2}L,\frac{1}{2}L\big] \times \cdots \times\big(-\frac{1}{2}L,\frac{1}{2}L\big]\Big) \bigcap \mathbb{Z}^d, 
\end{equation}
 and    a  canonical representative
of $x\in \mathbb{Z}^d$ through
\begin{equation} [x]_L: = \big(x+L\mathbb{Z}^d\big)\bigcap  \Lambda_{L,d}, 
\end{equation} 
 and let \begin{equation}N= |\Lambda_{L,d}|=L^d.\end{equation}
  Given a mild (e.g. continuous) symmetric density function  $f(x)$ on $\mathbb{R}^d$, i.e. $f(-x)=f(x)$, 
define    $\Sigma_N^{\rm (band)}=(\sigma_{ij})_{1 \le i,j \le N}$ via
\begin{equation}
		\sigma_{ij}^2=\frac{1}{M}\sum_{n \in \mathbb{Z}^d}f\Big(\frac{i-j+nL}{b_N}\Big), \quad M=\sum_{i\in \mathbb{Z}^d}f\big(\frac{i}{b_N}\big).	\end{equation}
Here    $b_N \le L/2$ is referred to as    the \emph{bandwidth} and    may depend on $L$.  

The deformation of random band matrices is defined as 
\begin{equation}
    X_N^{\rm (band)}:=\Sigma_{N}^{\rm (band)} \circ W_{\mathrm{G}\beta\mathrm{E}_N}+A_N
\end{equation}
where 
$A_N$ is  given 
 in Definition \ref{def:inhomo} (A3). In this case 
    $(\sigma^*_N)^2\sim \|f\|_{\infty}(b_N)^{-d}$ as $N\to \infty$. 
In  the case of $b_N/L\rightarrow 0$ (similar for $b_N\sim L$), if  
\begin{equation} \label{band-}
\lim\limits_{N\rightarrow\infty}\frac{m_i-m_j}{b_N}=x_{ij}\in \mathbb{R}^d\cup\{\infty\},
\end{equation}
then we have
\begin{equation}    (\sqrt{b_N})^{d}\sigma_{m_im_j}\rightarrow f(x_{ij}),
\end{equation}
and \begin{equation}
(b_N)^d(P_N^{k})_{m_im_j}\rightarrow (\underbrace{f*\cdots *f}_{k})(x_{ij})
\end{equation}
where the right-hand functions should  be continuous for all finite $x_{ij}$ (subject to the regularity of $f$)  and vanish if $x_{ij}=\infty$ 
\begin{equation}
{g}_{ij}
    =\begin{cases}
        \|f\|_{\infty}^{-\frac{1}{2}}\sum_{k\ge 2}a^{-2k+2} (\underbrace{f*\cdots *f}_{k})(x_{ij}),~~~i\ne j,\\
        \|f\|_{\infty}^{-\frac{1}{2}}\sum_{k\ge 3}a^{-2k+2} (\underbrace{f*\cdots *f}_{k})(x_{ij}),~~~i=j.
    \end{cases}
\end{equation}
\begin{theorem}[Band matrices]\label{thm:band_matrix}
With the same notations as in Theorem \ref{main result},  assume \eqref{finiteA} \eqref{def:sigma_i},  
\eqref{def:tau_i}   and \eqref{band-}. 
   If   $\log^2 N \ll (b_N)^d\ll N$, then  the  first $q$  largest eigenvalues  of $X_N^{\rm (band)}$ 
   \begin{equation*}
   \frac{a^2}{a^2-1}\frac{(b_N)^{\frac{d}{2}}}{\sqrt{\|f\|_{\infty}}}\Big(\lambda_1-a-\frac{1}{a},\lambda_{2}-a-\frac{1}{a},\ldots, \lambda_q-a-\frac{1}{a}\Big)
\end{equation*}converge weakly to those  of $Z_{\beta}$. 
\end{theorem}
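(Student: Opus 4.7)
The plan is to derive Theorem \ref{thm:band_matrix} as a direct specialization of Theorem \ref{main result}, by verifying each of its hypotheses for the band-matrix variance profile $\Sigma_N^{(\mathrm{band})}$ and identifying the four limiting quantities $\widetilde{\sigma}_{ij}, g_{ij}, \chi_i, \tau_i$ explicitly in terms of $f$ and its convolutions.

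First I would check the sparsity assumption. Since $f$ is a symmetric Schwartz probability density, Riemann summation gives $M \sim b_N^d\int_{\mathbb{R}^d} f = b_N^d$, and therefore $(\sigma_N^*)^2 \sim \|f\|_\infty/M \asymp \|f\|_\infty\, b_N^{-d}$. Hence the hypothesis $\log^2 N \ll b_N^d$ is exactly $\sigma_N^* \log N \to 0$, which unlocks the application of Theorem \ref{main result}.

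Next, I would translate each of the four limits \eqref{assumption1}--\eqref{def:tau_i} into convolution data. The identity $\sigma_{m_im_j}^2 \sim f\bigl((m_i - m_j)/b_N\bigr)/M$, combined with \eqref{band-}, yields $\widetilde{\sigma}_{ij}=\sqrt{f(x_{ij})/\|f\|_\infty}$ with the convention $f(\infty)=0$ provided by the Schwartz decay. For higher powers, the change of variables $y_\ell = c_\ell/b_N$ turns $(P_N^k)_{m_im_j}$ into a $(k-1)$-fold Riemann sum with step $1/b_N$ for the convolution integral, giving $(b_N)^d(P_N^k)_{m_im_j}\to f^{*k}(x_{ij})$. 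Summing against the geometric weight $a^{-2k+2}$ (with $a>1$) produces $g_{ij}$, and the analogous manipulations yield $\chi_i$ and $\tau_i$ in terms of $f$ and the fourth moments of $W$. Absolute convergence that justifies exchanging the limit with the series in $k$ comes from the crude uniform bound $(P_N^k)_{m_im_j} \le \|f\|_\infty/M$, itself a consequence of Young's inequality $\|p^{*k}\|_\infty \le \|p\|_1^{k-1}\|p\|_\infty$ applied to $p(\cdot) = f(\cdot/b_N)$.

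Finally, a careful accounting of the normalizing constants produces the prefactor $\sqrt{\|f\|_\infty}$: since $1/\sigma_N^* \sim b_N^{d/2}/\sqrt{\|f\|_\infty}$, the rescaling $a^2/((a^2-1)\sigma_N^*)$ in Theorem \ref{main result} differs from the band-matrix rescaling $(a^2/(a^2-1))b_N^{d/2}$ exactly by a factor $\sqrt{\|f\|_\infty}$, which therefore multiplies the limiting matrix $Z_\beta$. I expect the main technical obstacle to be the uniform control of Riemann-sum errors across all $k$, compounded by the torus periodization $[x]_L$ appearing in the discrete convolution—especially on the boundary between the regimes $b_N/L \to 0$ and $b_N \sim L$, where the limiting convolution acquires periodization terms over the lattice $L\mathbb{Z}^d$. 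The Schwartz decay of $f$ together with the geometric factor $a^{-2k+2}$ should make these tails manageable via dominated convergence, reducing the argument to a verification exercise.
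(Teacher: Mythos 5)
Your proposal is correct and follows the same route the paper takes: Theorem \ref{thm:band_matrix} is intended as a direct specialization of Theorem \ref{main result} to the band-matrix variance profile, obtained by verifying $\sigma_N^*\log N\to 0$ from $\log^2 N \ll (b_N)^d$ and computing $\widetilde\sigma_{ij}, g_{ij}, \chi_i, \tau_i$ via the Riemann-sum/convolution asymptotics $(b_N)^d(P_N^k)_{m_im_j}\to f^{*k}(x_{ij})$, with the $\sqrt{\|f\|_\infty}$ factor arising exactly as you describe from the mismatch between $1/\sigma_N^*$ and $(b_N)^{d/2}$. (Your $\widetilde\sigma_{ij}=\sqrt{f(x_{ij})/\|f\|_\infty}$ is in fact the correct version of what the paper writes slightly carelessly as $(\sqrt{b_N})^d\sigma_{m_im_j}\to f(x_{ij})$.)
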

 \begin{remark}
 The BBP transition for random band matrices at the level of LLN  follows directly from Theorem~\ref{thm:LLN1} under the condition $(r+1)(\sqrt{b_N})^{-d}\sqrt{\log N} \to 0$. In the special case $d=1$, with $r$ fixed and $b_N \ge N^{\epsilon}$ for any $\epsilon>0$, this LLN result was recently established by Au~\cite{Au23BBP}. Moreover, the fluctuation correlations in Theorem~\ref{thm:band_matrix} stem from three components: the eigenvector $Q_{\beta}$, the matrix $H_{\mathrm{ID}}$, and the factor ${g}_{ij}$.
  In particular, the factor ${g}_{ij}$ reflects the geometric structure of random band matrices and is solely related to the finite-step transition probabilities of random walk on the torus. This mechanism is fundamentally distinct from the edge statistics observed in non-deformed random band matrices \cite{sodin2010spectral,liu2023edge}.

 \end{remark}
    {\bf \bf\noindent Model 2: Random matrices with $\kappa$-regular   variance profile.} 
    
    Take   $\Sigma_N= \Psi_N/\sqrt{\kappa}$ where  $\Psi_N$ is a $\kappa$-regular matrix. In this case  $\sigma_N^*={1}/{\sqrt{\kappa}}$,  and we need to  assume 
 $\log^2 N\ll \kappa\ll N$.

\vspace{.2cm}

 {\bf \noindent Model 3: Inhomogeneous information-plus-noise model}.
 
 Consider two matrices in chiral form
\begin{equation}
    \Sigma_N = \begin{pmatrix} 0_{N/2} & \Phi_{N/2} \\ \Phi_{N/2}^t & 0_{N/2} \end{pmatrix}, \qquad
    A_N   = \begin{pmatrix} 0_{N/2} & B_{N/2}   \\ B_{N/2}^* & 0_{N/2} \end{pmatrix},
\end{equation}
where $N$ is even and $\Phi_{N/2}$ is a chiral matrix with nonnegative entries such that the $\ell^2$-norm of every row and every column vector equals $1$. In this setting, our main theorems, Theorem~\ref{thm:LLN1} and Theorem~\ref{main result}, remain valid.

\subsection{Method and structure }
To prove Theorem \ref{thm:LLN1} and Theorem \ref{main result}, we develop a ribbon graph expansion for very large powers of the deformed inhomogeneous matrix  $X_N$. Ribbon graph expansions have been widely employed in the study of classical GOE/GUE ensembles; see, for instance, \cite{kontsevich1992intersection, okounkov2009gromov, lando2004graphs} and references therein. However, to the best of our knowledge, their application to deformed models remains largely unexplored, even in the Gaussian case. While our approach relies on sharp upper bound estimates derived from the same philosophy that allows GOE to dominate sub-Gaussian  IRM models, the techniques required to establish Theorem \ref{thm:LLN1} and Theorem \ref{main result} differ significantly.

For Theorem \ref{thm:LLN1}, a key step involves applying sharp concentration inequalities for the central moments of the GOE matrix to control those of the sub-Gaussian  IRM. To achieve this, we introduce a series of domination inequalities that reduce the problem in the sub-Gaussian case to the Gaussian setting. 
A central  result is Theorem \ref{thm:spectral_measure} below concerning the almost sure convergence of spectral measures. 
This provides a sharper generalization of \cite[Proposition 2]{noiry2021spectral} and \cite[Lemma 3.7]{Au23BBP}, which study Wigner matrices and one-dimensional cut-off random band matrices, respectively. We then prove Theorem \ref{thm:LLN1} by adapting and extending the arguments used   in \cite{Au23BBP}.

Recall   that  the spectral measure of $X_N$  with respect to a unit (column) vector  $\mathbf{q}$ is defined as the unique probability measure $\mu_{X_N}^{\mathbf{q}}$ satisfying  
\begin{equation}
    \int x^m \mu_{X_N}^{\mathbf{q}}(dx)=  \mathbf{q}^{*} X_{N}^m \mathbf{q},\quad \forall  m\in \mathbb{N}.
\end{equation}
  For further details on spectral measures, we refer to \cite{noiry2021spectral,Au23BBP}.

\begin{theorem}\label{thm:spectral_measure}
    With $X_N$ given in 
Definition  \ref{def:inhomo},  let 
$\mathbf{q}_i$ be a  unit eigenvector of $A_N$  associated with eigenvalue   $a_i$ and  let $\mu_{X_N}^{\mathbf{q}_i}$   be  a   spectral measure of $X_N$ with respect to $\mathbf{q}_i$. If 
  $(r+1)\sigma^*_N \sqrt{\log N}\rightarrow 0$ as $N\to \infty$,  then for each $1\le i\le r$,    $\mu_{X_N}^{\mathbf{q}_i}$ 
  converges weakly, almost surely, to 
\begin{equation}
     \mu_{a_i}(dx)= \frac{1_{\{|x|\le 2\}}}{2\pi}\frac{\sqrt{4-x^2}}{a_i^2+1-a_ix}dx+1_{\{|a_i|\ge 1\}}\Big(1-\frac{1}{a_i^2}\Big)\delta_{a_i+\frac{1}{a_i}}(dx), 
\end{equation}
where $1_{\{\cdot\}}$ denotes the indicator function and $\delta$ is the Dirac measure.
\end{theorem}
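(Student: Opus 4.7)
The plan is to deduce weak almost sure convergence of $\mu_{X_N}^{\mathbf{q}_i}$ by the method of moments. Since $\mu_{a_i}$ is compactly supported (its Stieltjes transform is rational in $m_{sc}$), it is determined by its moment sequence, and the measures $\mu_{X_N}^{\mathbf{q}_i}$ are almost surely supported in a common compact interval because $\|H_N\|\to 2$ a.s.\ by \cite{altschuler2024spectral} under the sparsity hypothesis and $\|A_N\|$ is uniformly bounded. It therefore suffices to show, for each fixed $m\in\mathbb{N}$,
\begin{equation}
\mathbf{q}_i^*X_N^m\mathbf{q}_i\xrightarrow{\text{a.s.}}M_m(a_i):=\int x^m\,\mu_{a_i}(dx),
\end{equation}
and then intersect the countable collection of probability-one events.

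First I would expand $(H_N+A_N)^m$ binomially. Writing $A_N=\sum_{j=1}^{r}a_j\mathbf{q}_j\mathbf{q}_j^*$ and inserting a resolution of the identity adjacent to each $A_N$ factor, one obtains
\begin{equation}
\mathbf{q}_i^*X_N^m\mathbf{q}_i=\sum_{\ell\ge 0}\;\sum_{\substack{k_0,\ldots,k_\ell\ge 0\\ \ell+k_0+\cdots+k_\ell=m}}\;\sum_{j_1,\ldots,j_\ell=1}^{r}\Big(\prod_{s=1}^{\ell}a_{j_s}\Big)\prod_{s=0}^{\ell}\mathbf{q}_{j_s}^*H_N^{k_s}\mathbf{q}_{j_{s+1}},
\end{equation}
with the boundary convention $j_0=j_{\ell+1}=i$. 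The problem then reduces to the \emph{isotropic moment convergence}
\begin{equation}\label{eq:isot-plan}
\mathbf{q}_j^*H_N^k\mathbf{q}_{j'}\xrightarrow{\text{a.s.}}\delta_{jj'}\,\kappa_k,\qquad\kappa_k:=C_{k/2}\,\mathbf{1}_{\{k\text{ even}\}},
\end{equation}
for every $1\le j,j'\le r$ and every fixed $k\ge 0$, where $C_n$ denotes the $n$-th Catalan number. Granting \eqref{eq:isot-plan}, only chains with $j_1=\cdots=j_\ell=i$ and all $k_s$ even survive, and
\begin{equation}
M_m(a_i)=\sum_{\ell\ge 0}a_i^\ell\!\!\sum_{\substack{k_0+\cdots+k_\ell=m-\ell\\ k_s\text{ even}}}\!\!\prod_{s=0}^{\ell}C_{k_s/2}.
\end{equation}
A direct generating-function calculation using the identity $\sum_{n\ge 0}C_nz^{-2n}=-z\,m_{sc}(z)$ yields $\sum_{m\ge 0}M_m(a_i)z^{-m-1}=-m_{sc}(z)/(1+a_im_{sc}(z))=-G_{\mu_{a_i}}(z)$; inversion by partial fractions recovers both the absolutely continuous density and the $(1-a_i^{-2})$ point mass at $a_i+a_i^{-1}$ when $|a_i|>1$.

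The technical heart, and the main obstacle, is establishing \eqref{eq:isot-plan}. Writing $\mathbf{q}_j^*H_N^k\mathbf{q}_{j'}=\sum_{l,l'=1}^{r}\overline{(\mathbf{q}_j)_{m_l}}(\mathbf{q}_{j'})_{m_{l'}}(H_N^k)_{m_lm_{l'}}$, I would expand matrix powers as walks
\begin{equation}
\mathbb{E}\big[(H_N^k)_{xy}\big]=\sum_{x=x_0\to\cdots\to x_k=y}\Big(\prod_{s=1}^{k}\sigma_{x_{s-1}x_s}\Big)\mathbb{E}\Big[\prod_{s=1}^{k}W_{x_{s-1}x_s}\Big].
\end{equation}
Symmetry of the $W_{ij}$ forces every contributing walk to traverse each edge an even number of times; a parity count at vertices then yields $x_0=x_k$, so the off-diagonal expectation vanishes. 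For $x=y=m_l$, the leading Catalan (doubled-tree) walks telescope via the stochastic identity $\sum_v\sigma_{uv}^2=1$ to contribute exactly $C_{k/2}$, while each non-Catalan closed walk carries an extra factor $(\sigma^*_N)^{2s}$ with $s\ge 1$ and is negligible. To upgrade convergence in expectation to almost sure convergence, I would prove a high-moment estimate
\begin{equation}
\mathbb{E}\big|\mathbf{q}_j^*H_N^k\mathbf{q}_{j'}-\mathbb{E}[\mathbf{q}_j^*H_N^k\mathbf{q}_{j'}]\big|^{2p}\le(C_k\,\sigma^*_N)^{2p}\,(2p-1)!!,
\end{equation}
via a $2p$-fold ribbon graph expansion in which tree-like concatenations give the Gaussian combinatorial factor $(2p-1)!!$ and all remaining diagrams are suppressed by extra powers of $\sigma^*_N$; choosing $p\sim\log N$ and applying Markov together with the sharp sparsity bound $\sigma^*_N\sqrt{(r+1)\log N}\to 0$ makes the probabilities summable, so Borel--Cantelli delivers \eqref{eq:isot-plan}. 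The delicate bookkeeping of non-tree diagrams, where cycle structure must be controlled against the entry-variance profile using both the stochastic condition on $P_N$ and the sub-Gaussian bound (A1)(ii), is the principal hurdle; it refines the mean-field moment-method computation of \cite{noiry2021spectral} and the 1D band-matrix argument of \cite{Au23BBP}, now uniformly over the distinguished vertices $\{m_1,\ldots,m_r\}$.
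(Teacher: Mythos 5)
Your proposal follows essentially the same route as the paper's: binomially expand $X_N^m$ to reduce the problem to almost-sure convergence of the entries $(H_N^k)_{m_l m_{l'}}$ (equivalently your isotropic quantities $\mathbf q_j^*H_N^k\mathbf q_{j'}$, since the $\mathbf q_j$ are supported on $\{m_1,\dots,m_r\}$), establish that convergence by a high-moment concentration bound together with Borel--Cantelli under $\sigma^*_N\sqrt{(r+1)\log N}\to 0$, and match the limiting moments with those of $\mu_{a_i}$. The high-moment estimate you flag as the ``technical heart'' is precisely the content of Propositions \ref{coro:gaussian_bound_sub} and \ref{Prop:Up-bounds-G_M}, which the paper proves by domination with a normalized GOE of effective size $M\sim(\sigma^*_N)^{-2}$.
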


 As to  Theorem \ref{main result},  we detect the fluctuation by computing large powers of moments.    Following a similar approach, we refer to \cite{Férall2007deform} for rank-one deformed Wigner matrices, \cite{okounkov2000random} for non-deformed GUE, \cite{soshnikov1999universality,feldheim2010universality} for non-deformed Wigner matrices, and \cite{sodin2010spectral,liu2023edge} for non-deformed random band matrices. The proof of Theorem \ref{main result} reduces to establishing the following key theorem. 

\begin{theorem}\label{Thm-a>1:baby}
With  the same notations  and assumptions  as in Theorem \ref{main result},   
   for any given positive integer $s$,   let $k_i=\lfloor t_i/\sigma_N^*\rfloor $ with $t_i>0$,   $i=1,\dots,s$,  then   
    \begin{equation}\label{equ:main theorem}
       \lim_{N\to \infty} 
\frac{1}{2^s}\mathbb{E}\bigg[\prod_{j=1}^{s}\Big(\tr \Big(\frac{X_{N}}{\rho_a}\Big)^{k_j}+\tr \Big(\frac{X_N}{\rho_a}\Big)^{k_j+1}\Big)\bigg] = \mathbb{E}\bigg[\prod_{j=1}^{s}\tr  
       \exp\!\Big\{ \frac{(a^2-1)}{(a^2+1)a} t_jZ_{\beta}\Big\} 
      \bigg].
    \end{equation}
\end{theorem}

The core of our proof for Theorem \ref{Thm-a>1:baby} follows a three-stage approach: (i) first developing a diagrammatic expansion for high powers of the  IRM through Okounkov's contraction technique; (ii) then constructing dominating functions by estimating diagram-specific upper bounds and establishing term-wise convergence; (iii) finally,  the diagram upper bounds are summable and thus the limit and the summation are exchangeable.  We now elaborate on the technical details of this argument.

\begin{itemize}
    \item[\bf Step (i).] \textbf{Diagram expansions.} 
While diagram expansions have been extensively studied for GUE \cite{okounkov2000random,feldheim2010universality} and unimodular random band matrices \cite{sodin2010spectral,EK11Quantum,liu2023edge}, our approach requires substantial modifications to the reduction procedure. The key innovation involves decomposing joint moments into sums of diagram functions via Wick formulas, first establishing results for deformed Gaussian IRM models before extending to the sub-Gaussian case (see Section \ref{section:Genus expansions and map enumeration}).
A crucial distinction in deformed models arises from the ribbon graphs being associated with Riemann surfaces with boundaries. To the best of our knowledge, this geometric connection, along with its implications for matrix models, has only recently attracted significant attention in the literature \cite{buryak2017matrix,tessler2023combinatorial}.

\item[\bf Step (ii).] \textbf{Upper bounds for   diagram functions.}
  We establish uniform upper bounds for individual diagram functions in the GOE ensemble (Section~\ref{section:upper_bound_nontypical}),  with the help of 
   the procedure  of the automation developed in \cite{feldheim2010universality} to control the enumeration of trivalent diagrams. The extension to inhomogeneous random matrices  is achieved through a comparison principle (Section~\ref{Section:Inhomogeneous}), which dates back    at least  to  \cite[Equation (7.8)]{erdHos2011quantum}, \cite{EK11Quantum},    \cite[Lemma 2.5]{bandeira2016sharp} and \cite[Theorem 2.8]{latala2018dimension}.

    \item[\bf Step (iii).]  
\textbf{Diagram classification and diagram-wise limits.}
We introduce  a classification of diagrams into typical and non-typical categories according to their role in detecting fluctuation properties. Through careful analysis of the typical diagrams, we identify and rigorously derive the dominant terms that ultimately give rise to the Laplace transforms presented in Section~\ref{sec:fluctuation Gaussian}.
\end{itemize}

\textbf{Comparison with existing proof strategies.} While our approach of combining upper bound estimates with termwise limits bears some similarity to that  in \cite{okounkov2000random}, it differs fundamentally from the   moment method that directly computes error terms \cite{soshnikov1999universality,feldheim2010universality,Férall2007deform}. In \cite{okounkov2000random}, the construction of dominating functions  strongly depends   on the Harer-Zagier formula \cite{harer1986euler} for map enumeration on closed Riemann surfaces (which is specific to GUE and inapplicable to GOE, and furthermore only addresses single moments rather than the mixed moments considered here). This three--stage strategy is  of  its own interest and has  recently been employed in \cite{liu2023edge} for studying extreme eigenvalues of random band matrices.

\textbf{The structure of this paper is as follows.} 
Section \ref{section:Genus expansions and map enumeration} develops  diagram function expansions through ribbon graph reduction, while establishing necessary graph-theoretic tools and introducing the key classification of diagrams into typical and non-typical categories. Section \ref{Section:GUE/GOE case} provides rigorous upper bound estimates 
for the diagram functions.    Section \ref{sec:proof_of_main} establishes  Theorem \ref{thm:spectral_measure} and Theorem \ref{thm:LLN1} through   a system of domination inequalities for central moments. Finally, Section \ref{sec:fluctuation Gaussian} completes our analysis by determining the limiting behavior of diagram functions, thereby proving Theorem \ref{Thm-a>1:baby} and Theorem \ref{main result}.

\section{Ribbon graphs and graph expansions}\label{section:Genus expansions and map enumeration}

This section introduces ribbon graphs and ribbon graph expansions, which are central to    analyzing spectral properties of the deformed inhomogeneous model. Our investigation starts from the Gaussian IRM ensemble and is subsequently extended to the sub-Gaussian case.

\subsection{Ribbon graph expansions}\label{sec:ribbon_expansion}

\noindent\textbf{Ribbon graphs and diagrams. }
We begin by outlining the canonical procedure for constructing a punctured ribbon graph $\Upsilon$ through polygon gluing. Consider $s$ oriented polygons $D_1, \dots, D_s$ with $k_1, \dots, k_s$ edges, respectively. Let $k = \sum_{i=1}^s k_i$ be the total number of vertices, and let $\gamma=(12\cdots k_1)(k_1+1\cdots k_1+k_2)\cdots(k_1+\cdots +k_{s-1}+1\cdots k)\in \mathcal{S}_k$ be the permutation representing the cyclic order of $D_1,\dots,D_s$. The vertices are labeled as $v_1, v_2, \dots, v_k$, and the edges are defined as $\vec{e_j} = \overrightarrow{v_j v_{\gamma(j)}}$.  Given    a subset of the edges $J\subset [k]$,  with $\pi$ as   a pairing of $J$,  we glue the edges of polygons as follows:

\begin{itemize}
\item For each pair $(s,t) \in \pi$, when $\beta=2$, we glue $\vec{e_s},\vec{e_t}$ in the opposite direction. In particular, we identify $v_s=v_{\gamma(t)},v_t=v_{\gamma(s)}$.  When $\beta=1$, we glue $\vec{e_s},\vec{e_t}$   either in an opposite direction or in the same direction (hence the resulting ribbon graph may be  non-orientable in this case).
 
 \item For each $j \in J^c$, $\vec{e_j}$ is not glued with any other edge, which corresponds to a  matrix entry from the deformation $A$ and is referred to as  a boundary edge. 
 \end{itemize}
This construction yields a punctured (open) ribbon graph with perimeter $(k_1, \dots, k_s)$ on a compact surface, as illustrated in Figure \ref{fig:ribobon graph1} and Figure \ref{fig:ribobon graph2}.

\begin{figure}[ht]
\centering \includegraphics[scale=0.17]{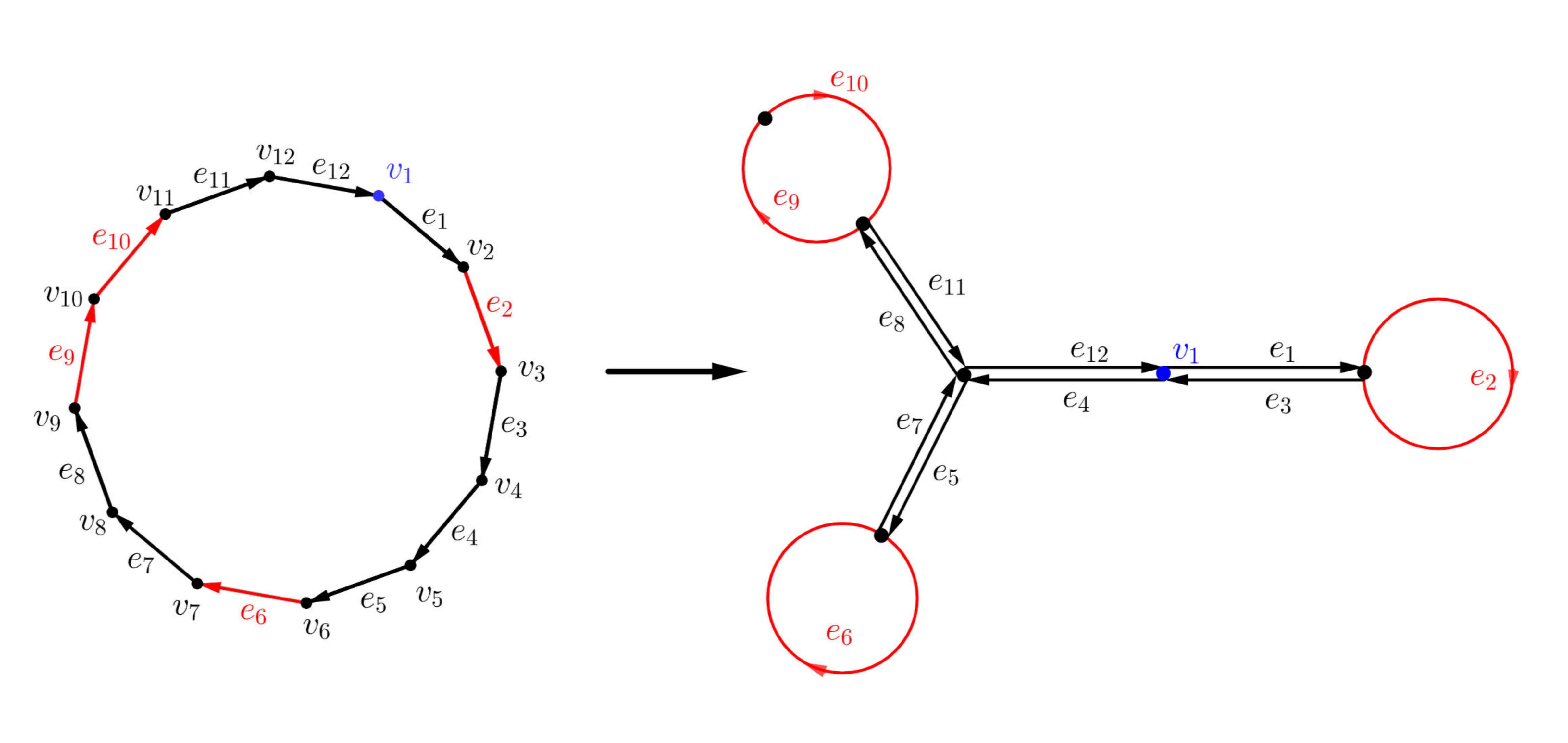}
     \caption{An example of oriented ribbon graph: $k=12$,   $J=\{1,3,4,5,7,8,11,12\}$, and $\pi=(1\ 3)(4\ 12)(5\ 7)(8 \ 11)$. The boundary edges are colored red. } 
     \label{fig:ribobon graph1}
\end{figure}

\begin{figure}[ht]
\centering \includegraphics[scale=0.15]{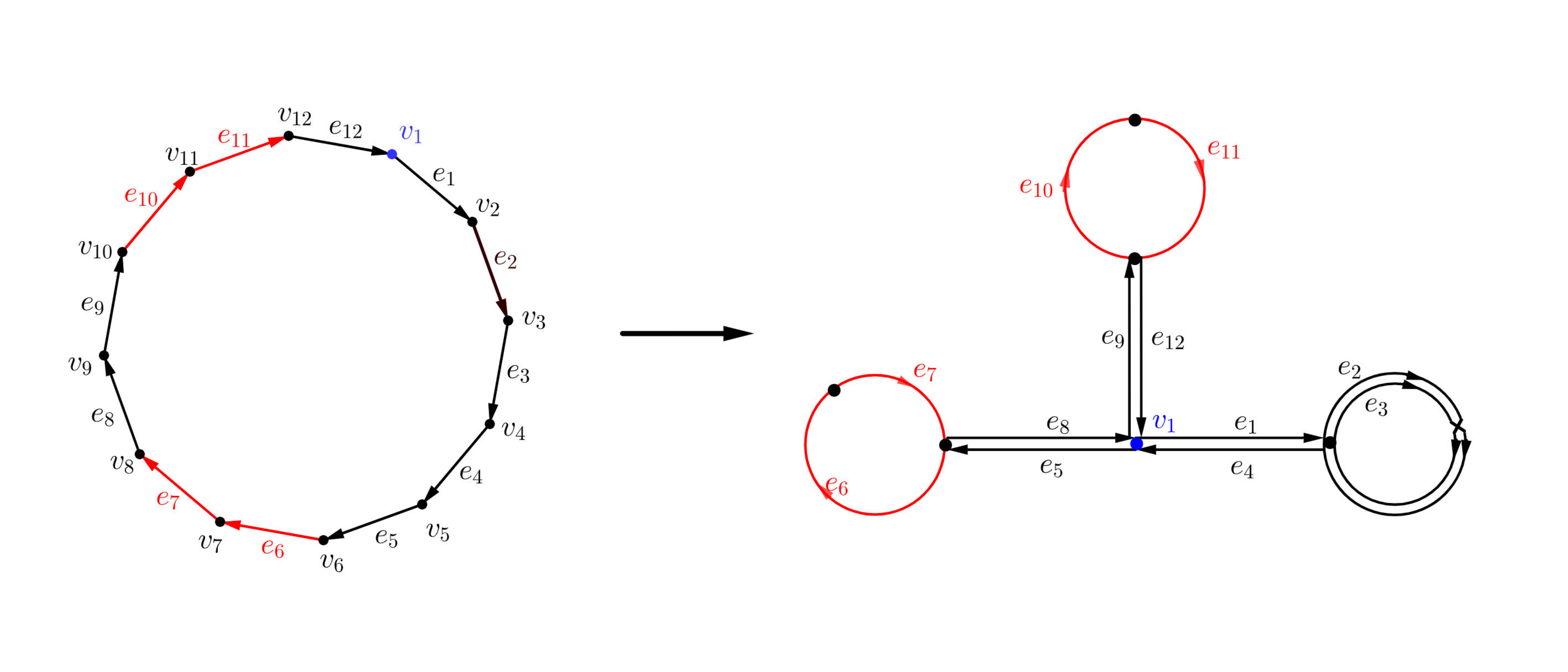}
     \caption{An example of nonoriented ribbon graph: $k=12$,   $J=\{1,2,3,4,5,8,9,12\}$, and  $\pi=(1\ 4)(2\ 3)(5\ 8)(9 \ 12)$. The boundary edges are colored red, and $\vec{e_2},\vec{e_3}$ are glued in an opposite direction. }
     \label{fig:ribobon graph2}
\end{figure}

 \bigskip

Formally, a ribbon graph possibly  with  boundary  is defined as a graph embedded in a compact surface with boundary, such that the complement of the graph is a disjoint union of open disks \cite{buryak2017matrix,lando2004graphs,tessler2023combinatorial}.

\begin{definition}
Let $\Sigma$ 
be a compact topological surface 
with or without boundary. A (punctured)
\textbf{ribbon graph} $\Upsilon$ of $s$ faces with perimeter $(m_1,\dots,m_s)$ on $\Sigma$ is a quadruple $\Upsilon=(\mathcal{V}(\Upsilon),\mathcal{E}(\Upsilon),\iota,\phi)$, where
\begin{enumerate}
    \item[(1)] $(\mathcal{V}(\Upsilon), \mathcal{E}(\Upsilon))$   is a graph;
    \item[(2)] $\iota:(\mathcal{V}(\Upsilon),\mathcal{E}(\Upsilon))\to \Sigma$ is   an embedding;
    \item[(3)] $\phi:[s] \rightarrow \mathcal{V}(\Upsilon)$ is a function that assigns a marked vertex to each face;
\end{enumerate}
  such that
 \begin{itemize}
 \item [(i)] the boundary of the surface  lies in the graph: $\partial \Sigma \subset \iota(\Upsilon)$;

 \item [(ii)] the complement 
$\Sigma\backslash \iota(\Upsilon) =
 \sqcup_{i=1}^{s}D_i$,
 where each $D_i$ is an oriented  $m_i$-gon and the vertex $\phi(i)$ (called the $i$-th marked point) is on the boundary of $D_i$;

 \item [(iii)] the orientations of $D_i$ are compatible when $\beta=2$.
 \end{itemize}
\end{definition}

\begin{definition}\label{def:diagram}
    A \textbf{diagram} (reduced ribbon graph) $\Gamma$ is a ribbon graph such that
\begin{itemize}
    \item[(i)] the degree of each unmarked vertex is at least 3, and 
\item[(ii)]the degree of each marked vertex is at least 2. 
\end{itemize}
In particular, $\Gamma$ is called  
  a {\bf trivalent} diagram if every unmarked vertex has degree exactly 3 and every marked vertex for $\Gamma$ has degree exactly 2.
\end{definition}
For convenience, we also include diagrams with connected components consisting of a single vertex in the definition.
Throughout this paper, the term ``ribbon graph" refers to the punctured ribbon graphs defined above. We will frequently work with such a graph $\Upsilon$ or a diagram $\Gamma$. To describe their geometric structure precisely, we introduce the following notations.

\begin{itemize}
\item For each ribbon graph $\Upsilon$, we denote   the set of edges (resp. interior edges, boundary edges) of $\Upsilon$ by $\mathcal{E}(\Upsilon)$ (resp. $\mathcal{E}_{\rm int}(\Upsilon)$, $\mathcal{E}_{b}(\Upsilon)$). Similarly, we    denote the set of vertices (resp. interior vertices, boundary vertices) of $\Upsilon$ by  $\mathcal{V}(\Upsilon)$ (resp. $\mathcal{V}_{\rm int}(\Upsilon)$, $\mathcal{V}_{b}(\Upsilon)$).
\item  For each diagram $\Gamma$,  we denote the set of its edges (resp. interior edges, boundary edges) by  $E(\Gamma)$ (resp. $E_{\rm int}(\Gamma)$, $E_b(\Gamma)$). Similarly,  we denote the set of its vertices (resp. interior  vertices, boundary  vertices) by $V(\Gamma)$, (resp. $V_{\rm int}(\Gamma)$, $V_b(\Gamma)$)  . Moreover, for any $j \in [s]$, let  $\partial D_j$ be the set of edges on the $j$-th face and denote the set of boundary edges on the $j$-th face by $E_{b,j}=E_b \cap \partial D_j$.
\item The set of all diagrams with $s$ faces is denoted by $\mathcal{D}_{\beta=1,s}$, while  the set of  all orientable diagrams with $s$ faces is denoted by $\mathcal{D}_{\beta=2,s}$. Moreover, let $\mathcal{D}_{\beta,s}^{b}$ (resp. $\mathcal{D}_{\beta,s}^{=3,b}$) be the set of  all (resp. trivalent) diagrams with $s$ faces such that each face contains at least one boundary edge, namely $E_{b,j}\neq \emptyset$ for each $j$.

\item For any diagram $\Gamma$, let $t(\Gamma)$ be the number of boundary components; the Euler genus is defined as
\begin{equation}\label{equ:genus}
    g(\Gamma) :=-V(\Gamma)+E(\Gamma)-F(\Gamma)+2-t(\Gamma).
\end{equation}
\end{itemize}

 \bigskip

\noindent\textbf{Okounkov's contraction.} Here, we introduce Okounkov's contraction \cite{okounkov2000random}, a key operation for studying large powers of random matrices that was originally developed to analyze the GUE model. Heuristically, it reduces a ribbon graph $\Upsilon$ to a diagram $\Gamma$ by deleting all of its local trees.

\begin{definition}\label{def:ok_contraction}

The \textbf{Okounkov  contraction}     
\begin{equation}
    \Phi: \big\{ \Upsilon:\text{ribbon graphs with perimeters $(k_1,\cdots,k_s)$} \big\} \longrightarrow
    \ \big\{\Gamma: \text{diagrams with $s$ faces}\big\}
\end{equation} 
is a surjection defined by the following procedure, as illustrated in Figures \ref{fig:Okounkov reduction 2} and \ref{figure:2}:
\begin{itemize}
\item If the marked point of $\Upsilon$ lies on a (local) tree, move it to the root of that tree.
\item Collapse all univalent vertices. 
\item Collapse all divalent vertices, except for the marked points.
\end{itemize}
 
\end{definition}

\begin{figure}
    \centering 
    \includegraphics[scale=0.50]{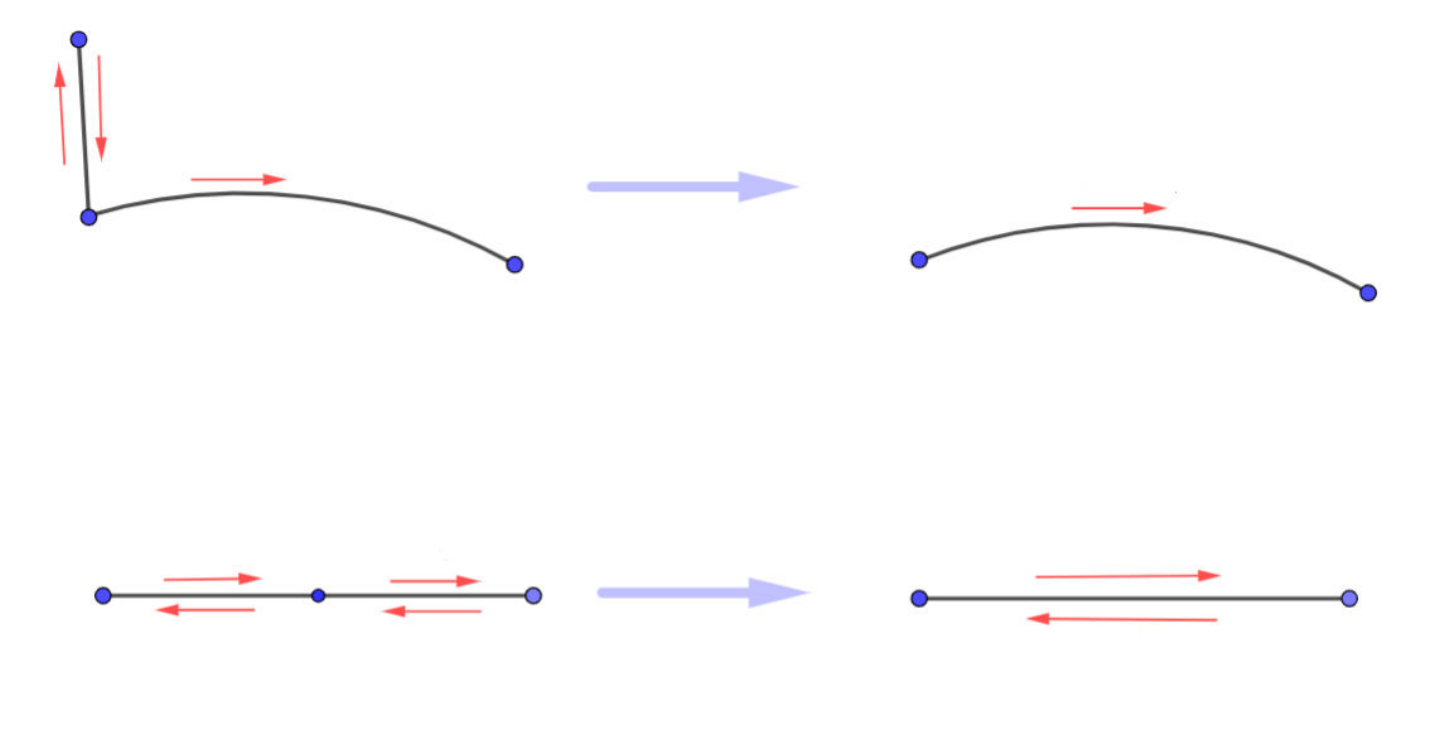}
     \caption{Okounkov's contraction} 
     \label{fig:Okounkov reduction 2}
\end{figure}

\begin{figure}[ht]
\centering
\includegraphics[scale=0.5]{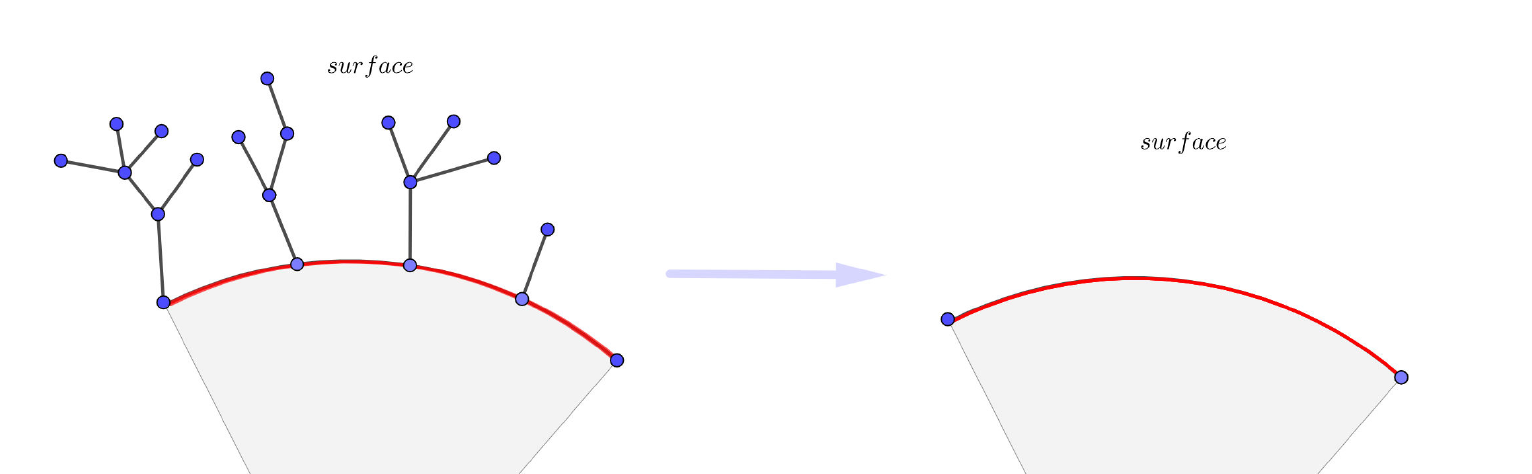}
    \label{fig:reduction_boundary}
    \caption{Reduction of a boundary edge}
    \label{figure:2}
\end{figure}
It is worth noting that the Okounkov contraction preserves the topology of a ribbon graph. In particular, it maps orientable ribbon graphs to orientable diagrams. 
\bigskip

\noindent\textbf{Diagram expansion in the Gaussian case. } We now introduce the diagram function for deformed Gaussian IRM.
\begin{definition} \label{diagramf}

Given non-negative integers $k_1,\dots ,k_s$, set $k=k_1+\cdots +k_s$. For the deformed symmetric (resp. Hermitian) Gaussian IRM $X$ defined in Definition \ref{def:inhomo} and for  any diagram $\Gamma$,  the  {\bf diagram function} of $\Gamma$ associated with $X$ is defined as
\begin{equation}\label{equ:diagram-function}
    \mathcal{W}_{\Gamma, X}(k_1,\dots,k_s):=\rho_a^{-k}\sum_{\Upsilon:\Phi(\Upsilon)=\Gamma}\sum_{\eta:\mathcal{V}(\Upsilon)\to [N]}
\prod_{(x,y) \in \mathcal{E}_{\rm int}(\Upsilon)}\sigma_{\eta(x)\eta(y)}^2 \prod_{(z,w) \in \mathcal{E}_b(\Upsilon)}A_{\eta(z)\eta(w)}.
\end{equation}
Here, the summation $\sum_{\Upsilon}$ is over all ribbon graphs $\Upsilon$ whose reduced diagram $\Phi(\Upsilon)$ is $\Gamma$, and $\sum_{\eta}$ is over all mappings from the vertex set $\mathcal{V}(\Upsilon)$ to $[N]$.
\end{definition}

The following proposition presents a ribbon graph expansion for the deformed Gaussian IRM, expressing its mixed moments in terms of the corresponding diagram functions.

\begin{proposition}\label{prop:ribbon_graph_expansion}
For $\beta=1,2$, let $X$ be the  deformed Gaussian IRM  matrix defined in   \eqref{eq:deformed-iid}, where $W_N$ is {\rm GOE} $(\beta=1)$ or {\rm GUE} $(\beta=2)$. For any  non-negative integers $k_1,\dots ,k_s$, 
we have 
    \begin{equation}
  \rho_{a}^{-(k_1+\cdots +k_s)}\mathbb{E}[\prod_{j=1}^{s}\tr X^{k_j}]=
  \sum_{\Gamma \in \mathcal{D}_{\beta,s}}\mathcal{W}_{\Gamma,X}(k_1,\dots,k_s).
\end{equation}
\end{proposition}

\begin{proof}
Let $k=k_1+\cdots +k_s$. We introduce a permutation  $\gamma=(12\cdots k_1)(k_1+1\cdots k_1+k_2)\cdots(k_1+\cdots +k_{s-1}+1\cdots k)\in \mathcal{S}_k$. For   any function of the labeling denoted by $\eta:[k]\to[N]$, a straightforward calculation shows   
\begin{align}\label{equ:Trace_X}
\mathbb{E}\Big[\prod_{j=1}^{s}\tr X^{k_j}\Big]&=\sum_{\eta:[k]\to[N]}\mathbb{E}\Big[\prod_{j=1}^{k}\big(H_{\eta(j)\eta(\gamma(j))}+A_{\eta(j)\eta(\gamma(j))}\big)\Big]\nonumber\\
&=\sum_{\eta:[k]\to[N]}\sum_{J \subset [k]} \mathbb{E}\Big[\prod_{j \in { J}}H_{\eta(j)\eta(\gamma(j))}\Big]\prod_{j \in { J^c}}A_{\eta(j)\eta(\gamma(j))}.
\end{align}
where $\mathcal{P}_2(J)$ denotes all pairings of $J$.
Noting  the symmetry of $H$,  by the Wick formula 
  we see when $\beta=1$ 
\begin{align}
\mathbb{E}\big[\prod_{j \in { J}}H_{\eta(j)\eta(\gamma(j))}\big]&=\sum_{\pi \in \mathcal{P}_2(J)} \prod_{(s,t) \in \pi}\mathbb{E}\big[H_{\eta(s)\eta(\gamma(s))}H_{\eta(t)\eta(\gamma(t))}\big]\nonumber\\
&
=\sum_{\pi \in \mathcal{P}_2(J)} \prod_{(s,t) \in \pi} \sigma_{\eta(s)\eta(\gamma(s))}^2\Big(\delta_{\eta(s)\eta(\gamma(t))} \delta_{\eta(t)\eta(\gamma(s))}+\delta_{\eta(\gamma(t))\eta(\gamma(s))}\delta_{\eta(s)\eta(t)}\Big).
\end{align}
If $|{ J}|$ is odd, then $\mathbb{E}\big[\prod_{j \in { J}}H_{\eta(j)\eta(\gamma(j))}\big]$ vanishes. Hence,
\begin{multline}
\label{equ:2.1RHS}
\mathbb{E}[\prod_{j=1}^{s}\tr X^{k_j}]=\\
\sum_{J \subset [k]} \sum_{\pi \in \mathcal{P}_2(J)} \sum_{\eta:[k]\to[N]}\prod_{j \in  J^c}A_{\eta(j)\eta(\gamma(j))}\prod_{(s,t) \in \pi} \sigma_{\eta(s)\eta(\gamma(s))}^2\Big(\delta_{\eta(s)\eta(\gamma(t))} \delta_{\eta(t)\eta(\gamma(s))}+\delta_{\eta(\gamma(t))\eta(\gamma(s))}\delta_{\eta(s)\eta(t)}\Big).
\end{multline}

By the construction of the ribbon graph, the above summation precisely enumerates the ribbon graphs $\Upsilon$ associated with subset $J$ and pairing $\pi$. Namely, 

\begin{equation}\label{equ:wick expansion beta=1ii}
  \rho_{a}^{-k}\mathbb{E}[\prod_{j=1}^{s}\tr X^{k_j}]=\sum_{\Upsilon} \rho_a^{-k}\sum_{\eta:\mathcal{V}(\Upsilon)\to[N]}\prod_{(x,y) \in \mathcal{E}_{int}(\Upsilon)}\sigma_{\eta(x)\eta(y)}^2 \prod_{(z,w) \in \mathcal{E}_{b}(\Upsilon)}A_{\eta(z)\eta(w)}.
\end{equation}
Classifying $\Upsilon$ according to the contraction map $\Phi(\Upsilon)$ and recalling the definition of $\mathcal{W}_{\Gamma, X}(k_1, \dots, k_s)$, we thus arrive at
\begin{equation}
\rho_{a}^{-k}\mathbb{E}[\prod_{j=1}^{s}\tr X^{k_j}]=\sum_{ \Gamma \in \mathcal{D}_{1,s}} \mathcal{W}_{\Gamma}(k_1,\dots,k_s).
\end{equation}

When $\beta=2$, the proof is similar and follows from the construction of orientable ribbon graphs. 
\end{proof}

Indeed,   the diagram function $\mathcal{W}_{\Gamma}(k_1,\dots,k_s)$ in Definition \ref{diagramf} admits a simplification in terms of transition probabilities for the induced Markov chain from the variance profile matrix. For convenience, we adopt the convention that $\binom{n}{k}=0$ whenever $n$ or $k$ is not an integer, and $\partial D_j =\emptyset$ means the $j$-th face $D_j$ consists of a single point.

\begin{proposition}\label{prop:compute_diagram_function}
   Let $p_{n}(x,y)$ be the $n$-th step transition probability  associated with  the Markov matrix   $P_N$
  in Definition  \ref{def:inhomo};  
  then  for any diagram $\Gamma$ we have
\begin{multline}\label{equation:W_Gamma}
\mathcal{W}_{\Gamma,X}(k_1,\dots,k_s)=\rho_a^{-(k_1+\cdots+k_s)}\prod_{j: \partial D_j=\emptyset}C_{\frac{k_j}{2}}\sum_{\eta:V(\Gamma) \to [N]}\sum_{n_{e}\ge 1, {e \in E(\Gamma)}}\prod_{j:\partial D_j\neq\emptyset}\binom{k_j}{\frac{k_j-\sum_{\partial D_j}n_e}{2}}\\
\times\prod_{(x,y) \in E_{\mathrm{int}}}p_{n_e}(\eta(x),\eta(y))
\prod_{(z,w) \in E_b}(A^{n_e})_{\eta(z)\eta(w)}.
\end{multline}
Here  $C_n:=\frac{1}{n+1}\binom{2n}{n}$ is  
the $n$-th Catalan number.

\end{proposition}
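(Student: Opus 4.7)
The strategy is to invert Okounkov's contraction $\Phi$ and evaluate the fiber sum defining $\mathcal{W}_{\Gamma,X}$ in Definition~\ref{diagramf}. For a fixed diagram $\Gamma$, every $\Upsilon \in \Phi^{-1}(\Gamma)$ is reconstructed from $\Gamma$ by two reverse operations: (i) subdividing each edge $e \in E(\Gamma)$ into a path of length $n_e \geq 1$ by inserting $n_e - 1$ fresh divalent vertices, and (ii) attaching plane trees along the boundary of each face $D_j$ inside the disk (in the real case, trees may also be glued in the non-orientable fashion permitted by Section~\ref{sec:ribbon_expansion}). Writing $m_j := \sum_{e \in \partial D_j} n_e$ for the total number of ``real" sides in $D_j$, the remaining $k_j - m_j$ sides of the $k_j$-gon are tree-sides that will collapse under $\Phi$.

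With the fiber parametrized by $\{n_e\}$ and a choice of tree pattern, I would fix a labeling $\eta : V(\Gamma) \to [N]$ and sum over labels of the newly introduced vertices in three independent groups. For each interior edge $e$ of $\Gamma$, telescoping the sum over the $n_e - 1$ intermediate labels yields
\begin{equation*}
\sum_{v_1,\dots,v_{n_e-1}} \sigma^2_{\eta(x) v_1} \sigma^2_{v_1 v_2} \cdots \sigma^2_{v_{n_e - 1} \eta(y)} = p_{n_e}(\eta(x), \eta(y))
\end{equation*}
by the Markov property of $P_N$. For each boundary edge $e$, the analogous reduction along $\partial \Sigma$ (cf.\ Figure~\ref{figure:2}) produces the factor $A_{\eta(z)\eta(w)}^{n_e}$. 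For each tree attached inside a face, summing its leaf labels inward and using the stochasticity $\sum_y \sigma^2_{xy} = 1$ from (A2) one vertex at a time shows that each tree contributes exactly $1$. Thus only the number and placement of the trees, not the labels, matters in the end.

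It then remains to count, for each face $D_j$, the number of admissible tree-attachment patterns compatible with the fixed multiplicity vector. Encoding each tree-slot by a $\pm 1$ step (entering or leaving a subtree) and each real-slot as a pass-through marker gives a bijection between valid attachments on the $k_j$-gon and $\pm 1$-words of length $k_j$ with $(k_j - m_j)/2$ up-steps in prescribed cyclic pattern relative to the $m_j$ real markers; these are enumerated by $\binom{k_j}{(k_j - m_j)/2}$. Taking the product over faces and summing over $\{n_e\}_{e \in E(\Gamma)}$ assembles the right-hand side of \eqref{equation:W_Gamma}. The main obstacle is this last step: one must verify that the planar (or in the real case, non-orientable) tree structures inside each $D_j$ match the $\pm 1$-walk encoding, and that the interleaving with the real sub-sides is accounted for without over- or under-counting. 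A careful edge-by-edge bookkeeping of the reverse Okounkov expansion together with the cyclic walk bijection should resolve this cleanly.
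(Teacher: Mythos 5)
Your proof follows the same route as the paper's: invert Okounkov's contraction, telescope the interior-edge labels via the Markov/stochasticity property to get $p_{n_e}$, collapse boundary chains to $(A^{n_e})_{\eta(z)\eta(w)}$, sum out tree labels using $\sum_y \sigma^2_{xy}=1$, and count the surviving tree shapes per face by the binomial coefficient. The only cosmetic difference is in the last step: the paper invokes the Raney/Catalan identity $\sum_{u_1+\cdots+u_{n_j}=(k_j-n_j)/2}(2u_1+1)\prod_i C_{u_i}=\binom{k_j}{(k_j-n_j)/2}$ directly, whereas you sketch an equivalent $\pm 1$-walk/cycle-lemma bijection — same content, and both treatments leave that identity at roughly the same level of detail.
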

\begin{proof}  
We count the pre-images of $\Gamma$ under Okounkov's contraction function $\Phi$.  
For any edge $e \in E(\Gamma)$, suppose there are exactly $n_e-1$ divalent vertices deleted in the contraction, that is,  there are exactly $n_e$ segments before the reduction. We see the total number of divalent vertices deleted in $\partial D_j$ is $n_j:=\sum_{e \in \partial D_j}n_e$. 

We now compute the number of trees on the $j$-th face. If  $\partial D_j\neq\emptyset$, that is, the component of $D_j$ is not a single point, the total number of all possible trees on the $j$-th face is  
\begin{equation}
    \mathcal{T}_j=\sum_{ u_1+\cdots+ u_{n_j}=(k_j-n_j)/2
    }(2u_1+1)\prod_{i=1}^{n_j} C_{u_i},
\end{equation}
where all $u_i$ are nonnegative integers. 
Noting the symmetry of the above summation with respect to $u_j$, and then using the  Catalan multi-fold convolution formula \cite{catalan1887nombres} 
\begin{equation}
    \sum_{i_1+i_2+\cdots+i_n=m} C_{i_1 } \cdots C_{i_n} = \frac{n}{2m+n} \binom{2m+n}{m}
\end{equation} 
where the summation is taken over all nonnegative integers $i_l\geq 0$;   also see, for instance, \cite[Eqn (14)]{larcombe2003catalan}, we obtain
\begin{align} \label{equ:tree}
        \mathcal{T}_j=&\sum_{ u_1+\cdots+ u_{n_j}=(k_j-n_j)/2}\frac{\sum_{l=1}^{n_j}(2u_l+1)}{n_j}\prod_{i=1}^{n_j} C_{u_i}\nonumber \\
       = & \frac{k_j}{n_j} \sum_{ u_1+\cdots+ u_{n_j}=(k_j-n_j)/2} \prod_{i=1}^{n_j} C_{u_i}\nonumber 
       =  \binom{k_j}{\frac{k_j-n_j}{2}}.
\end{align}

While when ${\partial D_j}=\emptyset$, from the definition of the Catalan numbers we see (see, for instance \cite[Section 2.2]{okounkov2000random}) $\mathcal{T}_j=C_{{k_j}/{2}}$.
Fix $\eta$'s value at the endpoint and  sum over all possible vertices,  we thus obtain the  factor of $(A^{n_e})_{\eta(z)\eta(w)}$.

Similarly, as for   $e\in E_{\mathrm{int}}$, fix $\eta$'s value at the endpoint $(\eta(x),\eta(y))$ and  sum over all possible values of the divalent vertices, by the doubly-stochastic property of $P_N$, we obtain the factor of $p_{n_e}(\eta(x),\eta(y))$.

 This thus completes the proof. 
\end{proof}

\begin{example}[\textbf{One-boundary  loop diagram}]
\label{example:degenerate}
By \eqref{equation:W_Gamma} and \eqref{equ:eigenvaluesa_i}, we see the  diagram function of $\Gamma_0$ is

\begin{figure}[ht]
\centering
\begin{tikzpicture}
    \draw[thick,red] (0,0) circle (1cm);
    \fill[blue] (1,0) circle (2pt);
\end{tikzpicture}
     \label{2.001}
   \caption{One-boundary loop diagram }
\end{figure}
\begin{equation}
  \mathcal{W}_{\Gamma_0,X}(k)
=\rho_a^{-k}\sum_{n \ge 1}\binom{k}{\frac{k-n}{2}}\tr A_N^n=\rho_a^{-k}\sum_{i=-r_-}^{r_+}\sum_{n \ge 1}\binom{k}{\frac{k-n}{2}}a_i^n.
\end{equation}
A straightforward combinatorial calculation shows that, for any $0 \leq b \leq a$,
\begin{equation}
    \lim_{k \to \infty}\rho_a^{-k}\sum_{n \ge 1}\binom{k}{\frac{k-n}{2}}b^n=\begin{cases}
        1, & \ \  b=a,\\
        0, & \ \ b<a.
    \end{cases}
\end{equation}
Therefore,   as $k \to \infty$ the diagram function $\mathcal{W}_{\Gamma_0,X}(k)$ converges to the multiplicity of the top eigenvalue $a$ of $A$.

\end{example}

\subsection{Graph expansions for sub-Gaussian IRM}
We now investigate   the ribbon graph expansion in the sub-Gaussian case. Since a direct application of the Wick formula is not feasible, we adopt an alternative approach: comparing the moments to those of the deformed real Gaussian IRM. This yields an analogous expansion, but with a more complex structure due to the presence of non-trivial coupling factors.

    For the IRM matrix $X$  in Definition \ref{def:inhomo} and for any ribbon graph $\Upsilon$, we define the diagram function as follows:
    
\begin{enumerate}  
\item[(1)] With   the Wigner matrix  $W_N=(W_{xy})$   in Definition \ref{def:inhomo}, for  any  $1 \le x \neq  y \le N$ and any $n_1,n_2 \ge 1$ with   $n_1+n_2$ even, we define 
\begin{equation}\label{equ:c-xy}
 c_{xy}(n_1,n_2)=\frac{\mathbb{E}[W_{xy}^{n_1}W_{yx}^{n_2}]}{\mathbb{E}[|\mathcal{N}(0,1)|^{n_1+n_2}]},
\end{equation}

while 
for  $1 \le x \le N$  and an even integer   $n \ge 1$,
 \begin{equation}
     c_{x}(n)=\frac{\mathbb{E}[W_{xx}^{n}]}{\mathbb{E}[|\mathcal{N}(0,2)|^{n}]}.
 \end{equation}
 \item[(2)] For a map  $\eta:\mathcal{V}(\Upsilon) \to [N]$,   write $n_{\vec{xy}}(\eta)$ for the number of directed interior edges in $\Upsilon$ 
 such that the values of $\eta$ on  starting and ending  points are  exactly $x$ and  $y$ respectively. Furthermore,  put  $n_x(\eta) := n_{\vec{xx}}(\eta)$ for short.

 \item[(3)] Define a \textbf{coupling factor } \begin{equation}\label{def:c(U,eta)}
     c(\Upsilon,\eta):=\prod_{1 \le x < y \le N}c_{xy}(n_{\vec{xy}}(\eta),n_{\vec{yx}}(\eta))\prod_{1 \le x \le N}c_{x}(n_{x}(\eta)),
 \end{equation}
 which measures  the difference between  the sub-Gaussian $H$ and Gaussian IRM ensembles  with the same variance profile. 
 \item[(4)] 
 For the sub-Gaussian matrix $X$   in Definition \ref{def:inhomo} and  for any diagram  
 $\Gamma$, define    {\bf diagram function} 
\begin{equation}\label{equ:diagram_function_wigner}
    \mathcal{W}_{\Gamma, X}(k_1,\dots,k_s):=\rho_a^{-k}\sum_{\Upsilon:\Phi(\Upsilon)=\Gamma}\sum_{\eta:\mathcal{V}(\Upsilon)\to [N]}c(\Upsilon,\eta)
\prod_{(x,y) \in \mathcal{E}_{\rm int}(\Upsilon)}\sigma_{\eta(x)\eta(y)}^2 \prod_{(z,w) \in \mathcal{E}_b(\Upsilon)}A_{\eta(z)\eta(w)}.
\end{equation}
This is consistent with   Definition~\ref{diagramf} whenever  $X$ is a deformed real Gaussian IRM  discussed  in Section \ref{sec:ribbon_expansion}, since in that case $c(\Upsilon,\eta)=1$.

\end{enumerate}

\begin{proposition}\label{prop:moments-iid}
For $\beta=1,2$, let $X=X_N$ be the symmetric/Hermitian IRM  in Definition \ref{def:inhomo}.   With the diagram function $\mathcal{W}_{\Gamma,X}$  defined in \eqref{equ:diagram_function_wigner}, we have
    \begin{equation}\label{equ:wick expansion subgaussian iid}
  \rho_{a}^{-(k_1+\cdots +k_s)}\mathbb{E}\big[\prod_{j=1}^{s}\tr X^{k_j}\big]= \sum_{\Gamma \in \mathcal{D}_{\beta,s}} \mathcal{W}_{\Gamma,X}(k_1,\dots,k_s),
\end{equation}
where the sum runs  over all diagrams defined in Definition  \ref{def:diagram}.
\end{proposition} 
\begin{proof}[Proof]

Set   $k=k_1+\cdots +k_s$ and $\gamma=(12\cdots k_1)(k_1+1\cdots k_1+k_2)\cdots(k_1+\cdots +k_{s-1}\cdots k) \in \mathcal{S}_k$.
A direct expansion shows
\begin{equation}
    \begin{aligned}
        \mathbb{E}\Big[\prod_{j=1}^{s}\tr X^{k_j}\Big]&=\sum_{\eta:[k]\to[N]}\mathbb{E}\Big[\prod_{j=1}^{k}\big(H_{\eta(j),\eta(\gamma(j))}+A_{\eta(j),\eta(\gamma(j))}\big)\Big]\\
        &=\sum_{\eta:[k]\to[N]}\sum_{\text{J} \subset [k]} \mathbb{E}\Big[\prod_{j \in {\rm J}}H_{\eta(j),\eta(\gamma(j))}\Big]\prod_{j \in {\rm J^c}}A_{\eta(j),\eta(\gamma(j))}.
    \end{aligned}
\end{equation}

Let $H^{\rm Gaussian}:=\Sigma_N\circ W_{{\rm GOE}_N}$ be a real Gaussian IRM with the same variance profile as $H_N$. By the definition of $c(\Upsilon,\eta)$, we have
\begin{equation}
\begin{aligned}
&\mathbb{E}\big[\prod_{j \in {\rm J}}H_{\eta(j),\eta(\gamma(j))}\big]=c(\Upsilon,\eta)\mathbb{E}\big[\prod_{j \in {\rm J}}H^{\rm Gaussian}_{\eta(j),\eta(\gamma(j))}\big] \\
&=c(\Upsilon,\eta) \sum_{\pi \in \mathcal{P}_2(\text{J})} \prod_{(s,t) \in \pi}\mathbb{E}\big[H_{\eta(s),\eta(\gamma(s))}H_{\eta(t),\eta(\gamma(t))}\big]\\
&
=c(\Upsilon,\eta)\sum_{\pi \in \mathcal{P}_2(\text{J})} \prod_{(s,t) \in \pi} \sigma_{\eta(s),\eta(\gamma(s))}^2\big(\delta_{\eta(s),\eta(\gamma(t))} \delta_{\eta(t),\eta(\gamma(s))}+\delta_{\eta(s),\eta(t)} \delta_{\eta(\gamma(t)),\eta(\gamma(s))}\big).
\end{aligned}
\end{equation}
Note that if $|J|$ is odd, we have $\mathbb{E}\left[\prod_{j \in {\rm J}}H_{\eta(j),\eta(\gamma(j))}\right]=0$. 

Analogous to the argument in in Proposition \ref{prop:ribbon_graph_expansion}, gluing  the polygons $D_1,\dots,D_s$ to ribbon graphs and observing that the coupling factor $c(\Upsilon,\eta)$ does not affect the gluing procedure, we can use ribbon graph enumeration  to rewrite  
\begin{equation}
  \mathbb{E}[\prod_{j=1}^{s}\tr X^{k_j}]=\sum_{\Upsilon}\sum_{\eta:\mathcal{V}(\Upsilon) \to [N]}c(\Upsilon,\eta)\prod_{(x,y) \in \mathcal{E}_{\rm int}(\Upsilon)}\sigma_{\eta(x)\eta(y)}^2 \prod_{(z,w) \in \mathcal{E}_{b}(\Upsilon)}A_{\eta(z)\eta(w)}.
\end{equation}
Substituting \eqref{equ:diagram_function_wigner} into the above equation  completes  the proof of Proposition \ref{prop:moments-iid}. 
\end{proof}

For technical reasons, in what follows, we mainly consider the following mixed product. Recall $\mathcal{D}_{\beta,s}^{b}$ is the set of  all diagrams with $s$ faces such that each face contains at least one boundary edge.

\begin{proposition}\label{prop:diagram_expansion_subGaussian_boundary}Let $X=X_N$ be the symmetric/Hermitian IRM  in Definition \ref{def:inhomo}.   With the diagram function $\mathcal{W}_{\Gamma,X}$  defined in \eqref{equ:diagram_function_wigner}, we have
    \begin{equation}\label{equ:expansion_boundary}        \rho_{a}^{-(k_1+\cdots +k_s)}\mathbb{E}\big[\prod_{j=1}^{s}\big(\tr X^{k_j}-\tr H^{k_j}\big)\big]=\sum_{\Gamma \in \mathcal{D}_{\beta,s}^{b}} \mathcal{W}_{\Gamma,X}(k_1,\dots,k_s).
    \end{equation}
\end{proposition}
\begin{proof}
    Recalling the general diagram expansion established in Proposition \ref{prop:moments-iid},  we see that the total moment $\mathbb{E}[\prod_{j=1}^s \tr X^{k_j}]$ is expressed as a sum over all diagrams $\Gamma \in \mathcal{D}_{\beta,s}$.
    
    Observe that for each $j \in [s]$, the trace $\tr X^{k_j}$ expands into a sum over paths on the $j$-th face involving both $H$ (interior edges) and $A$ (boundary edges). The term $\tr H^{k_j}$ corresponds precisely to the subset of these paths containing \emph{only} $H$ entries (i.e., faces with no boundary edges, $E_{b,j} = \emptyset$).
    
    Consequently, the difference $\tr X^{k_j} - \tr H^{k_j}$ restricts the expansion to configurations where the $j$-th face contains at least one $A$ factor (i.e., $E_{b,j} \neq \emptyset$).
 Taking the product over all $j=1,\dots,s$ implies that we sum only over diagrams where \emph{every} face contains at least one boundary edge. By definition, this is exactly the set $\mathcal{D}_{\beta,s}^{b}$.
\end{proof}

\subsection{Trivalent and typical  diagrams}

 This subsection examines topological properties of diagrams and introduces a special class of diagrams, termed typical diagrams. 
 The following graph inequality connects the number of boundary edges, interior vertices and faces, and will be  of central importance in Section \ref{Section:GUE/GOE case}.

\begin{lemma}\label{Lemma-ineq}
    For any  diagram $\Gamma\in \mathcal{D}_{\beta,s}^{b}$, we have \begin{equation}\label{ieq1.1}
        |E_b(\Gamma)|+3|V_{\mathrm{int}}(\Gamma)| \le 2|E_{\mathrm{int}}(\Gamma)|+s.
    \end{equation}
    Moreover,  the equality holds if and only if $\Gamma$ is trivalent and the marked points are distinct.
\end{lemma}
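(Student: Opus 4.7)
The plan is to combine the handshake lemma with the topological identity $|V_b(\Gamma)| = |E_b(\Gamma)|$ that holds for any ribbon graph on a compact $2$-manifold with boundary. First, the handshake relation applied to $\Gamma$ yields
\[
    \sum_{v \in V(\Gamma)} \deg(v) \;=\; 2|E_{int}(\Gamma)| + 2|E_b(\Gamma)|.
\]
Since $\Gamma$ is a diagram, every unmarked vertex has degree at least $3$ and every marked vertex has degree at least $2$. Writing $V_m := \phi([s])$ for the set of marked vertices, the degree count becomes
\[
    2|E_{int}| + 2|E_b| \;\ge\; 3(|V(\Gamma)| - |V_m|) + 2|V_m| \;=\; 3|V(\Gamma)| - |V_m| \;\ge\; 3|V(\Gamma)| - s,
\]
where the last inequality uses $|V_m| \le s$, with equality iff the marked points $\phi(1), \dots, \phi(s)$ are pairwise distinct.

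Second, I would establish the topological identity $|V_b(\Gamma)| = |E_b(\Gamma)|$. Because $\partial \Sigma$ is a closed $1$-manifold, it is a disjoint union of circles, and the restriction of $\Gamma$ to $\partial \Sigma$ endows each such circle with a CW structure whose $0$-cells and $1$-cells are exactly the boundary vertices and the boundary edges of $\Gamma$ lying on that circle. Since each circle has equal numbers of $0$- and $1$-cells, summing over components yields the identity. Equivalently, at any boundary vertex $v$ the local picture of $\Sigma$ is a half-disk and $\partial \Sigma$ passes through $v$ as a single arc, so $v$ is incident to exactly two boundary half-edges; summing gives $2|V_b| = 2|E_b|$.

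Substituting $|V(\Gamma)| = |V_{int}(\Gamma)| + |V_b(\Gamma)| = |V_{int}(\Gamma)| + |E_b(\Gamma)|$ into the handshake estimate produces
\[
    2|E_{int}| + 2|E_b| \;\ge\; 3|V_{int}| + 3|E_b| - s,
\]
which rearranges at once to $|E_b| + 3|V_{int}| \le 2|E_{int}| + s$. The equality case is immediate from the argument: both bounds used must be saturated, i.e.\ every unmarked vertex has degree exactly $3$ and every marked vertex has degree exactly $2$ (so $\Gamma$ is trivalent in the sense of the paper), and $|V_m| = s$, i.e.\ the marked points are pairwise distinct. The main potential obstacle is the topological step $|V_b| = |E_b|$: one must check that the polygon-gluing construction followed by Okounkov's contraction really produces a $2$-manifold-with-boundary (no pinched points, no univalent boundary vertices), so that $\partial \Sigma$ is a genuine $1$-manifold and the boundary cells of $\Gamma$ form a proper CW decomposition. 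Once this topological input is granted, the remaining argument is a one-line manipulation of the handshake inequality.
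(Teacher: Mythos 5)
Your proof is correct and takes essentially the same route as the paper: apply the handshake lemma with the degree bounds from the diagram definition, observe that the boundary of the underlying surface is a disjoint union of circles so $|V_b(\Gamma)| = |E_b(\Gamma)|$, and combine. Your version is slightly more explicit in isolating the $|V_m|\le s$ step of the equality analysis, but the argument is the same.
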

\begin{proof}
Since $E_{b,j} \neq \emptyset$ for each $j\in [s]$, we know that $\Gamma$ contains no trivial connected component (the single point). Recalling that each unmarked vertex in $\Gamma$ has degree   at least 3,  each marked point has degree  at least 2 (here any loop incident to the vertex is counted
twice), we obtain 
\begin{equation}\label{iequ1.2}
    2|E_b(\Gamma)|+2|E_{\mathrm{int}}(\Gamma)|=\sum_{v \in V(\Gamma)}{\rm deg}_{\Gamma}(v) \ge 3|V(\Gamma)|-s=3|V_b(\Gamma)|+3|V_{\mathrm{int}}(\Gamma)|-s.
\end{equation}

We claim    that the boundary of $\Gamma$ satisfies
\begin{equation}\label{eq1.3}
    |E_b(\Gamma)|= |V_b(\Gamma)|.
\end{equation}
This can be shown from a topological point of view: the boundary of $\Gamma$, as the boundary of a 2-dimensional  compact manifold, is a 1-dimensional closed manifold, hence is a disjoint union of some circles. Thus we obtain \eqref{eq1.3}. 

Combining \eqref{eq1.3} with \eqref{iequ1.2} yields 
the inequality \eqref{ieq1.1}. Moreover,  the equality holds if and only if the equality in \eqref{iequ1.2} holds, and hence    if and  only if $\Gamma$ is trivalent, and marked points of $\Gamma$ are distinct.
\end{proof} 

Recalling that $t(\Gamma)$ is the number of boundary components and $g(\Gamma)$ is defined in \eqref{equ:genus}, the following lemma provides explicit formulas for the numbers of vertices and edges of any trivalent diagram in terms of 
$g(\Gamma)$ and $t(\Gamma)$.

\begin{lemma}\label{lem:trivalent_graph} 
For any trivalent diagram $\Gamma\in \mathcal{D}_{\beta,s}^{=3,b}$ with $g(\Gamma)=g$ and $t(\Gamma)=t$, we have

\begin{equation}\label{ve}
|V(\Gamma)|=2g+2t+3s-4,\quad |E(\Gamma)|=3g + 3t + 4s- 6.
\end{equation}
Setting $l=|V_{\mathrm{int}}({\Gamma})|$, we have
\begin{equation}\label{vbeb}
|V_b({\Gamma})|=|E_b({\Gamma})|=2g+2t+3s-4-l,\quad |E_{\mathrm{int}}({\Gamma})|=g+t+s+l-2.
\end{equation}
\end{lemma}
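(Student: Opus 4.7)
The plan is to combine Euler's formula for the embedded graph with the structural equalities that characterize trivalent diagrams. Since $\Gamma \in \Gamma_{g,t,s}^{=3,b}$ is a connected diagram whose cells form the faces of an embedding into a surface of Euler characteristic $\chi(\Gamma) = 2 - g - t$, Euler's formula gives
\begin{equation}
|V(\Gamma)| - |E(\Gamma)| + s = 2 - g - t.
\end{equation}
Splitting into interior and boundary pieces and using the identity $|E_b(\Gamma)| = |V_b(\Gamma)|$ established inside the proof of Lemma \ref{Lemma-ineq} (equation \eqref{eq1.3}), this reduces to
\begin{equation}
|V_{int}(\Gamma)| - |E_{int}(\Gamma)| = 2 - g - t - s,
\end{equation}
which immediately yields $|E_{int}(\Gamma)| = g + t + s + l - 2$ after setting $l = |V_{int}(\Gamma)|$.

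Next, because $\Gamma$ is trivalent with all marked points distinct, the equality case in Lemma \ref{Lemma-ineq} applies, namely
\begin{equation}
|E_b(\Gamma)| + 3|V_{int}(\Gamma)| = 2|E_{int}(\Gamma)| + s.
\end{equation}
Substituting the value of $|E_{int}(\Gamma)|$ just obtained and solving for $|E_b(\Gamma)|$ gives $|E_b(\Gamma)| = 2g + 2t + 3s - 4 - l$. Since $|V_b(\Gamma)| = |E_b(\Gamma)|$, the boundary counts follow. Summing the interior and boundary contributions produces the totals $|V(\Gamma)| = 2g + 2t + 3s - 4$ and $|E(\Gamma)| = 3g + 3t + 4s - 6$.

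The entire argument is therefore a two-line linear algebra exercise once one has Euler's formula, the boundary-cycle equality $|E_b|=|V_b|$, and the trivalent saturation identity; no genuine obstacle is expected. The only subtlety to double-check is that Euler's formula is being applied correctly in the non-orientable case (since the gluing in Section \ref{sec:ribbon_expansion} allows non-orientable ribbon graphs), but the formula $|V|-|E|+|F| = \chi$ remains valid on any compact surface with or without boundary provided $\chi$ is taken as $2 - g - t$ with $g$ counting handles or crosscaps appropriately, which is precisely the convention built into the definition of $\Gamma_{g,t,s}^{\ge 3}$.
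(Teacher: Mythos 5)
Your proposal is correct and solves the same small linear system as the paper's proof---Euler's formula, the trivalent handshake identity, and the boundary-cycle equality $|E_b(\Gamma)|=|V_b(\Gamma)|$---just in a different order (you first isolate $|E_{int}|$, then $|E_b|$, then sum, whereas the paper solves for $|V|$ and $|E|$ first and splits afterward). Invoking the equality case of Lemma \ref{Lemma-ineq} is simply the handshake relation $2|E(\Gamma)|=3|V(\Gamma)|-s$ repackaged together with $|E_b|=|V_b|$, so the ingredients are identical to the paper's.
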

\begin{proof}Since each unmarked point in $V(\Gamma)$ is trivalent, and all marked points are distinct and divalent, we see that
\begin{equation}\label{equ:ve_1}
    2|E(\Gamma)|=\sum_{v \in V(\Gamma)}{\rm deg}_{\Gamma}(v)=3(|V(\Gamma)|-s)+2s=3|V(\Gamma)|-s.
\end{equation}
Recalling $g(\Gamma)$ defined in \eqref{equ:genus}, we have
\begin{equation}\label{equ:ve_2}
    |V(\Gamma)|-|E(\Gamma)|+s=2-g(\Gamma)-t(\Gamma).
\end{equation}
So \eqref{ve} directly follows from \eqref{equ:ve_1} and \eqref{equ:ve_2}. \end{proof}

We now introduce a key family of diagrams, referred to as \textbf{typical diagrams}, which capture the fluctuation behavior of outliers.
\begin{definition}\label{def:typical}
A diagram $\Gamma$ is called \textbf{typical} if it is a trivalent graph with no interior vertices (i.e., $V_{\text{int}}(\Gamma) = \emptyset$) and every marked point is a boundary vertex. Otherwise,    it is said to be non-typical.
\end{definition}

By Lemma \ref{Lemma-ineq}, a diagram $\Gamma$ is typical if and only if
\begin{equation}\label{equ:ineq}
    |E_b(\Gamma)|= 2|E_{\mathrm{int}}(\Gamma)|+s.
\end{equation} This equivalent characterization of typical diagrams will be used repeatedly in Sections \ref{Section:GUE/GOE case} and \ref{sec:fluctuation Gaussian}.

\begin{figure}[ht]
\centering
   \includegraphics[scale=0.18]{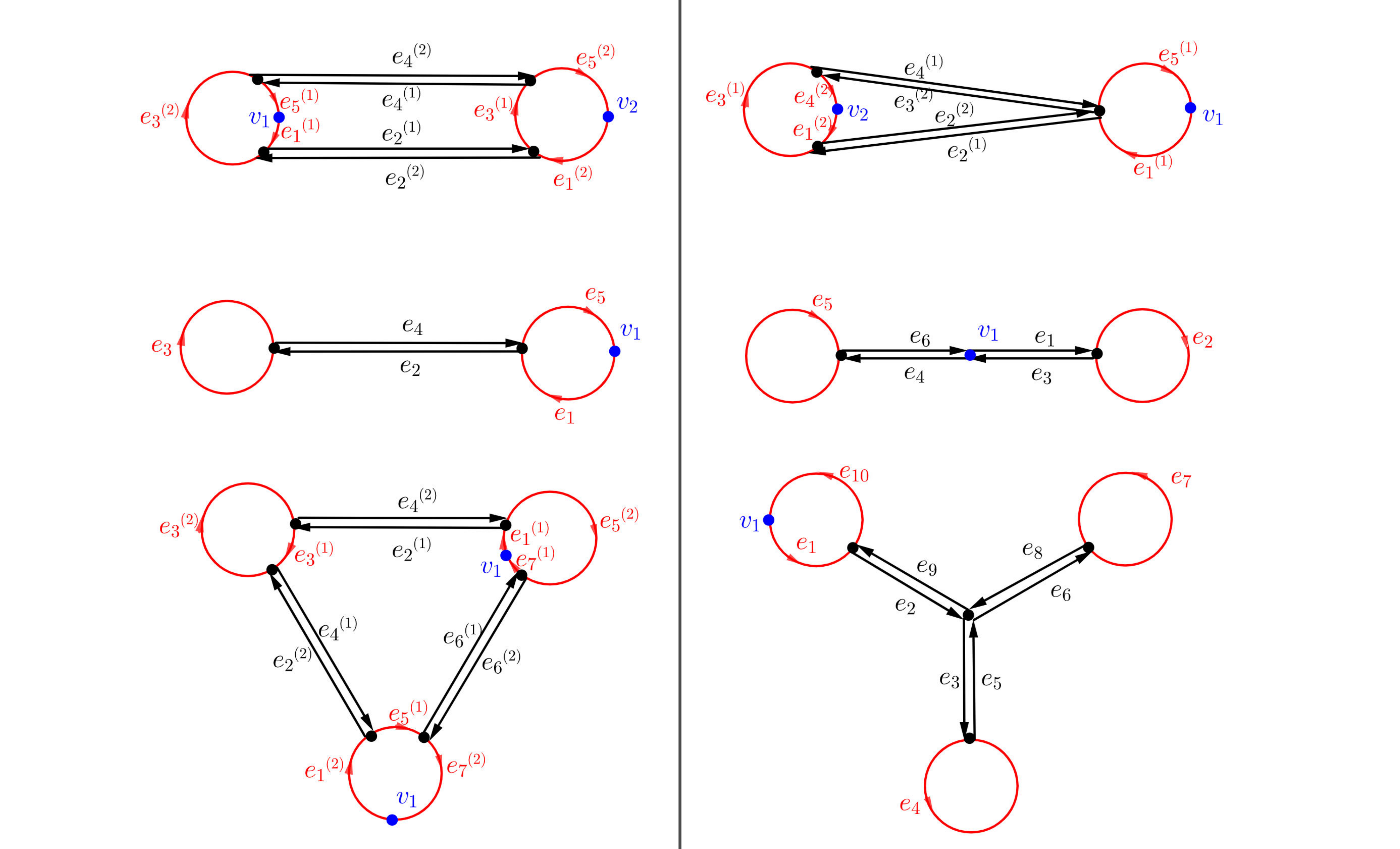}
     \caption{Typical (left) and non-typical (right) diagrams where the boundary edges are colored red.} 
     \label{fig:typical_diagram}
\end{figure}

\section{Upper bound estimates}\label{Section:GUE/GOE case}

The goal of this section is to establish term-wise   upper bounds for  diagram functions, which we call   dominating functions. We begin by introducing    these   functions and then present  the main results in this section.

\begin{definition}
   
Given parameters $s \ge 1,\xi>0$ and    $\lambda,a >1$, the dominating function of the variables $g\ge 0$ and $t\geq 1$ is defined as
   \begin{equation}
   \mathcal{D}_{g,t,s}(\xi;a,\lambda):=(2\xi s)^{2g+2t+2s-4}\Big(\frac{2\lambda  a^2}{a^2-1}\Big)^{3g+3t+4s-6} \frac{2048^{g+t+2s}}{(g+t)!}.\end{equation}
        
\end{definition}

\begin{theorem}\label{thm:dominating_function} 
With $X_N$ given in Definition  \ref{def:inhomo} and  with $\mathcal{W}_{\Gamma}$   in \eqref{equ:diagram_function_wigner},
assume that 
 \begin{equation}\label{equ:rkxi}
  \sum_{j=1}^s k_j \le \frac{\xi}{\sqrt{\gamma} (r+1)\sigma_N^*}.  
\end{equation} If $\xi \le \frac{1}{2}\gamma^{-1/2} ((r+1)\sigma_N^*)^{-1}$,  then for any  integer  $g\ge 0$ and $t\ge 1$ we have 
\begin{equation}\label{equ:dominating}
    \sum_{ \Gamma \in \mathcal{D}_{\beta,s}^{b}:  g(\Gamma)=g,t(\Gamma)=t}|\mathcal{W}_{\Gamma,X_N}(k_1,\cdots,k_s)|\le   (r+1)^{s} e^{2\xi^2}\mathcal{D}_{g,t,s}(\xi;a,1).
\end{equation}    
Furthermore, there exist positive constants $C_1(s,a)$ and $C_2(s,a)$    depending  on $s$ and $a$ such that 
\begin{equation}\label{equ:sum_D}
    \sum_{g \ge 0}\sum_{t \ge 1}\mathcal{D}_{g,t,s}(\xi;a,1) \le C_1(s,a)\, e^{C_{2}(s,a)
    \xi^2}.
\end{equation}

\end{theorem}

Theorem \ref{thm:dominating_function} provides a dominating function for the diagram functions with parameters $g$ and $t$. Since these dominating functions are summable, studying the asymptotic behavior of the full diagram sum reduces to analyzing the limit of each individual diagram function.

The  underlying idea of the proof  of Theorem  \ref{thm:dominating_function} is that a GOE matrix of a smaller size—chosen to match the sparsity level—can dominate a sub-Gaussian IRM. For this, we need to  establish   upper bounds for a rank-1 deformation of a GOE model, defined as follows:
\begin{equation}\label{equ:Y_M}
    Y_M=H_{{\rm GOE}_M}+aE_{11}=\frac{1}{\sqrt{M}}W_{{\rm GOE}_M}+aE_{11},
\end{equation}
     and to bound the diagram functions of $X_N$ by those of $Y_M$.

\begin{theorem}\label{Prop:Upper bound}
Let    $Y_M$ be given  in 
 \eqref{equ:Y_M}. Let  $k=\sum_{j=1}^s k_j$ and $\xi=\frac{k}{\sqrt{M}}$. Then the following holds.
 \begin{enumerate}
     \item[(1)]  For any $\xi>0$, we have
 \begin{equation}\label{equ:upper-bounds-GOE-1}
        \sum_{ \Gamma \in \mathcal{D}_{\beta,s}^{b}:  g(\Gamma)=g,t(\Gamma)=t}\lambda^{|E(\Gamma)|}\mathcal{W}_{\Gamma,Y_M}(k_1,\cdots,k_s)\le   \mathcal{D}_{g,t,s}(\xi;a,\lambda),
 \end{equation}
 and 
\begin{equation}\label{equ:upper-bounds-GOE-2}
    \sum_{g \ge 0}\sum_{t \ge 1}\mathcal{D}_{g,t,s}(\xi;a,\lambda) \le C_1(s,a,\lambda)\, e^{C_{2}(s,a,\lambda)
    \xi^2}
\end{equation}
for some positive constants $C_1(s,a,\lambda),C_2(s,a,\lambda)$ depending  only on $s,a$ and $\lambda$. 
\item[(2)] If $\xi=O(1)$, we have for any non-typical diagram $\Gamma\in \mathcal{D}_{\beta,s}^{b}$,  
\begin{equation}\label{equ:upperboundnontypical}
    \lim_{M\rightarrow\infty}\mathcal{W}_{\Gamma,Y_M}(k_1,\cdots,k_s)=0.
\end{equation}
 \end{enumerate}

\end{theorem}

\begin{theorem}\label{prop:gauss_irm} 

With $X_N$ given in Definition  \ref{def:inhomo} and $\mathcal{W}_{\Gamma, X_N}$ in \eqref{equ:diagram_function_wigner}, assume that the integer $M>0$   satisfies  $M\gamma\big((r+1)\sigma_{N}^*\big)^2\le 1$ and $k=k_1+\cdots+k_s\le  M/2$. Then for any diagram $\Gamma \in \mathcal{D}_{\beta,s}$, 
\begin{equation}\label{upper_bounds_IRM_b}
    \left|\mathcal{W}_{\Gamma, X_N}(k_1,\dots,k_s)\right|\le (r+1)^{s}e^{\frac{2k^2}{M}}\Big(\frac{N}{M}\Big)^{|\mathcal{C}(\Gamma)|}\mathcal{W}_{\Gamma, Y_M}(k_1,\dots,k_s),
\end{equation}
where   $|\mathcal{C}(\Gamma)|$ denotes  the number of connected components of $\Gamma$ without boundary edge.
\end{theorem}

Theorem \ref{thm:dominating_function}  follows directly from Theorem \ref{Prop:Upper bound} and Theorem \ref{prop:gauss_irm}. 

\begin{proof}[Proof of Theorem \ref{thm:dominating_function}] 
Taking $M=\lfloor \gamma^{-1}((r+1)\sigma_N^*)^{-2} \rfloor$, we see from \eqref{equ:rkxi} $\sum_{j=1}^s k_j \le \xi \sqrt{M}$. Since $\xi \le \frac{1}{2}\gamma^{-1/2} ((r+1)\sigma_N^*)^{-1} \le \sqrt{M}/2 $, we have $\sum_{j=1}^{s}k_j \le   M/2$. Therefore, the assumptions of Theorem \ref{Prop:Upper bound} and Theorem \ref{prop:gauss_irm} are satisfied. For any diagram $\Gamma \in \mathcal{D}_{\beta,s}^{b}$, $|\mathcal{C}(\Gamma)|=0$ and  Theorem \ref{prop:gauss_irm} yields the bound 
    \begin{equation}
            \left|\mathcal{W}_{\Gamma, X_N}(k_1,\dots,k_s)\right|\le (r+1)^{s} e^{\frac{2k^2}{M}}\mathcal{W}_{\Gamma, Y_M}(k_1,\dots,k_s).
    \end{equation}
    Theorem \ref{Prop:Upper bound} immediately implies  inequality \eqref{equ:dominating}  of Theorem \ref{thm:dominating_function}. The   second  inequality  of Theorem \ref{thm:dominating_function}  coincides with  that   of Theorem \ref{Prop:Upper bound}.
\end{proof}

The main task in this section is to prove  Theorems \ref{Prop:Upper bound} and \ref{prop:gauss_irm}. These proofs will be given   in Sections \ref{section:upper_bound_nontypical} and \ref{Section:Inhomogeneous}, respectively. Finally in Section \ref{sec:moments} we deduce the upper bound estimates for the moments of $X_N$ from Theorem \ref{thm:dominating_function}.

\subsection{Dominating functions for 
GOE}\label{section:upper_bound_nontypical}

In order to  prove Theorem \ref{Prop:Upper bound} we first obtain a diagram-wise upper bound. 

\begin{proposition}\label{prop:W+upper_bound}
For any diagram $\Gamma \in \mathcal{D}_{\beta,s}^{b}$  and for any positive  integers $k_1,\dots,k_s$, 
 we have
\begin{equation} \label{notation}
    \mathcal{W}_{\Gamma,Y_M}(k_1,\cdots,k_s)\le \big(\frac{a^2}{a^2-1}\big)^{|E(\Gamma)|}\Big(\frac{1}{M}\Big)^{|E_{\mathrm{int}}|-|V_{\mathrm{int}}|}  \prod_{j=1}^s\frac{k_j^{|E_{b,j}|-1}}{(|E_{b,j}|-1)!}.  
\end{equation}

\end{proposition}
\begin{proof}
Recall the expression of  $\mathcal{W}_{\Gamma,Y_M}(k_1,\dots,k_s)$ in \eqref{equation:W_Gamma}. Since $E_{b,j} \neq \emptyset$ for each $j\in [s]$, we know that $\Gamma$ contains no trivial connected component (i.e., the single point). Moreover, the transition probabilities of the associated random walk are all equal to ${1}/{M}$. Consequently,  we get
\begin{equation}\label{equ:3.17}
\begin{aligned}
\mathcal{W}_{\Gamma,Y_M}&(k_1,\dots,k_s)
=  \rho_a^{-\sum_{j=1}^{s}k_j}\sum_{\eta:V(\Gamma) \to [M],\eta|_{V_b}=1}\sum_{n_e\ge 1}\prod_{j=1}^{s}\binom{k_j}{\frac{1}{2}\big(k_j-\sum_{e\in \partial D_j}n_e\big)}\prod_{(x,y) \in E_{\mathrm{int}}}\frac{1}{M }
\prod_{(z,w) \in E_b}a^{n_e}\\
= & \left(\sum_{\eta:V(\Gamma) \to [M],\eta|_{V_b}=1}\prod_{(x,y) \in E_{\mathrm{int}}}\frac{1}{M }\right)\left(\sum_{n_e\ge 1}a^{\sum_{e \in E_{b}}n_e}\prod_{j=1}^{s}\binom{k_j}{ \frac{1}{2}\big(k_j-\sum_{e\in \partial D_j}n_e\big)}\right)\rho_a^{-\sum_{j=1}^{s}k_j}
\\
=& \big(\frac{1}{M}\big)^{|E_{\mathrm{int}}|-|V_{\mathrm{int}}|} \mathcal{I}_{\Gamma}, 
\end{aligned}
\end{equation} where in the last equality  $\eta$  has been evaluated at every interior vertex from $[M]$ and  
\begin{equation}
    \quad \mathcal{I}_{\Gamma}:=\sum_{n_e\ge 1,e\in E(\Gamma)}a^{\sum_{e \in E_{b}}n_e}  \prod_{j=1}^{s}\left(\binom{k_j}{ \frac{1}{2}\big(k_j-\sum_{e\in \partial D_j}n_e\big)}\rho_a^{-k_j}\right).
\end{equation}

Now we bound the above  summation. Using $\sum_{e\in E_b}+2\sum_{e\in E_{\mathrm{int}}}n_e=\sum_{j}\sum_{e\in \partial D_j}n_e$, we have
\begin{equation}
    \label{equ:3.14}
\begin{aligned}
       \mathcal{I}_{\Gamma} &=\sum_{n_e\ge 1,e\in E(\Gamma)}a^{-2\sum_{e \in E_{\mathrm{int}}}n_e}  \prod_{j=1}^{s}\left(\binom{k_j}{ \frac{k_j-\sum_{e\in \partial D_j}n_e}{2}}a^{\sum_{e\in \partial D_j}n_e}\rho_a^{-k_j}\right).
\end{aligned} \end{equation}
Let us fix $n_e$ for all $e\in E_{\mathrm{int}}$ and the summation $n_{j}=\sum_{e \in \partial D_j} n_e$ for all $j$, and also let  $n_j^{\rm b}=\sum_{e \in E_{b,j}}n_e$. 
Then the number of possible choices for $n_e$ with $e\in E_{b,j}$ is at most

\begin{equation}\label{equ:num_solution}
\binom{n^{b}_{j}}{|E_{b,j}|-1}\le \frac{{n_j}^{|E_{b,j}|-1}}{(|E_{b,j}|-1)!}.
\end{equation}

Hence, partitioning all edges into  interior and boundary ones, we see from \eqref{equ:3.14} that 
\begin{equation}
\begin{aligned}
   \mathcal{I}_{\Gamma} &\le \Big(\sum_{n_e\ge 1,e\in E_{\mathrm{int}}(\Gamma)}a^{-2\sum_{e \in E_{\mathrm{int}}}n_e} \Big) \cdot\bigg( \sum_{n_j\ge 1,j=1,\ldots s}\prod_{j=1}^{s}\bigg(\binom{k_j}{ \frac{1}{2}(k_j-n_j)}a^{n_j}\rho_a^{-k_j}\bigg)\frac{{n_j}^{|E_{b,j}|-1}}{(|E_{b,j}|-1)!}\bigg)\\
\end{aligned}
\end{equation}
For the first sum,  we have
\begin{equation}\label{equ:a_int}
    \sum_{n_e\ge 1,e\in E_{\mathrm{int}}(\Gamma)}a^{-2\sum_{e \in E_{\mathrm{int}}}n_e} \le \Big(\frac{a^2}{a^2-1}\Big)^{|E_{\mathrm{int}}(\Gamma)|}.
\end{equation}
For the second sum, we use the binomial distribution  
\begin{equation}\label{equ:binom}
\mathbb{P}\left(\text{Binom}\big(k,\frac{a^2}{a^2+1}\big)=\frac{k+n}{2}\right)=\rho_{a}^{-k}a^{n}\binom{k}{\frac{k-n}{2}}
\end{equation}
 to obtain
\begin{align}\label{eq:upper,inner part}
&\rho_a^{-k_j}\sum_{n_j\ge 1}a^{n_j}\frac{n_j^{|E_{b,j}|-1}}{(|E_{b,j}|-1)!}\binom{k_j}{\frac{k_j-n_j}{2}}\le  \frac{k_j^{|E_{b,j}|-1}}{(|E_{b,j}|-1)!}\sum_{n_j\ge 0}\frac{a^{n_j}}{\rho_a^{k_j}}\binom{k_j}{\frac{k_j-n_j}{2}}\le\frac{k_j^{|E_{b,j}|-1}}{(|E_{b,j}|-1)!}.
\end{align}
Combining the above two estimates, we obtain   
\begin{equation}
    \mathcal{I}_{\Gamma}\le \Big(\frac{a^2}{a^2-1}\Big)^{|E_{\mathrm{int}}(\Gamma)|}\prod_{j=1}^s\frac{k_j^{|E_{b,j}|-1}}{(|E_{b,j}|-1)!}.
 \end{equation}
Since  $|E_{\mathrm{int}}|\le |E(\Gamma)|$, together with   \eqref{equ:3.17} 
we  arrive at
\begin{equation}
\begin{aligned}\label{Upper1}
      \mathcal{W}_{\Gamma,Y_M}(k_1,\cdots,k_s)\le & \big(\frac{a^2}{a^2-1}\big)^{|E(\Gamma)|}\Big(\frac{1}{M}\Big)^{|E_{\mathrm{int}}|-|V_{\mathrm{int}}|}  \prod_{j=1}^s\frac{k_j^{|E_{b,j}|-1}}{(|E_{b,j}|-1)!}.
\end{aligned}
\end{equation}
This thus completes the proof.
\end{proof}

Next, we regularize all diagrams to trivalent diagrams and derive an upper bound in terms of the latter. Recall that $\mathcal{D}_{\beta,s}^{=3,b}$ is the set of  all trivalent diagrams in $\mathcal{D}_{\beta,s}^{b}$.

\begin{proposition}\label{prop:reduce to 3} Set 
\begin{equation}\label{equ:tildeW}
    \widetilde{\mathcal{W}}_{\Gamma,Y_M}(k_1,\cdots,k_s):=\big(\frac{a^2}{a^2-1}\big)^{|E(\Gamma)|}\Big(\frac{1}{M}\Big)^{|E_{\mathrm{int}}|-|V_{\mathrm{int}}|}  \prod_{j=1}^s\frac{k_j^{|E_{b,j}|-1}}{(|E_{b,j}|-1)!}.
\end{equation}
Then for any $\lambda\ge 1$, we have  
    \begin{equation}
                \sum_{\Gamma \in  \mathcal{D}_{\beta,s}^{b}}\lambda^{|E(\Gamma)|}\widetilde{\mathcal{W}}_{\Gamma,Y_M}(k_1,\cdots,k_s)\le 
        \sum_{\Gamma \in \mathcal{D}_{\beta,s}^{=3,b}}(2\lambda)^{|E(\Gamma)|}\widetilde{\mathcal{W}}_{\Gamma,Y_M}(k_1,\cdots,k_s).
    \end{equation}
\end{proposition}
\begin{proof}[Proof]
For any diagram $\Gamma$, it is  known that the neighborhood of each vertex is composed of several faces formed by the corners incident to that vertex. For vertices with degree greater than  $3$ and any marked point with degree $>2$, by using the local topological structure at this point, we add a weight one interior edge to split the vertices into those with  smaller degree. Namely, we construct a  function $\phi$ that maps each diagram $\Gamma \in  \mathcal{D}_{\beta,s}^{b}$ to a trivalent diagram $\overline{\Gamma} \in  \mathcal{D}_{\beta,s}^{=3,b}$ as shown in Figure \ref{fig:reduce}.

\begin{figure}[ht]
\centering
   \includegraphics[scale=0.5]{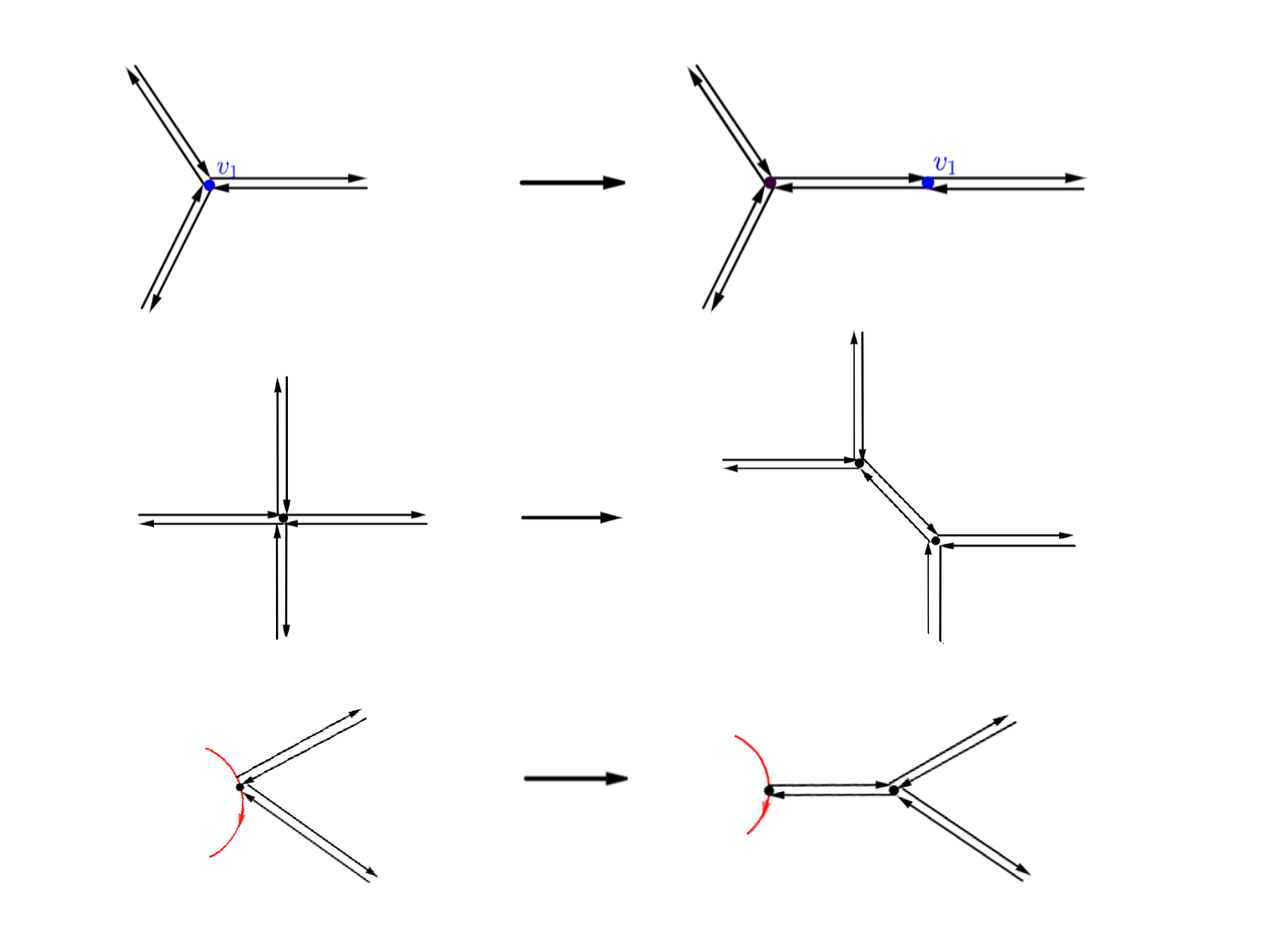}
\caption{Regularization to trivalent diagrams}
     \label{fig:reduce}
\end{figure}

Noting that both the quantities $|E_{b,j}|$ and  $|E_{\mathrm{int}}|-|V_{\mathrm{int}}|$ do not change and $|E(\Gamma)|$ is non-decreasing after regularization, we thus get  
\begin{equation}
    \widetilde{\mathcal{W}}_{\Gamma,Y_M}(k_1,\cdots,k_s)\le \widetilde{\mathcal{W}}_{\phi(\Gamma),Y_M}(k_1,\cdots,k_s).
\end{equation}
Moreover, since  all preimages in $\phi^{-1}(\Gamma)$ can be obtained by contracting some edges of $\Gamma$,  we have 
\begin{equation}
    |\phi^{-1}(\Gamma)|\le 2^{|E(\Gamma)|}.
\end{equation}
  So we have
\begin{equation}
\begin{aligned}
        \sum_{\Gamma\in \mathcal{D}_{\beta,s}^{b}}\lambda^{|E(\Gamma)|}\widetilde{\mathcal{W}}_{\Gamma,Y_M}(k_1,\cdots,k_s)&\le \sum_{\Gamma\in \mathcal{D}_{\beta,s}^{=3,b}}|\phi^{-1}(\Gamma)|\lambda^{|E(\Gamma)|}\widetilde{\mathcal{W}}_{\Gamma,Y_M}(k_1,\cdots,k_s)\\
        &\le \sum_{\Gamma\in \mathcal{D}_{\beta,s}^{=3,b}}(2\lambda)^{|E(\Gamma)|}\widetilde{\mathcal{W}}_{\Gamma,Y_M}(k_1,\cdots,k_s).
\end{aligned}
    \end{equation}
This completes the proof.
\end{proof}

Next,  we bound the number of trivalent diagrams $\Gamma\in \mathcal{D}_{\beta,s}^{=3,b}$ with $g(\Gamma)=g$ and $t(\Gamma)=t$.

\begin{proposition}\label{automaton}
For $\beta \in \{1,2\}$, for any $s \ge 1$ and $g,t \ge 0$, the number of trivalent diagrams $\Gamma\in \mathcal{D}_{\beta,s}^{=3,b}$ with $g(\Gamma)=g$ and $t(\Gamma)=t$ is at most
\begin{equation}\frac{128^{g+t+2s}(g+t+2s)^{g+t+3s-3}}{(s-1)!}.
\end{equation}
\end{proposition}
The proof follows the argument presented in \cite[Proposition II.3.3]{feldheim2010universality}, which constructs an automaton to generate all trivalent diagrams with $s$ faces on closed topological surfaces \cite[Remark II.6]{feldheim2010universality}. The automaton starts from the initial vertex of the diagram and proceeds along its edges, constructing the diagram step by step using four types of transition. For $\beta=2$, at each step, it either adds a loop (a \textbf{creation}) or removes a loop (an \textbf{annihilation}), continuing until the entire diagram is generated.   For $\beta=1$, another type of annihilation and a 
``creation and annihilation'' may take place at each step. This procedure can be viewed as the evolution of a line together with several loops; see \cite[Figure 2-5]{feldheim2010universality} for a detailed description.
By construction, the same automaton can generate all trivalent diagrams $\Gamma \in \mathcal{D}_{\beta,s}^{=3,b}$ with $t$ boundary components by additionally requiring that exactly $t$ circles remain unannihilated at the end. An example is shown in Figure~\ref{fig:automaton}.

\begin{figure}[ht]
\centering
   \includegraphics[scale=0.6]{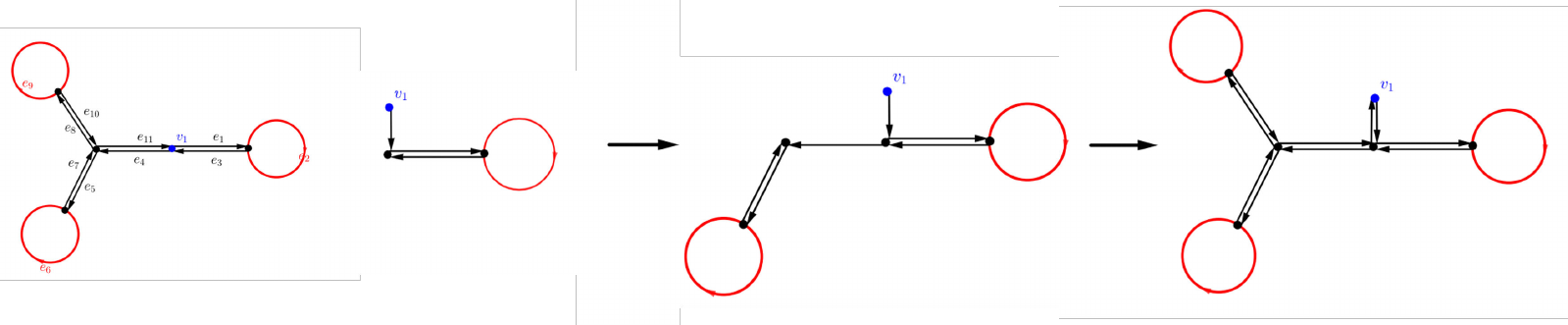}
\caption{An example of Feldheim-Sodin automaton} 
     \label{fig:automaton}
\end{figure}

\begin{proof}[Proof of Proposition \ref{automaton}]
We begin by attaching an extra interior tail edge to each marked point of the diagram. Now we use the automaton method to bound the number of  diagrams in $\mathcal{D}_{\beta,s}^{=3,b}$ with $g(\Gamma)=g$ and $t(\Gamma)=t$. Since the first (creation) step  creates 3 edges and 3
vertices, the last (annihilation) step creates 2 edges and one vertex, and every other
step creates 3 edges and 2 vertices. We see the total number of steps in the automaton is $T=g+t+2s-2$.  

First, we determine the return times $T_1, T_2, \dots, T_s$ for the automaton process starting from each of the $s$ marked points. Specifically, the automaton begins at the marked point of the first face $D_1$ and returns to it after $T_1$ steps; it then starts from the marked point of $D_2$ and returns after $T_2$ steps, and so on for all faces.
Since the total number of steps in the automaton is $T=g+t+2s-2$, the return times satisfy the constraint $T_1+\cdots+T_s= g+t+2s-2$. Hence, the number of choices for $T_1,T_2,\dots,T_s$ is at most the number of positive integer solutions to this equation, which is 
\begin{equation}\label{eq:T_choices}
       \frac{(g+t+2s-2)^{s-1}}{(s-1)!}\le \frac{(g+t+2s)^{s-1}}{(s-1)!}.
\end{equation}
 
After fixing these return times, we bound the number of possible sequences of transitions (of the four types) in the automaton. Since each step involves one of the four transition types, the total number of such sequences is at most  \begin{equation}\label{eq:transition_order}
       4^{g+t+2s-2} \le 4^{g+t+2s}.
\end{equation}

Third, we consider how the total length of the graph evolves during the automaton.  Define  $\ell_i$ as the number of unpaired edges in the diagram after each step of the automaton (which is the total length of the segment and the cycles in \cite{feldheim2010universality}), and set $m_i = 2 - (\ell_i-\ell_{i-1})$. It is easy to check that $m_i \ge 0$ and  $\sum_{i=1}^{T} m_i = 2g+2t+4s-4-|E_b(\Gamma)|$. Hence  the number of ways to choose the sequence of numbers $m_i$ is at most
\begin{equation}\label{eq:m_choices}
     \binom{2g+2t+4s-4-|E_b(\Gamma)|+g+t+2s-2}{g+t+2s-3} \le 8^{g+t+2s}.
\end{equation}

Finally, for a fixed order of transitions and a fixed sequence of   $m_i$, the number of corresponding diagrams is at most
$|E(\Gamma)|^{g+t+2s-2}=(3g+3t+4s-6)^{g+t+2s-2}$
using relation \eqref{ve}. Consequently, we obtain the bound
\begin{equation}\label{eq:diagram_choices}
     (3g+3t+4s-6)^{g+t+2s-2} \le 4^{g+t+2s}(g+t+2s)^{g+t+2s-2}.
\end{equation}

The proof is completed  by multiplying the bounds from equations \eqref{eq:T_choices}, \eqref{eq:transition_order}, \eqref{eq:m_choices}, and \eqref{eq:diagram_choices}. 
\end{proof}

Now we are ready to prove Theorem \ref{Prop:Upper bound}.
\begin{proof}[Proof of Theorem   \ref{Prop:Upper bound}]
We first prove \eqref{equ:upperboundnontypical}. By Proposition \ref{prop:W+upper_bound}, we have
\begin{equation}
    |\mathcal{W}_{\Gamma,Y_M}(k_1,\cdots,k_s)|\le C_{\Gamma,a} {M}^{|V_{\mathrm{int}}|-|E_{\mathrm{int}}|}  \prod_{j=1}^s\frac{k_j^{|E_{b,j}|-1}}{(|E_{b,j}|-1)!}\le C_{\Gamma,a}M^{|V_{\mathrm{int}}|-|E_{\mathrm{int}}|+\frac{1}{2}(\sum_{j=1}^s|E_{b,j}|-1)}.
\end{equation}
By Lemma \ref{Lemma-ineq}, we have
\begin{equation}
    |V_{\mathrm{int}}|-|E_{\mathrm{int}}|+\frac{1}{2}{\big(\sum_{j=1}^s|E_{b,j}|-1\big)}=\frac{1}{2}(|E_b(\Gamma)|-s)+|V_{\mathrm{int}}|-|E_{\mathrm{int}}|\le -\frac{1}{2}|V_{\mathrm{int}}|\le 0,
\end{equation}
where the equality holds if and only if $\Gamma\in \mathcal{D}_{\beta,s}^{= 3,b}$. This completes the proof of part (2). 

We now prove \eqref{equ:upper-bounds-GOE-1} of part (1). For simplicity, set
\begin{equation}
    W_{g,t}:=\sum_{\Gamma \in \mathcal{D}_{\beta,s}^{b}:  g(\Gamma)=g,t(\Gamma)=t}\lambda^{|E(\Gamma)|}\mathcal{W}_{\Gamma,Y_M}(k_1,\cdots,k_s).
\end{equation}
By Lemma \ref{Lemma-ineq}, Proposition \ref{prop:W+upper_bound}, and \ref{prop:reduce to 3}, we know that
\begin{align}\label{Upper4}
    W_{g,t}\le & \sum_{\Gamma \in \mathcal{D}_{\beta,s}^{=3,b}:  g(\Gamma)=g,t(\Gamma)=t}(2\lambda)^{|E(\Gamma)|}\big|\mathcal{W}_{\Gamma,Y_M}(k_1,\cdots,k_s)\big|\nonumber \\
     \le   &   \sum_{l \ge 1} \sum_{\substack{\Gamma \in \mathcal{D}_{\beta,s}^{=3,b}:  g(\Gamma)=g,t(\Gamma)=t\\ |V_{\mathrm{int}}(\Gamma)|=l}}\Big(\frac{2\lambda a^2}{a^2-1}\Big)^{|E(\Gamma)|}\Big(\prod_{j=1}^{s}\frac{(k_j/\sqrt{M})^{|E_{b,j}|-1}}{(|E_{b,j}|-1)!}\Big){M}^{-\frac{l}{2}}.
\end{align}
An application of the multinomial theorem yields
\begin{equation}
    \frac{1}{\prod_{j=1}^{s}(|E_{b,j}(\Gamma)|-1)!}=\frac{(|E_b(\Gamma)|-s)!}{\prod_{j=1}^{s}(|E_{b,j}(\Gamma)|-1)!}\times \frac{1}{(|E_b(\Gamma)|-s)!} \le \frac{s^{|E_b(\Gamma)|-s}}{(|E_b(\Gamma)|-s)!}.
\end{equation}
Therefore,
\begin{equation}
\begin{aligned}
    W_{g,t}&\le \sum_{l \ge 1} \sum_{\substack{\Gamma \in \mathcal{D}_{\beta,s}^{=3,b}:  g(\Gamma)=g,t(\Gamma)=t\\ |V_{\mathrm{int}}(\Gamma)|=l}}\Big(\frac{2\lambda a^2}{a^2-1}\Big)^{|E(\Gamma)|}\frac{s^{|E_b(\Gamma)|-s}}{(|E_b(\Gamma)|-s)!}\Big(\frac{k}{\sqrt{M}}\Big)^{|E_b(\Gamma)|-s}{M}^{-\frac{l}{2}}.
\end{aligned}
\end{equation}
Denote the number of diagrams $\Gamma \in \mathcal{D}_{\beta,s}^{=3,b}$ with  $g(\Gamma)=g,t(\Gamma)=t$ by $D_{g,t}$.
By Lemma \ref{lem:trivalent_graph}, we know that $|E_b(\Gamma)|=2g+2t+3s-4-l$ and $|E(\Gamma)|=3g+3t+4s-6$. Hence 
\begin{equation}
\begin{aligned}
    W_{g,t}&\le D_{g,t}\sum_{l} \Big(\frac{2\lambda  a^2}{a^2-1}\Big)^{|E(\Gamma)|}\frac{(k s)^{2g+2t+2s-4-l}}{(2g+2t+2s-4-l)!}M^{-(g+t+s-2)}\\
    &=D_{g,t}\big(\frac{ks}{\sqrt{M}}\big)^{2g+2t+2s-4}\Big(\frac{2\lambda  a^2}{a^2-1}\Big)^{3g+3t+4s-6}\sum_{l} \frac{1}{(2g+2t+2s-4-l)!(ks)^{l}}.
\end{aligned}
\end{equation}
Here the sum $\sum_{l}$ has a natural restriction $l\le 2g+2t+2s-4$, because of $|E_b(\Gamma)| \ge s$. Now we use $ks\ge l$ and hence $(ks)^l\ge l!$,  to get 
\begin{equation}
    \sum_{l} \frac{1}{(2g+2t+2s-4-l)!(ks)^{l}}\le \frac{1}{(2g+2t+2s-4)!}\sum_{l} \binom{2g+2t+2s-4}{l}.
\end{equation}
So 
\begin{equation}
\begin{aligned}
     W_{g,t}&\le \big(\frac{2ks}{\sqrt{M}}\big)^{2g+2t+2s-4}\Big(\frac{2\lambda  a^2}{a^2-1}\Big)^{3g+3t+4s-6}\frac{D_{g,t}}{(2g+2t+2s-4)!}.\\     
\end{aligned}
\end{equation}

By Proposition \ref{automaton}, we have
\begin{equation}
    \begin{aligned}
\frac{D_{g,t}}{(2g+2t+2s-4)!}\le \frac{128^{g+t+2s}(g+t+2s)^{g+t+3s-3}}{(2g+2t+2s-4)!(s-1)!}.
    \end{aligned}
\end{equation}
A simple algebraic computation shows\begin{equation}
    \frac{(g+t+2s)^{g+t+3s-3}}{(2g+2t+2s-4)!(s-1)!}\le \frac{16^{g+t+2s}}{(g+t)!},
\end{equation}
from which  
\begin{equation}
    W_{g,t}\le \frac{2048^{g+t+2s}}{(g+t)!}\big(\frac{2ks}{\sqrt{M}}\big)^{2g+2t+2s-4}\Big(\frac{2\lambda  a^2}{a^2-1}\Big)^{3g+3t+4s-6}.
\end{equation}
Combining the condition $k \le \xi \sqrt{M}$, we arrive at the first inequality
\begin{equation}
    W_{g,t}\le \mathcal{D}_{g,t,s}(\xi;a,\lambda).
\end{equation}

Summing over $g\ge 0, t\ge 1$, we obtain part (1) of \eqref{equ:upper-bounds-GOE-2}. 

This finally  completes the proof of the theorem.
\end{proof}

\subsection{GOE dominates IRM}\label{Section:Inhomogeneous}

In this section, we prove Theorem \ref{prop:gauss_irm}, which establishes that an appropriately chosen GOE model $Y_M$ dominates the contributions from the deformed sub-Gaussian IRM $X_N$. We recall the ribbon graph expansion in the sub-Gaussian case (Proposition \ref{prop:moments-iid} and \ref{prop:diagram_expansion_subGaussian_boundary}) and the diagram function \eqref{equ:diagram_function_wigner}. The primary goal is to bound the ribbon graph contribution $|\mathcal{L}_{\Upsilon,X}|$ by its GOE counterpart, $\mathcal{L}_{\Upsilon,Y_M}$. Here 
\begin{equation}\label{equ:def_L}
 \mathcal{L}_{\Upsilon,X} := \rho_a^{-\sum_{j=1}^s k_j} \sum_{\eta:\mathcal{V}(\Upsilon)\to [N]} c(\Upsilon,\eta)
\prod_{(x,y)\in\mathcal{E}_{\mathrm{int}}(\Upsilon)} \sigma_{\eta(x)\eta(y)}^2
 \prod_{(z,w)\in\mathcal{E}_b(\Upsilon)} A_{\eta(z)\eta(w)}.
\end{equation}
Thus,  the total diagram function can be rewritten as    \begin{equation}
    \mathcal{W}_{\Gamma, X}(k_1,\dots,k_s) = \sum_{\Upsilon:\Phi(\Upsilon)=\Gamma} \mathcal{L}_{\Upsilon,X}.
\end{equation}  For simplicity, and without loss of generality, we assume $\{m_1,\ldots,m_r\}=[r]$  unless stated otherwise.

A significant technical challenge arises because the  {coupling factor} $c(\Upsilon,\eta)$ depends intricately on the labeling $\eta$, unlike the pure Gaussian case. This dependence prevents straightforward summation over boundary labels:
\begin{equation}
\sum_{x_1,\dots,x_{n-1}} c(\Upsilon,\eta) A_{ax_1}A_{x_1x_2}\dots A_{x_{n-1}b} \ne (A^{n})_{ab}, 
\end{equation} 
where $c(\Upsilon,\eta)=1$  in the Gaussian case but $c(\Upsilon,\eta)$ changes in non-Gaussian case.
To overcome this obstacle, we employ a multi-step strategy (see also Figure \ref{fig:proof_streamline}):
\begin{enumerate}
\item \textbf{Quotient Graph ($G_0$):} 
Merge interior vertices in $\Upsilon$ with the same label $\eta(v)$ to construct the \textit{quotient graph} $G_0$. The induced labeling $\bar{\eta}$ is injective on $\mathcal{V}_{\mathrm{int}}(G_0)$, simplifying the edge coupling structure. Let $\mathcal{K}_{G_0,X}$ be the weight that will appear below.
\item \textbf{Handle Constraints ($\mathfrak{C}$):} $\mathcal{K}_{G_0,X}$ sums over labels $\bar{\eta}$ satisfying constraints $\mathfrak{C}$, which include injectivity on $\mathcal{V}_{\mathrm{int}}(G_0)$. We use the {inclusion-exclusion principle} (Lemma \ref{lem:exclusion_inclusion}) to remove the injectivity constraint on the free vertices $\mathcal{V}_{\mathrm{free}}(G_0)$ to ensure that we can sum over trees on the boundary edges and then we can sum over boundary vertices.
 \item \textbf{Bounding the Sum:} After relaxing constraints, we perform the summation over Catalan trees and boundary vertex labels to derive the desired upper bounds by comparing with the GOE model.
\end{enumerate}

\begin{table}[htbp]
\centering
\caption{Summary of key notations used in Section \ref{Section:Inhomogeneous}}
\label{tab:notation_sec32}
\small
\begin{tabularx}{\textwidth}{@{} llX @{}}
\toprule
\textbf{Symbol} & \textbf{Name} & \textbf{Description} \\
\midrule
\multicolumn{3}{l}{\textit{Graphs and Maps}} \\
$\Upsilon, \Gamma$ & Ribbon graph / Diagram & The combinatorial structures representing terms in the expansion. \\
$G_0, G_0/\pi$ & Quotient graph & Graphs obtained by merging vertices based on labels or partitions. \\
$\Phi$ & Contraction map & The map $\Phi: \Upsilon \to \Gamma$ reducing a ribbon graph to its shape. \\
\midrule
\multicolumn{3}{l}{\textit{Vertex Sets}} \\
$\mathcal{V}_{\mathrm{int}}, \mathcal{V}_b$ & Interior / Boundary vertices &  Vertices incident to boundary edges $\mathcal{V}_b$,  strictly internal vertices $\mathcal{V}_{\mathrm{int}}$.\\
$\mathcal{V}_{\mathrm{tied}}, \mathcal{V}_{\mathrm{free}}$ & Tied / Free vertices & Interior vertices with labels $\eta(v) \in [r]$ (tied) or $\eta(v) \in [N]\setminus[r]$ (free). \\
$\tilde{\mathcal{V}}_b(G_0)$ & Reduced boundary set & Boundary vertices remaining after removing boundary trees. \\
\midrule
\multicolumn{3}{l}{\textit{Edge Sets}} \\
$\mathcal{E}_{\mathrm{int}}, \mathcal{E}_b$ & Interior / Boundary edges & Edges corresponding to matrix entries of $H_N$ and $A_N$ respectively. \\
$\mathcal{E}_{\mathrm{tied}}, \mathcal{E}_{\mathrm{free}}$ & Tied / Free edges & Definition \ref{def:tied_edge}. \\
\midrule
\multicolumn{3}{l}{\textit{Others}} \\
$c(\Upsilon, \eta)$ & Coupling factor & Defined in \eqref{eq:coupling_factor_def} \\
$\mathfrak{C}$& & Constraints on $\bar\eta$ \eqref{equ:eta_constraint}.\\
$\mathcal{G}(\Upsilon)$ & Set of quotient graphs &The collection of all possible quotient graphs $G_0$ derived from a ribbon graph $\Upsilon$.\\
$\mathcal{K}_{G_0, X}$ &  & The sum of weights over labelings $\eta$ consistent with the graph $G_0$. \eqref{G0expansion} \\
$\mathcal{F}_{G_0/\pi, X}$ &  & $\mathcal{K}_{G_0, X}$ after applying the inclusion-exclusion principle. \eqref{def:F}\\
$Y_M$ & Reference GOE model & $M \times M$ deformed GOE matrix used as a comparison benchmark. \\
\bottomrule
\end{tabularx}
\end{table}

\usetikzlibrary{shapes.geometric, arrows, positioning}

\tikzset{
    process/.style={
        rectangle, 
        minimum width=10cm, 
        minimum height=1cm, 
        text width=0.8\textwidth, 
        text centered, 
        draw=black, 
        fill=white,
        inner sep=10pt
    },
    arrow/.style={thick, ->, >=stealth}
}

\begin{figure}[htbp]
\centering 

\begin{tikzpicture}[node distance=4.5cm, auto]

\node (goal) [process] {
    \textbf{Goal: GOE Dominates IRM} \\ 
    Challenge: The coupling factor $c(\Upsilon, \eta)$ depends on the labeling of boundary vertices $V_b$. 
    $$\displaystyle \sum_{x_1,\dots,x_{n-1}} c(\Upsilon,\eta) A_{ax_1}A_{x_1x_2}\dots A_{x_{n-1}b} \neq (A^{n})_{ab}$$
};

\node (step1) [process, below of=goal] {
    \textbf{Step 1: Quotient Graph $G_0$} \\ 
    Fix the partition induced by $\eta$ to determine the ``coupling type" of edges. (Proposition \ref{prop:relation:K}) 
    $$\displaystyle \mathcal{L}_{\Upsilon,X} = \sum_{G_0 \in \mathcal{G}(\Upsilon)} \mathcal{K}_{G_0,X}$$
};

\node (step2) [process, below of=step1] {
    \textbf{Step 2: Relaxing the Injectivity of $\bar{\eta}$} \\ 
    Relax injectivity constraints on vertices $\mathcal{V}_{\mathrm{free}}$ without changing the ``coupling type" of edges, enabling summation over trees. (Lemma \ref{lem:exclusion_inclusion} and Corollary \ref{coro:upper_exclusion}) 
    $$\displaystyle \mathcal{K}_{G_0,X} = \rho_a^{-k} \sum_{\pi \in \mathcal{P}(\mathcal{V}_{\mathrm{free}})} C_{\pi} \cdot \mathcal{F}_{G_0/\pi,X}$$
};

\node (step3) [process, below of=step2] {
    \textbf{Step 3: Summing over Trees in $\mathcal{F}_{G_0/\pi,X}$} \\ 
    Sum over all Catalan trees in $G_0/\pi$ and the boundary vertices. 
    $$\displaystyle \left|\sum_{p_1,\dots,p_{k}} \prod_{i=0}^{k-1} A_{p_i p_{i+1}} (\Lambda_{i+1})_{p_{i}p_{i}} \right| \le a^k$$
    Derive the upper bound for $\mathcal{F}_{G_0/\pi,X}$. (Lemma \ref{lem:F_upper_bound})
};

\node (conclusion) [process, below of=step3] {
    \textbf{Result: Bound $X_N$ weights by $Y_M$ weights} \\
    Summing over all $G_0$ bounds $|\mathcal{K}_{G_0,X}|$ by $\mathcal{K}_{G_0,Y}$, thus bounding $\mathcal{W}_{\Gamma,X}$ by $\mathcal{W}_{\Gamma,Y}$. (Corollary \ref{coro:KX<KY} and Theorem \ref{prop:gauss_irm})
};

\draw [arrow] (goal) -- (step1);
\draw [arrow] (step1) -- (step2);
\draw [arrow] (step2) -- (step3);
\draw [arrow] (step3) -- (conclusion);

\end{tikzpicture}
\caption{Structure of proofs in Section \ref{Section:Inhomogeneous}}
\label{fig:proof_streamline}
\end{figure}

\subsubsection{Cluster   expansion of weighted diagram functions }
\paragraph{Quotient graph $G_0$ and quotient labeling $\bar\eta$.}

The labeling $\eta$ induces a partition on $\mathcal{V}_{\mathrm{int}}(\Upsilon)$ by identifying vertices $u, v$ whenever $\eta(u) = \eta(v)$. Merging vertices within each block (while keeping boundary vertices distinct) yields the \textbf{quotient graph} $G_0$.

The induced labeling $\bar{\eta}$ naturally partitions the interior vertices $\mathcal{V}_{\mathrm{int}}(G_0)$ into two sets:
\begin{itemize}
    \item \textbf{Tied vertices} $\mathcal{V}_{\mathrm{tied}}(G_0)$: vertices with labels $\bar{\eta}(v) \in [r]$.
    \item \textbf{Free vertices} $\mathcal{V}_{\mathrm{free}}(G_0)$: vertices with labels $\bar{\eta}(v) \in [N]\setminus [r]$.
\end{itemize}
Thus, the quotient graph is denoted by the tuple:
\begin{equation}\label{def:G0}
    G_0 = \big(\mathcal{V}_{\mathrm{int}}(G_0),\ \mathcal{V}_b(G_0),\ \mathcal{E}_{\mathrm{int}}(G_0),\ \mathcal{E}_b(G_0)\big).
\end{equation}

For any fixed ribbon graph $\Upsilon$, denote the set of  all possible quotient graphs obtained by some function $\eta:\mathcal{V}(\Upsilon) \to [N]$ by  $\mathcal{G}(\Upsilon)$. 
By construction, the induced labeling $\bar{\eta}$ must satisfy the set of constraints $\mathfrak{C}$:
\begin{equation}\label{equ:eta_constraint}
    \mathfrak{C}:\begin{cases}
        \bar{\eta}(v) \in [r], & \forall v \in \mathcal{V}_{\mathrm{tied}}(G_0), \\
        \bar{\eta}(v) \in [N]\setminus [r], & \forall v \in \mathcal{V}_{\mathrm{free}}(G_0), \\
        \bar{\eta}(u) \ne \bar{\eta}(v), & \forall u \ne v \in \mathcal{V}_{\mathrm{int}}(G_0) \quad (\text{Injectivity}).
    \end{cases}
\end{equation}
We now rewrite $\mathcal{L}_{\Upsilon,X}$ in terms of the sum of the weights of $G_0$ by using the following identity 
\begin{equation}\label{equ:partition111}\sum_{\eta:\mathcal{V}(\Upsilon) \to [N]}\big(\cdot\big) =\sum_{G_0 \in \mathcal{G}(\Upsilon)} \sum_{\bar{\eta}:\mathcal{V}(G_0) \to [N] \ s.t.    \mathfrak{C}}\big(\cdot\big) \ . \end{equation}

For this purpose, for any quotient graph $G_0 \in \mathcal{G}(\Upsilon)$, we define the weight $\mathcal{K}_{G_0,X}$ by summing over all valid labelings $\bar\eta:\mathcal{V}(G_0) \to [N]$ satisfying $\mathfrak{C}$.
\begin{proposition}\label{prop:relation:K}
Let 
\begin{equation}\label{G0expansion}
    \mathcal{K}_{G_0,X} := \rho_a^{-k}\sum_{\mathcal{V}_{\mathrm{tied}}:\mathcal{V}_{\mathrm{tied}}\subset \mathcal{V}_{\mathrm{int}}} \sum_{\bar{\eta}:\mathcal{V}(G_0) \to [N]} 1_{\{\bar\eta:\mathfrak{C}\}} \cdot c(G_0,\bar\eta) \prod_{e \in \mathcal{E}_{\mathrm{int}}(G_0)} \sigma_{\bar\eta(u_e)\bar\eta(v_e)}^{2} \prod_{e \in \mathcal{E}_b(G_0)} A_{\bar\eta(u_e)\bar\eta(v_e)},
\end{equation}
where the coupling factor $c(G_0,\bar\eta)$ is the coupling factor in \eqref{def:c(U,eta)} 
\begin{equation}\label{eq:coupling_factor_def}
    c(G_0,\bar\eta) := \frac{\mathbb{E}\left[ \prod_{e\in \mathcal{E}_{\mathrm{int}}} W_{\bar\eta(u)\bar\eta(v)}^{\,k_{uv}} W_{\bar\eta(v)\bar\eta(u)}^{\,k_{vu}} \right]}
    {\mathbb{E}\left[ \prod_{e\in \mathcal{E}_{\mathrm{int}}} \left( W^{\mathrm{GOE}}_{\bar\eta(u)\bar\eta(v)} \right)^{k_{uv}+k_{vu}} \right]}
\end{equation}
with  the directed traversal counting  $k(e,\eta)=(k_{uv},k_{vu})$   inherited from $\Upsilon$. 
The constraints $\mathfrak{C}$ can be conveniently expressed in terms of  indicator functions:
\begin{equation}\label{equ:constraint_product}
    1_{\{\bar\eta:\mathfrak{C} \}} =
    \prod_{u\neq v\in\mathcal{V}_{\mathrm{int}}(G_0)} \big(1-1_{\{\bar\eta(u)=\bar\eta(v)\}}\big)
    \prod_{u\in\mathcal{V}_{\mathrm{free}}(G_0)} 1_{\{\bar\eta(u)\notin[r]\}}
    \prod_{u\in\mathcal{V}_{\mathrm{tied}}(G_0)} 1_{\{\bar\eta(u)\in[r]\}}.
\end{equation}
Then the total contribution defined in \eqref{equ:def_L}    decomposes  over the set of quotient graphs $\mathcal{G}(\Upsilon)$:
    \begin{equation}
        \mathcal{L}_{\Upsilon,X} = \sum_{G_0 \in \mathcal{G}(\Upsilon)} \mathcal{K}_{G_0,X}.
    \end{equation}
\end{proposition}

\begin{proof}
    The set of all labelings $\eta: \mathcal{V}(\Upsilon) \to [N]$ is partitioned as \eqref{equ:partition111} by the quotient graph $G_0$ they induce.
    Rewriting the sum in \eqref{equ:def_L} by first summing over possible structures $G_0 \in \mathcal{G}(\Upsilon)$ and then summing over labelings consistent with $G_0$ immediately yields the definition of $\mathcal{K}_{G_0,X}$.
\end{proof}

\paragraph{Decoupling the coupling factor.} The injectivity of $\bar{\eta}$ on $\mathcal{V}_{\mathrm{free}}(G_0)$ (cf. \eqref{equ:eta_constraint}) strictly limits label collisions: distinct edges can share the same label pair only if they involve boundary or tied vertices.

\begin{lemma}\label{lem:lemma3.9}
   For any quotient graph $G_0 \in \mathcal{G}(\Upsilon)$ and any labeling $\bar{\eta}:\mathcal{V}(G_0) \to [N]$ satisfying $\mathfrak{C}$, if $\bar{\eta}$ maps the endpoints of two  distinct interior edges $e_1, e_2$ to the same pair $\{\bar{\eta}(u), \bar{\eta}(v)\}$, then one of the following two holds:
    \begin{enumerate}
        \item[(1)] All endpoints belong to $\mathcal{V}_b(G_0) \cup \mathcal{V}_{\mathrm{tied}}(G_0)$.
        \item[(2)] $e_1$ and $e_2$ share exactly one common vertex in $\mathcal{V}_{\mathrm{free}}(G_0)$, while the other endpoints belong to $\mathcal{V}_b(G_0) \cup \mathcal{V}_{\mathrm{tied}}(G_0)$.
    \end{enumerate}
\end{lemma}

\begin{proof}
    The labels of vertices in $\mathcal{V}_{\mathrm{free}}(G_0)$ are distinct from each other and from those in $\mathcal{V}_{\mathrm{tied}} \cup \mathcal{V}_b$.
    If the shared label pair involves a free vertex label, the injectivity implies the corresponding vertices in $e_1$ and $e_2$ must be identical.
    Since $e_1 \ne e_2$, they cannot share \textit{two} free vertices (otherwise $e_1=e_2$). Thus, they can share at most one free vertex.
    If they share one free vertex, the other label must belong to $[r]$, implying the other endpoints are in $\mathcal{V}_b \cup \mathcal{V}_{\mathrm{tied}}$ (Case 2).
    If they share no free vertices, all labels must be in $[r]$, implying all endpoints are in $\mathcal{V}_b \cup \mathcal{V}_{\mathrm{tied}}$ (Case 1).
\end{proof}

\begin{definition}[Free and tied Edges]\label{def:tied_edge}
    An interior edge $e = \{u,v\}$ is said to be  a \textbf{free edge} if
    \begin{enumerate}
        \item[(1)] Both endpoints are free: $u, v \in \mathcal{V}_{\mathrm{free}}(G_0)$; or
        \item[(2)] $u \in \mathcal{V}_{\mathrm{free}}(G_0)$, $v \in \mathcal{V}_{\mathrm{fixed}}:= \mathcal{V}_b(G_0) \cup \mathcal{V}_{\mathrm{tied}}(G_0)$, and $e$ is the unique edge connecting $u$ to $\mathcal{V}_{\mathrm{fixed}}$.
    \end{enumerate}
    All other interior edges are called \textbf{tied edges}, denoted by  $\mathcal{E}_{\mathrm{tied}}$, implying   $\mathcal{E}_{\mathrm{int}} = \mathcal{E}_{\mathrm{free}} \sqcup \mathcal{E}_{\mathrm{tied}}$.
    We split the coupling factor $c(G_0,\bar{\eta})$ into a tied component and a free component.
\end{definition}

\begin{lemma}\label{lem:decouple-coupling}
    For any quotient graph $G_0 \in \mathcal{G}(\Upsilon)$,     the coupling factor factorizes under the constraints $\mathfrak{C}$
    \begin{equation}\label{equ:decouple}
        c(G_0,\bar{\eta}) = c\big(\mathcal{E}_{\mathrm{tied}}(G_0),\bar{\eta}\big) \cdot
        \prod_{e\in\mathcal{E}_{\mathrm{free}}(G_0)} c(e,\bar{\eta}).
    \end{equation}
\end{lemma}

\begin{proof}
    Lemma \ref{lem:lemma3.9} and the definition of free edges ensure that for any $e \in \mathcal{E}_{\mathrm{free}}(G_0)$, the label pair $\{\bar{\eta}(u_e), \bar{\eta}(v_e)\}$ is distinct from that of any other edge in $G_0$.
    Consequently, the random variables $\{W_{\bar{\eta}(u)\bar{\eta}(v)}\}_{e \in \mathcal{E}_{\mathrm{free}}}$ are mutually independent and independent of those in $\mathcal{E}_{\mathrm{tied}}$.
    The expectation of the product thus splits into the product of expectations, yielding \eqref{equ:decouple}.
\end{proof}

\paragraph{Relaxing Injectivity on $\mathcal{V}_{\mathrm{free}}$.}
The injectivity constraint on $\mathcal{V}_{\mathrm{free}}(G_0)$ makes summation over trees difficult. To address this, we employ the following inclusion–exclusion principle to relax the injectivity of $\bar\eta$ via a sum over partitions (see Appendix \ref{sec:proof_inclusion} for the proof).

\begin{lemma}[Inclusion-Exclusion for Injectivity]\label{lem:exclusion_inclusion}
    For any finite set $V$ and any function $\eta: V \to [N]$, the indicator function of injectivity $\prod_{u\neq v}(1-1_{\{\eta(u)=\eta(v)\}})$ can be expanded as
    \begin{equation}
        \sum_{\pi \in \mathcal{P}(V)} C_{\pi} \prod_{B\in\pi}1_{\{\eta \text{ is constant on } B\}}, \quad C_{\pi}:=(-1)^{\sum_{B\in\pi}(|B|-1)}\prod_{B\in\pi}(|B|-1)!.
    \end{equation}
\end{lemma}

For any partition $\pi \in \mathcal{P}(\mathcal{V}_{\mathrm{free}})$, let $G_0/\pi$ be the graph where vertices in each block $B \in \pi$ are merged. Setting
\begin{equation}
    1_{\{\bar\eta, \pi\}}=\mathbf{1}(\forall x\in \pi_i,y\in \pi_j, \eta(x)= \eta(y)\text{ if and only if } i=j),
\end{equation}
we define the class weight $\mathcal{F}_{G_0/\pi,X}$ as the sum where labels are forced to be constant within blocks of $\pi$,
\begin{equation}\label{def:F}
\begin{aligned}
\mathcal{F}_{G_0/\pi,X,\mathcal{V}_{\mathrm{tied}}}
:= \sum_{\bar{\eta}:\mathcal{V}(G_0) \to [N]}
& \Big( 1_{\{\bar\eta, \pi\}} \prod_{u\in\mathcal{V}_{\mathrm{free}}} 1_{\{\bar\eta(u)\notin[r]\}} \Big)
\Big( \prod_{u \ne v \in \mathcal{V}_{\mathrm{tied}}} (1-1_{\{\bar\eta(u)=\bar\eta(v)\}}) \prod_{u\in\mathcal{V}_{\mathrm{tied}}} 1_{\{\bar\eta(u)\in[r]\}} \Big) \\
\times & \Big( c(\mathcal{E}_{\mathrm{tied}},\bar\eta) \prod_{e\in\mathcal{E}_{\mathrm{free}}} c(e,\bar\eta) \Big)
\Big( \prod_{e\in\mathcal{E}_{\mathrm{int}}} \sigma_{\bar\eta(u_e)\bar\eta(v_e)}^{2} \prod_{e\in\mathcal{E}_b} A_{\bar\eta(u_e)\bar\eta(v_e)} \Big).
\end{aligned}
\end{equation}

Recalling that $\mathcal{K}_{G_0,X}$ is the weight of the quotient graph $G_0$ defined in \eqref{G0expansion}, we apply Lemma \ref{lem:exclusion_inclusion} to $\mathcal{K}_{G_0,X}$ and obtain the following identity:

\begin{corollary}\label{coro:upper_exclusion}
    The weight $\mathcal{K}_{G_0,X}$ admits an expansion 
    \begin{equation*}
        \mathcal{K}_{G_0,X} = \rho_a^{-k} \sum_{\mathcal{V}_{\mathrm{tied}}:\mathcal{V}_{\mathrm{tied}}\subset \mathcal{V}_{\mathrm{int}}} \sum_{\pi \in \mathcal{P}(\mathcal{V}_{\mathrm{free}})} C_{\pi} \cdot \mathcal{F}_{G_0/\pi,X,\mathcal{V}_{\mathrm{tied}}}.
    \end{equation*}
\end{corollary}

\begin{proof}
    The constraints $\mathfrak{C}$ factorize into conditions on $\mathcal{V}_{\mathrm{tied}}$ (which remain fixed) and injectivity on $\mathcal{V}_{\mathrm{free}}$. Substituting the expansion from Lemma \ref{lem:exclusion_inclusion} for the $\mathcal{V}_{\mathrm{free}}$ terms directly yields the result.
\end{proof}

\subsubsection{Bound  class weight \texorpdfstring{$|\mathcal{F}_{G_0/\pi,X}|$}{FG0}}
\paragraph{Summing over boundary trees in $G_0/\pi$.}\label{par:sum_boundary_trees}

To bound $|\mathcal{F}_{G_0/\pi,X}|$, we simplify the graph structure by removing tree-like components attached to the boundary.

\begin{definition}[Boundary Tree]\label{def:boundary_tree}
Let $G$ be a quotient graph. 
 A \emph{boundary tree} is a maximal tree $T \subset G$ such that:
\begin{enumerate}
    \item[(1)] The vertex set $V(T)$ consists entirely of free vertices, except for exactly one root vertex $v \in \mathcal{V}_b(G)$;
    \item[(2)] $T$ is connected to the rest of the graph $G \setminus E(T)$  only through the root $v$.
\end{enumerate}
The set of all such trees is denoted  by $\mathcal{T}_b(G)$. 
\end{definition}

Now we  remove these boundary trees and simplify chains of boundary edges.

\begin{definition}[Reduced Boundary Vertex Set]\label{def:reduced_boundary_set}
    The \emph{reduced boundary vertex set},  denoted by $\widetilde{\mathcal{V}}_b(G_0)$, is obtained by iteratively removing all \textbf{boundary trees} (Definition \ref{def:boundary_tree}) and then removing  
    any resulting non-marked divalent boundary vertices.  
    
\end{definition}

We need a quantitative relation between 
$|\widetilde{\mathcal{V}}_b(G)|$, $|\mathcal{E}_{\mathrm{int}}(G)|-|\mathcal{V}_{\mathrm{int}}(G)|$, 
and $|\mathcal{V}_{\mathrm{tied}}(G)|$. The proof is deferred to Appendix \ref{appendix:lemma_graph}.  

\begin{lemma}\label{lem:V_b_upper_bound}
Let $G$ be a weighted multi-graph with tied vertex set $\mathcal{V}_{\mathrm{tied}}(G)$ 
and at most $s$ marked points. Then
\begin{equation}
    |\widetilde{\mathcal{V}}_b(G)| 
    \;\le\; s \,+\, 2\bigl(|\mathcal{E}_{\mathrm{int}}(G)|-|\mathcal{V}_{\mathrm{int}}(G)|\bigr) 
    \,+\, |\mathcal{V}_{\mathrm{tied}}(G)|.
\end{equation}
\end{lemma}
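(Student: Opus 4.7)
The plan is to reduce the claimed inequality to a handshake argument on the interior subgraph obtained after pruning. I will proceed in three short steps.

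Step 1 (invariance of $|\mathcal{E}_{int}| - |\mathcal{V}_{int}|$ under the reduction). The reduction defining $V_b(G)$ first removes all unmarked trees hanging from the boundary and then merges consecutive boundary arcs by deleting divalent non-marked boundary vertices. An unmarked tree with $k$ interior vertices rooted at a boundary vertex has exactly $k$ interior edges, so its removal decreases both $|\mathcal{E}_{int}|$ and $|\mathcal{V}_{int}|$ by $k$, leaving the difference $|\mathcal{E}_{int}| - |\mathcal{V}_{int}|$ unchanged. The second step only touches boundary edges and divalent boundary vertices, so it does not affect the interior quantities at all. Iterating the tree-pruning therefore produces a reduced graph whose $|\mathcal{E}_{int}| - |\mathcal{V}_{int}|$ matches that of $G$.

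Step 2 (identification of surviving boundary vertices). Let $H^{*}$ denote the interior subgraph of $G$ after the pruning of Step 1, with vertex set $\mathcal{V}_{int}(H^{*}) \cup V_b^{H^{*}}$, where $V_b^{H^{*}}$ consists of the boundary vertices still carrying at least one interior edge. A non-marked boundary vertex of $G$ survives the reduction only if its total degree exceeds $2$, which, given its two boundary edges, forces it to lie in $V_b^{H^{*}}$. Together with the at most $s$ marked boundary vertices, this gives $|V_b(G)| \le s + |V_b^{H^{*}}|$.

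Step 3 (handshake bound). Because the tree-pruning has been iterated to completion, every interior vertex of $H^{*}$ that is not elected must have degree at least $2$ in $H^{*}$; otherwise it would be a non-elected, non-marked leaf and could be stripped as part of some unmarked tree. Elected vertices in $H^{*}$ and vertices of $V_b^{H^{*}}$ each contribute degree at least $1$. Summing degrees in $H^{*}$ therefore gives
\begin{equation*}
2|E(H^{*})| \;\ge\; 2\bigl(|\mathcal{V}_{int}(H^{*})| - |\mathcal{V}_{elect}(H^{*})|\bigr) + |\mathcal{V}_{elect}(H^{*})| + |V_b^{H^{*}}|,
\end{equation*}
which rearranges to $|V_b^{H^{*}}| \le 2(|E(H^{*})| - |\mathcal{V}_{int}(H^{*})|) + |\mathcal{V}_{elect}(H^{*})|$. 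Invoking the invariance from Step 1 together with the trivial bound $|\mathcal{V}_{elect}(H^{*})| \le |\mathcal{V}_{elect}(G)|$ yields the desired inequality.

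The one delicate point is verifying that the iterative pruning of unmarked trees really does leave every non-elected interior vertex of $H^{*}$ with degree at least $2$. This is a standard leaf-stripping fact, but it depends on interpreting ``unmarked tree'' as a tree containing neither a marked boundary vertex nor an elected vertex, so that its removal is both weight-preserving and eliminates exactly the superfluous degree-$1$ interior branches. Modulo this bookkeeping, the argument is a direct degree count.
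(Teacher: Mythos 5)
Your route---a single handshake/degree count on the pruned interior subgraph---is genuinely different from the paper's. The paper removes all boundary edges, looks at each connected component $C$ of the interior subgraph meeting a surviving boundary vertex, applies the Euler formula $|L(C)| = |E(C)| - |V(C)| + 1$ to each $C$, and then case-splits on whether $C$ contains a cycle or multi-edge, meets two or more boundary vertices, or contains an elected vertex; the $s$ isolated marked points are handled separately. That route never needs a per-vertex degree bound.

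The gap in your Step~3 is the assertion that every non-elected interior vertex of $H^{*}$ has degree at least $2$, where $H^{*}$ is the interior subgraph obtained by the paper's pruning. The paper prunes only \emph{unelected trees attached to a boundary vertex} (Definition~\ref{2-edge}(ii)). A non-elected interior leaf $w$ hanging off an \emph{elected} vertex $u$---say a path $v-u-w$ with $v$ boundary, $u$ elected, $w$ non-elected---is not removed: the subtree $\{u,w\}$ rooted at $v$ contains $u$, so it is not unelected, and $\{w\}$ itself is not rooted at a boundary vertex. Thus $w$ survives in $H^{*}$ with degree $1$, and the per-vertex argument that $2|E(H^{*})| \ge 2(|\mathcal{V}_{int}(H^{*})| - |\mathcal{V}_{elect}(H^{*})|) + |\mathcal{V}_{elect}(H^{*})| + |V_b^{H^{*}}|$ is not supported by the handshake count as stated. (The inequality does happen to hold for $H^{*}$, but one has to verify it component-wise as the paper does, which circles back to the very argument you were trying to bypass.) You flag this and propose stripping more aggressively---iteratively removing every non-elected, non-marked degree-$1$ interior vertex---to obtain $H^{**}\subseteq H^{*}$ in which the degree claim does hold. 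That fix is correct in spirit, but two verifications are missing: (a) the extra stripping removes equal numbers of interior vertices and interior edges, so $|E(H^{**})|-|\mathcal{V}_{int}(H^{**})| = |\mathcal{E}_{int}(G)|-|\mathcal{V}_{int}(G)|$ still holds; and more importantly (b) the extra stripping never reduces a boundary vertex's interior degree, so $V_b^{H^{**}}=V_b^{H^{*}}$. Point (b) holds because after the paper's pruning no unelected tree is rooted at a boundary vertex, so any surviving non-elected leaf is pendant off an elected or purely interior vertex and the cascade is blocked by elected vertices before it reaches the boundary---but this has to be said, since otherwise you have silently changed the definition of $V_b$. One should also note that connectedness of $G$ is used (implicitly, to rule out purely interior tree components and degree-$0$ elected vertices), which is fine since $G_0$ is connected in Lemma~\ref{lem:F_upper_bound}, but the lemma as stated does not say so.
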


Now we are ready to get the upper bound on $\mathcal{F}_{G_0/\pi,X}$.
\begin{lemma}[Bound on $\mathcal{F}_{G_0/\pi,X}$]\label{lem:F_upper_bound}
    Let $G_0 \in \mathcal{G}(\Upsilon)$ be a quotient graph derived from $\Upsilon$, and let $\pi$ be any partition of $\mathcal{V}_{\mathrm{free}}(G_0)$. Denote by $|\mathcal{C}(G_0)|$   the number of connected components of $G_0$ without boundary edges. If $M\gamma ((r+1)\sigma^*_N)^2\le 1$ Then 
    \begin{equation}
        \left| \mathcal{F}_{G_0/\pi,X,\mathcal{V}_{\mathrm{tied}}} \right|
            \;\le\;
            \big(\frac{N}{M}\big)^{|\mathcal{C}(G_0)|}a^{|\mathcal{E}_{b}(G_0)|}
            M^{-|\mathcal{E}_{\mathrm{int}}(G_0)| + |\mathcal{V}_{\mathrm{free}}(G_0/\pi )|}
            (r+1)^{s}.
    \end{equation}
\end{lemma}
\begin{proof}

\textbf{Sum over boundary trees $\mathcal{T}_b(G_0/\pi)$.} The total weights of such tree only depends on the root of the tree and the shape of the tree, we denote by $(\Lambda_i)_{xx}$ the weight of the $i$-th vertex with label $x\in [r]$. Moreover, the weights do not exceed $1$ by the doubly stochastic requirement.

    Now after removing all the boundary trees mentioned before, we sum over all non-marked divalent boundary vertices. For consecutive divalent boundary vertices with label $p_1,\ldots p_{k}$, we have
    \begin{equation}
    |\sum_{p_1,\dots,p_{k}} \prod_{i=0}^{k-1} A_{p_i p_{i+1}} (\Lambda_{i+1})_{p_{i}p_{i}} |= |\bigl(\prod_{i=1}^{k} A\Lambda_i\bigr)_{p_0 p_k}|\le \prod_{i=1}^k \|A\|_{\mathrm{op}} \|\Lambda_i\|_{\mathrm{op}} \le a^k.
    \end{equation}
    Summing over all non-marked divalent boundary vertices, the total contribution from the boundary edge terms $\prod A_{\eta(u_e)\eta(v_e)}$ is bounded by $a^{|\mathcal{E}_{b}(G_0)|}.$ The set of remaining boundary vertices is $\widetilde{\mathcal{V}}_b(G_0/\pi)$.

\paragraph{Internal Contribution.} Now we sum over labels of interior vertices. We remove all boundary edges, denoting the new graph by $G_1$.
We analyze the contribution from interior edges and vertices. 

First, recall the bound on the coupling factor:
\begin{equation}
    |c(e,\bar{\eta})| \le \gamma^{\frac{k_{u_ev_e}+k_{v_eu_e}}{2}-1}.
\end{equation}
This implies that any ``extra" traversal (creating a loop or multiple edge) contributes a factor bounded by $\gamma (\sigma_{ij})^2\le \gamma(\sigma^*_N)^2$.

To formalize this, consider a maximal spanning forest $\mathcal{F}$ of $G_1$ dividing and rooting over all marked vertices, boundary vertices $\widetilde{\mathcal{V}}_n(G_0/\pi)$ and tied vertices. We decompose the edge set as $\mathcal{E}_{\mathrm{int}} = \mathcal{E}(\mathcal{F}) \sqcup \mathcal{E}_{extra}$, where $\mathcal{E}_{extra}$ contains all cycle edges and multiple edges.

\begin{enumerate}
    \item \textbf{Summation over $\mathcal{F}$:} Summing the labels over the leaves of the forest contributes a factor $\le 1$ due to the double stochasticity $\sum_j \sigma_{ij}^2 = 1$.
    
    \item \textbf{Roots contribution:} The summation stops at the roots.
    \begin{itemize}
        \item Roots of free marked vertices have $N$ choices, contributing at most $N^{|\mathcal{C}(G_0/\pi)|}$.
        \item Roots in boundary vertices $\widetilde{\mathcal{V}}_b(G_0/\pi)$ or tied vertices have $r$ choices, contributing at most $r^{|\mathcal{V}_{\mathrm{tied}}| + |\widetilde{\mathcal{V}}_b|}$.
    \end{itemize}

    \item \textbf{Excess edges contribution:} Each edge $e \in \mathcal{E}_{extra}$ contributes a weight factor of $\gamma (\sigma^*_N)^2$. The number of such edges is determined by
    \begin{equation}
        |\mathcal{E}_{extra}| = |\mathcal{E}_{\mathrm{int}}| - |\mathcal{E}(\mathcal{F})|.
    \end{equation}
    Since each component of $\mathcal{F}$ contains only free vertices with one root vertex,  the number of free root vertices is equal  to the number of free marked vertices, which is given by $|\mathcal{C}(G_0/\pi)|$.  Thus, the total  number of edges is
    \begin{equation}
    \begin{aligned}
        |\mathcal{E}(\mathcal{F})| &= |V(\mathcal{F})|-|\mathcal{C}(\mathcal{F})| = |\mathcal{V}_{\mathrm{free}}|-|\mathcal{C}(G_0/\pi)| + |\mathcal{C}(\mathcal{F})|- |\mathcal{C}(\mathcal{F})|= |\mathcal{V}_{\mathrm{free}}|-|\mathcal{C}(G_0/\pi)|.
    \end{aligned}
    \end{equation}
\end{enumerate}

Combining these factors, the total internal contribution is bounded by
\begin{equation}
\begin{aligned}
    N^{|\mathcal{C}(G_0/\pi)|} \,
    r^{|\mathcal{V}_{\mathrm{tied}}| + |\widetilde{\mathcal{V}}_b|} \,
    \bigl(\gamma (\sigma^*_N)^2\bigr)^{|\mathcal{E}_{extra}|}=N^{|\mathcal{C}(G_0/\pi)|} \,
    r^{|\mathcal{V}_{\mathrm{tied}}| + |\widetilde{\mathcal{V}}_b|} \,
    \bigl(\gamma (\sigma^*_N)^2\bigr)^{|\mathcal{E}_{\mathrm{int}}| - (|\mathcal{V}_{\mathrm{free}}| - |\mathcal{C}(G_0/\pi)|)}.
\end{aligned}
\end{equation}
By Lemma \ref{lem:V_b_upper_bound}, we have
\begin{equation}
\begin{aligned}
        |\mathcal{V}_{\mathrm{tied}}| + |\widetilde{\mathcal{V}}_b|&\le s+2\bigl(|\mathcal{E}_{\mathrm{int}}(G_0/\pi)|-|\mathcal{V}_{\mathrm{int}}(G_0/\pi)| \,+\, |\mathcal{V}_{\mathrm{tied}}(G_0/\pi)|\bigr)\\
        &= s+2\bigl(|\mathcal{E}_{\mathrm{int}}(G_0/\pi)|-|\mathcal{V}_{\mathrm{free}}(G_0/\pi)|\bigl).
\end{aligned}
\end{equation}
Combining the contribution of boundary edges, the total weight is bounded by
\begin{equation}
\begin{aligned}
    &a^{|\mathcal{E}_{b}(G_0)|}N^{|\mathcal{C}(G_0/\pi)|} \,
    r^{|\mathcal{V}_{\mathrm{tied}}| + |\widetilde{\mathcal{V}}_b|} \,
    \bigl(\gamma (\sigma^*_N)^2\bigr)^{|\mathcal{E}_{\mathrm{int}}| - (|\mathcal{V}_{\mathrm{free}}| - |\mathcal{C}(G_0/\pi)|)}\\
    &\le a^{|\mathcal{E}_{b}(G_0)|}(N\gamma (\sigma^*_N)^2)^{|\mathcal{C}(G_0/\pi)|}\bigl(\gamma ((r+1)\sigma^*_N)^2\bigr)^{|\mathcal{E}_{\mathrm{int}}| - |\mathcal{V}_{\mathrm{free}}| }(r+1)^{s}.
    \end{aligned}
\end{equation}
Note that 
\begin{equation}
    M\gamma ((r+1)\sigma^*_N)^2\le 1,\quad |\mathcal{C}(G_0/\pi)|\le |\mathcal{C}(G_0)|,\quad |\mathcal{E}_{\mathrm{int}}(G_0/\pi)|=|\mathcal{E}_{\mathrm{int}}(G_0)|.
\end{equation}
Rearranging the terms yields the bound in the lemma.
\end{proof}

 \subsubsection{Final proof of Theorem \ref{prop:gauss_irm}}
We then derive the estimate for $\mathcal{K}_{G_0,X}$ from  the above estimate for $\mathcal{F}_{G_0/\pi,X}$. 
 To do so, recalling the relation between
$\mathcal{K}_{G_0,X}$ and 
$\mathcal{F}_{G_0/\pi,X}$ in Corollary \ref{coro:upper_exclusion}, we need the following lemma. The  proof is detailed  in Appendix \ref{appendix A2}.
\begin{lemma}\label{lem:sum_partition}
For any integer $t\ge 1$, let $\mathcal{P}([t])$ be the set of all possible partition of $[t]$,  we have
\begin{equation}\label{equ:partition}
\sum_{\pi\in  \mathcal{P}([t])}
\prod_{B\in\pi} (|B|-1)!\; M^{-(|B|-1)}
\le \Big(1+\frac{t}{M}\Big)^{t} \le e^{\frac{t^2}{M}}.
\end{equation}
\end{lemma}

\begin{corollary}\label{coro:KX<KY} For $M\gamma((r+1)\sigma_N^*)^2\le 1, M\le N$ and $k:=k_1+\cdots+ k_s\le  {M}/{2}$,   let  $|\mathcal{C}(G_0/\pi)|$ be the number of connected components of $G_0/\pi$ without any boundary edge,  then 
    \begin{equation}
        |\mathcal{K}_{G_0,X}|\le (r+1)^{s}e^{\frac{2k^2}{M}}\big(\frac{N}{M}\big)^{|\mathcal{C}(G_0)|}\mathcal{K}_{G_0,Y}.
    \end{equation}

\end{corollary}
\begin{proof}
    We only consider the case $G_0$ is connected since the non-connected case is similar. By Corollary \ref{coro:upper_exclusion} and Lemma \ref{lem:F_upper_bound}, we have
    \begin{equation}
    \begin{aligned}
        \rho_a^{k}|\mathcal{K}_{G_0,X}|&\le \sum_{\mathcal{V}_{\mathrm{tied}}:\mathcal{V}_{\mathrm{tied}}\subset \mathcal{V}_{\mathrm{int}}}\sum_{\pi}
\Bigg(\prod_{B\in\pi}(|B|-1)!\Bigg)
\;\bigl|\mathcal{F}_{G_0/\pi,X,\mathcal{V}_{\mathrm{tied}}}\bigr|\\
&\le (r+1)^s\big(\frac{N}{M}\big)^{|\mathcal{C}(G_0)|}a^{|\mathcal{E}_{b}(G_0)|}\sum_{\mathcal{V}_{\mathrm{tied}}:\mathcal{V}_{\mathrm{tied}}\subset \mathcal{V}_{\mathrm{int}}}\sum_{\pi}
\Bigg(\prod_{B\in\pi}(|B|-1)!\Bigg)
            M^{-|\mathcal{E}_{\mathrm{int}}(G_0)| + |\mathcal{V}_{\mathrm{free}}(G_0/\pi )|}.
\end{aligned}
    \end{equation}
    From the definition of $G_0/\pi,$ the following graph identities hold
    \begin{equation}
        |\mathcal{V}_{\mathrm{free}}(G_0/\pi)|= |\mathcal{V}_{\mathrm{free}}(G_0)|-\sum_{B\in \pi} (|B|-1).
    \end{equation}
   So we have
    \begin{equation}
        \begin{aligned}
        &\sum_{\pi\in \mathcal{P}(\mathcal{V}_{\mathrm{free}}(G_0))}
\Bigg(\prod_{B\in\pi}(|B|-1)!\Bigg)
            M^{-|\mathcal{E}_{\mathrm{int}}(G_0)| + |\mathcal{V}_{\mathrm{free}}(G_0/\pi )|}\\
            &=M^{-|\mathcal{E}_{\mathrm{int}}(G_0)|+|\mathcal{V}_{\mathrm{free}}(G_0)|}\sum_{\pi\in \mathcal{P}(\mathcal{V}_{\mathrm{free}}(G_0))}
\Bigg(\prod_{B\in\pi}(|B|-1)!M^{-(|B|-1)}\Bigg)\\
&\le M^{-|\mathcal{E}_{\mathrm{int}}(G_0)|+|\mathcal{V}_{\mathrm{free}}(G_0)|}e^{\frac{1}{M}|\mathcal{V}_{\mathrm{free}}(G_0)|^2}\\
&\le M^{-|\mathcal{E}_{\mathrm{int}}(G_0)|+|\mathcal{V}_{\mathrm{free}}(G_0)|} e^{\frac{k^2}{M}}.
        \end{aligned}
    \end{equation}
    Here we use Lemma \ref{lem:sum_partition} and $|\mathcal{V}_{\mathrm{free}}(G_0)|\le k$ in last two inequalities.

Thus we obtain
    \begin{equation}\label{equ:KXupper}
    \begin{aligned}
        \rho_a^{k}|\mathcal{K}_{G_0,X}|
&\le (r+1)^s\big(\frac{N}{M}\big)^{|\mathcal{C}(G_0)|}a^{|\mathcal{E}_{b}(G_0)|}M^{-|\mathcal{E}_{\mathrm{int}}(G_0)|+|\mathcal{V}_{\mathrm{free}}(G_0)|} e^{\frac{k^2}{M}}\sum_{\mathcal{V}_{\mathrm{tied}}}1\\
&\le (r+1)^s\big(\frac{N}{M}\big)^{|\mathcal{C}(G_0)|}a^{|\mathcal{E}_{b}(G_0)|}M^{-|\mathcal{E}_{\mathrm{int}}(G_0)|+|\mathcal{V}_{\mathrm{free}}(G_0)|} e^{\frac{k^2}{M}}k^{|\mathcal{V}_{\mathrm{tied}}|}\\
&\le (r+1)^s\big(\frac{N}{M}\big)^{|\mathcal{C}(G_0)|}a^{|\mathcal{E}_{b}(G_0)|}M^{-|\mathcal{E}_{\mathrm{int}}(G_0)|+|\mathcal{V}_{\mathrm{int}}(G_0)|} e^{\frac{k^2}{M}}.
\end{aligned}
    \end{equation} In the last line, we use $k\le M$ and $|\mathcal{V}_{\mathrm{int}}|=|\mathcal{V}_{\mathrm{free}}|+|\mathcal{V}_{\mathrm{tied}}|$. On the other hand, from \eqref{G0expansion}, where we only need $\bar\eta$ is injective, we obtain
\begin{equation}\label{equ:KYlower}
\begin{aligned}
        \rho_a^{k}\mathcal{K}_{G_0,Y}&= a^{|\mathcal{E}_{b}(G_0)|} M^{-|\mathcal{E}_{\mathrm{int}}(G_0)|} M(M-1) \cdot (M-|\mathcal{V}_{\mathrm{int}}(G_0)|+1)\\
        &\ge a^{|\mathcal{E}_{b}(G_0)|}M^{-|\mathcal{E}_{\mathrm{int}}(G_0)|+|\mathcal{V}_{\mathrm{int}}(G_0)|}e^{-\frac{k^2}{M}}.
\end{aligned}
\end{equation}
Combining \eqref{equ:KXupper} and \eqref{equ:KYlower} yields the desired estimate
\begin{equation}
    |\mathcal{K}_{G_0,X}|\le (r+1)^{s}e^{\frac{2k^2}{M}}\big(\frac{N}{M}\big)^{|\mathcal{C}(G_0)|}\mathcal{K}_{G_0,Y},
\end{equation}  which completes the proof.
\end{proof}

Now we are ready to  prove Theorem \ref{prop:gauss_irm}.
\begin{proof}[Proof of Theorem \ref{prop:gauss_irm}] We focus on the case where $\Gamma$ has boundary edges, as the case without boundary edges is straightforward. By Corollary \ref{coro:KX<KY},  
    \begin{equation}
        |\mathcal{K}_{G_0,X}|\le (r+1)^{s}e^{\frac{2k^2}{M}}\big(\frac{N}{M}\big)^{|\mathcal{C}(G_0)|}\mathcal{K}_{G_0,Y}.
    \end{equation}
Noting that $|\mathcal{C}(G_0)|\le |\mathcal{C}(\Gamma)|$ because $G_0$ is a quotient of $\Gamma$, we apply Proposition \ref{prop:moments-iid} together with Proposition \ref{prop:relation:K} to obtain
 \begin{equation}
        \left|\mathcal{W}_{\Gamma, X_N}(k_1,\dots,k_s)\right|\le (r+1)^{s}e^{\frac{2k^2}{M}}\big(\frac{N}{M}\big)^{|\mathcal{C}(\Gamma)|}\mathcal{W}_{\Gamma, Y_M}(k_1,\dots,k_s),
    \end{equation}
which completes the proof.
\end{proof}

\subsection{Upper bounds of mixed moments}\label{sec:moments}

As a consequence of Theorem \ref{thm:dominating_function}, we obtain the   upper bound estimates for the moments of $X_N$, which will be applied several times in Section \ref{sec:proof_of_main}.

\begin{corollary}\label{upper_bounds_moments}
    With $X_N$ given in Definition  \ref{def:inhomo}, we assume that for $j=1,2,\ldots,s$,
    \begin{equation}\label{equ:summption_k_moments}
   k_j \le \frac{\xi}{2s\sqrt{\gamma} (r+1)\sigma_N^*},  
\end{equation} 
and set $k_{\rm min}=\min \{k_1,\ldots,k_s\}$. If $\xi \le \frac{1}{2}\gamma^{-1/2} ((r+1)\sigma_N^*)^{-1}$, then there exists an absolute constant $C(s,a)>0$ depending only on $a$ and $s$ such that the following upper bounds hold:
    
\noindent \text{(i)}   
\begin{equation}\label{equ:upper_bounds_moments_1}
\rho_a^{-\sum_{j}k_j} \Big| \mathbb{E}[\prod_{j=1}^{s}\tr X^{k_j}] - \mathbb{E}[\prod_{j=1}^{s}(\tr X^{k_j}-\tr H^{k_j})] \Big| \le C(s,a)(r+1)^{s}N^s\Big(\frac{2}{\rho_a}\Big)^{k_{\rm min}}e^{C(s,a)\xi^2},
\end{equation}
\noindent\text{(ii)} 
\begin{equation}\label{equ:upper_bounds_moments_2}
        \rho_a^{-\sum_{j}k_j}\mathbb{E}\Big[\prod_{j=1}^{s}\tr X_{N}^{k_j}\Big]  \le (r+1)^{s}C(s,a)e^{C(s,a)\xi^2}\Big(1+N^s\Big(\frac{2}{\rho_a}\Big)^{k_{\rm min}}\Big). \end{equation}
\end{corollary}
\begin{proof}
  Taking $M=\lfloor \gamma^{-1}((r+1)\sigma_N^*)^{-2} \rfloor$, we see from \eqref{equ:summption_k_moments} $\sum_{j=1}^s k_j \le \xi \sqrt{M}$. Since $\xi \le \frac{1}{2}\gamma^{-1/2} ((r+1)\sigma_N^*)^{-1} \le \sqrt{M}/2 $, we have $\sum_{j=1}^{s}k_j \le   M/2$. Therefore, the assumptions of Theorem \ref{Prop:Upper bound} and Theorem \ref{prop:gauss_irm} are satisfied.  We first prove \eqref{equ:upper_bounds_moments_1} and \eqref{equ:upper_bounds_moments_2} for the case 
$s=1$, and then extend the result to general 
$s$ case.

\paragraph{Proof of Part (i): $s=1$.} We first prove \eqref{equ:upper_bounds_moments_1} for $s=1$. Combining the  diagram expansions (Proposition \ref{prop:moments-iid},  Proposition \ref{prop:diagram_expansion_subGaussian_boundary}) and Theorem \ref{prop:gauss_irm}, we see that for any even $k\le\frac{\xi}{\sqrt{\gamma} (r+1)\sigma_N^*}$ (noting by symmetry, $\mathbb{E}\big[\tr H^{k}\big]=0$ if $k$ is odd),
\begin{equation}
\begin{aligned}   \rho_a^{-k}\mathbb{E}\big[\tr H^{k}\big] =& \sum_{\Gamma \in \mathcal{D}_{\beta,1}\backslash \mathcal{D}_{\beta,1}^{b}}\mathcal{W}_{\Gamma, X_N}(k)\\ 
    \le &\frac{N}{M}e^{\frac{2k^2}{M}}\sum_{\Gamma \in \mathcal{D}_{\beta,1}\backslash \mathcal{D}_{\beta,1}^{b}}\mathcal{W}_{\Gamma, H_{{\rm GOE}_M}}(k)=\frac{N}{M}e^{\frac{2k^2}{M}}\rho_a^{-k}\mathbb{E}\big[\tr H_{{\rm GOE}_M}^{k}\big].
\end{aligned}
\end{equation}

By \cite[Eq. (I.5.6)]{feldheim2010universality} (see also \cite[Theorem 8]{ledoux2009recursion}), we have the following upper bound for the trace of high power of $H_{{\rm GOE}_M}$:
\begin{equation}\label{ineq:H^k}
    \mathbb{E}\big[\mathrm{Tr}{H^{k}_{\mathrm{GOE}_M}}\big]\le C_0M2^{k}e^{C_0\frac{k^3}{M^2}},
\end{equation}
where $C_0$ is a positive constant independent of $k$ and $M$.
Hence, we get
\begin{equation}\label{equ:upper_for_H,s=1}
 \rho_a^{-k}\mathbb{E}\big[\tr H^{k}\big] \le C_0\rho_a^{-k}2^kNe^{C_0(\frac{k^3}{M^2}+\frac{k^2}{M})} \le C_0\rho_a^{-k}2^kNe^{C_0\frac{k^2}{M}},
\end{equation}
where in the last inequality we use $k \le M/2$.

\paragraph{Proof of Part (ii):  $s=1$.} Taking $s=1$ in Proposition \ref{prop:diagram_expansion_subGaussian_boundary}, we obtain 
\begin{equation} \rho_a^{-k}\mathbb{E}\big[\tr X^{k}\big]=\rho_a^{-k}\mathbb{E}\big[\tr H^{k}\big]+ \sum_{\Gamma \in \mathcal{D}_{\beta,1}^{b}}\mathcal{W}_{\Gamma, X_N}(k).
\end{equation}
By Theorem \ref{thm:dominating_function}, for any $k\le\frac{\xi}{\sqrt{\gamma} (r+1)\sigma_N^*}$, we obtain the following upper bound 
\begin{equation}\label{equ:upper_bound_X_s=1}
    \sum_{\Gamma \in \mathcal{D}_{\beta,1}^{b}}\mathcal{W}_{\Gamma, X_N}(k) \le  C_a(r+1)e^{C_a
    \frac{k^2}{M}}.
\end{equation}
The proof of (ii) when $s=1$ follows by combining \eqref{equ:upper_for_H,s=1} and \eqref{equ:upper_bound_X_s=1}.

\paragraph{Proof of Part (i):  $s \ge 2$.} Now we prove \eqref{equ:upper_bounds_moments_1} for $s \ge 2$. 
Noting the following expansion 
\begin{equation}\label{equ:E}
   \mathbb{E}\big[\prod_{j=1}^{s}\tr X^{k_j}\big]-\mathbb{E}\big[\prod_{j=1}^{s}(\tr X^{k_j}-\tr H^{k_j})\big]
 =\sum_{\substack{J \subset [s]\\ J \neq \emptyset}}(-1)^{|J|}\mathbb{E}\Big[\prod_{j\in J}\tr H^{k_j}\prod_{j\notin J}\tr X^{k_j}\Big],
\end{equation} it suffices to prove that  for any non-empty subset $J \subset \{1,2,\ldots,s\}$,  
\begin{equation}\label{equ:555}
\Big| \mathbb{E}\Big[\prod_{j\in J}\tr H^{k_j}\prod_{j\notin J}\tr X^{k_j}\Big]\Big| \le C(s,a)(r+1)^sN^s2^{\sum_{j \in J}k_j}\rho_a^{\sum_{j \notin J}k_j}.
\end{equation}
Using the H\"older inequality
\begin{equation}
    \Big|\mathbb{E}[\prod_{i=1}^s \xi_i]\Big|\le \prod_{i=1}^{s} \left(\mathbb{E}[|\xi_i|^{2s}]\right)^{\frac{1}{2s}},
\end{equation}
we see 
\begin{equation}  \mathbb{E}\Big[\prod_{j\in J}\tr H^{k_j}\prod_{j\notin J}\tr X^{k_j}\Big] \le \prod_{j \in J}\left(\mathbb{E}\Big[\big(\tr H^{k_j}\big)^{2s}\Big]\right)^{\frac{1}{2s}}\prod_{j \notin J}\left(\mathbb{E}\Big[\big(\tr X^{k_j}\big)^{2s}\Big]\right)^{\frac{1}{2s}}.
\end{equation}
Then, using  H\"older inequality again but for its eigenvalues we see for any $N \times N$ Hermitian matrix $L$ and any $k$,
\begin{equation}\label{equ:Holder}
(\tr L^k)^{2s} =\big(\sum_{\ell=1}^{N}\lambda_{\ell}^{k}\big)^{2s}\le N^{2s-1}\sum_{\ell=1}^{N}\lambda_\ell^{2ks}=  N^{2s-1}\tr L^{2ks}.
\end{equation}Combining with \eqref{equ:upper_for_H,s=1}   (noting that $2sk_j\le\frac{\xi}{\sqrt{\gamma} (r+1)\sigma_N^*}$ by \eqref{equ:summption_k_moments}), we  get   the   upper bound for the moments of the non-deformed matrix $H$
\begin{equation}\label{equ:moments_upper_bounds_H}
    \prod_{j \in J}\left(\mathbb{E}\Big[\big(\tr H^{k_j}\big)^{2s}\Big]\right)^{\frac{1}{2s}} \le N^{|J|(1-\frac{1}{2s})} \prod_{j \in J}\left(\mathbb{E}[\tr H^{2sk_j}]\right)^{\frac{1}{2s}} \le CN^{|J|}2 ^{\sum_{j\in J}k_j}e^{2C_0s^2 \xi^2}.
\end{equation}
Similarly, by applying \eqref{equ:Holder} for the moments of $X$ and using \eqref{equ:upper_bounds_moments_2} for $s=1$ (which has been  proved in the above step), we obtain
\begin{equation}\label{equ:moments_upper_bounds_X}
\prod_{j \notin J}\Big(\mathbb{E}\Big[\big(\tr X^{k_j}\big)^{2s}\Big]\Big)^{\frac{1}{2s}} \le N^{s-|J|}C_a \prod_{j \notin J}\Big(\mathbb{E}[\tr X^{2sk_j}]\Big)^{\frac{1}{2s}}\le N^{s-|J|}C_a (r+1)^s\rho_a^{\sum_{j \notin J}k_j} e^{2C_as^2 \xi^2}.
\end{equation}The proof of \eqref{equ:555} is thus completed by combining \eqref{equ:moments_upper_bounds_H} with \eqref{equ:moments_upper_bounds_X}.

\paragraph{Proof of (ii):  $s\ge 2$.} By \eqref{equ:upper_bounds_moments_1}, it remains to establish  the following upper bound:
\begin{equation}\label{equ:upper_bounds_boundary_irm}
   \Big| \mathbb{E}\Big[\prod_{j=1}^{s}(\tr X^{k_j}-\tr H^{k_j})\Big]\Big| \le  C_1(s,a)\, e^{C_{2}(s,a)\xi^2}, 
\end{equation}
which follows by combining Proposition \ref{prop:diagram_expansion_subGaussian_boundary} with Theorem \ref{thm:dominating_function}.
\end{proof}

\section{BBP transition}\label{sec:proof_of_main}
This section presents the proof of Theorem \ref{thm:LLN1}. Specifically, we will proceed by  establishing the almost sure upper  bounds  for the   spectral norm  $\|X_N\|_{\mathrm{op}}$,   concentration inequalities  of the central moments  of matrix powers,   and  convergence of the spectral projection measure  as stated   in Theorem \ref{thm:spectral_measure}.   

\subsection{Almost sure  bounds for the spectral norm 
}

As a first step, we establish an upper bound for the spectral norm. 
\begin{lemma}\label{lem:rho_a_a.s.}
     With $X_N$ given in 
Definition  \ref{def:inhomo}, assume that  the spectral    norm $\|A_N\|_{\mathrm{op}}\le a$ with some constant $a>1$. If   $(r+1)\sigma_N^*\sqrt{\log N}\to 0$ as $N\to \infty$, then  
\begin{equation}
        \limsup\limits_{N\rightarrow \infty}\|X_N\|_{\mathrm{op}}\le \rho_a~\ \  a. s.
    \end{equation}
 \end{lemma}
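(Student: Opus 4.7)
The plan is to use the high-moment method. Since $\|X_N\|_{op}^{2k}\le \tr X_N^{2k}$, Markov's inequality gives
\begin{equation*}
\mathbb{P}\bigl(\|X_N\|_{op}>(1+\epsilon)\rho_a\bigr)\le \bigl((1+\epsilon)\rho_a\bigr)^{-2k}\mathbb{E}[\tr X_N^{2k}],
\end{equation*}
and I would show that for $k=\lceil c\log N\rceil$ with $c$ large enough (depending on $\epsilon$ and $a$) the right-hand side is $O(N^{-2})$. Borel-Cantelli and then letting $\epsilon\downarrow 0$ along a countable sequence would yield the lemma.

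For the moment estimate, the plan is to apply the ribbon graph expansion of Proposition \ref{prop:moments-iid} at $s=1$ and split it according to the presence of boundary edges, as in \eqref{equ:X-H}:
\begin{equation*}
\mathbb{E}[\tr X_N^{2k}]=\mathbb{E}[\tr H_N^{2k}]+\rho_a^{2k}\sum_{\Gamma\in\bigcup_{g,t}\Gamma_{g,t,1}^{\ge 3,b}}\mathcal{W}_{\Gamma,X_N}(2k).
\end{equation*}
The deformed sum is handled directly by Theorem \ref{thm:dominating_function} with $\xi:=2k\sqrt{\gamma}(r+1)\sigma_N^*$, giving a bound of the form $\rho_a^{2k}\cdot r\,C_1(1,a)\exp\{C_2(1,a)\xi^2\}$. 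For the undeformed part $\mathbb{E}[\tr H_N^{2k}]$, I would apply Theorem \ref{prop:guass_irm} diagram-by-diagram on the boundary-edge-free diagrams (choosing $M$ with $M\gamma(r+1)^2(\sigma_N^*)^2\le 1$), recognize $\sum_{\Gamma:E_b=\emptyset}\mathcal{W}_{\Gamma,Y_M}(2k)=\rho_a^{-2k}\mathbb{E}[\tr H_{\mathrm{GOE}_M}^{2k}]$ (since boundary-free diagrams are insensitive to the rank-one deformation in $Y_M$), and bound the latter by the Feldheim-Sodin estimate of Theorem \ref{thm:sodin_upper}. Using $4(M-2)/M\le 4$, this yields $\mathbb{E}[\tr H_N^{2k}]\le CN\cdot 4^{k}\cdot k^{-3/2}\exp\{O(k^2/M+k^3/M^2)\}$.

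With $k=\lceil c\log N\rceil$ and $M=\lfloor 1/(\gamma(r+1)^2(\sigma_N^*)^2)\rfloor$, the hypothesis $(r+1)\sigma_N^*\sqrt{\log N}\to 0$ forces $\xi^2$, $k^2/M$ and $k^3/M^2$ all to be $o(\log N)$, so every stretched-exponential error factor contributes only $N^{o(1)}$. Since $\rho_a>2$ whenever $a>1$, the Markov bound becomes
\begin{equation*}
N^{1+o(1)}\Bigl(\tfrac{2}{(1+\epsilon)\rho_a}\Bigr)^{2k}+N^{o(1)}(1+\epsilon)^{-2k}\le N^{-2}
\end{equation*}
once $c$ is chosen sufficiently large in terms of $\epsilon$ and $a$. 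The main difficulty, and the point where the assumption is essentially used, is precisely this balance: the window $(r+1)\sigma_N^*\sqrt{\log N}\to 0$ is just enough to ensure $\xi^2,k^2/M,k^3/M^2=o(\log N)$ when $k$ is as large as $c\log N$, which in turn is what is needed for the Markov plus Borel-Cantelli step to close; any weaker hypothesis would leave $\xi^2$ comparable to $\log N$ and destroy the polynomial margin created by $\rho_a>2$.
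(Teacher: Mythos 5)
Your proposal is correct and essentially reproduces the paper's argument: the same decomposition of the ribbon-graph expansion into boundary-edge diagrams (controlled by the dominating function of Theorem \ref{thm:dominating_function}) and boundary-edge-free diagrams (controlled via the GOE comparison of Theorem \ref{prop:guass_irm} plus the Feldheim--Sodin bound of Theorem \ref{thm:sodin_upper}), followed by Markov and Borel--Cantelli, with $\rho_a>2$ supplying the polynomial margin. The only difference is cosmetic: you take $k=\lceil c\log N\rceil$ with $c=c(\epsilon,a)$ large, whereas the paper takes $k=\sqrt{M\log N}\gg\log N$, which makes $(1+\epsilon)^{-2k}$ decay super-polynomially without tuning $c$; both choices satisfy $k\le M/2$ and keep $\xi^2,k^2/M,k^3/M^2=o(\log N)$ under the hypothesis $(r+1)\sigma_N^*\sqrt{\log N}\to 0$, so both close.
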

 \begin{proof}[Proof] 
Take  $s=1$, $k=\lfloor(\log N)^{1/2}\gamma^{-1/2}((r+1)\sigma_N^*)^{-1}\rfloor$, and $\xi=\sqrt{\log N}$ in Corollary \ref{upper_bounds_moments}. Since $\gamma((r+1)\sigma_N^*)^2\log N\rightarrow 0$, we see $\xi \le \frac{1}{2}\gamma^{-1/2} ((r+1)\sigma_N^*)^{-1}$ and $  k \gg \log N$. Combining  Corollary \ref{upper_bounds_moments} and the Markov inequality, for any given $\epsilon>0$ we obtain
    \begin{equation}
        \mathbb{P}(\|X_N\|_{\mathrm{op}}\ge (1+\epsilon)\rho_a)\le ((1+\epsilon)\rho_a)^{-2k}\mathbb{E}[\tr X_{N}^{2k}]\le CN^{C}(1+\epsilon)^{-2k}.
    \end{equation}
    
    Since $k\gg \log N$,  there is  a constant  $C_{\epsilon,a}$ depending on $ \epsilon,a$ such that 
    \begin{equation}
        \mathbb{P}(\|X_N\|_{\mathrm{op}}\ge (1+\epsilon)\rho_a)\le C_{\epsilon,a}N^{-2024}.
    \end{equation}
    By the Borel-Cantelli lemma,  we obtain 
    \begin{equation}
        \limsup\limits_{N\rightarrow \infty}\|X_N\|_{\mathrm{op}}\le (1+\epsilon)\rho_a~~~~ a. s.
    \end{equation}
    Taking $\epsilon\rightarrow 0$ yields  the desired result.
 \end{proof}

One of the key results established in this section is the absence of outliers whenever the perturbation remains below a certain threshold.
\begin{theorem}
\label{thm:no_outlier}
    With $X_N$ given in 
Definition  \ref{def:inhomo},  assume that  the spectral norm    $\|A_N\|_{\mathrm{op}}\le 1$. If    $(r+1)\sigma_N^*\sqrt{\log N}\to 0$ as $N\to \infty$, then    
    \begin{equation}
        \|X_N\|_{\mathrm{op}}\rightarrow 2~~~~  a.s. 
    \end{equation}

\end{theorem}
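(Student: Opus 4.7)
The plan is to establish matching almost-sure asymptotic upper and lower bounds on $\|X_N\|_{op}$, each of which follows fairly directly from machinery already in place.

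For the upper bound $\limsup_N \|X_N\|_{op} \le 2$ a.s., I would apply Lemma~\ref{lem:rho_a_a.s.} with parameter $a = 1 + \epsilon$ for arbitrary $\epsilon > 0$: since $\|A_N\|_{op} \le 1 \le 1 + \epsilon$ and the sparsity hypothesis $(r+1)\sigma_N^* \sqrt{\log N}\to 0$ matches the lemma's assumption, it gives $\limsup_N \|X_N\|_{op} \le \rho_{1+\epsilon}$ a.s. Taking $\epsilon$ along a countable sequence decreasing to $0$ and using $\rho_{1+\epsilon} = (1+\epsilon) + (1+\epsilon)^{-1} \to 2$, I obtain $\limsup_N \|X_N\|_{op} \le 2$ a.s.

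For the lower bound $\liminf_N \|X_N\|_{op} \ge 2$ a.s., I would combine Weyl's interlacing with the almost-sure weak convergence of the empirical spectral distribution of $H_N$ to the semicircle law on $[-2,2]$, which holds under the hypothesis $\sigma_N^* \to 0$ by the G\"otze--Tikhomirov type result \cite{gotze2015limit} cited in the introduction. Since $A_N$ has rank at most $r$, Weyl's inequality applied to $H_N = X_N - A_N$ yields $\lambda_1(X_N) \ge \lambda_{r+1}(H_N)$, and by the symmetric argument for $-X_N$, $-\lambda_N(X_N) \ge -\lambda_{N-r}(H_N)$. Fix $\epsilon > 0$. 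Because the semicircle density assigns strictly positive mass to each of the intervals $(2-\epsilon, 2]$ and $[-2, -2+\epsilon)$, the counts $|\{i : \lambda_i(H_N) \ge 2-\epsilon\}|$ and $|\{i : \lambda_i(H_N) \le -2+\epsilon\}|$ both grow linearly in $N$ almost surely, and thus eventually exceed $r+1$. Hence $\lambda_{r+1}(H_N) \ge 2-\epsilon$ and $\lambda_{N-r}(H_N) \le -2+\epsilon$ eventually a.s., so $\|X_N\|_{op} \ge 2 - \epsilon$ eventually a.s., and sending $\epsilon \to 0$ through a countable sequence gives $\liminf_N \|X_N\|_{op} \ge 2$ a.s.

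Combining the two bounds gives $\|X_N\|_{op} \to 2$ a.s. The substantive content of the theorem is concentrated in Lemma~\ref{lem:rho_a_a.s.}, whose proof (ribbon graph expansions together with the GOE dominating function of Section~\ref{Section:GUE/GOE case}) is the actual workhorse; once that lemma is granted, neither the continuity step $a \to 1^+$ nor the Weyl/semicircle lower bound presents any real obstacle, so I expect no serious difficulty in carrying out this plan.
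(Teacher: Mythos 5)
Your proposal matches the paper's own proof in both structure and content: the upper bound is obtained by applying Lemma~\ref{lem:rho_a_a.s.} with $a = 1+\epsilon$ and sending $\epsilon \to 0^+$, and the lower bound comes from the almost-sure semicircle limit of the ESD of $H_N$ from \cite{gotze2015limit} combined with Weyl interlacing. Your version merely spells out the interlacing step more explicitly (passing through $\lambda_{r+1}(H_N)$ and $\lambda_{N-r}(H_N)$) than the paper does, but this is the same argument.
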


\begin{proof}

 Choose   $a>1$.  By Lemma \ref{lem:rho_a_a.s.}, we have 
    \begin{equation}
        \limsup\limits_{N\rightarrow \infty}\|X_N\|_{\mathrm{op}}\le \rho_a~~~ a.s.
    \end{equation}
    Take the limit as $a \to 1^+$ and we obtain an upper bound 
    \begin{equation} \label{supbound}
        \limsup\limits_{N\rightarrow \infty}\|X_N\|_{\mathrm{op}}\le 2 ~~~ a.s.
    \end{equation}

On the other hand, it's easy to see from 
  the stochastic  assumption (A2) in Definition \ref{def:inhomo} that   $\sigma_N^*\geq 1/\sqrt{N}$.  So 
  using the condition
    $(r+1)\sigma_N^*\sqrt{\log N}\to 0$, we   derive   
   $r+1= o\big(\sqrt{N/\log N}\big)$. 
    Since     the empirical  spectral measure  of  the IRM  ensemble  $X_N$ almost surely converges  weakly  to the semicircle law as $\sigma_N^*\rightarrow 0$,  as shown    in \cite{gotze2015limit}, 
     the same holds for the deformed IRM ensemble by Weyl’s interlacing inequalities. Therefore, we conclude that
    \begin{equation}
        \liminf\limits_{N\rightarrow \infty}\|X_N\|_{\mathrm{op}}\ge 2 ~~~ a.s.
    \end{equation}
    
Combining this with \eqref{supbound}, we finally obtain the   desired limit.     \end{proof}

\subsection{Concentration of central moments}

A crucial step in proving Theorem \ref{thm:spectral_measure} is to establish upper bounds on the central moments of the power entries $(H_N^t)_{ij}$
  for the sub-Gaussian IRM matrix  $H_N$. To this end, we first reduce the sub-Gaussian case to the Gaussian case using the following comparison lemma, whose   proof will be provided in Appendix \ref{appendix:sec4}.
 
\begin{lemma}\label{lem:Gaussian_moment_dominate}
    For any finite index set    $E$,  let $(Y_e)_{e \in E}$ be  independent symmetric $\gamma$-sub-Gaussian complex  random variables, and let $(Z_e)_{e \in E}$ be independent real centered Gaussian random variables with variance $16\gamma$. Then, for any integers $k(i,e),k'(i,e)\ge 0$ we have
    \begin{equation}
        \mathbb{E}\bigg[\prod_{i=1}^{s}\Big(\prod_{e\in E}Z_{e}^{k(i,e)+k'(i,e)}-\prod_{e\in E}\mathbb{E}[Z_{e}^{k(i,e)+k'(i,e)}]\Big)\bigg]\ge \bigg|\mathbb{E}\bigg[\prod_{i=1}^{s}\Big(\prod_{e\in E}Y_{e}^{k(i,e)}\bar{Y}_{e}^{k'(i,e)}-\prod_{e\in E}\mathbb{E}[Y_{e}^{k(i,e)}\bar{Y}_{e}^{k'(i,e)}]\Big)\bigg]\bigg| .
    \end{equation}
\end{lemma}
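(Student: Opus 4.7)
The plan is to reduce the inequality to a single-coordinate comparison via independence, and then handle the alternating signs by a telescopic replacement across $E$. Writing $A_i := \prod_{e \in E} Y_e^{k(i,e)} \bar Y_e^{k'(i,e)}$ and $B_i := \prod_{e \in E} Z_e^{k(i,e)+k'(i,e)}$, the inclusion-exclusion identity $\prod_i (X_i - \mathbb{E}X_i) = \sum_{S\subseteq [s]}(-1)^{s-|S|}\prod_{i \in S}X_i \prod_{j\notin S}\mathbb{E}X_j$ combined with independence across $e$ yields
\begin{equation*}
\mathbb{E}\Big[\prod_i(A_i-\mathbb{E}A_i)\Big] = \sum_S(-1)^{s-|S|}\prod_{e \in E}\alpha_e(S),\qquad \mathbb{E}\Big[\prod_i(B_i-\mathbb{E}B_i)\Big] = \sum_S(-1)^{s-|S|}\prod_{e \in E}\beta_e(S),
\end{equation*}
where $\alpha_e(S) := \mathbb{E}[Y_e^{m_e(S)}\bar Y_e^{m'_e(S)}]\prod_{j\notin S}\mathbb{E}[Y_e^{k(j,e)}\bar Y_e^{k'(j,e)}]$ with $m_e(S) = \sum_{i\in S}k(i,e)$ and $m'_e(S) = \sum_{i\in S}k'(i,e)$, and $\beta_e(S)$ is the Gaussian analogue with combined exponent $n(i,e) = k(i,e)+k'(i,e)$.

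Next I would perform the coordinate-wise moment comparison. The sub-Gaussian hypothesis $\mathbb{E}[|Y_e|^{2k}] \leq \gamma^{k-1}(2k-1)!!$ together with $\mathbb{E}[Z_e^{2k}] = (16\gamma)^k(2k-1)!!$ gives
\begin{equation*}
|\mathbb{E}[Y_e^a\bar Y_e^b]| \leq \mathbb{E}[|Y_e|^{a+b}] \leq \mathbb{E}[Z_e^{a+b}],
\end{equation*}
the factor $16$ providing the slack needed at low moment orders. The symmetry $Y_e\stackrel{d}{=}-Y_e$ ensures that both sides vanish whenever $a+b$ is odd. Consequently $|\alpha_e(S)| \leq \beta_e(S)$ pointwise, with $\beta_e(S) \geq 0$.

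The main obstacle is passing from this pointwise bound to a signed-sum bound, since the factors $(-1)^{s-|S|}$ preclude a direct triangle inequality. My plan is a telescopic swap: enumerate $E = \{e_1,\ldots,e_m\}$ and define
\begin{equation*}
T_j := \sum_S(-1)^{s-|S|}\prod_{l \leq j}\beta_{e_l}(S)\prod_{l > j}\alpha_{e_l}(S), \qquad 0 \leq j \leq m,
\end{equation*}
so that $T_0$ is the complex side and $T_m$ is the Gaussian side. The inductive claim $|T_{j-1}| \leq T_j$ reduces, after freezing all coordinates $\neq e_j$, to a one-variable inequality
\begin{equation*}
\Big|\sum_S C_S\,\alpha_{e_j}(S)\Big| \leq \sum_S C_S\,\beta_{e_j}(S)
\end{equation*}
for real coefficients $C_S$ inherited from the fixed coordinates. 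I expect the scalar inequality to follow by expanding each side into monomials in a single variable and invoking the coefficient-wise moment domination, where the Wick-pairing interpretation of the Gaussian moments ensures that each contribution carries a coherent non-negative sign.

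The hardest step will be the last one: verifying the scalar comparison in the presence of arbitrary real coefficients $C_S$. I expect that the specific combinatorial structure of $(2k-1)!!$ (counting perfect matchings) together with the multiplicative slack provided by the constant $16$ makes the scalar inequality provable by an induction on $s$, with the base case $s=1$ being trivial (both sides vanish identically) and the inductive step reducing to a covariance-type estimate where FKG-positive-correlation for non-negative monotone functions of the Gaussian variable $|Z_{e_j}|$ guarantees that the centered $s$-fold Gaussian moment majorizes the absolute value of its sub-Gaussian counterpart.
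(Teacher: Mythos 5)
Your inclusion-exclusion expansion is correct, as is the pointwise bound $|\alpha_e(S)| \le \beta_e(S)$ and the one-variable moment comparison underpinning it. But the telescoping step, which you yourself flag as the hard part, does not go through as proposed. For $1 \le j < m$ the quantity $T_j$ still contains the complex factors $\alpha_{e_l}(S)$ with $l > j$, so there is no reason it should be real, much less non-negative, yet $T_j \ge 0$ is implicitly required for the inequality $|T_{j-1}| \le T_j$ to be meaningful (and you cannot bootstrap it backwards from $T_m \ge 0$). Worse, the scalar inequality you reduce to,
\begin{equation*}
\Bigl|\sum_{S} C_S\,\alpha_{e_j}(S)\Bigr| \le \sum_{S} C_S\,\beta_{e_j}(S),
\end{equation*}
is false for general signed $C_S$ (with mixed signs the right-hand side can be made zero or negative), so it can only hold by exploiting a structural positivity of the actual coefficients, which is exactly what is missing. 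The $C_S$ that arise are products of single-coordinate moments carrying the alternating factor $(-1)^{s-|S|}$, so they are not sign-coherent, and the functions involved are not monotone, so the FKG remark does not apply. The root difficulty is that the inclusion-exclusion expansion destroys termwise positivity, and nothing in the proposal restores it.

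The paper takes a different route that never writes down the alternating expansion. It also inducts on $|E|$, but the inductive step (its Lemma \ref{lem:moment_dominate}) uses the algebraic split $Z_i A^{n_i}-\mathbb{E}[Z_i]\mathbb{E}[A^{n_i}]=(Z_i-\mathbb{E}Z_i)A^{n_i}+\mathbb{E}[Z_i](A^{n_i}-\mathbb{E}A^{n_i})$ for the fresh coordinate $A=Z_{e_0}$, expands the product over $i$ via $\prod_i(a_i+b_i)=\sum_{J}\prod_{i\in J}a_i\prod_{i\in J^c}b_i$, and factors each resulting expectation using independence of the $e_0$-coordinate from the rest. Every Gaussian summand is then a product of a centered moment of the $(Z_i)$ and a (partially centered) moment of $Z_{e_0}$; both factors are non-negative by the inductive hypothesis and the single-variable Lemma \ref{lem:gaussian_moment_dominate}, so the triangle inequality applies term by term. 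That termwise positivity is precisely what your decomposition lacks; repairing your argument would essentially require rediscovering this alternative split.
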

We are now ready to establish upper bounds for the central moments of the entries of $H_N^t$ and   $H_{{\rm GOE}_M}^{t}$. 
\begin{proposition}\label{coro:gaussian_bound_sub}
    With  the    sub-Gaussian IRM matrix $H_N$  in Definition  \ref{def:inhomo},   for   any integer $M$ such that   $M(\sigma_N^*)^2\le 1$  and for any integer $s,t>0$, we have 
    \begin{equation}
    \begin{aligned}
    \mathbb{E}\bigg[\bigg|(H_N^{t})_{m_im_j}-\mathbb{E}[(H_N^{t})_{m_im_j}]\bigg|^{2s}\bigg]&\le (16\gamma)^{st}\mathbb{E}\bigg[\bigg|(H_{{\rm GOE}_M}^{t})_{ij}-\mathbb{E}[(H_{{\rm GOE}_M}^{t})_{ij}]\bigg|^{2s}\bigg]\\
    &\le (16\gamma)^{st}\mathbb{E}\bigg[\bigg|(H_{{\rm GOE}_M}^{t})_{11}-\mathbb{E}[(H_{{\rm GOE}_M}^{t})_{11}]\bigg|^{2s}\bigg].
    \end{aligned}
\end{equation}
\end{proposition}

\begin{proof}
We begin with the first inequality. Let us denote a path of length   $t$ by $p^{t}=(p_0p_1\ldots p_t)$ and define    $H_{p^t}=H_{p_0p_1}H_{p_1p_2}\ldots H_{p_{t-1}p_{t}}$. Then the entries of the power matrix may be rewritten as 
    \begin{equation}
        (H_N^{t})_{m_im_j}=\sum_{p^t:m_i\rightarrow m_j} H_{p^t},
    \end{equation}
from which      we get
\begin{equation} \label{productterm}
\begin{aligned}
    \mathbb{E}\bigg[\bigg|(H_N^{t})_{m_im_j}-\mathbb{E}[(H_N^{t})_{m_im_j}]\bigg|^{2s}\bigg]&=\sum_{p^{t,1}, \ldots, p^{t,2s}}\mathbb{E}\bigg[\prod_{i=1}^{s}\Big(\big(H_{p^{t,i}}-\mathbb{E}[H_{p^{t,i}}]\big)\big(\bar{H}_{p^{t,i+s}}-\mathbb{E}[\bar{H}_{p^{t,i+s}}]\big)\Big) \bigg],
\end{aligned}
\end{equation}
where $p^{t,1},p^{t,2},\ldots, p^{t,2s}$ are paths from  $m_i$ to $m_j$.

By Lemma \ref{lem:Gaussian_moment_dominate}, every   product  term  on   the right-hand side of  \eqref{productterm} can be   dominated by the corresponding  term of the  Gaussian IRM matrix $H_N$ with variance $16\gamma$ as given in Definition  \ref{def:inhomo}. Therefore, by extracting the  factor $(16\gamma)^{st}$, it suffices to prove that when $H_{G_N}$ is a real Gaussian IRM matrix, its central moments  can be bounded by those of the GOE, namely,  
    \begin{equation}\label{equ:central_moments_Gaussian IRM}
         \mathbb{E}[|(H_{G_N}^{t})_{m_im_j}-\mathbb{E}[(H_{G_N}^{t})_{m_im_j}]|^{2s}]\le \mathbb{E}\bigg[\bigg|(H_{{\rm GOE}_M}^{t})_{ij}-\mathbb{E}[(H_{{\rm GOE}_M}^{t})_{ij}]\bigg|^{2s}\bigg].
    \end{equation}
    
    To prove \eqref{equ:central_moments_Gaussian IRM}, we repeat the same calculation as in Section \ref{sec:ribbon_expansion} and use the Wick formula to perform the ribbon graph expansion of 
    \begin{equation}
        \mathbb{E}[|(H_{G_N}^{t})_{m_im_j}-\mathbb{E}[(H_{G_N}^{t})_{m_im_j}]|^{2s}]
    \end{equation}
    again. To be precise, we can get
    \begin{equation}\label{equ:expansion_central_moments}
        \mathbb{E}[|(H_{G_N}^{t})_{m_im_j}-\mathbb{E}[(H_{G_N}^{t})_{m_im_j}]|^{2s}]=\sum_{\Upsilon}\sum_{\substack{\eta:V(\Upsilon) \to [N]\\ \eta(v_p)=m_i,\eta(w_p)=m_j}}\prod_{e=(x,y) \in \mathcal{E}(\Upsilon)}\sigma_{\eta(x)\eta(y)}^2.
    \end{equation}
Here    $L_1, \ldots, L_{2s}$ are $2s$ directed $t$-segments with starting vertex $v_p$ and ending vertex $w_p$ for each $L_p$, and the summation is taken over all ribbon graphs $\Upsilon$ obtained by gluing these segments to ribbon graphs such that no $L_p$ is glued entirely to itself. 

Therefore, in order to prove \eqref{equ:central_moments_Gaussian IRM} it suffices to show that for any fixed ribbon graph $\Upsilon$ constructed above,  
\begin{equation}\label{equ:central_moments_Gaussian_diagram_functions}
\sum_{\substack{\eta:V(\Upsilon) \to [N]\\ \eta(v_p)=m_i,\eta(w_p)=m_j}}\prod_{e=(x,y) \in \mathcal{E}(\Upsilon)}\sigma_{\eta(x)\eta(y)}^2 \le \sum_{\substack{\eta:V(\Upsilon) \to [M]\\ \eta(v_p)=i,\eta(w_p)=j}}M^{-|\mathcal{E}(\Upsilon)|}.
\end{equation} 
For the ribbon graph $\Upsilon$,  we choose a spanning forest $F$ such that each connected component contains exactly one of the vertices $v_p$ and $w_p$ $(1\le p\le 2s)$.  For   edges not in $F$ we bound the corresponding weights by  
\begin{equation}\label{equ:forest}
   \prod_{e=(x,y) \notin F}\sigma_{\eta(x)\eta(y)}^2 \le (\sigma^*_N)^{2(|\mathcal{E}(\Upsilon)|-|F|)} \le M^{-(|\mathcal{E}(\Upsilon)|-|F|)}.
\end{equation}
where the last inequality follows from the assumption $(\sigma_N^*)^2 \le 1/M$. Hence,
\begin{equation}\label{equ:inequ_forest}
    \sum_{\substack{\eta:V(\Upsilon) \to [N]\\ \eta(v_p)=m_i,\eta(w_p)=m_j}}\prod_{e=(x,y) \in \mathcal{E}(\Upsilon)}\sigma_{\eta(x)\eta(y)}^2 \le M^{-(|\mathcal{E}(\Upsilon)|-|F|)} \sum_{\substack{\eta:V(F) \to [N]\\ \eta(v_p)=m_i,\eta(w_p)=m_j}}\prod_{e=(x,y) \in \mathcal{E}(F)}\sigma_{\eta(x)\eta(y)}^2
\end{equation}
For edges in $F$, we use the double stochastic property of the transition matrix $\Sigma_N$ to obtain
\begin{equation}\label{equ:IRM_double_stochastic}
\sum_{\substack{\eta:V(F) \to [N]\\ \eta(v_p)=m_i,\eta(w_p)=m_j}}\prod_{e=(x,y) \in \mathcal{E}(F)}\sigma_{\eta(x)\eta(y)}^2=1.
\end{equation}
We note that equality in \eqref{equ:forest} holds when all $\sigma_{\eta(x)\eta(y)}$ are equal. In particular,
\begin{equation}\label{equ:GOE_forest}
  \sum_{\substack{\eta:V(\Upsilon) \to [M]\\ \eta(v_p)=i,\eta(w_p)=j}}M^{-|\mathcal{E}(\Upsilon)|}=  M^{-(|\mathcal{E}(\Upsilon)|-|F|)}.
\end{equation}
Equation \eqref{equ:central_moments_Gaussian_diagram_functions} follows directly from \eqref{equ:inequ_forest}, \eqref{equ:IRM_double_stochastic}, and \eqref{equ:GOE_forest}. This completes the proof of the first inequality.

 Next, we turn to  the proof of  the second inequality.   For any ribbon graph $\Upsilon$, let $\mathcal{S}_{\Upsilon,ij}$ denote the set of vertex labelings $\eta$ such that
$\eta(v_p)=i$ for all   starting vertex 
$v_p$ and $\eta(w_p)=j$ for all ending vertex  $w_p$, namely
\begin{equation}
    \mathcal{S}_{\Upsilon,ij}=\{\eta:V(\Upsilon) \to [M]:\ \eta(v_p)=i,\eta(w_p)=j \text{ for all } 1 \le p \le 2s\}.
\end{equation}
Applying \eqref{equ:expansion_central_moments} to $H_{{\rm GOE}_M}$, we obtain  
\begin{equation}\label{equ:expansion_cetral_moments_GOE}
\mathbb{E}\bigg[\bigg|(H_{{\rm GOE}_M}^{t})_{ij}-\mathbb{E}[(H_{{\rm GOE}_M}^{t})_{ij}]\bigg|^{2s}\bigg]=\sum_{\Upsilon}M^{-|E(\Upsilon)|} |\mathcal{S}_{\Upsilon,ij}|.
\end{equation}

Now for any ribbon graph $\Upsilon$, we construct an injective map from $\mathcal{S}_{\Upsilon,ij}$ to $\mathcal{S}_{\Upsilon,11}$ as follows: for any $\eta\in \mathcal{S}_{\Upsilon,ij}$, we modify the values of $\eta$ on each $v_p$ and $w_p$ to 1 and then get a labeling in $\mathcal{S}_{\Upsilon,11}$. 
Since the values of $\eta$
  at all other vertices remain unchanged, the map is injective. Hence, for any $i,j$,
 \begin{equation}\label{equ:S_ij}
    |\mathcal{S}_{\Upsilon,ij}| \le |\mathcal{S}_{\Upsilon,11}|.
\end{equation} 
 
Substituting \eqref{equ:S_ij} into \eqref{equ:expansion_cetral_moments_GOE} completes the proof of the second inequality. \end{proof}

The following  upper bounds for traces of the GOE model play a central role in this paper and are also of independent interest.

\begin{lemma}\label{Lem:moment_concentrate}
For any fixed integer  $ t\ge 1$, 
   there exists a  constant $C_t>0$  that depends only on  $t$  such that for any positive  integer  $s\leq M/C_t$, 
\begin{equation}
    \mathbb{E}\bigg[\bigg|\frac{1}{M}\tr H^{t}_{{\rm GOE}_M}-\frac{1}{M}\mathbb{E}[\tr H^{t}_{{\rm GOE}_M}]\bigg|^{2s}\bigg]\le \Big(\frac{C_t s }{M}\Big)^{2s}.
\end{equation}

\end{lemma}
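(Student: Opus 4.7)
The plan is to use the connected ribbon graph expansion of Section \ref{section:Genus expansions and map enumeration} together with the standard cumulant--to--moment identity applied to the non--deformed GOE. Writing $Z := \tr H^t_{\mathrm{GOE}_M}$, we start from
\begin{equation*}
\mathbb{E}\Big[(Z - \mathbb{E} Z)^{2s}\Big] = \sum_{\pi} \prod_{B \in \pi} \kappa_{|B|}(Z),
\end{equation*}
where $\pi$ ranges over set partitions of $[2s]$ with \emph{no singleton blocks}. By the inclusion--exclusion formulas \eqref{equ:T}--\eqref{equ:T(n)}, each joint cumulant $\kappa_m(Z)$ coincides with the sum of diagram functions $\mathcal{W}_{\Gamma, H_{\mathrm{GOE}_M}}(t,\dots,t)$ over \emph{connected} diagrams $\Gamma$ with $m$ faces, each of perimeter $t$. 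Since $A_N = 0$ for the pure GOE, only diagrams with $E_b(\Gamma) = \emptyset$ contribute, so we are enumerating ribbon graphs on closed topological surfaces.

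The central step is then the cumulant estimate
\begin{equation*}
|\kappa_m(\tr H^t_{\mathrm{GOE}_M})| \leq C_t^{m}\, M^{2-m}, \qquad m \geq 1.
\end{equation*}
Because $\sigma_{ij}^2 = 1/M$ uniformly for GOE, Proposition \ref{prop:compute_diagram_function} yields the topological scaling $\mathcal{W}_\Gamma(t,\dots,t) \lesssim M^{|V(\Gamma)| - |E(\Gamma)|} = M^{\chi(\Gamma) - m}$ for each connected closed--surface diagram with $m$ faces, up to a $t$--dependent prefactor absorbing the binomial coefficients and the sums over the segment lengths $n_e$ in \eqref{equation:W_Gamma}. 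Since $\chi(\Gamma) \leq 2$ for every compact connected surface, the exponent is at most $2-m$. The multiplicative constant $C_t^m$ arises from enumerating connected trivalent diagrams with $m$ faces of perimeter $t$ after Okounkov contraction; with $t$ held fixed, this count is at most exponential in $m$ by the automaton argument of Proposition \ref{automation}.

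Once the cumulant estimate is in hand, the conclusion is purely combinatorial. Any partition $\pi$ of $[2s]$ without singletons satisfies $|\pi| \leq s$; since $M \geq 1$, we get $M^{2|\pi| - 2s} \leq 1$ and hence
\begin{equation*}
\prod_{B \in \pi} |\kappa_{|B|}(Z)| \leq C_t^{\sum_B |B|}\, M^{\sum_B (2-|B|)} = C_t^{2s}\, M^{2|\pi| - 2s} \leq C_t^{2s}.
\end{equation*}
The number of partitions of $[2s]$ with every block of size at least two is bounded by $(2s)^{2s}$. Summing and dividing by $M^{2s}$ gives
\begin{equation*}
\mathbb{E}\Big[\big|\tfrac{1}{M}\tr H^t_{\mathrm{GOE}_M} - \tfrac{1}{M} \mathbb{E}[\tr H^t_{\mathrm{GOE}_M}]\big|^{2s}\Big] \leq \Big(\frac{C_t'\, s}{M}\Big)^{2s}
\end{equation*}
after enlarging the constant, which is the desired bound.

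The main obstacle is establishing the cumulant estimate $|\kappa_m(\tr H^t)| \leq C_t^m M^{2-m}$. The topological $M$--scaling follows cleanly from the Wick expansion, but controlling the auxiliary $t$--dependent weights uniformly in $m$ --- the Catalan tree counts, the geometric sums over $n_e$ that no longer enjoy a damping factor $a^{-2n_e}$ with $a>1$, and the enumeration of connected trivalent maps of fixed perimeter on closed surfaces --- requires the techniques of Section \ref{section:upper_bound_nontypical} adapted to the closed--surface (no boundary edge) regime, together with an analogue of Proposition \ref{automation} for connected closed--surface diagrams of bounded perimeter. The partition--counting step is then routine.
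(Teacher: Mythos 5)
Your decomposition is structurally the same as the paper's. The paper's identity
\begin{equation*}
\mathbb{E}\Big[\big|\tfrac{1}{M}\tr H^{t}_{{\rm GOE}_M}-\tfrac{1}{M}\mathbb{E}[\tr H^{t}_{{\rm GOE}_M}]\big|^{2s}\Big]=\sum_{\pi \in \mathcal{P}'(2s)}\prod_{P \in \pi}\frac{1}{M^{|P|}}T\underbrace{(t,\dots, t)}_{|P|}
\end{equation*}
is exactly the classical moment--cumulant expansion you write down (each connected $T$-weight being the corresponding joint cumulant), so you are not taking a different route. The difference is in how the two sides of the estimate — the per-block (cumulant) bound and the partition count — are split, and that is where there is a genuine gap.

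The step you do not prove is the cumulant estimate $|\kappa_m(\tr H^t_{\mathrm{GOE}_M})| \le C_t^m M^{2-m}$, and the tool you invoke for it cannot deliver it. The automaton argument (Proposition~\ref{automation}, or \cite[Prop.~II.3.3]{feldheim2010universality} as it is used inside the paper's proof of this lemma) gives a bound of the shape $D_k(g)\le (Cg)^{g+k-1}/(k-1)!$ on the number of trivalent $k$-face diagrams generated in $g$ steps; summing this over the relevant range of $g$ and folding in the $t$-dependent prefactor produces the paper's bound $\frac{1}{M^k}T(t,\dots,t) \le (C_t k/M)^k$, i.e.\ a cumulant bound of the form $(C_t m)^m$ with a super-exponential factor $m^m$ and \emph{without} the $M^{2-m}$ decay. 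Your proposal quietly strengthens this to $C_t^m$ (exponential in $m$, uniform over $m\le 2s\lesssim M$) and simultaneously invokes an extra $M^{2-m}$ gain; neither of these is supplied by the citation, and the exponential-in-$m$ count requires genuine map-enumeration input (a uniform, $M$-independent bound on the number of closed-surface diagrams with $m$ faces of fixed perimeter $t$, summed over all genera weighted by $M^{-2g}$) that is nowhere addressed.

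The reason this matters is that the two ingredients compensate for each other. With your sharper cumulant bound the product over blocks is constant over partitions, $\prod_B |\kappa_{|B|}|\le C_t^{2s}M^{2|\pi|-2s}\le C_t^{2s}$, and the crude count of no-singleton partitions $\le (2s)^{2s}$ suffices. With the bound $(C_t m)^m$ that the automaton actually gives, the per-partition quantity $\prod_B (C_t|B|)^{|B|}$ swings from $(2C_t)^{2s}$ for pairings to $(2C_ts)^{2s}$ for the single-block partition, so ``raw count times worst block'' produces $(2s)^{2s}\cdot(2C_ts)^{2s}/M^{2s}\sim (s^2/M)^{2s}$, the wrong power of $s$. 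The paper avoids this by first converting $|B|^{|B|}\le C^{|B|}|B|!$ and then bounding the \emph{weighted} partition sum $\sum_{\pi\in\mathcal{P}'(2s)}\prod_{P}|P|!\le (4s)^{2s}$ by the generating-function identity $e^{z^2/(1-z)}$ dominated coefficientwise by $e^{-\log(1-2z)}$; see \eqref{equ:P'(2s)}. To repair your argument you must either prove the stronger cumulant bound you asserted (which is a statement about map enumeration that goes beyond the automaton), or use the automaton's $(C_tm)^m$ bound together with the paper's generating-function estimate on the weighted partition sum.
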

\begin{proposition}\label{Prop:Up-bounds-G_M}
   For any fixed integer  $ t\ge 1$, 
   there exists a  constant $C_t>0$  that depends only on  $t$  such that for any positive  integer  $s\leq M/C_t$      and  for any $i,j\in [M]$,  
    \begin{equation}
    \mathbb{E}\bigg[\bigg|(H_{{\rm GOE}_M}^{t})_{11}-\mathbb{E}[(H_{{\rm GOE}_M}^{t})_{11}]\bigg|^{2s}\bigg]\le M\Big(\frac{C_t s}{M}\Big)^s.
    \end{equation}
\end{proposition}

\begin{proof}[Proof of Lemma   \ref{Prop:Up-bounds-G_M}]
By the Wick formula,  we first get  (possibly disconnected) ribbon graph expansion and then  classify  the ribbon graphs into  connected components $P$, like  
\begin{equation}\label{equ:4.33T}
    \mathbb{E}\bigg[\bigg|\frac{1}{M}\tr H^{t}_{{\rm GOE}_M}-\frac{1}{M}\mathbb{E}[\tr H^{t}_{{\rm GOE}_M}]\bigg|^{2s}\bigg]= \sum_{\pi \in \mathcal{P}'(2s)}\prod_{P \in \pi}\frac{1}{M^{|P|}}T\underbrace{(t,\dots, t)}_{|P|}.
\end{equation}
Here $\mathcal{P}'(2s)$ is the set of all partitions of $[2s]$ such that each block in the partition contains at least two elements and 
$T(t,\dots, t)$,  which consists of 
 $|P|$  components,          is given by
\begin{equation}
   T(t,\dots, t) =\sum_{\Upsilon}M^{V(\Upsilon)-E(\Upsilon)-2s}
\end{equation}
where the summation is over all  
 ribbon graphs \emph{constructed by gluing $|P|$ $t$-gons} so that each connected component is formed by gluing the polygons corresponding to a block of the partition $\pi$.

For $T(t,\dots, t)$, which consists of $k$ components, we now establish an upper bound using Okounkov's contraction method. Namely, we reduce each ribbon graph $\Upsilon$ to a diagram $\Gamma=(V(\Gamma),E(\Gamma))$, as described in Section \ref{sec:ribbon_expansion}.
Similar to Proposition \ref{prop:reduce to 3}, it suffices to consider trivalent diagrams $\bar{\Gamma}$, at the cost of introducing a factor of $2^{E(\bar{\Gamma})}$.

We then apply \cite[Proposition II.3.3]{feldheim2010universality} to bound the number of such trivalent diagrams.  
To be precise, let $g=E(\Gamma)-V(\Gamma)+k=E(\bar{\Gamma})-V(\bar{\Gamma})+k$, which counts the number of steps required to generate $\Gamma$ in the automation constructed by Feldheim and Sodin. Since $\bar{\Gamma}$ is trivalent,
we obtain the parameterization  $|E(\bar{\Gamma})|=3g-k$ and $|V(\bar{\Gamma})|=2g$ via the genus $g$. For any    $g$, it follows from \cite[Proposition II.3.3]{feldheim2010universality} that  there is a universal constant $C$ such that    the number of trivalent $k$-diagrams generated in $g$ steps, denoted by  $D_{k}(g)$,   

    \begin{equation}
    D_{k}(g)\le \frac{(Cg)^{g+k-1}}{(k-1)!}.
    \end{equation}

Since we just consider the $t$-th moment, the size of trees for each edge does not exceed $t$. Thus the number of possible tree configurations on each edge is bounded by a constant $C(t)$. So for any fixed diagram $\Gamma$, the number of its pre-images (i.e. un-reduced ribbon graphs $\Upsilon$ corresponding to $\Gamma$)  is at most
    \begin{equation}
        M^{|V(\Gamma)|-|E(\Gamma)|}C(t)^{|E(\Gamma)|}=M^{-g}C(t)^{|E(\Gamma)|} \le M^{k-g}C(t)^{3g-k},
    \end{equation}
 where we use the inequality $|E(\Gamma)| \le |E(\bar{\Gamma})| =3g-k$. Since a non-degenerate trivalent $k$-diagram 
  is generated by  at least $k$ steps in the Feldheim-Sodin   automation, we see that $g \ge k$.  Note that $3g-k=|E(\bar{\Gamma})| \le 2kt$, we have 
    \begin{equation}
    \begin{aligned}
        \frac{1}{M^k}T\underbrace{(t,\dots, t)}_{k}&\le \sum_{kt\ge g \ge k}D_{k}(g)2^{3g-k}M^{-g}C(t)^{E}\le  \frac{1}{(k-1)!}\sum_{kt\ge g \ge k}\frac{{(8CC(t)^3g)}^{g+k-1}}{M^g}\\       &\le \frac{1}{(k-1)!}\sum_{g \ge k}\frac{{(8CC(t)^3\cdot t k)}^{g+k-1}}{M^g}\le \sum_{g \ge k}\frac{(C'(t)k)^{g}}{M^g}\le \big(\frac{C''(t)k}{M}\big)^k,
    \end{aligned}
    \end{equation}
    where in the last line we need $M>C'(t)k$.
Hence by \eqref{equ:4.33T}, we have
\begin{equation} \label{goesum}
    \begin{aligned}   &\mathbb{E}\bigg[\bigg|\frac{1}{M}\tr H^{t}_{{\rm GOE}_M}-\frac{1}{M}\mathbb{E}[\tr H^{t}_{{\rm GOE}_M}]\bigg|^{2s}\bigg]
    \le \sum_{\pi \in \mathcal{P}'(2s)}\prod_{P \in \pi}\Big(\frac{C''(t)|P|}{M}\Big)^{|P|}\\
    &\le \Big(\frac{C'''(t)}{M}\Big)^{2s}\sum_{\pi \in \mathcal{P}'(2s)}\prod_{P \in \pi}{(|P|)!},
    \end{aligned}
\end{equation}
where in the last line we use $\sum_{P\in \pi}|P|=2s$, and $|P|^{|P|}\le C^{|P|}|P|!$. 

The above  sum     can be bounded by $(2s)^{2s}$.   In fact,  let $i_l$ be the number of $P\in \pi$ such that $|P|=l$. Since   the number of such partition is $(2s)!/\big(\prod_{l\ge 2}(i_l)!(l!)^{i_l}\big)$,   we have
\begin{equation}
    \begin{aligned}
       &\sum_{\pi \in \mathcal{P'}(2s)}\prod_{P \in \pi}(|P|)!=(2s)!\sum_{\sum_{l\ge 2}l\cdot i_l=2s}\prod_{l\ge 2}\frac{1}{(i_l)!}.
     \end{aligned}
\end{equation}
Consider the generating function  
\begin{equation}
    G(z)=\sum_{s=0}^{\infty}\sum_{\sum_{l\ge 2}l\cdot i_l=s}\prod_{l\ge 2}\frac{1}{(i_l)!}z^{s}=e^{\sum_{l\ge 2}z^l}=e^{\frac{z^2}{1-z}}.
\end{equation}
 Note that   the coefficient of $ z^2/(1-z)$ is dominated by that  of $-\ln(1-2z)$ term-by-term,    we obtain 
\begin{equation}\label{equ:P'(2s)}
    \sum_{\pi \in \mathcal{P'}(2s)}\prod_{P \in \pi}(|P|)!=(2s)!\big[e^{\frac{z^2}{1-z}}\big]_{2s}\le (2s)![e^{-\ln(1-2z)}]_{2s}=2^{2s}(2s)!\le (4s)^{2s}.
\end{equation}
     Therefore, the desired result follows from \eqref{goesum}. 
\end{proof}

\begin{proof}[Proof of Proposition \ref{Prop:Up-bounds-G_M}]
We introduce a spectral decomposition $H_{{\rm GOE}_M}=\sum_{i=1}^{M} \lambda_i \mathbf{v}_i\mathbf{v}^T_{i}$  with $v_{i1}$ as the  first  component  of  eigenvector $\mathbf{v}_i$. 
Obviously,
\begin{equation}
    (H_{{\rm GOE}_M}^t)_{11}= \sum_{i=1}^{M} \lambda_i^t v_{i1}^2,
\end{equation}
  from which we get 
\begin{equation}\label{equ:lln_G}
\begin{aligned}
    &\mathbb{E}\Big[\bigg|(H_{{\rm GOE}_M}^{t})_{11}-\mathbb{E}[(H_{{\rm GOE}_M}^{t})_{11}]\bigg|^{2s}\Big]=\mathbb{E}\bigg[\Big|\sum_{i=1}^{M} \lambda_i^t v_{i1}^2-\mathbb{E}\Big[\sum_{i=1}^{M} \lambda_i^t v_{i1}^2\Big]\Big|^{2s}\bigg].
\end{aligned}
\end{equation} 
By the rotational invariance of the GOE, we can take the expectation in order as $\mathbb{E}[\cdot]=\mathbb{E}_{\lambda}[\mathbb{E}_{v_i}[\cdot]],$ 
 where the   vector $\mathbf{v}=(v_{11}, \ldots,v_{M1})^{T}$ is   taken uniformly from the unit sphere  $\mathbb{S}^{M-1}$.  It's easy to see that 
\begin{equation}\label{equ:lln_divide}
    \begin{aligned}
    &\mathbb{E}\bigg[\bigg|\sum_{i=1}^{M} \lambda_i^t v_{i1}^2-\mathbb{E}\bigg[\sum_{i=1}^{M} \lambda_i^t v_{i1}^2\bigg]\bigg|^{2s}\bigg]
    =\mathbb{E}\bigg[\bigg|\sum_{i=1}^{M} \lambda_i^t \big(v_{i1}^2-\frac{1}{M}\big)+\frac{1}{M}\sum_{i=1}^{M} \big(\lambda_i^t- \mathbb{E}\big[ \lambda_i^t\big]\big)\bigg|^{2s}\bigg]\\
    &\le 2^{2s}\mathbb{E}\bigg[\bigg|\sum_{i=1}^{M} \lambda_i^t \big(v_{i1}^2-\frac{1}{M}\big)\bigg|^{2s}\bigg]+2^{2s}\mathbb{E}\bigg[\bigg|\frac{1}{M}\tr H_{{\rm GOE}_M}^t-\mathbb{E}\bigg[\frac{1}{M}\tr H_{{\rm GOE}_M}^t\bigg]\bigg|^{2s}\bigg].
    \end{aligned}
\end{equation}
The second term can be  bounded by Lemma \ref{Lem:moment_concentrate},  so it suffices to     bound the first term. 

Using  a representation of the random vector  $\mathbf{v}=\mathbf{Y}/R$ where all components  $Y_i$ are  i.i.d. normal variables with distribution      $ \mathcal{N}(0,1/M)$  and  $R^2=\sum_{i=1}^M Y_i^2$,   we  then obtain   
\begin{equation}
    \begin{aligned}
        \mathbb{E}_{\mathbf{Y}}\bigg[\bigg|\sum_{i=1}^{M} \lambda_i^t \Big(Y_{i}^2-\frac{1}{M}\Big)\bigg|^{2s}\bigg]&=\mathbb{E}_{\mathbf{v}}\bigg[\mathbb{E}_{R}\bigg[\bigg|R^2\sum_{i=1}^{M} \lambda_i^t v_{i1}^2-\frac{1}{M}\sum_{i=1}^{M} \lambda_i^t\bigg|^{2s}\bigg]\bigg]\\
        &\ge \mathbb{E}_{\mathbf{v}}\bigg[\bigg|\sum_{i=1}^{M} \lambda_i^t v_{i1}^2-\frac{1}{M}\sum_{i=1}^{M} \lambda_i^t\bigg|^{2s}\bigg],
    \end{aligned}
\end{equation}
where the last inequality  comes from $\mathbb{E}[R^2]=1$ and  the convexity of the function $|ax+b|^{2s}$.   Now we  expand the $2s$-moment   into
\begin{equation}
    \begin{aligned}
        &\mathbb{E}_{\mathbf{Y}}\bigg[\bigg|\sum_{i=1}^{M} \lambda_i^t \Big(Y_{i}^2-\frac{1}{M}\Big)\bigg|^{2s}\bigg]=\sum_{i_{k}\in [M], k=1,\ldots 2s} \Big(\prod_{k=1}^{2s} \lambda^t_{i_k} \Big)\mathbb{E}\bigg[\prod_{k=1}^{2s}\big(Y_{i_{k}}^2-\frac{1}{M}\big)\bigg].
    \end{aligned}
\end{equation}
By  Lemma \ref{lem:Gaussian_moment_dominate}   we know that  all $\mathbb{E}\big[\prod_{k=1}^{2s}(Y_{i_{k}}^2-M^{-1})\big]$ are  non-negative, so by the simple inequality  $\prod_{k=1}^{2s} \lambda^t_{i_k}\le |\lambda|_{\mathrm{max}}^{2st}$  we have  
\begin{equation} \label{upperb-0}
    \begin{aligned}
        &\mathbb{E}\bigg[\bigg|\sum_{i=1}^{M} \lambda_i^t \Big(Y_{i}^2-\frac{1}{M}\Big)\bigg|^{2s}\bigg]\le  \mathbb{E}\big[|\lambda|_{\mathrm{max}}^{2st}\big]\,\mathbb{E}\bigg[\bigg|\sum_{i=1}^{M} \Big(Y_{i}^2-\frac{1}{M}\Big)\bigg|^{2s}\bigg]=\mathbb{E}\big[|\lambda|_{\mathrm{max}}^{2st}\big] \,\mathbb{E}\bigg[\Big|\frac{1}{M}\chi_M^{2}-1\Big|^{2s}\bigg].
    \end{aligned}
\end{equation}
Here $\chi_M^{2}$ is the chi square distribution with freedom  $M$, whose   moment generating function   is  given by 
\begin{equation}
    \big(1-\frac{2t}{M}\big)^{-\frac{M}{2}}e^{-t}=e^{-t-\frac{M}{2}\ln (1-\frac{2t}{M})}.
\end{equation}

On one hand, notice the simple inequality 
\begin{equation}
    -t-\frac{M}{2}\ln \big(1-\frac{2t}{M}\big)\le \frac{t^2}{M-2t}, \quad 0\leq t
\leq \frac{M}{2},\end{equation}
consider the coefficient of $t^{2s}$ in the  above generating function  and we get 
\begin{equation}\label{upperb-1}
    \begin{aligned}
        \mathbb{E}\Big[\Big|\frac{1}{M}\chi_M^{2}-1\Big|^{2s}\Big]&\le (2s)! \Big[e^{ \frac{t^2}{M-2t}} \Big]_{2s}=(2s)!\sum_{i=0}^{s}\frac{1}{i! M^{i}}\big(\frac{2}{M}\big)^{2s-2i}\binom{i+2s-2i-1}{2s-2i}\\
        &= 2^{2s}\sum_{i=0}^{s}\frac{(2s-i)!}{M^{2s-i}}\binom{2s}{i}\binom{2s-i-1}{2s-2i}\le 4^{3s}\sum_{i=0}^s\frac{(2s-i)!}{M^{2s-i}}\\
        &\le 2\Big(\frac{64s}{M}\Big)^s,
    \end{aligned}
\end{equation}
where   the condition $s\le M/{2}$ has been used in the last line. 

On the other hand,
by\cite[Eq(I.5.6)]{feldheim2010universality} again we have
\begin{equation}
    \label{upperb-2}\mathbb{E}\big[|\lambda|_{\mathrm{max}}^{2st}\big]\le \mathbb{E}[\operatorname{Tr}H_{{\rm GOE}_M}^{2st}]\le \frac{CM}{(\sqrt{st})^3}e^{C\frac{(st)^3}{M^2}}\le M(C_t)^s,
\end{equation}
whenever  $s\le M/2$.
Combining   \eqref{upperb-1} and  \eqref{upperb-2},  we see from  \eqref{upperb-0} that 
\begin{equation}
\begin{aligned}
        &\mathbb{E}\bigg[\bigg|\sum_{i=1}^{M} \lambda_i^t \Big(v_{i1}^2-\frac{1}{M}\Big)\bigg|^{2s}\bigg]\le M\Big(\frac{C_t s}{M}\Big)^s.
\end{aligned}
\end{equation}
This shows that the first term  on the right-hand side of  \eqref{equ:lln_divide} has  the desired bound.   We thus complete the proof. 
\end{proof}

\subsection{Proof of Theorem \ref{thm:LLN1} }\label{sec:proof_lln}

 We now proceed to prove Theorem \ref{thm:spectral_measure}, which characterizes the projection measure for   $X_N$. This result will subsequently be used to establish Theorem \ref{thm:LLN1}.

\begin{proof}[Proof of Theorem \ref{thm:spectral_measure}]  
    Noting that  $(r+1)\sigma_N^* \sqrt{\log N}\rightarrow 0$ as $N\to \infty$,    taking $M=\lfloor (\sigma^*_N)^{-2} \rfloor$ and  $s=\log N$,  
    by Propositions \ref{coro:gaussian_bound_sub} and   \ref{Prop:Up-bounds-G_M} we have
    \begin{equation}\label{equ:concentration_H}
    \begin{aligned}
       & \mathbb{P}\Big(\bigg|(H_N^{t})_{m_im_j}-\mathbb{E}[(H_N^{t})_{m_im_j}]\bigg|>\frac{\epsilon}{r+1}\Big)\le \mathbb{E}\bigg[\bigg|(H_N^{t})_{m_im_j}-\mathbb{E}[(H_N^{t})_{m_im_j}]\bigg|^{2s}\bigg]\cdot \Big(\frac{r+1}{\epsilon}\Big)^{2s}\\
        &\le \big(C_t\epsilon^{-2} (r+1)^2(\sigma_N^*)^2\log N\big)^{s}\le C_{\epsilon,t}N^{-1958} 
    \end{aligned}
    \end{equation}
    for sufficiently large $N$.    By the  Borel-Cantelli lemma,  we get 
\begin{equation}\label{io}
    \begin{aligned}
        \mathbb{P}\Big(\max_{1\le i,j \le r}\bigg|(H_N^{t})_{m_im_j}-\mathbb{E}[(H_N^{t})_{m_im_j}]\bigg|>\frac{\epsilon}{r+1} \ i.o. \Big) =0.
    \end{aligned}
    \end{equation}

    Since $X_N^{t}$ can be expressed as a sum of mixed non-commutative products of $A_N^{i_l}$, $H_{N}^{j_l}$, namely, 
    \begin{equation}
        X_{N}^t=A_N^{t}+\sum_{n=1}^{t}\sum_{\substack{\sum_{l=1}^{n}(i_l+j_l)=t\\ \sum_{l\ge 1}j_l\geq 1}}\prod_{l=1}^{n} A_N^{i_l}H_{N}^{j_l},
    \end{equation}
 where the product of matrices is defined by $
\prod_{k=1}^{d} M_k = M_1 M_2 \cdots M_d$. There are at most $2^t$ terms in total. Pick up a term and replace $H_{N}^{j_l}$ by $\mathbb{E}[(H_N^{j_l})]$ one by one, we get
\begin{equation}
    \prod_{l=1}^n A_N^{i_l}H_{N}^{j_l}-\prod_{l=1}^n A_N^{i_l}\mathbb{E}[H_{N}^{j_l}]=\sum_{p=1}^{n} \Big(\prod_{l=1}^{p-1} A_N^{i_l}H_{N}^{j_l} \Big)A_N^{i_{p}} \big(H_{N}^{j_{p}}-\mathbb{E}[H_{N}^{j_{p+1}}]\big)\prod_{l=p+1}^n A_N^{i_l}\mathbb{E}[H_{N}^{j_l}].
\end{equation}
By considering the positions of the non-zero entries of  $A_N$, we introduce   the reduced matrices $\Tilde{A}$ and $\Tilde{H}_t$  of size $r\times r$ whose $(i,j)$ entry  $\Tilde{A}_{ij}=(A_N)_{m_im_j}$ and  $(\Tilde{H}_t)_{ij}=(H_{N}^{t})_{m_im_j}$ respectively. Then  
\begin{equation}
\begin{aligned}
    &\Big(\prod_{l=1}^n A_N^{i_l}H_{N}^{j_l}-\prod_{l=1}^n A_N^{i_l}\mathbb{E}[H_{N}^{j_l}]\Big)_{m_im_j}=\bigg(\sum_{p=1}^{n} \prod_{l=1}^s \Tilde{A}^{i_l}\Tilde{H}_{j_l} \prod_{l=p+1}^n \Tilde{A}^{i_l}(\Tilde{H}_{j_l}-\mathbb{E}[\Tilde{H}_{j_l}])\bigg)_{ij}.
\end{aligned}
\end{equation}
Now by \eqref{io}, we can always condition on \begin{equation}
\bigg|(\Tilde{H}_t)_{ij}-\mathbb{E}[(\Tilde{H}_{t})_{ij}]\bigg|\le \frac{\epsilon}{r+1},  \quad \forall 1\le i,j\le r,
\end{equation}
 which is an almost sure event as $N\rightarrow\infty$ and implies 
\begin{equation}
    \|\Tilde{H}_{j_l}-\mathbb{E}[\Tilde{H}_{j_l}]\|_{\mathrm{op}}\le \epsilon.
\end{equation}

Combining  the simple facts  that 
    \begin{equation}
        \|\mathbb{E}[H_{N}^{2m}]-\frac{1}{m+1}\binom{2m}{m}\mathbf{I}_N\|_{\mathrm{op}}=O((\sigma_N^*)^2),\quad \mathbb{E}[H_{N}^{2m+1}]=0
    \end{equation}
    and  all  the non-zero components of  $\mathbf{q}_i$  lying in  the $m_1$-, $\ldots$,$m_{r}$- rows and columns, 
 we  thus arrive at  
\begin{equation}
\begin{aligned}
    &\bigg|\Big\langle\big(\prod_{l=1}^n A_N^{i_l}H_{N}^{j_l}-\prod_{l=1}^n A_N^{i_l}\mathbb{E}[H_{N}^{j_l}]\big)\mathbf{q}_i,\mathbf{q}_i\Big\rangle \bigg|=\bigg|\Big\langle(\prod_{l=1}^n \Tilde{A}_{i_l}\Tilde{H}_{j_l}-\prod_{l=1}^n \Tilde{A}_{i_l}\mathbb{E}[\Tilde{H}_{j_l}])\mathbf{q}_i,\mathbf{q}_i\Big\rangle \bigg|\\
    &\le \Big\|\prod_{l=1}^n \Tilde{A}_{i_l}\Tilde{H}_{j_l}-\prod_{l=1}^n \Tilde{A}_{i_l}\mathbb{E}[\Tilde{H}_{j_l}]\Big\|_{\mathrm{op}}\le \bigg\|\sum_{p=1}^{n} \prod_{l=1}^p \Tilde{A}^{i_l}\Tilde{H}_{j_l} \prod_{l=p+1}^n \Tilde{A}^{i_l}(\Tilde{H}_{j_l}-\mathbb{E}[\Tilde{H}_{j_l}])\bigg\|_{\mathrm{op}}\\
    &\le C_{t,a}\epsilon
\end{aligned}
\end{equation}
for some constant $C_{t,a}$.

Set 
\begin{equation}
    \bar{X}_{N,t}=
    A_N^{t}+\sum_{\sum_{l\ge 1}(i_l+j_l)=t}\prod_{l} A_N^{i_l}\mathbb{E}[H_{N}^{j_l}].
\end{equation}
Summing up  all at most $2^t$ mixed  products shows 
    \begin{equation}
        \bigg|\langle(X_N^{t}-\bar{X}_{N,t})\mathbf{q}_i,\mathbf{q}_i\rangle \bigg|\le C'_{t,a}\epsilon.
    \end{equation}
    The final piece of the puzzle lies in some explicit calculations.    
    Standard random matrix theory operations yield
\begin{equation}\langle\bar{X}_{N,t}\mathbf{q}_i,\mathbf{q}_i\rangle= \sum_{j=0}^{t}C(j,t)\langle A_N^{t-j}\mathbf{q}_i,\mathbf{q}_i\rangle+O_t((\sigma_N^*)^2)=\sum_{j=0}^{t}C(j,t) a_i^{t-j}+o(1),
    \end{equation}
    where $C(j,t)$ are constants only  depending on   $t,j$ (c.f.  Example \ref{example:degenerate} for explicit calculations). Hence we have
    \begin{equation}
         \lim\limits_{N\rightarrow\infty}\langle X_N^{t}\mathbf{q}_i,\mathbf{q}_i\rangle= \sum_{j=0}^{t}C(j,t) a_i^{t-j}~~~~{a.s.},~~~\forall\, 1\le i\le r.
    \end{equation}
    The sum on the  right-hand side is exactly  the $t$-th moment of $\mu_{a_i}$  as shown by  in \cite{noiry2021spectral}   and \cite[Theorem 1.3]{Au23BBP}. 

Finally, since $\mu_{a_i}$ is compactly supported and therefore uniquely determined by its moments, the convergence of moments implies weak convergence. Consequently, we conclude that the spectral measure converges weakly to $\mu_{a_i}$ almost surely.
\end{proof}

\begin{proof}[Proof of Theorem \ref{thm:LLN1}]

We establish Theorem \ref{thm:LLN1} by integrating the absence of outliers (Theorem \ref{thm:no_outlier}) and the convergence of projection measures (Theorem \ref{thm:spectral_measure}). By using Weyl’s interlacing inequality and min-max principle  to sandwich the eigenvalues, we adapt the inductive framework of \cite{Au23BBP} to our setting.

\paragraph{Step 1: Base case ($j=1$).} We first establish the convergence for the extreme eigenvalues, verifying that \eqref{equ:lambdaj} and \eqref{equ:lambdaN-j} hold for $j=1$.

To simplify the exposition, we assume that all deformation eigenvalues satisfy $|a_i| > 1$, as the case $|a_i| \le 1$ follows from analogous arguments. We assume the spikes are ordered such that $a_1 \ge a_2 \ge \dots > 1$ and $a_{-1} \le a_{-2} \le \dots < -1$.

We first establish the convergence for the smallest eigenvalue $\lambda_N(X_N)$. Let $X_N^-$ be the matrix formed by incorporating all rank-one perturbations $a_i \mathbf{q}_i \mathbf{q}_i^T$ such that $|a_i| \le |a_{-1}|$. Note that by construction, $a_{-1} \mathbf{q}_{-1} \mathbf{q}_{-1}^T$ is the most extreme negative perturbation in $X_N^-$. Applying the spectral norm bound from Lemma \ref{lem:rho_a_a.s.}, we obtain
\begin{equation}
    \limsup_{N \to \infty} \|X_N^-\|_{\mathrm{op}} \le |\rho_{a_{-1}}| + \epsilon \quad \text{a.s.}
\end{equation}
Since $\rho_{a_{-1}} < 0$, this operator norm bound directly implies a lower bound for the smallest eigenvalue:
\begin{equation} \label{smallestlow_refined}
    \liminf_{N \to \infty} \lambda_N(X_N^-) \ge \rho_{a_{-1}} - \epsilon \quad \text{a.s.}
\end{equation}

Next, we incorporate the remaining positive deformations into $X_N^-$ to recover the full matrix $X_N$. According to Weyl’s interlacing inequality, the smallest eigenvalue is non-decreasing under the addition of positive semi-definite matrices (the rank-one perturbations $a_i \mathbf{q}_i \mathbf{q}_i^T$ for $a_i > 1$). Thus, \eqref{smallestlow_refined} implies
\begin{equation} \label{smallestlow_final}
    \liminf_{N \rightarrow \infty} \lambda_{N}(X_N) \ge \rho_{a_{-1}} - \epsilon \quad \text{a.s.}
\end{equation}

To establish the matching upper bound, we employ the min-max principle. For any $k \in \mathbb{N}$, we have
\begin{equation} \label{equ:4.64_refined}
    \lambda_{N}(X_N)^{2k+1} = \min_{\dim(V)=1} \max_{x \in V, \|x\|=1} \langle x, X_N^{2k+1} x \rangle.
\end{equation}
By choosing the test subspace $V = \mathrm{span}(\mathbf{q}_{-1})$, we obtain
\begin{equation} \label{equ:4.65_refined}
    \lambda_{N}(X_N)^{2k+1} \le \langle \mathbf{q}_{-1}, X_N^{2k+1} \mathbf{q}_{-1} \rangle.
\end{equation}

For any fixed $k$, Theorem \ref{thm:spectral_measure} ensures that the right-hand side of \eqref{equ:4.65_refined} converges almost surely to the $(2k+1)$-th moment of the projection measure $\mu_{a_{-1}}$. Since the support of $\mu_{a_{-1}}$ is bounded above by $\rho_{a_{-1}}$, by taking $k$ sufficiently large, we have
\begin{equation}
    \limsup_{N \rightarrow \infty} \lambda_{N}(X_N) \le \rho_{a_{-1}} + \epsilon \quad \text{a.s.}
\end{equation}
Combining this with \eqref{smallestlow_final} and letting $\epsilon \to 0$, we conclude that $\lim_{N \to \infty} \lambda_N(X_N) = \rho_{a_{-1}}$ a.s. 

A symmetric argument applies to the largest eigenvalue $\lambda_1(X_N)$. Specifically, we obtain an upper bound for $\lambda_1$ using Lemma \ref{lem:rho_a_a.s.} and a lower bound via the max-min principle with the test vector $\mathbf{q}_1$, yielding
\begin{equation}
    \lim_{N \rightarrow \infty} \lambda_{1}(X_N) = \rho_{a_{1}} \quad \text{a.s.}
\end{equation}

\paragraph{Step 2: Inductive step} 
Assume the inductive hypothesis that Theorem \ref{thm:LLN1} (specifically limits \eqref{equ:lambdaj} and \eqref{equ:lambdaN-j}) holds for all $1 \le j \le n$. We now show that the result remains valid for $j = n+1$.

Without loss of generality, assume $a_1 \ge |a_{-1}| > 1$. To isolate the effect of the $(n+1)$-th negative spike, we construct an auxiliary matrix $X_N^{-0}$ by incorporating all rank-one perturbations $a_i \mathbf{q}_i \mathbf{q}_i^T$ satisfying $|a_i| \le |a_{-1}|$, with the specific exclusion of the most extreme negative spike $a_{-1} \mathbf{q}_{-1} \mathbf{q}_{-1}^T$. 

By the inductive hypothesis, $X_N^{-0}$ has its $n$ smallest eigenvalues determined by the remaining negative spikes $\{a_{-2}, a_{-3}, \dots, a_{-(n+1)}\}$. Specifically, for $i = 1, \dots, n$, we have
\begin{equation} \label{equ:ind_hyp_X0}
    \limsup_{N \to \infty} |\lambda_{N-i+1}(X_N^{-0}) - \rho_{a_{-i-1}}| \le \epsilon \quad \text{a.s.}
\end{equation}
Note that the index shift $a_{-i-1}$ arises because the ``first" negative spike in $X_N^{-0}$ is actually $a_{-2}$. 

Next, we recover $X_N^-$ by adding back the perturbation $a_{-1} \mathbf{q}_{-1} \mathbf{q}_{-1}^T$. Since $a_{-1} < 0$, this is a negative rank-one perturbation. By Weyl's interlacing inequality, the eigenvalues of $X_N^-$ and $X_N^{-0}$ satisfy
\begin{equation} \label{equ:weyl_interlace_ind}
    \lambda_{N-i+1}(X_N^-)\ge \lambda_{N-i+2}(X_N^{-0}) \ge \rho_{a_{-i}} - \epsilon, \quad \text{for } i = 2, \dots, n+1.
\end{equation}

Furthermore, the spectral norm bound from Lemma \ref{lem:rho_a_a.s.} ensures that the smallest eigenvalue $\lambda_N(X_N^-)$ cannot drop below $\rho_{a_{-1}} - \epsilon$. Combining these interlacing relations with \eqref{equ:ind_hyp_X0}, we conclude that for all $1 \le i \le n+1$:
\begin{equation}
    \liminf_{N \to \infty} \lambda_{N-i+1}(X_N^-) \ge  \rho_{a_{-i}} - \epsilon \quad \text{a.s.}
\end{equation}

Next, we incorporate all remaining positive rank-one perturbations into $X_N^-$, excluding the largest spike $a_1 \mathbf{q}_1 \mathbf{q}_1^T$, and denote the resulting matrix by $X_N^+$. Since these are positive semi-definite perturbations, Weyl’s interlacing inequality implies that the eigenvalues are non-decreasing. Specifically, for the smallest eigenvalues, we have
\begin{equation} \label{equ:X_minus_to_plus}
    \lambda_{N-i+1}(X_N^-) \le \lambda_{N-i+1}(X_N^+), \quad  \forall \,1 \le i \le n+1.
\end{equation}
By the inductive hypothesis applied to $X_N^+$, the positions of these eigenvalues are determined by the corresponding negative spikes, yielding
\begin{equation}
    \limsup_{N \to \infty} |\lambda_{N-i+1}(X_N^+) - \rho_{a_{-i}}| \le \epsilon \quad \text{a.s.}
\end{equation}

Finally, we recover the full matrix $X_N$ by adding the last positive deformation $a_1 \mathbf{q}_1 \mathbf{q}_1^T$.  Applying the rank-one interlacing relation $\lambda_{i+1}(X_N) \le \lambda_i(X_N^+)$ for the positive side and the monotonicity $\lambda_{N-i+1}(X_N) \ge \lambda_{N-i+1}(X_N^+)$ for the negative side, we obtain the necessary upper and lower bounds. 

For the upper bounds in negative side, we employ the spectral norm bound from Lemma \ref{lem:rho_a_a.s.} for the extreme edge and the min-max principle for the internal outliers. By choosing the test subspace $V_i = \mathrm{span}(\mathbf{q}_{-1}, \dots, \mathbf{q}_{-i})$, we have
\begin{equation}
    \lambda_{N-i+1}(X_N)^{2k+1} \le \max_{x \in V_i, \|x\|=1} \langle x, X_N^{2k+1} x \rangle.
\end{equation}
As $N \to \infty$, the right-hand side converges to the $(2k+1)$-th moment of the projection measure associated with the $i$-th negative spike. By taking $k$ sufficiently large and $\epsilon \to 0$, we conclude that
\begin{equation}
    \lim_{N \to \infty} \lambda_{N-i+1}(X_N) = \rho_{a_{-i}} \quad \text{a.s.}
\end{equation}
An analogous argument using the max-min principle establishes the limit for the largest eigenvalues:
\begin{equation}
    \lim_{N \to \infty} \lambda_{i}(X_N) = \rho_{a_{i}} \quad \text{a.s.}
\end{equation}

This completes the inductive step and the proof of Theorem \ref{thm:LLN1}.
\end{proof}

\section{Fluctuations of outliers}\label{sec:fluctuation Gaussian}
We prove Theorem \ref{main result} and Theorem \ref{Thm-a>1:baby} in this section. In Section \ref{sec:typical_diagrams}, we investigate the asymptotics of diagram functions for typical diagrams. In Section \ref{sec:proof-moments}, we prove Theorem \ref{Thm-a>1:baby} by combining the limit of the high-order moments of $X_N$ and showing that it matches the exponential of $Z_{\beta}$. Theorem \ref{main result} is then deduced from Theorem \ref{Thm-a>1:baby} in Section \ref{appendix: main results}.

Throughout this section, we assume without loss of generality that the row parameters satisfy $m_i = i$ for all $i$ in Assumption (A3) of Definition~\ref{def:inhomo}; the general case follows by a suitable permutation.

\subsection{Typical diagram functions}\label{sec:typical_diagrams}
This subsection analyzes the asymptotics of the diagram function $\mathcal{W}_{\Gamma}(k_1,\cdots,k_s)$ at the fluctuation scale, where  $k_i \sim t_i/\sigma_N^*$. By Theorem \ref{Prop:Upper bound} (2), if $\Gamma$ is non-typical, then $\mathcal{W}_{\Gamma}(k_1,\cdots,k_s)$ goes to $0$. It therefore suffices to consider  the case in which $\Gamma$ is typical. 
 The limiting behavior of such diagram functions is characterized by the following definition:
\begin{definition}
Given  any typical diagram $\Gamma$ and $\sigma_a:=(a^2-1)/(a^2+1)$ with $a>1$, for $t_1,\ldots,t_s>0$,  we define the   limit  diagram function  as 
\begin{equation}\label{equ:W(Gamma)_lim}
     w_{\Gamma}(t_1,\dots,t_s):=\sum_{\eta:V_b\to[r]}\prod_{j=1}^{s}\frac{ (t_j\sigma_aa^{-1})^{|E_{b,j}|-1}}{(|E_{b,j}|-1)!}\prod_{(x,y) \in E_{\rm int}}S_{\eta(x)\eta(y)} \prod_{(z,w) \in E_b}(Q_{\beta}^{}Q_{\beta}^*)_{\eta(z)\eta(w)},
\end{equation} where  $S_{ij}$ is given by
\begin{equation}\label{equ:G}
S_{ij}=\sqrt{g_{ij}+\tilde{\sigma}_{ij}^2+\frac{\beta}{2}(\tau_i-\chi_i)\delta_{ij}}\ ,
\end{equation}and $g_{ij}$, $\tilde{\sigma}_{ij}$, $\chi_i$, $\tau_i$ are defined  respectively  in \eqref{assumption1}, \eqref{assumption2}, \eqref{def:sigma_i}, and \eqref{def:tau_i}.

\end{definition}

\begin{theorem}\label{theorem:typical_Gaussian}
    With  the Gaussian IRM matrix $X_N$   in Definition \ref{def:inhomo}, assuming that 
     $k_i=\lfloor t_i/\sigma_N^* \rfloor$ with $t_i>0$,  then for any typical diagram $\Gamma$ we have
\begin{equation}
      \lim_{N \to \infty} \frac{1}{2^s}\sum_{\chi_1,\ldots,\chi_s \in \{0,1\}}\mathcal{W}_{\Gamma,X}(k_1+\chi_1,\ldots,k_s+\chi_s)=w_{\Gamma}(t_1,\dots,t_s).
    \end{equation}
    \end{theorem}
 Recalling the diagram function defined in \eqref{equation:W_Gamma}, for any typical diagram $\Gamma$, since $E_{b,j} \neq \emptyset$ for each $j\in [s]$, the first product term of the Catalan numbers disappears. Moreover, since $V_{\rm int}=\emptyset$,  the map $\eta$ takes values in $[r]$. Thus, the diagram function is  simplified to   
\begin{multline}\label{equ:diagram_typical}
\mathcal{W}_{\Gamma,X}(k_1,\dots,k_s)=\rho_a^{-(k_1+\cdots+k_s)}\sum_{\eta:V(\Gamma) \to [r]}\sum_{n_{e}\ge 1, {e \in E(\Gamma)}}\prod_{j=1}^{s}\binom{k_j}{\frac{k_j-\sum_{\partial D_j}n_e}{2}}\\
\times\prod_{(x,y) \in E_{\rm int}}p_{n_e}(\eta(x),\eta(y))
\prod_{(z,w) \in E_b}\big(A^{n_e}\big)_{\eta(z)\eta(w)}.
\end{multline}
Therefore, to prove Theorem~\ref{theorem:typical_Gaussian}, we address two challenges through separate propositions:
\begin{itemize}
    \item Eigenvalues of $A$ with modulus less than $a$ vanish in the limit;
    \item The tree weight on the $j$-th face contributes a binomial   factor $  \binom{k_j}{\frac{1}{2}(k_j - \sum_{e \in \partial D_j} n_e)}$.
\end{itemize}
In each step, we compare the term with its limit using an auxiliary diagram function and show that the error vanishes as $N \to \infty$.

 \paragraph{Step 1: Remove smaller eigenvalues.} 
In this step, we handle the eigenvalues of $A$ with modulus less than $a$. We recall the spectral decomposition of $\widetilde{A}$ from Theorem~\ref{main result} and introduce a matrix discarding all less $a$ eigenvalues,
\begin{equation}\label{equ:A_eigenvalue=a}
    \hat{A} := U^{*} \operatorname{diag}(a I_{q}, -a I_{q_-}, O_{r-q-q_-}) U.
\end{equation}
Here $q_-$ is the multiplicities of $-a$ in $\widetilde{A}$.  Denote $\mathcal{W}_{\Gamma,\hat{X}}(k_1,\ldots,k_s)$ by the diagram function associated with   $\hat{X}: = H + \hat{A}$.

Our first step in proving Theorem~\ref{theorem:typical_Gaussian} is to establish the following proposition, which allows us to eliminate the contribution of the smaller perturbations.
\begin{proposition}\label{prop:lim-step2}
   With  the Gaussian IRM matrix $X_N$   in Definition \ref{def:inhomo}, assuming that    $k_i\sim t_i/\sigma_N^* $ with $t_i>0$,  then for any typical diagram $\Gamma$ we have
    \begin{equation}
    \lim\limits_{N\rightarrow\infty}|\mathcal{W}_{\Gamma,X}(k_1,\ldots,k_s)-\mathcal{W}_{\Gamma,\hat{X}}(k_1,\ldots,k_s)| =0.
    \end{equation}
\end{proposition}

\begin{proof}
By Lemma~\ref{lemma:Upper1} in the appendix, for any typical diagram $\Gamma$,   $\eta: V(\Gamma) \to [r]$  and   $(n_e)_{e \in E(\Gamma)}$, we have
\begin{equation}
    \Bigg| \prod_{(z,w) \in E_b} \big(A^{n_e}\big)_{\eta(z)\eta(w)} - \prod_{(z,w) \in E_b} \big(\hat{A}^{n_e}\big)_{\eta(z)\eta(w)} \Bigg|
    \le a^{\sum_{e \in E_b} n_e} \bigg( \prod_{e \in E_b} (1+\delta^{n_e}) - 1 \bigg),
\end{equation}
where $\delta \in (0,1)$ is chosen so that $\{|a_{q_++q_-+1}|,\dots,|a_r|\} < a\delta$. Recalling \eqref{equ:diagram_typical},
we combine the above inequality with the simple estimate for the $n_e$-step transition probability
\begin{equation}\label{equ:upper_p}
    p_{n_e}(\eta(x),\eta(y)) \le (\sigma_N^*)^2,
\end{equation}
to obtain the following bound for the difference of two diagram functions 
\begin{equation}
\begin{aligned}   & |\mathcal{W}_{\Gamma,X}(k_1,\ldots,k_s)-\mathcal{W}_{\Gamma,\hat{X}}(k_1,\ldots,k_s)| \\
\le & (\sigma_N^*)^{2|E_{\rm int}|} \rho_a^{-(k_1+\cdots+k_s)}\sum_{\eta:V(\Gamma) \to [r]}\sum_{n_e,{e \in E(\Gamma)}}\prod_{j=1}^{s}\binom{k_j}{\frac{k_j-\sum_{e \in \partial D_j}n_e}{2}} \Big| \prod_{(z,w) \in E_b} \big(A^{n_e}\big)_{\eta(z)\eta(w)} - \prod_{(z,w) \in E_b} \big(\hat{A}^{n_e}\big)_{\eta(z)\eta(w)} \Big|\\
\le & (\sigma_N^*)^{2|E_{\rm int}|} \rho_a^{-(k_1+\cdots+k_s)}\sum_{\eta:V(\Gamma) \to [r]}\sum_{n_e,{e \in E(\Gamma)}}\prod_{j=1}^{s}\binom{k_j}{\frac{k_j-\sum_{e \in \partial D_j}n_e}{2}}a^{\sum_{e \in E_b} n_e}\Big(\prod_{e \in E_b}(1+\delta^{n_e})-1\Big). 
\end{aligned}
\end{equation}

We now expand the last term as  
\begin{equation}
    \prod_{e \in E_b}(1+\delta^{n_e})-1 =\sum_{I \subset E_b,I\neq \emptyset}\delta^{\sum_{e \in I}n_e}. 
\end{equation}
Since $\delta<1$, we have $\delta^{\sum_{e \in I}n_e}<\delta^{n_e}$ for any $e \in I$. It suffices to show  that for any $e_0 \in E_{b,j_0}(\Gamma)$, as $N \to \infty$,
\begin{equation}\label{equ:C9}
\mathcal{W}_{\Gamma,e_0}^{\rm err}:= (\sigma_N^*)^{2|E_{\rm int}|}\rho_a^{-(k_1+\cdots+k_s)}\sum_{\eta:V(\Gamma) \to [r]}\sum_{n_e,{e \in E(\Gamma)}}a^{\sum_{e \in E_b} n_e}\delta^{n_{e_0}}\prod_{j=1}^{s}\binom{k_j}{\frac{k_j-\sum_{\partial D_j}n_e}{2}} =o(1).
\end{equation}
We now prove \eqref{equ:C9} follows a strategy similar  to that of Proposition~\ref{prop:W+upper_bound}.

  We fix $(n_e)_{e \in E_{\rm int}(\Gamma)}$, set  $n_j=\sum_{e \in \partial D_j}n_e$, $n_j^{\rm int}=\sum_{e \in \partial D_j\cap E_{\rm int}} n_e$, and  rewrite the power  factor of $a$ as      ${\sum_{e \in E_b}n_e}={\sum_j n_j-2\sum_{e \in E_{\rm int}}n_e}$. The summation over $(n_e)_{e \in E_b(\Gamma)}$ is then constrained by the linear equations  
\begin{equation}\label{equ:linear}
    \mathfrak{C}_j: \sum_{e \in \partial D_j \cap E_b(\Gamma)}n_e=n_j-n_j^{\rm int}.
\end{equation} 
So we   can  rewrite \eqref{equ:C9} as
\begin{multline} \label{equ:C9-2}
\mathcal{W}_{\Gamma,e_0}^{\rm err}=(\sigma_N^*)^{2|E_{\rm int}|}\sum_{\eta:V(\Gamma) \to [r]}\sum_{n_e,e \in E_{\rm int}} a^{-2\sum_{e \in E_{\rm int}} n_e}\Big(\prod_{j\neq j_0}\sum_{n_j \ge n_j^{\rm int}} \sum_{n_e,e \in E_{b,j},\mathfrak{C}_j}\binom{k_j}{\frac{k_j-n_j}{2}}a^{n_j}\rho_a^{-k_j}\Big)\\
\times \Big(\sum_{n_{j_0} \ge n_{j_0}^{\rm int}} \sum_{n_e,e \in E_{b,{j_0}},\mathfrak{C}_{j_0}}\binom{k_{j_0}}{\frac{k_{j_0}-n_{j_0}}{2}}a^{n_{j_0}}\delta^{n_{e_0}}\rho_a^{-k_{j_0}}\Big).
\end{multline}
For any ${j} \neq j_0$, we see from \eqref{equ:num_solution}, \eqref{equ:binom} and \eqref{eq:upper,inner part} that
 \begin{equation}  \label{UP-1}
\sum_{n_j \ge n_j^{\rm int}} \sum_{n_e,e \in E_{b,j},\mathfrak{C}_j}\binom{k_j}{\frac{k_j-n_j}{2}}a^{n_j}\rho_a^{-k_j}
\le \frac{a^2}{a^2-1}\frac{k_j^{|E_{b,j}|-1}}{(|E_{b,j}|-1)!}.
\end{equation}
While for $j = j_0$, we fix $n_{e_0}$ and observe that the system $\mathfrak{C}_{j_0}$ is equivalent to the following equation in the variables
$(n_e)_{e \in E_{b,j_0}\backslash e_0}$,
\begin{equation}
    \mathfrak{C}'_{j_0}: \sum_{e \in E_{b,j_0}\backslash e_0}n_e=n_{j_0}-n_{j_0}^{\rm int}-n_{e_0}.
\end{equation}
This  system  has at most 
 \begin{equation}
 \binom{n_{j_0}-n_{j_0}^{\rm int}-n_{e_0}-1}{|E_{b,j_0}|-2}\le \frac{n_{j_0}^{|E_{b,{j_0}}|-2}}{(|E_{b,{j_0}}|-2)!} 
 \end{equation} solutions.
Therefore,
 \begin{equation}   \label{UP-2}  \begin{aligned}
 &\sum_{n_{j_0} \ge n_{j_0}^{\rm int}} \sum_{n_e,e \in E_{b,{j_0}},\mathfrak{C}_{j_0}}\binom{k_{j_0}}{\frac{k_{j_0}-n_{j_0}}{2}}a^{n_{j_0}}\delta^{n_{e_0}}\rho_a^{-k_{j_0}}  
\le   \sum_{n_{j_0} \ge 0}\sum_{n_{e_0} \ge 0}\frac{n_{j_0}^{|E_{b,j}|-2}}{(|E_{b,j}|-2)!}a^{n_{j_0}}\binom{k_{j_0}}{\frac{k_{j_0}-n_{j_0}}{2}}\rho_a^{-k_{j_0}}\delta^{n_{e_0}}\\
\le & \frac{1}{1-\delta}\sum_{n_{j_0}}\frac{n_{j_0}^{|E_{b,j}|-2}}{(|E_{b,j}|-2)!}a^{n_{j_0}}\binom{k_{j_0}}{\frac{k_{j_0}-n_{j_0}}{2}}\rho_a^{-k_{j_0}} \le \frac{1}{1-\delta}\frac{a^2}{a^2-1}\frac{k_{j_0}^{|E_{b,{j_0}}|-2}}{(|E_{b,{j_0}}|-2)!},
     \end{aligned}
 \end{equation}
where the last inequality follows again from   \eqref{eq:upper,inner part}.

Based on   \eqref{equ:C9-2}, together with   \eqref{UP-1} and \eqref{UP-2},  we   obtain 
\begin{equation}
    \begin{aligned}
        \mathcal{W}_{\Gamma,e_0}^{\rm err} 
   \le & C_{a,\delta}\frac{k_{j_0}^{|E_{b,{j_0}}|-2}}{(|E_{b,{j_0}}|-2)!}\prod_{j \neq j_0}\frac{k_j^{|E_{b,j}|-1}}{(|E_{b,j}|-1)!}(\sigma_N^*)^{2|E_{\rm int}|}\sum_{n_e:e \in E_{\rm int}(\Gamma)}a^{-2\sum_{e \in E_{\rm int}} n_e}\\
   \le & C_{a,\delta,\Gamma,\{t_i\}}(\sigma_N^*)^{2|E_{\rm int}|-|E_{b}|+s+1}\prod_{e \in E_{\rm int}}\Big(\sum_{n_e \ge 1}a^{-2 n_e}\Big)\\
   = &  C_{a,\delta,\Gamma,\{t_i\}} \sigma_N^*(a^2-1)^{-|E_{\rm int}(\Gamma)|} \to 0, 
    \end{aligned}
\end{equation}
where  the positive constant    may  depend  only on $a,\delta,\Gamma$ and $t_1,\cdots,t_s$, and in the last line we use \eqref{eq:upper,inner part}
for the summation over $(n_e)_{e \in E_{\rm int}}$ and \eqref{equ:ineq} for the exponent of $\sigma_N^*$. 

This thus completes the proof.
  \end{proof}

\paragraph{Step 2: Counting trees.} In this second step, we study the limiting behavior of the term involving the binomial coefficient in the definition of $\mathcal{W}_{\Gamma,\hat{X}}(k_1,\ldots,k_s)$, which arises from Catalan trees. More precisely, we prove the following proposition:

\begin{proposition}\label{prop:lim_step3}
With the same notation and assumptions as in Proposition \ref{prop:lim-step2}, for any typical diagram $\Gamma$ we have
\begin{equation}
   \lim\limits_{N\rightarrow\infty} | \mathcal{W}_{\Gamma,\hat{X}}(k_1,\ldots,k_s)-\mathcal{W}_{\Gamma}^{'}(k_1,\dots,k_s)|= 0,
\end{equation}
where  the auxiliary diagram function  
\begin{multline}\label{def:diagram_function_2}
    \mathcal{W}^{'}_{\Gamma}(k_1,\dots,k_s):=\sum_{\eta:V(\Gamma) \to [r]}\sum_{(\chi_e)_{e \in E(\Gamma)} \in \{0,1\}} \prod_{j=1}^{s}\Big(\frac{(k_j\sigma_a)^{|E_{b,j}|-1}}{2^{|E_{b,j}|-1}(|E_{b,j}|-1)!}\Big)1_{\{k_j+\sum_{e \in \partial D_j}\chi_e \equiv 1\mod 2
    \}}\\
  \times \prod_{(x,y) \in E_{\rm int}} \Big(\sum_{n_e: n_e \equiv \chi_e \mod 2}{p_{n_e}(\eta(x),\eta(y))}{a^{-2n_e}}\Big)\prod_{(z,w) \in E_b}\big(Q_{\beta}^*Q_{\beta}^{}+(-1)^{\chi_e}Q_{\beta_-}^*Q_{\beta_-}^{}\big)_{\eta(z)\eta(w)},
\end{multline}
with  $Q_{\beta_-}$ being  a $q_-\times r$ matrix formed by orthogonal eigenvectors of eigenvalue $-a$.
\end{proposition}

Recalling the definition of $\hat{A}$ in \eqref{equ:A_eigenvalue=a}, a direct computation shows that for any boundary edge $e$,
\begin{equation}\label{equ:power_hat_A}
    \big(\hat{A}^{n_e}\big)_{\eta(z)\eta(w)}=a^{n_e}(Q_{\beta}^*Q_{\beta}^{}+(-1)^{n_e}Q_{\beta_-}^*Q_{\beta_-}^{})_{\eta(z)\eta(w)}.
\end{equation} Consequently, to handle the summation over boundary edges (i.e., over $(n_e)_{e \in E_b(\Gamma)}$) in the definition of $\mathcal{W}_{\Gamma,\hat{X}}(k_1,\ldots,k_s)$, one must treat the cases of odd and even $n_e$ separately. To this end, we fix the parity $\chi_e \in \{0,1\}$ of each $n_e$ and sum over all $n_e$ satisfying $n_e \equiv \chi_e \pmod 2$. 
With the factor involving $Q_{\beta}$ in \eqref{equ:power_hat_A} fixed, we proceed analogously to the proofs of Proposition~\ref{prop:W+upper_bound} and Proposition~\ref{prop:lim-step2}.

\begin{proof}[Proof of Proposition \ref{prop:lim_step3}]

Recalling the definition of $\mathcal{W}_{\Gamma,\hat{X}}(k_1,\ldots,k_s)$ in \eqref{equ:diagram_typical}, we fix $(n_e)_{e \in E_{\rm int}}$ and $(\chi_e)_{e \in E(\Gamma)}$, set $n_j=\sum_{e \in \partial D_j}n_e$, $n_j^{\rm int}=\sum_{e \in \partial D_j\cap E_{\rm int}} n_e$, and then take the summation over $(n_e)_{e \in E_b(\Gamma)}$ 
subject to the linear system $\mathfrak{C}_j$ \eqref{equ:linear} and the congruence conditions
\begin{equation}\label{equ:mod}
     \mathfrak{C}^{\rm mod}_j:  n_e \equiv \chi_e \mod 2, \quad \forall e \in E_{b,j}.
\end{equation} 
More explicitly, we rewrite $\mathcal{W}_{\Gamma,\hat{X}}(k_1,\ldots,k_s)$ as
\begin{equation}\label{equ:W_b,j—1}
   \sum_{\eta}\sum_{\chi_e}\sum_{\substack{(n_e)_{e \in E_{\rm int}}\\ n_e \equiv \chi_e\mod 2}} \prod_{j=1}^{s}\mathcal{W}_{j}\prod_{(x,y) \in E_{\rm int}}\big({p_{n_e}(\eta(x),\eta(y))}{a^{-2n_e}}\big)\prod_{(z,w) \in E_b}\big(Q_{\beta_+}^*Q_{\beta_+}+(-1)^{\chi_e}Q_{\beta_-}^*Q_{\beta_-}\big)_{\eta(z)\eta(w)},
\end{equation}
where   $\mathcal{W}_{j}$ as the summation over $E_{b,j}$ is defined by 
\begin{equation}
\mathcal{W}_{j}:=\sum_{n_j \ge n_j^{\rm int}} \sum_{n_{e}, e \in E_{b,j}:\mathfrak{C}_j,\mathfrak{C}^{\rm mod}_j}\binom{k_j}{\frac{k_j-n_j}{2}}a^{n_j}\rho_a^{-k_j}.
\end{equation} 

To compute $\mathcal{W}_{j}$, we recall the convention that $\binom{n}{k}=0$ whenever $n \notin \mathbb{Z}$ or $k \notin \mathbb{Z}$. A standard combinatorial fact shows that the number of solutions $(n_e)_{e \in E_{b,j}(\Gamma)}$ to the linear system $\mathfrak{C}_j$ together with the congruence conditions $\mathfrak{C}^{\mathrm{mod}}_j$ in \eqref{equ:mod} is
\begin{equation}
        \binom{\frac{1}{2}(n_j-n_j^{\rm int}+\sum_{e \in E_{b,j}}\chi_e)-1}{|E_{b,j}|-1}.
\end{equation}
In particular, $W_{b,j}$ is not zero if and only if $k_j+\sum_{e \in E_{j}}\chi_e$ is even. In this case,    
\begin{equation}
   \mathcal{W}_{j}=\sum_{n_j \ge 1}   \binom{\frac{1}{2}\big(n_j-n_j^{\rm int}+\sum_{e \in E_{b,j}}\chi_e\big)-1}{|E_{b,j}|-1}\binom{k_j}{\frac{k_j-n_j}{2}}a^{n_j}\rho_a^{-k_j}.
\end{equation} For the summation over $n_j$, we apply Lemma~\ref{lemma:LLN-trees} to obtain for some constant $C>0$, 
\begin{equation} \label{diffup}
    \Bigg|\mathcal{W}_{j}-\frac{\sigma_a^{|E_{b,j}|-1}k_j^{|E_{b,j}|-1}}{2^{|E_{b,j}|-1}(|E_{b,j}|-1)!}1_{\{k_j+\sum_{e \in E_{j}}\chi_e \equiv 0\mod 2\}}\Bigg| \le  C (n_j^{\rm int}+1)k_j^{|E_{b,j}|-\frac{3}{2}}. 
\end{equation} 

We now turn to \eqref{equ:W_b,j—1} and derive the desired estimate for $\prod_{j=1}^{s}\mathcal{W}_{j}$ from the above bounds on  
each $\mathcal{W}_{j}$. Similar to   \eqref{eq:upper,inner part},  we can obtain the following upper bound:
\begin{equation} \label{diffup-2}
    \mathcal{W}_{j} \le  \rho_a^{-k_j}\sum_{n_j\ge 0}a^{n_j}\frac{n_j^{|E_{b,j}|-1}}{(|E_{b,j}|-1)!} \le \frac{k_j^{|E_{b,j}|-1}}{(|E_{b,j}|-1)!}.
\end{equation}
We apply Lemma \ref{lemma:ineq} with\begin{equation}
    a_j=\mathcal{W}_{j},\quad 
    b_j=\frac{\sigma_a^{|E_{b,j}|-1}}{(|E_{b,j}|-1)!}k_j^{|E_{b,j}|-1},
\end{equation}
and observe from \eqref{diffup} and \eqref{diffup-2} that 
\begin{equation}
   \sum_{j=1}^{s}|a_j-b_j|\prod_{\ell \neq j}\max\{a_{\ell},b_{\ell}\} \le C\sum_{j=1}^{s}(n_j^{\rm int}+1)k_j^{|E_{b,j}|-\frac{3}{2}}\prod_{\ell \neq j}k_{\ell}^{|E_{b,\ell}|-1}\leq\frac{C_{\Gamma,a}}{\sqrt{k_{min}}}\prod_{j=1}^{s}k_j^{|E_{b,j}|-1}\Big(\sum_{e \in E_{\rm int}}n_e\Big),
\end{equation}
where we use $n_j^{\rm int}+1 \le 2n_j^{\rm int}$ in the last inequality, and $k_{min}:=\min \{k_1,\ldots,k_s\}$. This implies  
\begin{equation}\label{equ:prod_W_b,j}
\Bigg|\prod_{j=1}^{s}\mathcal{W}_{j}-\prod_{j=1}^{s}\frac{\sigma_a^{|E_{b,j}|-1}k_j^{|E_{b,j}|-1}}{2^{|E_{b,j}|-1}(|E_{b,j}|-1)!}1_{\{k_j+\sum_{e \in E_{j}}\chi_e \equiv 0\mod 2\}}\Bigg|\le \frac{C_{\Gamma,a}}{\sqrt{k_{min}}}\prod_{j=1}^{s}k_j^{|E_{b,j}|-1}\Big(\sum_{e \in E_{\rm int}}n_e\Big).
\end{equation}

Returning to the difference of diagram functions, recalling \eqref{equ:W_b,j—1} and  the upper bound \eqref{equ:upper_p} for transition probabilities $p_{n_e}$, we get 
\begin{equation}
    \begin{aligned}
    &  | \mathcal{W}_{\Gamma,\hat{X}}(k_1,\dots,k_s)-\mathcal{W}_{\Gamma}^{'}(k_1,\dots,k_s)|\\
\le &(\sigma_N^*)^{2|E_{\rm int}|}\sum_{\eta:V \to [r]}\sum_{(n_e)_{e \in E_{\rm int}}}\left|\prod_{j=1}^{s}\mathcal{W}_{j}-\prod_{j=1}^{s}\frac{\sigma_a^{|E_{b,j}|-1}k_j^{|E_{b,j}|-1}}{2^{|E_{b,j}|-1}(|E_{b,j}|-1)!}1_{\{k_j+\sum_{e \in E_{j}}\chi_e \equiv 0\mod 2\}}\right|\prod_{(x,y) \in E_{\rm int}}\frac{1}{a^{2n_e}}\\
\le & \frac{C_{\Gamma,a}}{\sqrt{k_{min}}}(\sigma_N^*)^{2|E_{\rm int}|}\prod_{j=1}^{s}\frac{k_j^{|E_{b,j}|-1}}{(|E_{b,j}|-1)!}\sum_{\eta:V \to [r]}\sum_{(n_e)_{e \in E_{\rm int}}}\Big(\sum_{e \in E_{\rm int}}n_e\Big) \prod_{(x,y) \in E_{\rm int}}\frac{1}{a^{2n_e}}\\
\le & \frac{C_{\Gamma,a}}{\sqrt{k_{min}}} (\sigma_N^*)^{2|E_{\rm int}|-|E_{b}|+s} \prod_{j=1}^{s}\Big(\frac{t_j^{|E_{b,j}|-1}}{(|E_{b,j}|-1)!}\Big)\prod_{(x,y) \in E_{\rm int}}\Big(\sum_{n_e \ge 1}\frac{n_e}{a^{2n_e}}\Big)\\
=& \frac{C_{\Gamma,a}}{\sqrt{k_{min}}}\prod_{j=1}^{s}\frac{t_j^{|E_{b,j}|-1}}{(|E_{b,j}|-1)!}\Big(\frac{a^2}{(a^2-1)^2}\Big)^{|E_{\rm int}|} =o(1),  
\end{aligned}
\end{equation} where in the third line we use the inequality $\sum_{e \in E_{\rm int}}n_e \le |E_{\rm int}| \prod_{e \in E_{\rm int}}n_e$ for $n_e \ge 1$ and absorb the factor $|E_{\rm int}|$ into the constant $C_{\Gamma,a}$, and in the last line we use \eqref{equ:ineq} for the exponent of $\sigma_N^*$. 

This thus completes the proof.
\end{proof}

With Proposition \ref{prop:lim-step2} and Proposition \ref{prop:lim_step3} in hand, we are now ready to complete the proof of Theorem \ref{theorem:typical_Gaussian}.

\begin{proof}[Proof of Theorem~\ref{theorem:typical_Gaussian}]
  In view of Propositions~\ref{prop:lim-step2} and~\eqref{def:diagram_function_2}, it remains to analyze the limit of $\mathcal{W}'_{\Gamma}$. Recalling its definition in \eqref{def:diagram_function_2}, and using $k_i,k_i+1 \sim t_i/\sigma_N^*$, we see that the first term on the right-hand side of ~\eqref{def:diagram_function_2} is
  \begin{equation*}
(1+o(1))(\sigma_N^{*})^{-|E_b|+s}\prod_{j=1}^{s}\frac{(t_j\sigma_a)^{|E_{b,j}|-1}}{2^{|E_{b,j}|-1}(|E_{b,j}|-1)!}.
  \end{equation*}

Now we take the summation over $\chi_j$, noting the identity
\begin{equation}
\sum_{\chi_1, \ldots,\chi_s\in\{0,1\}} \prod_{j=1}^{s}  1_{\{k_j+\chi_j+\sum_{e \in \partial D_j}\chi_e \equiv 0 \mod 2  \}}=1,
\end{equation}
we therefore obtain
    \begin{multline}\label{equ:W_sum}
        \frac{1}{2^s}\sum_{\chi_1,\ldots,\chi_s \in \{0,1\}}\mathcal{W}'_{\Gamma}(k_1+\chi_1,\ldots,k_s+\chi_s)=(1+o(1))(\sigma_N^{*})^{-|E_b|+s}\prod_{j=1}^{s}\frac{(t_j\sigma_a)^{|E_{b,j}|-1}}{2^{|E_{b,j}|}(|E_{b,j}|-1)!}\times \\\sum_{\eta}\sum_{\chi_e} \prod_{(x,y) \in E_{\rm int}} \Big(\sum_{n_e: n_e \equiv \chi_e} {p_{n_e}(\eta(x),\eta(y))}{a^{-2n_e}}\Big) \prod_{(z,w) \in E_b}\big(Q_{\beta}^*Q_{\beta}^{}+(-1)^{\chi_e}Q_{\beta_-}^*Q_{\beta_-}^{}\big)_{\eta(z)\eta(w)}.
    \end{multline}

We now address the summation over $\chi_e$ in the above equation. For any interior edge $e \in E_{\rm int}$, recalling $S_{ij}$ defined in \eqref{equ:G}, we use the assumptions \eqref{assumption1}, \eqref{assumption2}, \eqref{def:sigma_i}, and \eqref{def:tau_i} to obtain 
\begin{equation}
\begin{aligned}
     \sum_{\chi_e \in \{0,1\}}\sum_{n_e: n_e \equiv \chi_e} p_{n_e}(\eta(x),\eta(y)){a^{-2n_e}} = \sum_{n_e \ge 1}{p_{n_e}(\eta(x),\eta(y))}{a^{-2n_e}}=\big(1+o(1)\big)(\sigma_N^*)^2a^{-2}S_{\eta(x)\eta(y)}.
    \end{aligned}
\end{equation} 
And for any boundary edge $e=(z,w) \in E_b$, we note that
\begin{equation}
\sum_{\chi_e \in \{0,1\}}\big(Q_{\beta}^*Q_{\beta}^{}+(-1)^{\chi_e}Q_{\beta_-}^*Q_{\beta_-}^{}\big)_{\eta(z)\eta(w)}=2\big(Q_{\beta}^*Q_{\beta}^{}\big)_{\eta(z)\eta(w)}.
\end{equation}
Substituting the above two equations into \eqref{equ:W_sum}, and using \eqref{equ:ineq} for the exponent of $\sigma_N^*$ and $a$, we deduce that  the right-hand side of  \eqref{equ:W_sum} coincide with \eqref{equ:W(Gamma)_lim},  therefore complete the proof.
\end{proof}

\subsection{Proof of Theorem \ref{Thm-a>1:baby}}\label{sec:proof-moments}

To prove Theorem \ref{Thm-a>1:baby}, we begin  with the following expansion of the moments of $X_N$.

\begin{theorem}\label{coro:expansion}With the same notation and assumptions as in Theorem \ref{main result}, 
let  $k_i=\lfloor t_i/\sigma_N^* \rfloor$ with $t_i>0$. Then  
\begin{equation}
       \lim_{N \to \infty}\frac{1}{2^s}\mathbb{E}\Big[\prod_{j=1}^{s}\Big(\tr \Big(\frac{X_{N}}{\rho_a}\Big)^{k_j}+\tr \Big(\frac{X_N}{\rho_a}\Big)^{k_j+1}\Big)\Big]= \sum_{\Gamma}w_{\Gamma}(t_1,\dots,t_s)
    \end{equation}
   where the sum runs over all typical diagrams.
\end{theorem}
\begin{proof}
Recalling the assumption of sparsity $\sigma^{*}_N  \log N \to 0$ in Theorem \ref{main result}, we see from Corollary \ref{upper_bounds_moments} (i) that for any $k_i=\lfloor  t_i/\sigma_N^*\rfloor$ and $\chi_i \in \{0,1\}$, 
\begin{equation}
    \rho_a^{-\sum_j(k_j+\chi_j)} \Big| \mathbb{E}[\prod_{j=1}^{s}\tr X^{k_j+\chi_j}] - \mathbb{E}[\prod_{j=1}^{s}(\tr X^{k_j+\chi_j}-\tr H^{k_j+\chi_j})] \Big| \le CN^s\Big(\frac{2}{\rho_a}\Big)^{\sum_{j}\lfloor t_j/\sigma_N^*\rfloor} \to 0.
\end{equation}
Therefore, by \eqref{equ:expansion_boundary} we have 
    \begin{equation}\label{equ:expansion_o(1)}
      \begin{aligned}
& \frac{1}{2^s}\mathbb{E}\Big[\prod_{j=1}^{s}\Big(\tr \Big(\frac{X_{N}}{\rho_a}\Big)^{k_j}+\tr \Big(\frac{X_N}{\rho_a}\Big)^{k_j+1}\Big)\Big]\\
&= \frac{1}{2^s} \sum_{\chi_1,\ldots,\chi_s \in \{0,1\}}\rho_a^{-\sum_j(k_j+\chi_j)}\mathbb{E}[\prod_{j=1}^{s}(\tr X^{k_j+\chi_j}-\tr H^{k_j+\chi_j})]+o(1)\\
&= \frac{1}{2^s}\sum_{\Gamma \in \mathcal{D}_{\beta,s}^{b}}\sum_{\chi_1,\ldots,\chi_s \in \{0,1\}}\mathcal{W}_{\Gamma,X}(k_1+\chi_1,\dots,k_s+\chi_s)+o(1).  
      \end{aligned}
    \end{equation}

For any $\Gamma \in \mathcal{D}_{\beta,s}^{b}$, recall Theorem~\ref{Prop:Upper bound} (2) for non-typical $\Gamma$ and Theorem~\ref{theorem:typical_Gaussian} for typical $\Gamma$. It then suffices to show that the sum over $\Gamma \in \mathcal{D}_{\beta,s}^{b}$ and the limit $N \to \infty$ can be interchanged. To this end, we employ an argument analogous to the dominated convergence theorem.
    
For all    $k_i\sim t_i/\sigma_N^*$,  Theorem~\ref{thm:dominating_function} ensures that for fixed $g$ and $t$,
    \begin{equation}\label{equ:upper_D}
    \sum_{ \Gamma \in \mathcal{D}_{\beta,s}^{b}:  g(\Gamma)=g,t(\Gamma)=t}|\mathcal{W}_{\Gamma,X_N}(k_1,\cdots,k_s)|\le   r^s e^{4\xi^2}\mathcal{D}_{g,t,s}(\xi;a,1).
\end{equation} where
$\xi$ depends on $t_i$. Consequently,  we see  from Theorem \ref{theorem:typical_Gaussian} that,

\begin{equation}\label{equ:w_Gamma_D}
    \sum_{ \Gamma \text{ typical: }  g(\Gamma)=g,t(\Gamma)=t}|w_{\Gamma}(t_1,\dots,t_s)|\le   r^s e^{4\xi^2}\mathcal{D}_{g,t,s}(\xi;a,1).
\end{equation}

Given $\epsilon>0$, since $\mathcal{D}$ is absolutely summable \eqref{equ:sum_D}, there exists  $K=K_{\epsilon}>0$ depending on $\epsilon$ such that
\begin{equation}
   \Big| \sum_{g \ge 0}\sum_{t \ge 1}\mathcal{D}_{g,t,s}(\xi;a,1) - \sum_{K\ge g \ge 0}\sum_{K\ge t \ge 1}\mathcal{D}_{g,t,s}(\xi;a,1)\Big| \le \epsilon.
\end{equation}
Using  \eqref{equ:w_Gamma_D} again,  we obtain
\begin{equation}\label{equ:w_Gamma_sum_eps}
   \Big| \sum_{ \Gamma \in \mathcal{D}_{\beta,s}^{b}}w_{\Gamma}(t_1,\dots,t_s) - \sum_{0\le g \le K}\sum_{1\le t \le K}\sum_{ \Gamma \text{ typical: }  g(\Gamma)=g,t(\Gamma)=t}w_{\Gamma}(t_1,\dots,t_s)\Big| \le \epsilon.
\end{equation}

We now apply Theorem~\ref{theorem:typical_Gaussian} for typical diagrams and Theorem~\ref{Prop:Upper bound} for non-typical diagrams, summing over all $\Gamma \in \mathcal{D}_{\beta,s}^{b}$ with $0 \le g(\Gamma) \le K$ and $1 \le t(\Gamma) \le K$. We see from \eqref{equ:expansion_o(1)} that there exists $ N_\epsilon > 0$, depending on $\epsilon$, such that for all $N > N_\epsilon$,
\begin{equation}
    \Bigg|  \frac{1}{2^s}\mathbb{E}\Big[\prod_{j=1}^{s}\Big(\tr \Big(\frac{X_{N}}{\rho_a}\Big)^{k_j}+\tr \Big(\frac{X_N}{\rho_a}\Big)^{k_j+1}\Big)\Big]
    - \sum_{0 \le g \le K} \sum_{1 \le t \le K} \sum_{ \Gamma \text{ typical: }  g(\Gamma)=g,t(\Gamma)=t} 
    w_{\Gamma}( t_1,\ldots,t_s) \Bigg| \le 2\epsilon.
\end{equation}
Combining this with \eqref{equ:w_Gamma_sum_eps} yields, for all $N > N_\epsilon$,
\begin{equation}
     \Big|  \frac{1}{2^s}\mathbb{E}\Big[\prod_{j=1}^{s}\Big(\tr \Big(\frac{X_{N}}{\rho_a}\Big)^{k_j}+\tr \Big(\frac{X_N}{\rho_a}\Big)^{k_j+1}\Big)\Big]-\sum_{ \Gamma \text{ typical }}w_{\Gamma}(t_1,\dots,t_s)\Big| \le 3\epsilon.
\end{equation}

This thus completes the proof. 
\end{proof}

Next, we compare the summation over typical diagrams in Theorem~\ref{coro:expansion} with the corresponding moments of $Z_{\beta}$ defined in \eqref{equ:Z}. By an argument similar to that in Proposition~\ref{prop:moments-iid}, we obtain the following lemma.

\begin{lemma} \label{lemma:moments Z}
For $\beta = 1,2$, let $Z_{\beta}$ be the random matrix defined in \eqref{equ:Z}. Then, for positive integers $i_1,\ldots,i_s$, the following diagram expansion holds:
\begin{equation}\label{equ:expansion-of-Z}
\prod_{j=1}^{s}  \frac{1}{i_j!}\Big(\frac{t_j(a^2-1)}{a(a^2+1)}\Big)^{i_j} \mathbb{E}\big[\prod_{j=1}^{s}\tr Z_{\beta}^{i_j}\big]=\sum_{\substack{\Gamma {\rm \ typical} \\|E_{b,j}(\Gamma)|=i_j+1}}   w_{\Gamma}(t_1,\dots,t_s).
\end{equation}
\end{lemma}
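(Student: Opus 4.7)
The plan is to mirror the proof of Proposition \ref{prop:moments-iid} applied directly to the matrix $Z_\beta$. Writing $Z_\beta = Q_\beta M Q_\beta^{*}$ with $M = H_{\mathrm{ID}}+H_{\mathrm{Gaussian}}+H_{\mathrm{Diag}}$ a sum of three independent random matrices, cyclicity of the trace gives
\begin{equation*}
\tr Z_\beta^{e_j} = \tr\bigl[(PM)^{e_j}\bigr],\qquad P := Q_\beta^{*}Q_\beta \in \mathbb{R}^{r\times r},
\end{equation*}
which expands as a cyclic index-sum alternating $P$-entries (boundary-type) with $M$-entries (interior-type). Hence $\prod_{j=1}^s \tr Z_\beta^{e_j}$ carries a natural $s$-cell polygon structure where the $j$-th polygon has $e_j$ $M$-arcs and $e_j$ $P$-arcs; together with the marked-point convention on each face (which, as in Example \ref{example:degenerate}, is responsible for the ``$+1$'' in $|E_{b,j}|=e_j+1$), this places us in the typical-leafy-diagram framework of Section \ref{sec:leafy_diagram}.

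The next step is to take the expectation. By independence, one first colors each $M$-arc by an element of $\{\mathrm{ID},G,\mathrm{Diag}\}$ and evaluates within each color class. Wick's formula on the Gaussian block $H_{\mathrm{Gaussian}}$ pairs up two $G$-arcs to contribute the interior-edge weight $g_{\eta(x)\eta(y)}$, with the $\mathrm{G}\beta\mathrm{E}$ Wick pattern accounting for the factor $c_{3,\Gamma_l}(\eta,J)$ on the pair-edges placed into $J$. The moment expansion for the sub-Gaussian $H_{\mathrm{ID}}$, identical to the one used in Section \ref{section:Genus expansions and map enumeration}, supplies the complementary interior weight $\tilde\sigma^2_{\eta(x)\eta(y)}$ on $E_{int}\setminus J$. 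Wick on $H_{\mathrm{Diag}}$, whose entries are supported only where both indices coincide, forces self-loops at a single $[r]$-label; each such pair is realized as a leaf attached to a boundary vertex, contributing the excess-fourth-moment combination $\tfrac13\tau_{\eta(x)}-\chi_{\eta(x)}$ at each $V_{leaf}^b$ vertex. The surviving $P$-arcs along each face boundary give the boundary weight $\prod_{(z,w)\in E_b}(Q_\beta Q_\beta^{*})_{\eta(z)\eta(w)}$.

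The final step is combinatorial reconciliation. The LHS prefactor $\prod_j \tfrac{1}{e_j!}\bigl(\tfrac{t_j(a^2-1)}{a(a^2+1)}\bigr)^{e_j}$ equals $\prod_j\tfrac{(t_j\sigma_a)^{e_j}}{a^{e_j}e_j!}$, and under $|E_{b,j}|=e_j+1$ (so that $|E_b|-s=\sum_j e_j$) this is precisely the Laplace-scaling factor $\prod_j\tfrac{(t_j\sigma_a)^{|E_{b,j}|-1}}{(|E_{b,j}|-1)!}\,a^{-|E_b|+s}$ appearing inside $w_{\Gamma_l}$; matching what remains diagram-by-diagram yields the identity. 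I expect the main subtlety to be the treatment of the leafy contribution coming from the diagonal part of $M$: one must verify, by carefully tracing through the (P1)-coupling bookkeeping of Section \ref{sec:typical_label}, that the collective Wick pairing of the diagonal Gaussian together with the collapsed diagonal $H_{\mathrm{ID}}$-couplings produces exactly $\tfrac{1}{3}\tau_{\eta(x)}-\chi_{\eta(x)}$ rather than some other fourth-moment combination. Everything else amounts to transcribing the ribbon-graph expansion of Proposition \ref{prop:moments-iid} onto the $Z_\beta$ side, with the real/complex distinction between $\beta=1,2$ only affecting the Wick combinatorics of $W_{\mathrm{G}\beta\mathrm{E}_r}$ exactly as in that proposition.
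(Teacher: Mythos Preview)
Your overall strategy---expand $\tr Z_\beta^{e_j}=\tr[(PM)^{e_j}]$ with $P=Q_\beta^*Q_\beta$, color each $M$-arc by one of the three independent summands, apply Wick, and glue into a typical leafy diagram---is exactly the route the paper takes. But your weight assignments do not match $w_{\Gamma_l}$, and the fix is \emph{not} the one you flag as the ``main subtlety''.

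Concretely: with the decomposition $M=H_{\mathrm{ID}}+H_{\mathrm{Gaussian}}+H_{\mathrm{Diag}}$ taken verbatim from Theorem~\ref{main result}, a Wick pairing of two $H_{\mathrm{Gaussian}}$-arcs contributes variance $g_{\eta(x)\eta(y)}$, whereas the $J$-edge weight in \eqref{equ:W(Gamma) lim} is $g_{\eta(x)\eta(y)}+\tfrac13\tau_{\eta(x)}\delta_{\eta(x)\eta(y)}$; likewise a Wick pairing of two $H_{\mathrm{Diag}}$-arcs contributes $\tau_{\eta(x)}-\chi_{\eta(x)}$, not the $\tfrac13\tau_{\eta(x)}-\chi_{\eta(x)}$ that appears at each $V_{leaf}^b$. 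These two discrepancies do not cancel by any ``collapsed diagonal $H_{\mathrm{ID}}$-coupling'' mechanism, since the three blocks are independent and $H_{\mathrm{ID}}$ feeds only into $E_{int}\setminus J$. (Relatedly, the coupling factor $c_{3,\Gamma_l}(\eta,J)$ in \eqref{equ:fac,J} arises from the \emph{sub-Gaussian} block $H_{\mathrm{ID}}$ via comparison with GOE, not from the $\mathrm{G}\beta\mathrm{E}$ Wick pattern as you state.)

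The paper's resolution is a preliminary redistribution of the diagonal variance: rewrite $Z_\beta=Q_\beta(H_{\mathrm{ID}}+G+D)Q_\beta^*$ where $G$ is $\mathrm{G}\beta\mathrm{E}_r$ with variance profile $g_{ij}+\tfrac13\tau_i\delta_{ij}$ (respectively $g_{ij}+\tfrac23\tau_i\delta_{ij}$ for $\beta=2$) and $D$ is diagonal Gaussian with variances $\tfrac13\tau_i-\chi_i$. This is the same random matrix in law, but now the $G$-pairings land exactly on the $J$-edge weight and the $D$-pairings land exactly on the leaf weight. One first assumes $\tau_i\ge 3\chi_i$ so that $D$ is well-defined, matches the two sides as polynomials in $\tau_i$, and then extends by polynomiality. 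Once you insert this rewriting step, the rest of your sketch goes through essentially as written.
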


\begin{proof}
We focus on the real case ($\beta=1$), as the complex case is analogous. For convenience of notation, we set $G=H_{\mathrm{ID}}+{H_{\mathrm{Gaussian}}}+H_{\mathrm{Diag}}$. Then $ 
    Z_{1}= Q_{1}G Q_{1}^*$. A direct calculation yields
\begin{equation}
    \begin{aligned}
     & \mathbb{E}\big[\prod_{j=1}^{s}\tr Z_{1}^{i_j}\big]=\mathbb{E}\big[\prod_{j=1}^{s}\tr \ Q_{1} Z_{1}^{i_j}Q_{1}^*\big]\\
     = &\sum_{\eta:[2l_s-s]\to [r]}\mathbb{E}\big[\prod_{j=1}^{s}(Q_{1}Q_{1}^*)_{\eta(l_{j-1})\eta(\gamma(l_{j-1}))}G_{\eta(l_{j-1}+1)\eta(\gamma(l_{j-1}+1))}\cdots(Q_{1}Q_{1}^*)_{\eta(l_{j}-1)\eta(\gamma(l_{j}-1))}\big]
    \end{aligned}
\end{equation}
where $l_j=(2i_1+1)+\cdots+(2i_{j}+1)$ for $j \ge 1$,   with the convention   $l_0=1$, and  $\gamma=(12\cdots l_1)(l_1+1\cdots l_2)\cdots(l_{s-1}+1\cdots l_s)$ is the permutation describing the cycle structure of the product. 

We now reinterpret the above expression as an enumeration of ribbon graphs as follows. 
Draw $s$ polygons $D_1,\ldots,D_s$ with perimeters $2i_1+1,\ldots,2i_s+1$, respectively. Label their vertices and edges in cyclic order: $ 
v_1, v_2, \ldots, v_{l_{s+1}},  
\vec{e}_j = \overrightarrow{v_j v_{\gamma(j)}}.$ 
For each polygon $D_j$, we designate the first, third, fifth, etc., edges as  {boundary edges} and the remaining edges as  {interior edges}. Denote the sets of boundary edges and interior edges by $E_b$, $E_{\rm int}$ respectively.
Thus, we obtain
\begin{equation}
\begin{aligned}
    \mathbb{E}\big[\prod_{j=1}^{s}\tr Z_{1}^{i_j}\big]=&\sum_{\eta:V \to [r]}\prod_{(j,\gamma(j)) \in E_b}(Q_{1}Q_{1}^*)_{\eta(j)\eta(\gamma(j))}\mathbb{E}\big[\prod_{(j,\gamma(j)) \in E_{\rm int}}G_{\eta(j)\eta(\gamma(j))}\big].
    \end{aligned}
\end{equation}

For the interior edges, recall that $\mathcal{P}_2(J)$ denotes the set of all   pairings of $J$ (in particular, $\mathcal{P}_2(J) = \emptyset$ if $|J|$ is odd). Recalling \eqref{equ:G}, we note that $\mathbb{E}[G_{ij}G_{ji}]=S_{ij}(1+\delta_{ij})$. Applying the Wick formula again for $\beta = 1$, we obtain
\begin{equation}
\mathbb{E}\big[\prod_{(j,\gamma(j)) \in E_{\rm int}}G_{\eta(j)\eta(\gamma(j))}\big]
= \sum_{\pi \in \mathcal{P}_2(E_{\rm int})}\prod_{(s,t) \in \pi}S_{\eta(s)\eta(t)}\Big(\delta_{\eta(s)\eta(\gamma(t))}\delta_{\eta(t)\eta(\gamma(s))}+\delta_{\eta(s)\eta(t)}\delta_{\eta(\gamma(s))\eta(\gamma(s))}\Big).
\end{equation}

The diagram $\Gamma$ is constructed by identifying the edges of these $s$ polygons: for each pair $(s,t) \in \pi$, the corresponding edges are identified either in the same or opposite direction. After gluing, we obtain a {typical  diagram} satisfying $|E_{b,j}(\Gamma)| = i_j + 1$ for $j = 1,2,\ldots,s$. Moreover, the coefficients involving $a$ match precisely the factor $\prod_{j=1}^{s} \frac{1}{i_j!} \big( \frac{t_j(a^2-1)}{a(a^2+1)} \big)^{i_j}$. 
Translating the above computation into the language of ribbon graph enumeration yields exactly Equation \eqref{equ:expansion-of-Z}. This thereby completes the proof of Lemma~\ref{lemma:moments Z}.\end{proof}

With Theorem \ref{theorem:typical_Gaussian} and Lemma \ref{lemma:moments Z} in hand, we can now complete the proof of Theorem\ref{Thm-a>1:baby}.

\begin{proof}[Proof of Theorem \ref{Thm-a>1:baby}  ]

Combining Theorem \ref{coro:expansion}  and Lemma \ref{lemma:moments Z}, we obtain
\begin{equation}
   \begin{aligned}
          \lim_{N \to \infty}&\frac{1}{2^s}\mathbb{E}\Big[\prod_{j=1}^{s}\Big(\tr \Big(\frac{X_{N}}{\rho_a}\Big)^{k_j}+\tr \Big(\frac{X_N}{\rho_a}\Big)^{k_j+1}\Big)\Big] 
           =\sum_{i_1,\dots,i_s \ge 0}\sum_{\substack{\Gamma \\ {\rm typical \ diagrams} \\E_{b,j}(\Gamma)=i_j+1}}w_{\Gamma}(t_1,\ldots,t_s)\\    &=\sum_{i_1,\dots,i_s \ge 0}\prod_{j=1}^{s}  \frac{1}{i_j!}\Big(\frac{t_j(a^2-1)}{a(a^2+1)}\Big)^{i_j}\mathbb{E}\big[\prod_{j=1}^{s}\tr Z_{\beta}^{i_j}\big]      =\mathbb{E}\Big[\prod_{j=1}^{s}\tr  
       \exp\!\Big\{ \frac{(a^2-1)}{(a^2+1)a} t_jZ_{\beta}\Big\} 
      \Big],
   \end{aligned}
\end{equation}
which  completes the proof of Theorem \ref{Thm-a>1:baby}.
\end{proof}

\subsection{Proof of Theorem \ref{main result}}\label{appendix: main results}

In this section, we deduce Theorem \ref{main result} from Theorem \ref{Thm-a>1:baby} under the assumption of $a= a_1= a_2= \cdots=a_{q}>a_{q+1}\ge   \cdots   \ge a_{r} > -a$, establishing   the weak convergence of the  first few largest eigenvalues  from the convergence of large power moments.  The general case  \eqref{finiteA}  can be treated   similarly.

Recall that for all $ 1 \le  m \le  N$,  the  $m$-point correlation measure $R_{N,\beta,m}$ of $X_N$ is a random measure on $\mathbb{R}^m$ defined by

\begin{equation}
\int_{\mathbb{R}^m}f(\lambda_1,\ldots,\lambda_m) dR_{N,\beta,m}(\lambda_1,\ldots,\lambda_m)=\mathbb{E}\Big[\sum_{j_1 \neq \cdots \neq j_m}f(\lambda_{i_1}(X_N),\ldots,\lambda_{i_m}(X_N))\Big]
\end{equation}for any continuous test function $f:\mathbb{R}^m \to \mathbb{R}$ with compact support. Similarly, for any $1 \le m \le q$, let $R_{Z,\beta,m}$ be the $m$-point correlation measure of eigenvalues associated with $Z_{\beta}$. By convention, we define $R_{Z,\beta,m}$ as the null measure if $m \ge q+1$.

Rescale the eigenvalues $\{ \lambda_j\}$ of  $X_N$
 as
\begin{equation} \label{rescaling}
   \lambda_j = 
        \rho_a\Big(1 +\frac{(a^2-1)\xi_j\sigma_N^*}{(a^2+1)a}\Big),
\end{equation}
and then introduce the Laplace transform of truncated local statistics
\begin{equation}
    S_{N,m}(t_1,\ldots,t_m)=\sum_{\substack{j_1 \neq \cdots \neq j_m \\ |\xi_{j_i}| \le s_N}} \exp \Big(\frac{(a^2-1)}{(a^2+1)a}(t_1\xi_{j_1}+\cdots+t_k\xi_{j_m})\Big),
\end{equation}
where   $s_N$ may be   chosen such that $\log N \ll s_N \ll  {1}/{\sigma_N^*}$.\\

We follow the strategy outlined in \cite{soshnikov1999universality,Férall2007deform} and proceed in three key steps.

\textbf{Step 1: Convergence of the Laplace transform}
\begin{lemma}\label{lemma:error_trace}
For  the deformed IRM $X_N$ defined in Definition \ref{def:inhomo}, assume $\sigma_N^*\log N \to 0$ as $N \to \infty$. Then for  any integer $p \ge 1$ and  $t > 0$,

\begin{equation}
\lim_{N \to \infty}\mathbb{E}\Big[\Big|\tr\Big(\frac{X_N}{\rho_a}\Big)^{2[t/\sigma_N^*]}-S_{N,1}(t)\Big|^p\Big]=0.
\end{equation}

\end{lemma}

\begin{proof}
Split  the trace into four parts for  summation of eigenvalues and we have   
\begin{equation}    \tr\Big(\frac{X_N}{\rho_a}\Big)^{2[t/\sigma_N^*]}:=r_1+r_2+r_3+r_4,
\end{equation} where 
\begin{equation}   r_1=\sum_{j:\lambda_j \le 0}\Big(\frac{\lambda_j}{\rho_a}\Big)^{2[t/\sigma_N^*]}, \ 
    r_2=\sum_{j: \lambda_j > 0, \, \xi_j >s_N}\Big(\frac{\lambda_j}{\rho_a}\Big)^{2[t/\sigma_N^*]},
\end{equation}

\begin{equation}    r_3=\sum_{j: \lambda_j > 0,\, |\xi_j| \le s_N}\Big(\frac{\lambda_j}{\rho_a}\Big)^{2[t/\sigma_N^*]},\ 
    r_4=\sum_{j: \lambda_j > 0,\,\xi_j <-s_N}\Big(\frac{\lambda_j}{\rho_a}\Big)^{2[t/\sigma_N^*]}.
\end{equation}

In view of the rescalings in \eqref{rescaling}, it is straightforward to see that the leading term yields
\begin{equation}
    r_3 = \Big(1 + O(s_N\sigma_N^*)\Big)\sum_{j:|\xi_j| \le s_N} e^{\frac{(a^2-1)}{(a^2+1)a}t\xi_j}.
\end{equation}   
It's also easy to obtain  an upper bound of  $r_4$,
\begin{equation}
    |r_4|=\sum_ { j : \lambda_ j > 0, \xi_ j \le -s_N}\Big(\frac{\lambda_j}{\rho_a}\Big)^{2[t/\sigma_N^*]} \le N(1-s_N\sigma_N^*)^{2[t/\sigma_N^*]} \le Ne^{-C_ts_N} \to 0,
\end{equation}
since   $s_N \gg \log N$.
 
It then suffices to verify that the $L^p$-norms of the remaining three terms $r_1$ and   $r_2$  converge to zero.
For the term $r_2$,    we arrive at 
\begin{equation}
\begin{aligned}
    \mathbb{E}[|r_2|^p]=&\mathbb{E}\Big[\Big(\sum_ { j : \lambda_ j > 0, \xi_ j \ge s_N}\Big(\frac{\lambda_j}{\rho_a}\Big)^{2[t/\sigma_N^*]}\Big)^p\Big] \le \mathbb{E}\Big[\Big(\sum_ { j : \lambda_ j > 0, \xi_ j \ge s_N}\Big(\frac{\lambda_j}{\rho_a}\Big)^{4[t/\sigma_N^*]} \Big(\frac{1}{1+s_N\sigma_N^*}\Big)^{2[t/\sigma_N^*]}\Big)^p\Big]\\
    \le & \mathbb{E}\Big[\Big(\tr\Big(\frac{X_N}{\rho_a}\Big)^{4[t/\sigma_N^*]}\Big)^p\Big] e^{-2tps_N}
    \le C_{t,p} e^{-2tps_N} 
    \end{aligned}
\end{equation} for some constant $C_{t,p}$.  
Here  in the last equality we have used    Corollary  \ref{upper_bounds_moments}, which relies on the condition $\sigma_N^*\log N \to 0$.

For the term $r_1$, recalling the assumption on eigenvalues of  $A$ in \eqref{finiteA}, we choose a sufficiently small constant  $\epsilon_0>0$ such that $a-\epsilon>\max\{|a_r|,1\}$ and rewrite    $A$ as
$ A=A_1+A_2$,
 where $A_1=U^*\text{diag}(\epsilon_0 I_q,O)U$ and $A_2=U^*\text{diag}(a-\epsilon_0,\cdots,a-\epsilon_0,a_{q+1},\cdots,a_{r})U$. We immediately see from  \eqref{finiteA} that   $\|A_2\|_{op}=a-\epsilon$ and $ \lambda_j(X_N) \ge  \widetilde{\lambda}_j$,
where 
  $\widetilde{\lambda}_1 \ge \dots \ge \widetilde{\lambda}_N$ are  eigenvalues of $H_N+A_2$. 
In particular, since $\lambda_j \le 0$ implies $\widetilde{\lambda}_j \le 0$, it follows that  
\begin{equation}
    \mathbb{E}[|r_1|^p]=\mathbb{E}\Big[\Big(\sum_{j:\lambda_j \le 0}\Big(\frac{\lambda_j}{\rho_a}\Big)^{2[t/\sigma_N^*]}\Big)^p\Big] \le \mathbb{E}\Big[\Big(\sum_{j:\tilde{\lambda}_j \le 0}\Big(\frac{\widetilde{\lambda}_j}{\rho_a}\Big)^{2[t/\sigma_N^*]}\Big)^p\Big].
\end{equation}
Now we apply Corollary \ref{upper_bounds_moments} to the deformed IRM $H_N + A_2$, which yields
\begin{equation}
    \mathbb{E}\Big[\Big(\sum_{j:\tilde{\lambda}_j \le 0}\Big(\frac{\tilde{\lambda}_j}{\rho_a}\Big)^{2[t/\sigma_N^*]}\Big)^p\Big] \le \mathbb{E}\Big[\Big(\tr\Big(\frac{H_N+A_2}{\rho_a}\Big)^{2[t/\sigma_N^*]}\Big)^p\Big]  \le C_{t,p}\Big(\frac{\rho_{a-\epsilon_0}}{\rho_a}\Big)^{2p[t/\sigma_N^*]}.
\end{equation}  The right-hand side tends to $0$ as $N \to \infty$ since $\rho_{a-\epsilon_0} < \rho_a$. This thus completes the desired proof.
\end{proof}

We now use Lemma \ref{lemma:moments} and deduce the convergence of $S_{n,m}$ from $S_{n,1}$.  

\begin{lemma}\label{lemma:convergence_S_n,k}
Under the same assumptions of Theorem \ref{Thm-a>1:baby}, for any $1 \le m \le q$ and   $t_1,\cdots,t_m>0$, we have
\begin{equation}
  \lim_{N \to \infty}\mathbb{E}[S_{N,m}(t_1,...,t_m)]=\int_{\mathbb{R}^m}  \exp(t_1\xi_1+\cdots+t_m\xi_m )dR_{Z,\beta,m}(\xi_1,\ldots,\xi_m).
  \end{equation}
\end{lemma}
\begin{proof}
Since $S_{N,m}(t_1,\dots,t_m)$ can be expressed as a polynomial in terms of ${S_{N,1}(\sum_{j \in J}t_j)}_{J \subset [m]}$, we apply Lemma \ref{lemma:moments} with $n = 2^m$, $Y_{J} = S_{N,1}(\sum_{j \in J}t_j)$, and $Z_{J} = \tr\left( {X_N}/{\rho_a}\right)^{2[\sum_{j \in J}t_j/\sigma_N^*]}$. 

Assumption (1) of Lemma \ref{lemma:moments} follows from Lemma \ref{lemma:error_trace}, and Assumption (2) follows from Corollary \ref{upper_bounds_moments}. With these conditions verified, the proof is completed by combining Lemma \ref{lemma:error_trace} and Theorem \ref{Thm-a>1:baby}.\end{proof} 

\textbf{Step 2: Convergence of correlation measures}

\begin{lemma}\label{lemma:correlation}
    Under the same assumptions of Theorem \ref{Thm-a>1:baby}, for any $m \ge 1$,  the rescaled correlation measure of $X_N$
    \begin{equation}
    \widetilde{R}_{N,\beta,m}(\xi_1,\dots,\xi_m) := R_{N,\beta,m}\Big(\rho_a +\rho_a\frac{(a^2-1)\xi_1\sigma_N^*}{(a^2+1)a},\dots, \rho_a +\rho_a\frac{(a^2-1)\xi_m\sigma_N^*}{(a^2+1)a}\Big)
\end{equation}
converges weakly to $ R_{Z,\beta,m}$.\end{lemma}

\begin{proof}
    We  first consider the case $1 \le m \le q$. Noting that 
    \begin{equation}
        \mathbb{P}\big(\lambda_1(X_N) \ge \rho_a(1+s_N\sigma_N^*)\big) \le \Big(\frac{1}{1+s_N\sigma_N^*}\Big)^{2[t/\sigma_N^*]}\mathbb{E}\Big[\tr\Big(\frac{X_N}{\rho_a}\Big)^{2[t/\sigma_N^*]}\Big] \le \widetilde{C} e^{-C_t s_N} \to 0 
    \end{equation} 
    for some constants  $\widetilde{C}, C_t$, we    
 have 
    \begin{equation}
     \int_{\mathbb{R}^{m}}  1_{\{\exists j, \  \xi_j>s_N\}}(\xi_1,\dots,\xi_m) \,d \tilde{R}_{N,\beta,m}(\xi_1,\ldots,\xi_m) \le \mathbb{P}\big(\lambda_1(X_N) \ge \rho_a(1+s_N\sigma_N^*)\big) \to 0.
    \end{equation}
Therefore, it suffices to prove the weak convergence for
$1_{(-\infty,s_N]^{m}}(\xi_1,\dots,\xi_m)\widetilde{R}_{N,\beta,m}(\xi_1,\dots,\xi_m)$. 
By Lemma \ref{lemma:convergence_S_n,k},   its Laplace transform  converges to that    of $R_{Z,\beta,m}$.  

Next we turn to  the case $m \ge q+1$. 
For any  $\ell<0$,   Theorem \ref{thm:LLN1}  on the almost  sure  convergence of $\lambda_{q+1}(X_N)$ implies  that 
\begin{equation}
    \tilde{R}_{N,\beta,m}([\ell,   +\infty)^{m}) \le \mathbb{P}\big(\lambda_{q+1}(X_N) \ge \rho_a(1+\ell s_N\sigma_N^*)\big) \to 0.
\end{equation}  
Thus, $\widetilde{R}_{N,\beta,m}$ converges to the null measure. 
 
 This completes the proof.
\end{proof}

\textbf{Step 3: Completion of the proof of Theorem~\ref{main result}.}
\begin{proof}[Proof of Theorem \ref{main result}]
    For any real $s_1,\ldots,s_q$, we first rewrite the probability $\mathbb{P}(\xi_1 \le s_1,\ldots,\xi_q \le s_q)$ as a finite linear combination of
probabilities
\begin{equation}
\mathbb{P}(\#\{j \ge 1: 
\xi_j\in I_i\}=m_i, \
i=1,2,\ldots ),
\end{equation}
where $I_i$ are intervals of the form $(s_i,s_j]$ or $(s_i, +\infty)$. Here we stress that   the probability goes to 0 if $\sum^q_{i=1}m_i > q$. 

Define   $\eta_{N,i}=\#\{j \ge 1: 
\xi_j\in I_i\}$ and   similarly let $\eta_{Z_{\beta},i}$ be the counting variable for the number of eigenvalues of $Z_{\beta}$ in $I_i$.
  The definition of the correlation measure directly relates the factorial moments of the counting variables  to the measure:
\begin{equation}
\mathbb{E}\big[\prod^k_{i=1}\prod^{m_i-1}_{\ell =0}(\eta_{N,i}-\ell )\big]=\tilde{R}_{N,\beta,m}\big(\prod^k_{i=1}I_i^{m_i}\big)
\end{equation}
where $m = \sum^k_{i=1}m_i$. 
Consequently, Lemma \ref{lemma:correlation} implies that the joint moments of $\eta_{N,i}$ for $i=1,\ldots, k$ converge to those of $\eta_{Z_{\beta},i}$. Furthermore, since $\eta_{Z_{\beta},i} \le q$, the convergence of these joint moments is sufficient to establish the convergence in distribution of $\eta_{N,i}$.

 This   completes the proof of Theorem \ref{main result}.
\end{proof}

\section*{Acknowledgments}
This work was supported by the National Natural Science Foundation of China under Grants \#12371157 and \#12090012. We are very grateful to Zhantao He for his careful reading of the manuscript and for pointing out several typos, and to Zhonggen Su and Jun Yin for valuable discussions. We also thank Ramon van Handel for insightful discussions about his recent work \cite{bandeira2024matrixconcentrationinequalitiesfree}
 and on inhomogeneous random matrices during the 2024 International Congress of Basic Science (Beijing, July 2024).

\appendix
\section{Proofs in Section \ref{Section:GUE/GOE case}}

\subsection{Proof of Lemma \ref{lem:exclusion_inclusion}}\label{sec:proof_inclusion}
\begin{proof}[Proof of Lemma \ref{lem:exclusion_inclusion}]
Expanding the product
\begin{equation}
\prod_{u\ne v}\big(1-1_{\{\eta(u)=\eta(v)\}}\big)
\end{equation}
amounts to summing over subsets of edges $E$ of the complete graph $K_{|V|}$: choosing an edge $\{u,v\}$ corresponds to inserting the factor $-1_{\{\eta(u)=\eta(v)\}}$.  

Each edge set $E$ determines a partition $\pi(E)$ of $V$: the blocks are the connected components of $(V,E)$. Thus, regrouping terms by $\pi(E)$, we obtain
\begin{equation}
\prod_{u\ne v}\big(1-1_{\{\eta(u)=\eta(v)\}}\big)=
\sum_{\pi} C_\pi \prod_{B\in\pi} 1_{\{\eta(u)=\eta(v)\ \forall u,v\in B\}},
\end{equation}
with
\begin{equation}
C_\pi=\prod_{B\in\pi}\; \bigg(\sum_{\substack{E\subset \binom{B}{2}\\ (B,E)\text{ connected}}} (-1)^{|E|}\bigg).
\end{equation}
Thus it suffices to compute, for $m\ge 1$,
\begin{equation}
c_m:=\sum_{\substack{E\subset \binom{[m]}{2}\\ ( [m],E)\text{connected}}}(-1)^{|E|}.
\end{equation}

We claim that 
\begin{equation} \label{cmidensity}
c_m=(-1)^{m-1}(m-1)!.
\end{equation} 
If so, substituting this into the expression for  $C_\pi$ yields 
\begin{equation}
C_\pi=\prod_{B\in\pi} c_{|B|} = \prod_{B\in\pi}(-1)^{|B|-1}(|B|-1)!,
\end{equation}
which is precisely the desired formula.

The identity  \eqref{cmidensity}   can be proved by induction as follows. For $m=1$, clearly $c_1=1$.  
Assume $c_m=(-1)^{m-1}(m-1)!$ and we consider $c_{m+1}$. To form a connected graph on $m+1$ vertices, the $(m+1)$-th vertex must be adjacent to at least one of the first $m$ vertices. If it connects to exactly one vertex, then after removing this edge, the remaining $m$ vertices must form a connected graph; if it connects to more than one (say to a nonempty subset $S$), then the contributions from such configurations cancel out (when accounting for the edges   in the set $S$).  Consequently, we obtain the recurrence $c_{m+1}=-m\,c_m.$ 
Hence, by induction, we have  $c_{m+1}=(-1)^{m}m!$.  \end{proof}

\subsection{Proof of Lemma \ref{lem:V_b_upper_bound}}\label{appendix:lemma_graph}

\begin{proof}[Proof of Lemma \ref{lem:V_b_upper_bound}]
We first remove all boundary edges of $G$. 
By definition, $\widetilde{\mathcal{V}}_b(G)$ is the set of the boundary vertices that remain after deleting all boundary trees and removing non-marked divalent boundary vertices.  

For any $v\in \widetilde{\mathcal{V}}_b(G)$, let $C$ be the connected component containing $v$. Since we have removed all boundary trees, each $C$ must fall into one of the following cases:
\begin{enumerate}[(i)]
    \item $C$ contains a loop or multiple edges;
    \item $C$ is connected to more than one boundary vertex;
    \item $C$ contains at least one vertex in $\mathcal{V}_{\mathrm{tied}}(G)$.
    \item $C$ is a marked point.
\end{enumerate}

Let $L(C)$ denote the set of loops in $C$, then Euler’s formula gives the number of loops in    $C$ 
\begin{equation}
        |L(C)| = |E(C)| - |V(C)| + 1,\quad |V(C)|=|\widetilde{V}_b(C)|+|V_{int}(C)|,
\end{equation}
which implies 
\begin{equation}\label{equ:Euler_2}
        |\widetilde{\mathcal{V}}_b(C)| 
    = |\mathcal{E}_{\mathrm{int}}(C)| - |\mathcal{V}_{\mathrm{int}}(C)| + 1 - |L(C)|.
\end{equation}

\smallskip
\noindent\textbf{Case (i): $|L(C)| \ge 1$.}  
In this case, it is easy to see from \eqref{equ:Euler_2} that
\begin{equation}
|\widetilde{\mathcal{V}}_b(C)|
    \le |\mathcal{E}_{\mathrm{int}}(C)| - |\mathcal{V}_{\mathrm{int}}(C)|.
\end{equation}

\smallskip
\noindent\textbf{Case (ii) and (iii): $L(C)=0$.}  
Here $C$ is a tree, if nonempty. Two subcases are discussed below.
\begin{itemize}
    \item If $|\widetilde{\mathcal{V}}_b(C)| > 1$, then 
    \begin{equation}
        1<|\widetilde{\mathcal{V}}_b(C)| =|\mathcal{E}_{\mathrm{int}}(C)| - |\mathcal{V}_{\mathrm{int}}(C)| + 1
        \le 2\bigl(|\mathcal{E}_{\mathrm{int}}(C)| - |\mathcal{V}_{\mathrm{int}}(C)|\bigr).
    \end{equation}
    \item If $|\widetilde{\mathcal{V}}_b(C)| = 1$, then the unique boundary vertex must be adjacent to a tied vertex. This  implies 
    \begin{equation}
        |\widetilde{\mathcal{V}}_b(C)| \le |\mathcal{V}_{\mathrm{tied}}(C)|.
    \end{equation}
\end{itemize}

\smallskip
\noindent\textbf{Case (iv): $|\mathcal{E}_{\mathrm{int}}(C)|=0$.} 
Then $C$ reduces to a single point $v$, which implies $v$ is of degree 2 in $G$ (only connects to boundary edges). Thus $v$ is a marked point, which can occur at most $s$ times in total.

\smallskip
Summing over all connected components $C$ gives the desired bound.
\end{proof}

\subsection{Proof of Lemma \ref{lem:sum_partition}}\label{appendix A2}
\begin{proof}[Proof of Lemma \ref{lem:sum_partition}]
Let $\pi$ be a partition of $[t]$ and denote by $i_l$ the number of blocks of size $l$ in $\pi$ for each $l \ge 1$. Then $\sum_{l \ge 1} l i_l = t$. The number of partitions with given multiplicities $(i_l)_{l \ge 1}$ is
\begin{equation}
\frac{t!}{\prod_{l \ge 1} (i_l!)\,(l!)^{i_l}}.
\end{equation}
Consequently, the sum on the left-hand side of \eqref{equ:partition} becomes

\begin{equation}
\sum_{\substack{(i_l)_{l\ge1}\\\sum l i_l=t}}
\frac{t!}{\prod_{l\ge1} (i_l!)(l!)^{i_l}}
\prod_{l\ge1} \bigl((l-1)!\,M^{-(l-1)}\bigr)^{i_l}=t!\sum_{\substack{(i_l)_{l\ge1}\\\sum l i_l=t}}
\prod_{l\ge1} \frac{1}{i_l!}\left(\frac{1}{l\,M^{\,l-1}}\right)^{i_l}.
\end{equation}

Consider the exponential generating function in a formal variable $z$:
\begin{equation}
\sum_{t\ge0}\Bigg[\,
\sum_{\substack{(i_l)_{l\ge1}\\\sum l i_l=t}}
\prod_{l\ge1} \frac{1}{i_l!}\left(\frac{z^l}{l\,M^{\,l-1}}\right)^{i_l}
\Bigg]
= \exp\!\Bigg(\sum_{l\ge1}\frac{z^l}{l\,M^{\,l-1}}\Bigg).
\end{equation}
The inner sum is therefore the coefficient $[z^t]$ of the right-hand side. We first compute the series appearing in the exponent
\begin{equation}
\sum_{l\ge1}\frac{z^l}{l\,M^{\,l-1}}
= M\sum_{l\ge1}\frac{(z/M)^l}{l}
= -M\ln\!\big(1-\tfrac{z}{M}\big),
\end{equation}
which gives 
\begin{equation}
\exp\!\Big(\sum_{l\ge1}\frac{z^l}{l\,M^{\,l-1}}\Big)
= \big(1-\tfrac{z}{M}\big)^{-M}.
\end{equation}
The coefficient of $z^t$ in $(1- {z}/{M})^{-M}$ is
\begin{equation}
[z^t]\,(1-\tfrac{z}{M})^{-M}
= \frac{1}{M^t}\binom{M+t-1}{t}.
\end{equation}
Therefore, the original sum  on the left-hand side of \eqref{equ:partition} equals
\begin{equation}
  \frac{t!}{M^t}\binom{M+t-1}{t}.
\end{equation}

Now use the elementary bound $\binom{M+t-1}{t}\le\big(M+t\big)^t/t!$ to get
\begin{equation}
 \frac{t!}{M^t}\binom{M+t-1}{t}\le \frac{t!}{M^t}\cdot\frac{(M+t)^t}{t!}
=\Big(1+\frac{t}{M}\Big)^t\leq e^{\frac{t^2}{M}},
\end{equation}
where in the last inequality  we use  $1+ {t}/{M}  \le\exp(t/M)$. This gives the desired result.
\end{proof}

\section{Proof of Lemma \ref{lem:Gaussian_moment_dominate}}
\label{appendix:sec4}

\begin{lemma}\label{lem:moment_dominate}

   For real  random variables
   $A, Z_1,\ldots,Z_s$ and  complex  random variables $B, Y_1,\ldots,Y_s$, suppose that   $A$ is independent of   $\{Z_1,\ldots,Z_s\}$   and    $B$ is independent of   $\{Y_1,\ldots,Y_s\}$.  If the following three conditions hold:
    \begin{itemize}
      \item[(i)]  
    \begin{equation}\label{equ:A-B}
        \mathbb{E}[Z_i]\ge \big|\mathbb{E}[Y_i]\big|, \quad \forall i=1,2,\dots,s;
    \end{equation}
        \item[(ii)]  
\begin{equation}\label{equ:Z-X}
        \mathbb{E}\bigg[\prod_{i\in I}(Z_i-\mathbb{E}[Z_i])\bigg]\ge \bigg|\mathbb{E}\bigg[\prod_{i\in I}(Y_i-\mathbb{E}[Y_i])\bigg] \bigg|, \quad \forall I\subset [s];
    \end{equation}

     \item[(iii)]   for any non-negative integers $k_i$ and $k'_i$ ($i=1,\ldots, s$), 
    \begin{equation}
        \mathbb{E}\bigg[A^{k+k'}\prod_{i\in I}(A^{k_i+k'_i}-\mathbb{E}[A^{k_i+k'_i}])\bigg]\ge \bigg|\mathbb{E}\bigg[B^{k}\bar{B}^{k'}\prod_{i\in I}(B^{k_i}\bar{B}^{k'_i}-\mathbb{E}[B^{k_i}\bar{B}^{k'_i}])\bigg] \bigg|,
    \end{equation}
     \end{itemize}
  then for any $I\subset [s]$, we have   
    \begin{equation}
        \mathbb{E}\bigg[\prod_{i\in I}(Z_iA^{k_i+k'_i}-\mathbb{E}[Z_i]\mathbb{E}[A^{k_i+k'_i}])\bigg]\ge \bigg|\mathbb{E}\bigg[\prod_{i\in I}(Y_iB^{k_i}\bar{B}^{k'_i}-\mathbb{E}[Y_i]\mathbb{E}[B^{k_i}\bar{B}^{k'_i}])\bigg] \bigg|.
    \end{equation}
\end{lemma}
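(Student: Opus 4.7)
The strategy is to find an asymmetric decomposition of each factor $Z_iA^{k_i+k'_i}-\mathbb{E}[Z_i]\mathbb{E}[A^{k_i+k'_i}]$ that leaves the $A$-part in monomial form, so that contributions on the $B$-side collapse into exactly the shape required by hypothesis (iii). Writing $n_i=k_i+k'_i$ and $\mu_i=\mathbb{E}[Z_i]$, I would exploit the identity
\[
Z_iA^{n_i}-\mathbb{E}[Z_i]\mathbb{E}[A^{n_i}]=(Z_i-\mu_i)A^{n_i}+\mu_i\bigl(A^{n_i}-\mathbb{E}[A^{n_i}]\bigr),
\]
together with its $Y,B$-analogue obtained by replacing $A^{n_i}$ with $B^{k_i}\bar{B}^{k'_i}$, and expand the $|I|$-fold product into $2^{|I|}$ contributions indexed by the subset $S\subseteq I$ where the second summand is chosen. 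This asymmetry, in which $Z_i$ is centered only when $A^{n_i}$ remains uncentered, is precisely what allows the $A$-factors on indices $i\notin S$ to multiply into a single monomial $A^{\sum_{i\notin S}n_i}$, and analogously on the $B$-side into $B^{k}\bar{B}^{k'}$ with $k=\sum_{i\notin S}k_i$ and $k'=\sum_{i\notin S}k'_i$.

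Using independence of $A$ from $\{Z_i\}$ on one side and of $B$ from $\{Y_i\}$ on the other, each of the $2^{|I|}$ expectations factorizes as a product of three pieces; for the $Z,A$-side the $S$-term reads
\[
\Bigl(\prod_{i\in S}\mu_i\Bigr)\,\mathbb{E}\Bigl[\prod_{i\notin S}(Z_i-\mu_i)\Bigr]\,\mathbb{E}\Bigl[A^{\sum_{i\notin S}n_i}\prod_{i\in S}(A^{n_i}-\mathbb{E}[A^{n_i}])\Bigr].
\]
I would then verify that each of the three factors is non-negative: $\mu_i\geq 0$ from (i) applied to singletons, non-negativity of the $(Z-\mu)$-expectation from (ii) on $I\setminus S$, and non-negativity of the $A$-expectation from (iii) (whose right-hand side is a modulus, forcing its left-hand side to be non-negative). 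Hence the left-hand side of the lemma is a sum of non-negative $S$-contributions.

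To bound the $Y,B$-side, I would run the identical expansion, apply the triangle inequality, and then dominate each absolute-value factor by its $Z,A$-counterpart using the three hypotheses in the order $\text{(i)}, \text{(ii)}, \text{(iii)}$: $|\mathbb{E}[Y_i]|\leq\mu_i$, the centered-product bound on $I\setminus S$, and the key bound
\[
\Bigl|\mathbb{E}\Bigl[B^{k}\bar{B}^{k'}\prod_{i\in S}(B^{k_i}\bar{B}^{k'_i}-\mathbb{E}[B^{k_i}\bar{B}^{k'_i}])\Bigr]\Bigr|\leq\mathbb{E}\Bigl[A^{k+k'}\prod_{i\in S}(A^{n_i}-\mathbb{E}[A^{n_i}])\Bigr]
\]
with $k,k'$ identified above. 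Multiplying these three inequalities within each $S$ gives term-by-term domination, and summing over $S\subseteq I$ yields the lemma.

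The main obstacle is conceptual rather than computational: finding the decomposition. The naive symmetric split $c_i=(Z_i-\mu_i)(A^{n_i}-\mathbb{E}[A^{n_i}])+\mu_i(A^{n_i}-\mathbb{E}[A^{n_i}])+(Z_i-\mu_i)\mathbb{E}[A^{n_i}]$ produces, after expectation, $A$-contributions of the form $\prod_{i\in T}\mathbb{E}[A^{n_i}]\cdot\mathbb{E}\bigl[\prod_{i\in T'}(A^{n_i}-\mathbb{E}[A^{n_i}])\bigr]$, namely a product of two separate expectations rather than one expectation of a monomial times a centered product, and therefore cannot invoke hypothesis (iii) directly. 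The asymmetric split above is engineered to merge the deterministic $A$-moments back inside the single expectation, matching the structure of (iii) exactly.
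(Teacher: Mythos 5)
Your proposal is correct and follows the same approach as the paper: the identical asymmetric decomposition $Z_iA^{n_i}-\mathbb{E}[Z_i]\mathbb{E}[A^{n_i}]=(Z_i-\mu_i)A^{n_i}+\mu_i(A^{n_i}-\mathbb{E}[A^{n_i}])$, the same $2^{|I|}$-term subset expansion, factorization via independence, and term-by-term application of hypotheses (i)--(iii). Your explicit verification that each factor on the $Z,A$-side is non-negative, and your remark on why a symmetric split fails, are useful elaborations of steps the paper leaves implicit.
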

\begin{proof}
    Rewrite \begin{equation}
        Z_iA^{k_i+k'_i}-\mathbb{E}[Z_i]\mathbb{E}[A^{k_i+k'_i}]=(Z_i-\mathbb{E}[Z_i])A^{k_i+k'_i}+\mathbb{E}[Z_i](A^{k_i+k'_i}-\mathbb{E}[A^{k_i+k'_i}]),
     \end{equation}
    and then expand the inner term, we have
    \begin{equation}
        \begin{aligned}
            &
            \prod_{i\in I}\left(Z_iA^{k_i+k'_i}-\mathbb{E}[Z_i]\mathbb{E}[A^{k_i+k'_i}]\right)
            =\sum_{J\subset I}\prod_{i\in J}(Z_i-\mathbb{E}[Z_i])A^{k_i+k'_i}\prod_{i\in J^c}\mathbb{E}[Z_i](A^{k_i+k'_i}-\mathbb{E}[A^{k_i+k'_i}]).
        \end{aligned}
    \end{equation}
  For each $J \subset I$, by   the independence of $\{Z_i\}$ and $A$,   we take expectation  and get   
    \begin{equation}\label{equ:tec-lemma-Z,A}
        \begin{aligned}
            &\mathbb{E}\bigg[ \prod_{i\in I}\left(Z_iA^{k_i+k'_i}-\mathbb{E}[Z_i]\mathbb{E}[A^{k_i+k'_i}]\right)\bigg]\\
            &=\sum_{J\subset I}\mathbb{E}\bigg[\prod_{i\in J}(Z_i-\mathbb{E}[Z_i])\prod_{i\in J^c}\mathbb{E}[Z_i]\bigg]\mathbb{E}\bigg[\prod_{i\in J}A^{k_i+k'_i}\prod_{i\in J^c}(A^{k_i+k'_i}-\mathbb{E}[A^{k_i+k'_i}])\bigg].
        \end{aligned}
    \end{equation}
Similarly, we obtain
    \begin{equation}\label{equ:tec-lemma-X,B}
     \begin{aligned}
         &\mathbb{E}\bigg[\prod_{i\in I}(Y_iB^{k_i}\bar{B}^{k'_i}-\mathbb{E}[Y_i]\mathbb{E}[B^{k_i}\bar{B}^{k'_i}])\bigg]\\
         =&\sum_{J \subset I}\mathbb{E}\bigg[\prod_{i\in J}(Y_i-\mathbb{E}[Y_i])\prod_{i\in J^c}\mathbb{E}[Y_i]\bigg]\mathbb{E}\bigg[\prod_{i\in J}B^{k_i+k'_i}\prod_{i\in J^c}(B^{k_i+k'_i}-\mathbb{E}[B^{k_i+k'_i}])\bigg].  
     \end{aligned} 
    \end{equation}

By applying the comparison inequality conditions to each term in the summation over $J \subset I$ on the right-hand sides of both \eqref{equ:tec-lemma-Z,A} and \eqref{equ:tec-lemma-X,B}, the desired inequality follows immediately.
\end{proof}

The following lemma provides a domination inequality for a symmetric $\gamma$-sub-Gaussian random variable $Y$, whose moments satisfy
$\mathbb{E}[|Y|^{2k} ]\le \gamma^{k-1}(2k-1)!!$ for every $ k\geq 2$, in terms of a Gaussian variable $Z$.
\begin{lemma}\label{lem:gaussian_moment_dominate}
Let $Y$ be a real or complex symmetric $\gamma$-sub-Gaussian random variable and $Z$ be a real Gaussian random variable  $Z \sim \mathcal{N}(0,16\gamma)$. Then, for any positive integer  $s \ge 1$ and any integers $k_i,k'_i,k,k'\ge 0$,
    \begin{equation}
        \mathbb{E}\bigg[Z^{k+k'}\prod_{i=1}^{s}(Z^{k_i+k'_i}-\mathbb{E}[Z^{k_i+k'_i}])\bigg]\ge \bigg|\mathbb{E}\bigg[Y^{k}\bar{Y}^{k'}\prod_{i=1}^{s}(Y^{k_i}\bar{Y}^{k'_i}-\mathbb{E}[Y^{k_i}\bar{Y}^{k'_i}])\bigg]\bigg| .
    \end{equation}
\end{lemma}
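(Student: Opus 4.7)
The plan is to prove this inequality by combining a parity reduction, a triangle-inequality bound on the $Y$-side, and a Wick-type positivity on the Gaussian side, with the factor $16$ inside the variance $16\gamma$ providing the numerical slack that makes the final comparison close.

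First, since $Y \sim -Y$, the mixed moment $\mathbb{E}[Y^{j}\bar{Y}^{j'}]$ vanishes whenever $j+j'$ is odd, and the analogous Gaussian moment $\mathbb{E}[Z^{j}]$ vanishes for odd $j$. This lets us reduce to the case where every exponent $m_i := k_i+k_i'$ and $m_0 := k+k'$ is even. In this setting $|Y^{k_i}\bar{Y}^{k_i'}| = |Y|^{m_i}$, so expanding the centered product on the $Y$-side and applying the triangle inequality together with $|\mathbb{E}[Y^{k_i}\bar{Y}^{k_i'}]|\le \mathbb{E}[|Y|^{m_i}]$ gives
\begin{equation*}
\Bigl|\mathbb{E}\Bigl[Y^{k}\bar{Y}^{k'}\prod_{i=1}^{s}\bigl(Y^{k_i}\bar{Y}^{k_i'}-\mathbb{E}[Y^{k_i}\bar{Y}^{k_i'}]\bigr)\Bigr]\Bigr|\le \sum_{T\subset[s]}\prod_{i\in T}\mathbb{E}[|Y|^{m_i}]\,\mathbb{E}\bigl[|Y|^{m_0+\sum_{i\notin T}m_i}\bigr].
\end{equation*}
The sub-Gaussian hypothesis $\mathbb{E}[|Y|^{2j}]\le\gamma^{j-1}(2j-1)!!$ for $j\ge 2$ then controls each factor on the right by $\mathbb{E}[Z^{2j}]/16^{j}$ since $\mathbb{E}[Z^{2j}] = (16\gamma)^{j}(2j-1)!!$, leaving a very large geometric slack.

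On the Gaussian side, expanding the centered product yields the signed sum $\sum_{T\subset[s]}(-1)^{|T|}\prod_{i\in T}\mathbb{E}[Z^{m_i}]\,\mathbb{E}[Z^{m_0+\sum_{i\notin T}m_i}]$. By Wick's theorem followed by a standard inclusion–exclusion on the subset lattice, this signed sum equals $(16\gamma)^{N/2}$ times the number of pair-partitions of the $N=m_0+\sum_i m_i$ ``slots'' (grouped into $s+1$ blocks of sizes $m_0,m_1,\ldots,m_s$) such that every block $i\in[s]$ sends at least one pair to a different block. This identity converts the signed Gaussian expression into a manifestly nonnegative combinatorial quantity directly comparable with the $|Y|$-moment bound above.

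The main obstacle, which is the delicate point of the argument, is to close the gap between the two expansions: the triangle-inequality bound on the $Y$-side is a purely positive sum of $2^s$ terms with no cancellation, whereas the right-hand side is a signed sum whose cancellations are essential. To handle this I would proceed by induction on $s$, peeling off one centered factor $(Y^{k_s}\bar{Y}^{k_s'}-\mathbb{E}[Y^{k_s}\bar{Y}^{k_s'}])$ at a time. Alternatively, writing $Z^{m_i}-\mathbb{E}[Z^{m_i}] = \sum_{j<m_i/2}c_{m_i,j}\sigma^{m_i}H_{m_i-2j}(Z/\sigma)$ with $\sigma^2=16\gamma$ and positive coefficients $c_{m_i,j}$ recasts the Gaussian central moment as a nonnegative sum over Hermite products whose combinatorics can be matched term-by-term with the $|Y|$-side bound. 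In either implementation, the factor $16$ in the variance $16\gamma$ is precisely the room that absorbs the triangle-inequality losses at each inductive step.
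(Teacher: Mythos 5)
Your proposal tracks the paper's proof closely in its key structural moves: the parity reduction, the triangle-inequality estimate on the $Y$-side, and — crucially — the conversion of the Gaussian centered product into $(16\gamma)^{N/2}$ times the number of pairings of the $N$ slots in which no block $i\in[s]$ is internally closed. That Wick-plus-inclusion-exclusion identity is exactly the paper's main observation.

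The gap is that you never finish. Having converted the Gaussian side into a nonnegative pairing count $|\Pi|$, the remaining task is no longer ``a signed sum whose cancellations are essential'' (the cancellations are already absorbed into $|\Pi|\ge 0$), but purely a matter of proving a sufficiently strong \emph{numerical lower bound} on $|\Pi|$ and checking arithmetic against the $Y$-side. You sketch two routes (induction on $s$; Hermite re-expansion) without carrying either out, and the phrasing of the obstacle suggests you may not have noticed that the cancellation problem is already resolved by the count identity you wrote down. The paper supplies the missing step directly and elementarily: pick one designated element from each of the $s$ boxes (two from one box when $s$ is odd), pair these designated elements among themselves so each box necessarily gets a cross-box pair, giving at least $(\lfloor s/2\rfloor)!$ such partial matchings, and then pair the remaining $\approx n-s$ elements arbitrarily in at least $(\lfloor (n-s)/2\rfloor)!$ ways; this yields $|\Pi|\ge (\lfloor s/2\rfloor)!\,(\lfloor (n-s)/2\rfloor)!\gtrsim 4^{-n}(n-1)!!$ after elementary manipulations. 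On the $Y$-side the paper also simplifies more aggressively than you do: instead of keeping the sum over $T\subset[s]$ it first applies the superadditivity $\mathbb{E}[|Y|^{k_1+k_2}]\ge \mathbb{E}[|Y|^{k_1}]\mathbb{E}[|Y|^{k_2}]$ (from the positive-correlation identity $\mathbb{E}[|Y|^{k_1+k_2}]-\mathbb{E}[|Y|^{k_1}]\mathbb{E}[|Y|^{k_2}]=\tfrac12\mathbb{E}[(|Y|^{k_1}-|Y'|^{k_1})(|Y|^{k_2}-|Y'|^{k_2})]\ge 0$ with $Y'$ an independent copy) to collapse the whole $2^s$-term expansion into the single bound $2^s\,\mathbb{E}[|Y|^n]$, which combined with the sub-Gaussian hypothesis gives $2^s\gamma^{n/2-1}(n-1)!!$, and then the comparison with the Gaussian lower bound is a one-line check in which the factor $16$ supplies the slack. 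Your proposal is the right approach and correctly identifies the key identity, but it stops precisely where the only nontrivial arithmetic lives, so as written it does not constitute a proof.
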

\begin{proof}
The case for $s=1$ follows from a direct computation. Without loss of generality, we assume that $k_i+k'_i\ge 1$ for $i$. We also assume that $n=k+k'+\sum_{i}k_i+k'_i$ is even,  since  both sides vanish by symmetry when  $n$  is odd.
    
A direct calculation provides  an upper bound of the $Y$ term \begin{equation}
        \begin{aligned}
        &\bigg|\mathbb{E}\bigg[Y^{k}\bar{Y}^{k'}\prod_{i=1}^{s}(Y^{k_i}\bar{Y}^{k'_i}-\mathbb{E}[Y^{k_i}\bar{Y}^{k'_i}])\bigg]\bigg| 
        \le\mathbb{E}\bigg[\bigg|Y^{k}\bar{Y}^{k'}\prod_{i=1}^{s}(Y^{k_i}\bar{Y}^{k'_i}-\mathbb{E}[Y^{k_i}\bar{Y}^{k'_i}])\bigg|\bigg]\\
        &\le \sum_{I\subset [s]}\mathbb{E}\bigg[\bigg|Y^{k}\bar{Y}^{k'}\prod_{i\in I}Y^{k_i}\bar{Y}^{k'_i}\prod_{i\in I^c}\mathbb{E}[Y^{k_i}\bar{Y}^{k'_i}]\bigg|\bigg]\le \sum_{I \subset [s]}\mathbb{E}[|Y|^{k+k'+\sum_{I \in I}k_i+k_i'}]\prod_{i\in I^c}\mathbb{E}[|Y|^{k_i+k'_i}]\\
        &\le 2^s \mathbb{E}\Big[|Y|^{k+k'+\sum_{i}k_i+k'_i}\Big]\le \mathbb{E}\Big[\mathcal{N}(0,4\gamma)^n\Big],
        \end{aligned}
    \end{equation}
where in the last line we use the simple inequality \begin{equation}
    \mathbb{E}[|Y|^{k_1+k_2}]-\mathbb{E}[|Y|^{k_1}]\mathbb{E}[|Y|^{k_2}]=\frac{1}{2}\mathbb{E}[(|Y|^{k_1}-|Y'|^{k_1})(|Y|^{k_2}-|Y'|^{k_2})]\ge 0,
\end{equation} with  
  $Y'$  a iid copy of $Y$.

   Next,    we give a lower bound of the $Z$ term   by  the  Wick formula. Without loss of generality, we assume that all $n_i:=k_i+k'_i$ are even. In fact, if some $k_1+k_1'$ is odd, then $\mathbb{E}[Y^{k_1}\bar{Y}^{k_1'}]=0$ and $\mathbb{E}[Z^{k_1+k_1'}]=0$, so   one can merge the term $Y^{k_1}\bar{Y}^{k_1'}$ to $Y^{k}\bar{Y}^{k'}$.
   
Consider $s$ boxes with $n_i$ elements in the $i$-th box and $k$ elements in the $(s+1)$-th box. Denote by  $\Pi$   the set of pairings of all elements, with the restriction that for all $1\le i\le s$, edges in the $i$-th box do not pair exclusively with edges in the $i$-th box. By the Wick formula we have
    \begin{equation}
         \mathbb{E}\bigg[Z^{k+k'}\prod_{i=1}^{s}(Z^{k_i+k'_i}-\mathbb{E}[Z^{k_i+k'_i}])\bigg]=(16\gamma)^{\frac{n}{2}} |\Pi|.
    \end{equation}
    
   We now  provide a lower bound for $|\Pi|$.  For the first $s$ boxes,  we pick up the first element in each box  when   $s$ is even,  while for odd   $s$   we choose the first element in  the first $s-1$ boxes and the first two elements in the  $s$-th box. Then we consider the pairing of these elements, such that each element is paired with an element in the other set. There are
   \begin{equation}
         \pi(s)=\begin{cases}
            2^{-\frac{s}{2}}\frac{s!}{(\frac{s}{2})!},&~~~~s\text{  even,}\\
           2^{-\frac{s+1}{2}} \frac{(s+1)!}{ (\frac{s+1}{2})!}-2^{-\frac{s-1}{2}} \frac{(s-1)!}{(\frac{s-1}{2})!},&~~~~s\text{  odd},
        \end{cases}
    \end{equation}
    pairings and we have $\pi(s)\ge (\lfloor\frac{s}{2}\rfloor)!$ for $s>1$. 
    The number of pairing for other $n-s$ or $n-s-1$ elements is lower bounded by $(\lfloor\frac{n-s}{2}\rfloor)!$. Hence we have 
    \begin{equation}
        |\Pi|\ge \big(\lfloor\frac{s}{2}\rfloor\big)!\big(\lfloor\frac{n-s}{2}\rfloor\big)!\ge 2^{-n}\big(\frac{n-2}{2}\big)!\ge 4^{-n}\big(\frac{n}{2}\big)!\ge 4^{-n}\mathbb{E}[\mathcal{N}(0,1)^n].
    \end{equation}
    
   Therefore,
    \begin{equation}
        \mathbb{E}\bigg[Z^{k+k'}\prod_{i=1}^{s}(Z^{k_i+k'_i}-\mathbb{E}[Z^{k_i+k'_i}])\bigg]\ge (4\gamma)^n\mathbb{E}[\mathcal{N}(0,1)^n]\ge \bigg|\mathbb{E}\bigg[Y^{k}\bar{Y}^{k'}\prod_{i=1}^{s}(Y^{k_i}\bar{Y}^{k'_i}-\mathbb{E}[Y^{k_i}\bar{Y}^{k'_i}])\bigg]\bigg|.
    \end{equation}
    This completes the proof of  Lemma \ref{lem:gaussian_moment_dominate}.\end{proof}
With the above two lemmas we can prove Lemma \ref{lem:Gaussian_moment_dominate}, which can be viewed as a high-dimensional  generalization of Lemma \ref{lem:gaussian_moment_dominate}.

\begin{proof}[Proof of Lemma \ref{lem:Gaussian_moment_dominate}]
    We prove  by induction on the size of $E$. The case for $|E|=1$ is just Lemma \ref{lem:gaussian_moment_dominate}. Now, suppose the result holds for $E$, and consider the case for $E'=E\cup \{e_0\}$. Set  \begin{equation}
        Y_i=\prod_{e\in E}Y_{e}^{k(i,e)}\bar{Y}_{e}^{k'(i,e)}, \quad Z_i=\prod_{e\in E}Z_{e}^{k(i,e)+k'(i,e)}, \quad A=Z_{e_0}, \quad  B=Y_{e_0}.\end{equation}
    By the induction assumption, we know $A,B, Y_i,Z_i$ satisfy the conditions in Lemma \ref{lem:moment_dominate} and  Lemma \ref{lem:gaussian_moment_dominate}. Thus, by Lemma \ref{lem:moment_dominate} and Lemma \ref{lem:gaussian_moment_dominate}, the desired result holds for $E'$. 
\end{proof}

\section{Proofs in Section \ref{sec:fluctuation Gaussian}}

We need the following auxiliary lemma  to remove smaller eigenvalues of $A$.
\begin{lemma}\label{lemma:Upper1}
For    an $r\times r$ Hermitian  matrix $A$ with eigenvalues $a_1,\dots,a_r$,  assume that  $a_1=a_2=\cdots=a_{q_+}=a>1, \ a_{q_++1}=a_{q_++2}=\cdots=a_{q_++q_-}=-a,$ and $\max \{|a_{q_++q_-}+1|,\dots,|a_r|\}<\delta a$ with $\delta<1$. Let two matrices  $U_+$ of size    $q_+\times r$ and $U_-$ of size $q_-\times r$ be     created by the orthogonal eigenvectors respectively associated  with  $a_1,a_2,\ldots a_{q_+}$ and  $a_{q_++1},a_{q_++2},\ldots,a_{q_++q_-}$.   
Then, for any $t \ge 1$ and any integers $n_1,\dots ,n_t \ge 1$, 
    \begin{equation}
    \bigg|\prod_{i=1}^t(A^{n_i})_{x_iy_i}-\prod_{i=1}^t a^{n_i}(U_+^*U_+^{}+(-1)^{n_i}U_-^*U_-^{})_{x_iy_i}\bigg| \le a^{\sum_{i=1}^t n_i}\Big(\prod_{i=1}^t(1+\delta^{n_i})-1\Big).
    \end{equation}
\end{lemma}

\begin{proof}
By the spectral decomposition of ${A}$,
\begin{equation}
   A=a\begin{pmatrix}
        U_+^* & U_-^* & U^*
     \end{pmatrix}
     \begin{pmatrix}
        I_{q_+} & O & O\\
        O & -I_{q_-} & O\\
        O & O & B\\
     \end{pmatrix}
     \begin{pmatrix}
        U_+ \\
        U_- \\
        U
     \end{pmatrix},
\end{equation}
where $B={\rm diag}({a_{q_++q_-+1}}/{a},\dots,{a_r}/{a})$ is the diagonal matrix constructed by eigenvalues of $A$ whose absolute value is strictly smaller than $a$, 
we have
\begin{equation}
\begin{aligned}
&\bigg|\prod_{i=1}^t (A^{n_i})_{x_iy_i}-\prod_{i=1}^t a^{n_i}(U_+^*U_+^{}+(-1)^{n_i}U_-^*U_-^{})_{x_iy_i}\bigg| \\
= &  \bigg|\prod_{i=1}^ta^{n_i}\big(U_+^*U_+^{}+(-1)^{n_i}U_-^*U_-^{}+(U^*B^{n_i}U)\big)_{x_iy_i}-\prod_{i=1}^ta^{n_i}(U_+^*U_+^{}+(-1)^{n_i}U_-^*U_-^{})_{x_iy_i}\bigg|.\\
\end{aligned}
\end{equation}

Expanding the product, we observe that  
  for   any two disjoint $J_1, J_2 \subset [t]$, the corresponding term is bounded by
\begin{equation}
\begin{aligned}
    &\bigg|\prod_{i\in J_1}(U_+^*U_+^{})_{x_iy_i} \prod_{i\in J_2}(-1)^{n_i}(U_-^*U_-^{})_{x_iy_i}\prod_{i\notin J_1 \cup J_2}(U_2^*B^{n_i}U_2^{})_{x_iy_i}\bigg|\\
   \le & \prod_{i\in J_1}\|U_+^*U_+^{}\|_{\rm op}\prod_{i\in J_2}\|U_-^*U_-^{}\|_{\rm op}\prod_{i\notin J_1 \cup J_2}\|U_2^*B^{n_i}U_2^{}\|_{\rm op} \le \prod_{i\in J_1}1 \cdot \prod_{i\in J_2} 1 \cdot \prod_{i\notin J_1 \cup J_2}\delta^{n_i}.
\end{aligned}
\end{equation}
The desired result follows immediately.\end{proof}

\begin{lemma}\label{lemma:LLN-trees}For any positive integer $k_j$ and any $n$ satisfying $1 \le n \le k_j$, we have
\begin{equation}\label{equ:c-1}
\bigg|\sum_{n_j=n}^{k_j}\frac{(n_j-n)^{|E_{b,j}|-1}}{(|E_{b,j}|-1)!}\binom{k_j}{\frac{k_j-n_j}{2}}a^{n_j}\rho_a^{-k_j}- \frac{\sigma_a^{|E_{b,j}|-1}}{(|E_{b,j}|-1)!}k_j^{|E_{b,j}|-1}\bigg| \le C\cdot n\cdot k_j^{|E_{b,j}|-\frac{3}{2}},
\end{equation} for some  constant $C>0$  depending only on $a$ and $|E_{b,j}|$.
\end{lemma}
\begin{proof}
When  $n \ge \sigma_ak_j/2$,  the inequality is trivial, since by \eqref{eq:upper,inner part} we have 
\begin{equation}
\begin{aligned}
&\bigg|\sum_{n_j=n}^{k_j}\frac{(n_j-n)^{|E_{b,j}|-1}}{(|E_{b,j}|-1)!}\binom{k_j}{\frac{k_j-n_j}{2}}a^{n_j}\rho_a^{-k_j}- \frac{\sigma_a^{|E_{b,j}|-1}}{(|E_{b,j}|-1)!}k_j^{|E_{b,j}|-1}\bigg|\\
\le &\sum_{n_j \ge 0}\frac{n_j^{|E_{b,j}|-1}}{(|E_{b,j}|-1)!}\binom{k_j}{\frac{k_j-n_j}{2}}a^{n_j}\rho_a^{-k_j}+\frac{\sigma_a^{|E_{b,j}|-1}}{(|E_{b,j}|-1)!}k_j^{|E_{b,j}|-1}\\
\le & \Big(1+\frac{\sigma_a^{|E_{b,j}|-1}}{(|E_{b,j}|-1)!}\Big)k_j^{|E_{b,j}|-1}
\end{aligned}
\end{equation}
On the other hand, from $n\geq \sigma_ak_j/2$ we deduce that
\begin{equation}
   n k_j^{|E_{b,j}|-\frac{3}{2}} \ge \sigma_a/2k_j^{|E_{b,j}|-\frac{1}{2}}.
\end{equation}
Hence, the desired inequality holds.

We now consider the case $n<\sigma_ak_j/2$. Let $S_{k_j}$ be a binomial random variable defined in \eqref{equ:binom}, then
    \begin{equation}\label{D-2}
    \begin{aligned}
&\bigg|\sum_{n_j \ge n}\frac{(n_j-n)^{|E_{b,j}|-1}}{(|E_{b,j}|-1)!}\binom{k_j}{\frac{k_j-n_j}{2}}a^{n_j}\rho_a^{-k_j}- \frac{\sigma_a^{|E_{b,j}|-1}}{(|E_{b,j}|-1)!}k_j^{|E_{b,j}|-1}\bigg| \\
= &\bigg|\sum_{n_j \ge n}\mathbb{P}\Big(S_{k_j}=\frac{k_j+n_j}{2}\Big)\frac{(n_j-n)^{|E_{b,j}|-1}}{(|E_{b,j}|-1)!}- \frac{\sigma_a^{|E_{b,j}|-1}}{(|E_{b,j}|-1)!}k_j^{|E_{b,j}|-1}\bigg|\\
=&\frac{k_j^{|E_{b,j}|-1}}{(|E_{b,j}|-1)!}\bigg|\mathbb{E}\Big[\Big(2\frac{S_{k_j}}{k_j}-1-\frac{n}{k_j}\Big)^{|E_{b,j}|-1}1_{S_{k_j} \ge \frac{1}{2}(k_j+n)}\Big]- \sigma_a^{|E_{b,j}|-1}\bigg|\\
\le & \frac{k_j^{|E_{b,j}|-1}}{(|E_{b,j}|-1)!}\left(\bigg|\mathbb{E}\Big[\Big(2\frac{S_{k_j}}{k_j}-1-\frac{n}{k_j}\Big)^{|E_{b,j}|-1}\Big]- \sigma_a^{|E_{b,j}|-1}\bigg|+ \mathbb{E}\Big[\Big(-2\frac{S_{k_j}}{k_j}+1+\frac{n}{k_j}\Big)^{|E_{b,j}|-1}1_{S_{k_j} \le \frac{1}{2}(k_j+n)}\Big]\right)
\end{aligned}
\end{equation} 

By using the elementary bound $|1+\frac{n}{k_j}-\frac{2}{k_j}S_{k_j}| \le 2$, we see the second term in the last line of \eqref{D-2} can be bounded by
\begin{equation}
   \mathbb{E}\Big[\Big(-2\frac{S_{k_j}}{k_j}+1+\frac{n}{k_j}\Big)^{|E_{b,j}|-1}1_{S_{k_j} \le  \frac{1}{2}(k_j+n)}\Big] \le 2^{|E_{b,j}|-1}\mathbb{P}\Big(S_{k_j} \le \frac{k_j+n}{2}\Big).
\end{equation}
By Markov's inequality, the deviation probability can be bounded as follows:
\begin{equation}\label{equ:second_term}
  \mathbb{P}\Big(S_{k_j} \le \frac{k_j+n}{2}\Big) \le  \mathbb{P}\Big(\Big|S_{k_j}-\frac{a^2{k_j}}{a^2+1}\Big|\ge  \frac{1}{4}\sigma_a{k_j}\Big) \le \frac{16}{\sigma_a^2{k_j}^2}{\rm Var}(S_{k_j}) \le \frac{C_a}{k_j},
\end{equation}
where   the assumption $n<\sigma_a{k_j}/2$ has been used. 

For the first term in the last line of \eqref{D-2}, we use the   inequality
$ 
    |a^n-b^n| \le n|a-b|\mathrm{max}\{|a|,|b|\}^{n-1}
$  
to get (noting $S_{k_j} \le k_j$, hence $|(2S_{k_j}-k_j-n)/{k_j}| \le 4$)
\begin{equation} \begin{aligned}  
\bigg|\Big(2\frac{S_{k_j}}{k_j}-1-\frac{n}{k_j}\Big)^{|E_{b,j}|-1}-\sigma_a^{|E_{b,j}|-1}\bigg| &\le 4^{|E_{b,j}|-1}\bigg|2\frac{S_{k_j}}{k_j}-1-\sigma_a-\frac{n}{k_j}\bigg| \\
&\le 4^{|E_{b,j}|-1}\bigg|2\frac{S_{k_j}}{k_j}-1-\sigma_a\bigg|+\frac{n4^{|E_{b,j}|-1}}{k_j}.
\end{aligned}
\end{equation}
Hence the first term in the last line of \eqref{D-2} is bounded by
\begin{equation}\label{equ:first_term}
   \begin{aligned} &4^{|E_{b,j}|-1}\mathbb{E}\bigg[\bigg|2\frac{S_{k_j}}{k_j}-1-\sigma_a\bigg|\bigg]+\frac{n4^{|E_{b,j}|-1}}{k_j}\le 4^{|E_{b,j}|-1}\sqrt{\mathbb{E}\bigg[\Big(2\frac{S_{k_j}}{k_j}-1-\sigma_a\Big)^2\bigg]}+\frac{n4^{|E_{b,j}|-1}}{k_j}\\
=&4^{|E_{b,j}|-1}\sqrt{\mathrm{Var}\Big(\frac{2S_{k_j}}{k_j}\Big)}+\frac{n4^{|E_{b,j}|-1}}{k_j} \le 4^{|E_{b,j}|-1}\frac{2+n}{\sqrt{k_j}} \le \frac{Cn}{\sqrt{k_j}}.
 \end{aligned}
\end{equation}

  Substituting \eqref{equ:second_term} and \eqref{equ:first_term} into \eqref{D-2} gives the desired result.
\end{proof}

\begin{lemma}\label{lemma:ineq}
    For any complex numbers $a_1,\cdots,a_l$ and $b_1,\cdots,b_l$, the following inequality holds:
    \begin{equation}
      \Big|\prod_{i=1}^{l}a_i-\prod_{i=1}^{l}b_i\Big| \le \sum_{i=1}^{l}|a_i-b_i|\cdot\prod_{j\neq i}\max \{|a_j|,|b_j|\}.  
    \end{equation}
\end{lemma}
\begin{proof}
  The inequality follows easily by induction; see, e.g., \cite[Lemma 3.4.3]{durrett2019} for a proof.
\end{proof}

The following lemma  has been   used in the proof of Lemma   \ref{lemma:convergence_S_n,k}, and  can be verified  directly via  the triangle inequality and the H\"older inequality.
\begin{lemma}\label{lemma:moments}
    Given any  positive integers $n$ and    $p \ge 1$, assume that two sequences of random variables $\{Y_{k,N}\}_{k=1}^n$ and $\{Z_{k,N}\}_{k=1}^n$ satisfy 
\begin{enumerate}
\item[(1)] For  any $1 \le k \le n$, $\lim_{N \to \infty}\mathbb{E}[|Y_{k,N}-Z_{k,N}|^p]= 0$;
\item[(2)] There exists a positive constant $C_p$ depending only on $p$ such that for any integers $p_1,\dots,p_n$ with $\sum_{i=1}^{n}p_i \le p$,
    \begin{equation}
    \sup_{N \ge 1}\mathbb{E}\big[\prod_{j=1}^{n}|Z_{j,N}^{p_j}|\big] \le C_p.
    \end{equation}
    \end{enumerate}
Then, for any polynomial $P(x_1,\ldots,x_n) \in \mathbb{C}[x_1,\dots,x_n]$,  
    \begin{equation}
     \lim_{N \to \infty}\mathbb{E}[|P(Y_{1,N},\dots,Y_{n,N}) -P(Z_{1,N},\dots,Z_{n,N})|]=0.
    \end{equation}
\end{lemma}

\let\oldthebibliography\thebibliography
\let\endoldthebibliography\endthebibliography
\renewenvironment{thebibliography}[1]{
  \begin{oldthebibliography}{#1}
    \setlength{\itemsep}{0.3em}
    \setlength{\parskip}{0em}
}
{
  \end{oldthebibliography}
}

\bibliographystyle{alpha}

\begin{spacing}{0}
\small
\bibliography{Reference}

@article{catalan1887nombres,
  title={Sur les nombres de Segner},
  author={Catalan, M Eug{\'e}ne},
  journal={Rendiconti del Circolo Matematico di Palermo (1884-1940)},
  volume={1},
  number={1},
  pages={190--201},
  year={1887},
  publisher={Springer Milan Milan}
}

@article{larcombe2003catalan,
  title={The \text{C}atalan number kappa-fold self-convolution identity: the original formulation},
  author={Peter J. Larcombe and David R. French},
  journal={J. Combin. Math. Combin. Comput.},
  volume={46},
  pages={191--204},
  year={2003},
  publisher={CHARLES BABBAGE RESEARCH CENTRE}
}

@article {ledoux2009recursion,
    AUTHOR = {Michel Ledoux},
     TITLE = {A recursion formula for the moments of the {G}aussian
              orthogonal ensemble},
   JOURNAL = {Ann. Inst. Henri Poincar\'e{} Probab. Stat.},
  FJOURNAL = {Annales de l'Institut Henri Poincar\'e{} Probabilit\'es et
              Statistiques},
    VOLUME = {45},
      YEAR = {2009},
    NUMBER = {3},
     PAGES = {754--769},
      ISSN = {0246-0203,1778-7017},
   MRCLASS = {60B20 (60E05)},
  MRNUMBER = {2548502},
MRREVIEWER = {Vladislav\ Kargin},
       DOI = {10.1214/08-AIHP184},
       URL = {https://doi.org/10.1214/08-AIHP184},
}

@article{okounkov2000random,
    AUTHOR = {Okounkov, Andrei},
     TITLE = {Random matrices and random permutations},
   JOURNAL = {Internat. Math. Res. Notices},
  FJOURNAL = {International Mathematics Research Notices},
      YEAR = {2000},
    NUMBER = {20},
     PAGES = {1043--1095},
      ISSN = {1073-7928,1687-0247},
   MRCLASS = {15A52 (05C30 60C05 82B31)},
  MRNUMBER = {1802530},
MRREVIEWER = {Oleksiy\ Khorunzhiy},
       DOI = {10.1155/S1073792800000532},
       URL = {https://doi.org/10.1155/S1073792800000532},
    volume = {2000},
}

@article{adamczak2024norms,
    AUTHOR = {Adamczak, Rados{\l}{}aw and Prochno, Joscha and Strzelecka, Marta
              and Strzelecki   Micha\l},
     TITLE = {Norms of structured random matrices},
   JOURNAL = {Math. Ann.},
  FJOURNAL = {Mathematische Annalen},
    VOLUME = {388},
      YEAR = {2024},
    NUMBER = {4},
     PAGES = {3463--3527},
      ISSN = {0025-5831,1432-1807},
   MRCLASS = {60B20 (46B09 52A23 60E15 60G15)},
  MRNUMBER = {4721775},
       DOI = {10.1007/s00208-023-02599-6},
       URL = {https://doi.org/10.1007/s00208-023-02599-6},
}

@article{dai2024deviation,
  title={Deviation inequalities for the spectral norm of structured random matrices},
  author={Dai, Guozheng and Su, Zhonggen},
  journal={Stat. Probab. Lett.},
  volume={221},
  pages={110378},
  year={2025},
  publisher={Elsevier}
}

@article{Férall2007deform,
    AUTHOR = {F\'eral, Delphine and P\'ech\'e, Sandrine},
     TITLE = {The largest eigenvalue of rank one deformation of large
              {W}igner matrices},
   JOURNAL = {Comm. Math. Phys.},
  FJOURNAL = {Communications in Mathematical Physics},
    VOLUME = {272},
      YEAR = {2007},
    NUMBER = {1},
     PAGES = {185--228},
      ISSN = {0010-3616,1432-0916},
   MRCLASS = {82B31 (60K40 82B21)},
  MRNUMBER = {2291807},
MRREVIEWER = {Florent\ Benaych-Georges},
       DOI = {10.1007/s00220-007-0209-3},
       URL = {https://doi.org/10.1007/s00220-007-0209-3},
}

@article{forrester2023rank,
    AUTHOR = {Forrester, Peter J.},
     TITLE = {Rank 1 perturbations in random matrix theory---a review of
              exact results},
   JOURNAL = {Random Matrices Theory Appl.},
  FJOURNAL = {Random Matrices. Theory and Applications},
    VOLUME = {12},
      YEAR = {2023},
    NUMBER = {4},
     PAGES = {Paper No. 2330001, 48},
      ISSN = {2010-3263,2010-3271},
   MRCLASS = {60B20 (15B52 33C45 33E17 42C05 60-02 60K35)},
  MRNUMBER = {4672877},
MRREVIEWER = {Florent\ Benaych-Georges},
       DOI = {10.1142/S2010326323300012},
       URL = {https://doi.org/10.1142/S2010326323300012},
}

@article{capitaine2009largest,
    AUTHOR = {Capitaine, Mireille and Donati-Martin, Catherine and F\'eral, Delphine},
     TITLE = {The largest eigenvalues of finite rank deformation of large
              {W}igner matrices: convergence and nonuniversality of the
              fluctuations},
   JOURNAL = {Ann. Probab.},
  FJOURNAL = {The Annals of Probability},
    VOLUME = {37},
      YEAR = {2009},
    NUMBER = {1},
     PAGES = {1--47},
      ISSN = {0091-1798,2168-894X},
   MRCLASS = {15B52 (15A18 60F05 60F15)},
  MRNUMBER = {2489158},
MRREVIEWER = {Alessandra\ Luati},
       DOI = {10.1214/08-AOP394},
       URL = {https://doi.org/10.1214/08-AOP394},
}

@article{erdHos2011quantum,
    AUTHOR = {Erd\"os, L\'aszl\'o{} and Knowles, Antti},
     TITLE = {Quantum diffusion and eigenfunction delocalization in a random
              band matrix model},
   JOURNAL = {Comm. Math. Phys.},
  FJOURNAL = {Communications in Mathematical Physics},
    VOLUME = {303},
      YEAR = {2011},
    NUMBER = {2},
     PAGES = {509--554},
      ISSN = {0010-3616,1432-0916},
   MRCLASS = {82B44 (15B52 60B20 60J60)},
  MRNUMBER = {2782623},
MRREVIEWER = {Nizar\ Demni},
       DOI = {10.1007/s00220-011-1204-2},
       URL = {https://doi.org/10.1007/s00220-011-1204-2},
}

@article{soshnikov1999universality,
    AUTHOR = {Soshnikov, Alexander},
     TITLE = {Universality at the edge of the spectrum in {W}igner random
              matrices},
   JOURNAL = {Comm. Math. Phys.},
  FJOURNAL = {Communications in Mathematical Physics},
    VOLUME = {207},
      YEAR = {1999},
    NUMBER = {3},
     PAGES = {697--733},
      ISSN = {0010-3616,1432-0916},
   MRCLASS = {82B41 (15A52 60F99 82B44)},
  MRNUMBER = {1727234},
MRREVIEWER = {Boris\ A.\ Khoruzhenko},
       DOI = {10.1007/s002200050743},
       URL = {https://doi.org/10.1007/s002200050743},
}

@article{sodin2010spectral,
    AUTHOR = {Sodin, Sasha},
     TITLE = {The spectral edge of some random band matrices},
   JOURNAL = {Ann. of Math. (2)},
  FJOURNAL = {Annals of Mathematics. Second Series},
    VOLUME = {172},
      YEAR = {2010},
    NUMBER = {3},
     PAGES = {2223--2251},
      ISSN = {0003-486X,1939-8980},
   MRCLASS = {60B20},
  MRNUMBER = {2726110},
MRREVIEWER = {Estelle\ L.\ Basor},
       DOI = {10.4007/annals.2010.172.2223},
       URL = {https://doi.org/10.4007/annals.2010.172.2223},
}

@article{feldheim2010universality,
    AUTHOR = {Feldheim, Ohad N. and Sodin, Sasha},
     TITLE = {A universality result for the smallest eigenvalues of certain
              sample covariance matrices},
   JOURNAL = {Geom. Funct. Anal.},
  FJOURNAL = {Geometric and Functional Analysis},
    VOLUME = {20},
      YEAR = {2010},
    NUMBER = {1},
     PAGES = {88--123},
      ISSN = {1016-443X,1420-8970},
   MRCLASS = {60B20},
  MRNUMBER = {2647136},
MRREVIEWER = {Anna\ Lytova},
       DOI = {10.1007/s00039-010-0055-x},
       URL = {https://doi.org/10.1007/s00039-010-0055-x},
}

@article{liu2023edge,
  title={Edge statistics for random band matrices},
  author={Dang-Zheng Liu and Guangyi Zou},
  journal={arXiv preprint arXiv:2401.00492v2},
  year={2024}
}

@article{Au23BBP,
    author = {Benson Au},
    title = {\text{BBP} Phenomena for Deformed Random Band Matrices},
     year={2023},
 journal={arXiv preprint arXiv:2304.13047}
 
}

@article{noiry2021spectral,
    AUTHOR = {Noiry, Nathan},
     TITLE = {Spectral measures of spiked random matrices},
   JOURNAL = {J. Theoret. Probab.},
  FJOURNAL = {Journal of Theoretical Probability},
    VOLUME = {34},
      YEAR = {2021},
    NUMBER = {2},
     PAGES = {923--952},
      ISSN = {0894-9840,1572-9230},
   MRCLASS = {60B20 (60F15)},
  MRNUMBER = {4259454},
MRREVIEWER = {Vlad\ Margarint},
       DOI = {10.1007/s10959-020-00987-1},
       URL = {https://doi.org/10.1007/s10959-020-00987-1},
}

@article{EK11Quantum,
    AUTHOR = {Erd\"os, L\'aszl\'o{} and Knowles, Antti},
     TITLE = {Quantum diffusion and delocalization for band matrices with
              general distribution},
   JOURNAL = {Ann. Henri Poincar\'e},
  FJOURNAL = {Annales Henri Poincar\'e. A Journal of Theoretical and
              Mathematical Physics},
    VOLUME = {12},
      YEAR = {2011},
    NUMBER = {7},
     PAGES = {1227--1319},
      ISSN = {1424-0637,1424-0661},
   MRCLASS = {82B44 (60B20)},
  MRNUMBER = {2846669},
MRREVIEWER = {Sasha\ Sodin},
       DOI = {10.1007/s00023-011-0104-5},
       URL = {https://doi.org/10.1007/s00023-011-0104-5},
}

@article{dyson1962brownian,
    AUTHOR = {Dyson, Freeman J.},
     TITLE = {A {B}rownian-motion model for the eigenvalues of a random
              matrix},
   JOURNAL = {J. Mathematical Phys.},
  FJOURNAL = {Journal of Mathematical Physics},
    VOLUME = {3},
      YEAR = {1962},
     PAGES = {1191--1198},
      ISSN = {0022-2488,1089-7658},
   MRCLASS = {81.60},
  MRNUMBER = {148397},
MRREVIEWER = {G.\ K\"all\'en},
       DOI = {10.1063/1.1703862},
       URL = {https://doi.org/10.1063/1.1703862},
}

@book{dyson1996selected,
    AUTHOR = {Dyson, Freeman},
     TITLE = {Selected papers of {F}reeman {D}yson with commentary},
    SERIES = {Collected Works},
    VOLUME = {5},
      NOTE = {With a foreword by Elliott H. Lieb},
 PUBLISHER = {American Mathematical Society, Providence, RI; International
              Press, Cambridge, MA},
      YEAR = {1996},
     PAGES = {xii+601},
      ISBN = {0-8218-0561-4},
   MRCLASS = {01A75 (81-06 82-06)},
  MRNUMBER = {1390992},
MRREVIEWER = {Aernout\ C. D. van Enter},
}

@book{durrett2019,
    AUTHOR = {Durrett, Rick},
     TITLE = {Probability---theory and examples},
    SERIES = {Cambridge Series in Statistical and Probabilistic Mathematics},
    VOLUME = {49},
   EDITION = {Fifth},
 PUBLISHER = {Cambridge University Press, Cambridge},
      YEAR = {2019},
     PAGES = {xii+419},
      ISBN = {978-1-108-47368-2},
   MRCLASS = {60-01 (37A30)},
  MRNUMBER = {3930614},
       DOI = {10.1017/9781108591034},
       URL = {https://doi.org/10.1017/9781108591034},
}

@article{jones1978eigenvalue,
    AUTHOR = {Jones, Raymund C. and Kosterlitz, J. M. and Thouless, D. J.},
     TITLE = {The eigenvalue spectrum of a large symmetric random matrix
              with a finite mean},
   JOURNAL = {J. Phys. A},
  FJOURNAL = {Journal of Physics. A. Mathematical and General},
    VOLUME = {11},
      YEAR = {1978},
    NUMBER = {3},
     PAGES = {L45--L48},
      ISSN = {0305-4470,1751-8121},
   MRCLASS = {81.60},
  MRNUMBER = {496169},
MRREVIEWER = {Ludwig\ Arnold},
       URL = {http://stacks.iop.org/0305-4470/11/L45},
}

@article{furedi1981eigenvalues,
    AUTHOR = {F\"uredi, Zolt\'an and Koml\'os, Jan\'os},
     TITLE = {The eigenvalues of random symmetric matrices},
   JOURNAL = {Combinatorica},
  FJOURNAL = {Combinatorica. An International Journal of the J\'anos Bolyai
              Mathematical Society},
    VOLUME = {1},
      YEAR = {1981},
    NUMBER = {3},
     PAGES = {233--241},
      ISSN = {0209-9683},
   MRCLASS = {15A18 (05C50 15A52 60F99)},
  MRNUMBER = {637828},
MRREVIEWER = {Abbas\ A.\ El Gamal},
       DOI = {10.1007/BF02579329},
       URL = {https://doi.org/10.1007/BF02579329},
}

@article{lang1964isolated,
  title={Isolated eigenvalue of a random matrix},
  author={Lang, Donald W.},
  journal={Physical Review},
  volume={135},
  number={4B},
  pages={B1082},
  year={1964},
  publisher={APS}
}

@inproceedings{peche2014deformed,
    AUTHOR = {P{\'e}ch\'e, Sandrine},
     TITLE = {Deformed ensembles of random matrices},
 BOOKTITLE = {Proceedings of the {I}nternational {C}ongress of
              {M}athematicians---{S}eoul 2014. {V}ol. {III}},
     PAGES = {1159--1174},
 PUBLISHER = {Kyung Moon Sa, Seoul},
      YEAR = {2014},
      ISBN = {978-89-6105-806-3; 978-89-6105-803-2},
   MRCLASS = {60B20 (15B52 62J20)},
  MRNUMBER = {3729069},
}

@inproceedings{okounkov2009gromov,
    AUTHOR = {Okounkov, Andrei and Pandharipande, Rahul},
     TITLE = {Gromov-{W}itten theory, {H}urwitz numbers, and matrix models},
 BOOKTITLE = {Algebraic geometry---{S}eattle 2005. {P}art 1},
    SERIES = {Proc. Sympos. Pure Math.},
    VOLUME = {80, Part 1},
     PAGES = {325--414},
 PUBLISHER = {Amer. Math. Soc., Providence, RI},
      YEAR = {2009},
      ISBN = {978-0-8218-4702-2},
   MRCLASS = {14N35 (14H15)},
  MRNUMBER = {2483941},
MRREVIEWER = {Hsian-Hua\ Tseng},
       DOI = {10.1090/pspum/080.1/2483941},
       URL = {https://doi.org/10.1090/pspum/080.1/2483941},
}

@article{brailovskaya2024extremal,
  title={Extremal random matrices with independent entries and matrix superconcentration inequalities},
  author={Brailovskaya, Tatiana and van Handel, Ramon},
  journal={arXiv preprint arXiv:2401.06284},
  year={2024}
}

@article{bandeira2024matrixconcentrationinequalitiesfree,
      title={Matrix Concentration Inequalities and Free Probability \text{II}. \text{T}wo-sided Bounds and Applications}, 
      author={Afonso S. Bandeira and Giorgio Cipolloni and Dominik Schröder and Ramon van Handel},
  journal={arXiv preprint arXiv:2406.11453},
  year={2024} 
}

@article{bandeira2016sharp,
    AUTHOR = {Bandeira, Afonso S. and van Handel, Ramon},
     TITLE = {Sharp nonasymptotic bounds on the norm of random matrices with
              independent entries},
   JOURNAL = {Ann. Probab.},
  FJOURNAL = {The Annals of Probability},
    VOLUME = {44},
      YEAR = {2016},
    NUMBER = {4},
     PAGES = {2479--2506},
      ISSN = {0091-1798,2168-894X},
   MRCLASS = {60B20 (46B09 60F10)},
  MRNUMBER = {3531673},
MRREVIEWER = {Ofer\ Zeitouni},
       DOI = {10.1214/15-AOP1025},
       URL = {https://doi.org/10.1214/15-AOP1025},
}

@article{altschuler2024spectral,
  title={On spectral outliers of inhomogeneous symmetric random matrices},
  author={Altschuler, Dylan J and Santos, Patrick Oliveira and Tikhomirov, Konstantin and Youssef, Pierre},
  journal={arXiv preprint arXiv:2401.07852},
  year={2024}
}

@article{latala2018dimension,
    AUTHOR = {Lata\l{}a, Rafa\l{} and van Handel, Ramon and Youssef, Pierre},
     TITLE = {The dimension-free structure of nonhomogeneous random
              matrices},
   JOURNAL = {Invent. Math.},
  FJOURNAL = {Inventiones Mathematicae},
    VOLUME = {214},
      YEAR = {2018},
    NUMBER = {3},
     PAGES = {1031--1080},
      ISSN = {0020-9910,1432-1297},
   MRCLASS = {60B20 (15B52 46B09 46L53)},
  MRNUMBER = {3878726},
MRREVIEWER = {Wojciech\ M\l otkowski},
       DOI = {10.1007/s00222-018-0817-x},
       URL = {https://doi.org/10.1007/s00222-018-0817-x},
}

@article{knowles2014outliers,
    AUTHOR = {Knowles, Antti and Yin, Jun},
     TITLE = {The outliers of a deformed {W}igner matrix},
   JOURNAL = {Ann. Probab.},
  FJOURNAL = {The Annals of Probability},
    VOLUME = {42},
      YEAR = {2014},
    NUMBER = {5},
     PAGES = {1980--2031},
      ISSN = {0091-1798,2168-894X},
   MRCLASS = {60B20 (15B52)},
  MRNUMBER = {3262497},
MRREVIEWER = {Florent\ Benaych-Georges},
       DOI = {10.1214/13-AOP855},
       URL = {https://doi.org/10.1214/13-AOP855},
}

@article{peche2006largest,
    AUTHOR ={P{\'e}ch{\'e}, Sandrine},
     TITLE = {The largest eigenvalue of small rank perturbations of
              {H}ermitian random matrices},
   JOURNAL = {Probab. Theory Related Fields},
  FJOURNAL = {Probability Theory and Related Fields},
    VOLUME = {134},
      YEAR = {2006},
    NUMBER = {1},
     PAGES = {127--173},
      ISSN = {0178-8051,1432-2064},
   MRCLASS = {15A52 (60F05 82B44)},
  MRNUMBER = {2221787},
MRREVIEWER = {Pavel\ T.\ Stoynov},
       DOI = {10.1007/s00440-005-0466-z},
       URL = {https://doi.org/10.1007/s00440-005-0466-z},
}

@article{pizzo2013finite,
    AUTHOR = {Pizzo, Alessandro and Renfrew, David and Soshnikov, Alexander},
     TITLE = {On finite rank deformations of {W}igner matrices},
   JOURNAL = {Ann. Inst. Henri Poincar\'e{} Probab. Stat.},
  FJOURNAL = {Annales de l'Institut Henri Poincar\'e{} Probabilit\'es et
              Statistiques},
    VOLUME = {49},
      YEAR = {2013},
    NUMBER = {1},
     PAGES = {64--94},
      ISSN = {0246-0203,1778-7017},
   MRCLASS = {60B20},
  MRNUMBER = {3060148},
MRREVIEWER = {Florent\ Benaych-Georges},
       DOI = {10.1214/11-AIHP459},
       URL = {https://doi.org/10.1214/11-AIHP459},
}

@article{capitaine2012central,
    AUTHOR = {Capitaine, Mireille and Donati-Martin, Catherine and F\'eral,
              Delphine},
     TITLE = {Central limit theorems for eigenvalues of deformations of
              {W}igner matrices},
   JOURNAL = {Ann. Inst. Henri Poincar\'e{} Probab. Stat.},
  FJOURNAL = {Annales de l'Institut Henri Poincar\'e{} Probabilit\'es et
              Statistiques},
    VOLUME = {48},
      YEAR = {2012},
    NUMBER = {1},
     PAGES = {107--133},
      ISSN = {0246-0203,1778-7017},
   MRCLASS = {60B20 (15B52 60F05)},
  MRNUMBER = {2919200},
MRREVIEWER = {Steven\ Joel\ Miller},
       DOI = {10.1214/10-AIHP410},
       URL = {https://doi.org/10.1214/10-AIHP410},
}

@article{knowles2013isotropic,
    AUTHOR = {Knowles, Antti and Yin, Jun},
     TITLE = {The isotropic semicircle law and deformation of {W}igner
              matrices},
   JOURNAL = {Comm. Pure Appl. Math.},
  FJOURNAL = {Communications on Pure and Applied Mathematics},
    VOLUME = {66},
      YEAR = {2013},
    NUMBER = {11},
     PAGES = {1663--1750},
      ISSN = {0010-3640,1097-0312},
   MRCLASS = {60B20 (15B52)},
  MRNUMBER = {3103909},
MRREVIEWER = {Florent\ Benaych-Georges},
       DOI = {10.1002/cpa.21450},
       URL = {https://doi.org/10.1002/cpa.21450},
}

@article{gotze2015limit,
    AUTHOR = {Götze, Friedrich and Naumov, Alexey Alexandrovich and Tikhomirov, Alexander N.},
     TITLE = {Limit theorems for two classes of random matrices with
              dependent entries},
   JOURNAL = {Theory Probab. Appl.},
  FJOURNAL = {Theory of Probability and its Applications},
    VOLUME = {59},
      YEAR = {2015},
    NUMBER = {1},
     PAGES = {23--39},
      ISSN = {0040-585X,1095-7219},
   MRCLASS = {60B20 (60F99)},
  MRNUMBER = {3416062},
MRREVIEWER = {Ji\v r\'i\ \v Cern\'y},
       DOI = {10.1137/S0040585X97986916},
       URL = {https://doi.org/10.1137/S0040585X97986916},
}

@article{benaych2014largest,
    AUTHOR = {Benaych-Georges, Florent and P\'ech\'e, Sandrine},
     TITLE = {Largest eigenvalues and eigenvectors of band or sparse random
              matrices},
   JOURNAL = {Electron. Commun. Probab.},
  FJOURNAL = {Electronic Communications in Probability},
    VOLUME = {19},
      YEAR = {2014},
     PAGES = {no. 4, 9},
      ISSN = {1083-589X},
   MRCLASS = {60B20 (60F05)},
  MRNUMBER = {3164751},
MRREVIEWER = {Nizar\ Demni},
       DOI = {10.1214/ECP.v19-3027},
       URL = {https://doi.org/10.1214/ECP.v19-3027},
}

@article{johansson2001universality,
    AUTHOR = {Johansson, Kurt},
     TITLE = {Universality of the local spacing distribution in certain
              ensembles of {H}ermitian {W}igner matrices},
   JOURNAL = {Comm. Math. Phys.},
  FJOURNAL = {Communications in Mathematical Physics},
    VOLUME = {215},
      YEAR = {2001},
    NUMBER = {3},
     PAGES = {683--705},
      ISSN = {0010-3616,1432-0916},
   MRCLASS = {15A52 (60F99)},
  MRNUMBER = {1810949},
       DOI = {10.1007/s002200000328},
       URL = {https://doi.org/10.1007/s002200000328},
}

@article{erdHos2010bulk,
    AUTHOR = {Erd\"os, L\'aszl\'o{} and P\'ech\'e, Sandrine and Ram\'irez,
              Jos\'e{} A. and Schlein, Benjamin and Yau, Horng-Tzer},
     TITLE = {Bulk universality for {W}igner matrices},
   JOURNAL = {Comm. Pure Appl. Math.},
  FJOURNAL = {Communications on Pure and Applied Mathematics},
    VOLUME = {63},
      YEAR = {2010},
    NUMBER = {7},
     PAGES = {895--925},
      ISSN = {0010-3640,1097-0312},
   MRCLASS = {60B20 (82B44)},
  MRNUMBER = {2662426},
MRREVIEWER = {Sasha\ Sodin},
       DOI = {10.1002/cpa.20317},
       URL = {https://doi.org/10.1002/cpa.20317},
}

@article{tao2011random,
    AUTHOR = {Tao, Terence and Vu, Van},
     TITLE = {Random matrices: universality of local eigenvalue statistics},
   JOURNAL = {Acta Math.},
  FJOURNAL = {Acta Mathematica},
    VOLUME = {206},
      YEAR = {2011},
    NUMBER = {1},
     PAGES = {127--204},
      ISSN = {0001-5962,1871-2509},
   MRCLASS = {60B20},
  MRNUMBER = {2784665},
MRREVIEWER = {Djalil\ Chafa\"i},
       DOI = {10.1007/s11511-011-0061-3},
       URL = {https://doi.org/10.1007/s11511-011-0061-3},
}

@article{bloemendal2016limits,
    AUTHOR = {Bloemendal, Alex and Vir\'ag, B\'alint},
     TITLE = {Limits of spiked random matrices {II}},
   JOURNAL = {Ann. Probab.},
  FJOURNAL = {The Annals of Probability},
    VOLUME = {44},
      YEAR = {2016},
    NUMBER = {4},
     PAGES = {2726--2769},
      ISSN = {0091-1798,2168-894X},
   MRCLASS = {60B20 (60B12)},
  MRNUMBER = {3531679},
MRREVIEWER = {Deli\ Li},
       DOI = {10.1214/15-AOP1033},
       URL = {https://doi.org/10.1214/15-AOP1033},
}

@article{baik2005phase,
    AUTHOR = {Baik, Jinho and Ben Arous, G\'erard and P\'ech\'e, Sandrine},
     TITLE = {Phase transition of the largest eigenvalue for nonnull complex
              sample covariance matrices},
   JOURNAL = {Ann. Probab.},
  FJOURNAL = {The Annals of Probability},
    VOLUME = {33},
      YEAR = {2005},
    NUMBER = {5},
     PAGES = {1643--1697},
      ISSN = {0091-1798,2168-894X},
   MRCLASS = {15A52 (60F99 62E20 62H20)},
  MRNUMBER = {2165575},
MRREVIEWER = {Florent\ Benaych-Georges},
       DOI = {10.1214/009117905000000233},
       URL = {https://doi.org/10.1214/009117905000000233},
}

@article{baik2006eigenvalues,
    AUTHOR = {Baik, Jinho and Silverstein, Jack W.},
     TITLE = {Eigenvalues of large sample covariance matrices of spiked
              population models},
   JOURNAL = {J. Multivariate Anal.},
  FJOURNAL = {Journal of Multivariate Analysis},
    VOLUME = {97},
      YEAR = {2006},
    NUMBER = {6},
     PAGES = {1382--1408},
      ISSN = {0047-259X,1095-7243},
   MRCLASS = {60F15 (15A18 62H99)},
  MRNUMBER = {2279680},
MRREVIEWER = {Florent\ Benaych-Georges},
       DOI = {10.1016/j.jmva.2005.08.003},
       URL = {https://doi.org/10.1016/j.jmva.2005.08.003},
}

@article{bloemendal2013limits,
    AUTHOR = {Bloemendal, Alex and Vir\'ag, B\'alint},
     TITLE = {Limits of spiked random matrices {I}},
   JOURNAL = {Probab. Theory Related Fields},
  FJOURNAL = {Probability Theory and Related Fields},
    VOLUME = {156},
      YEAR = {2013},
    NUMBER = {3-4},
     PAGES = {795--825},
      ISSN = {0178-8051,1432-2064},
   MRCLASS = {60B20},
  MRNUMBER = {3078286},
       DOI = {10.1007/s00440-012-0443-2},
       URL = {https://doi.org/10.1007/s00440-012-0443-2},
}

@article {bai2008central,
    AUTHOR = {Bai, Zhidong and Yao, Jian-feng},
     TITLE = {Central limit theorems for eigenvalues in a spiked population
              model},
   JOURNAL = {Ann. Inst. Henri Poincar\'e{} Probab. Stat.},
  FJOURNAL = {Annales de l'Institut Henri Poincar\'e{} Probabilit\'es et
              Statistiques},
    VOLUME = {44},
      YEAR = {2008},
    NUMBER = {3},
     PAGES = {447--474},
      ISSN = {0246-0203,1778-7017},
   MRCLASS = {60F05 (15A18 62E20 62H25)},
  MRNUMBER = {2451053},
MRREVIEWER = {Florent\ Benaych-Georges},
       DOI = {10.1214/07-AIHP118},
       URL = {https://doi.org/10.1214/07-AIHP118},
}

@article{benaych2011eigenvalues,
    AUTHOR = {Benaych-Georges, Florent and Nadakuditi, Raj Rao},
     TITLE = {The eigenvalues and eigenvectors of finite, low rank
              perturbations of large random matrices},
   JOURNAL = {Adv. Math.},
  FJOURNAL = {Advances in Mathematics},
    VOLUME = {227},
      YEAR = {2011},
    NUMBER = {1},
     PAGES = {494--521},
      ISSN = {0001-8708,1090-2082},
   MRCLASS = {60B20 (46L54 60F99 62H25)},
  MRNUMBER = {2782201},
MRREVIEWER = {Michael\ Stolz},
       DOI = {10.1016/j.aim.2011.02.007},
       URL = {https://doi.org/10.1016/j.aim.2011.02.007},
}

@article{benaych2011fluctuations,
    AUTHOR = {Benaych-Georges, Florent and Guionnet, Alice and Ma{\"i}da, Myl{\`e}ne},
     TITLE = {Fluctuations of the extreme eigenvalues of finite rank
              deformations of random matrices},
   JOURNAL = {Electron. J. Probab.},
  FJOURNAL = {Electronic Journal of Probability},
    VOLUME = {16},
      YEAR = {2011},
     PAGES = {no. 60, 1621--1662},
      ISSN = {1083-6489},
   MRCLASS = {60B20 (60F05)},
  MRNUMBER = {2835249},
MRREVIEWER = {Michael\ Stolz},
       DOI = {10.1214/EJP.v16-929},
       URL = {https://doi.org/10.1214/EJP.v16-929},
}

@article{loubaton2011almost,
    AUTHOR = {Loubaton, Philippe and Vallet, Pascal},
     TITLE = {Almost sure localization of the eigenvalues in a {G}aussian
              information plus noise model---application to the spiked
              models},
   JOURNAL = {Electron. J. Probab.},
  FJOURNAL = {Electronic Journal of Probability},
    VOLUME = {16},
      YEAR = {2011},
     PAGES = {no. 70, 1934--1959},
      ISSN = {1083-6489},
   MRCLASS = {60B20 (15B52 60F05)},
  MRNUMBER = {2851051},
       DOI = {10.1214/EJP.v16-943},
       URL = {https://doi.org/10.1214/EJP.v16-943},
}

@article{ding2022edge,
    AUTHOR = {Ding, Xiucai and Yang, Fan},
     TITLE = {Edge statistics of large dimensional deformed rectangular
              matrices},
   JOURNAL = {J. Multivariate Anal.},
  FJOURNAL = {Journal of Multivariate Analysis},
    VOLUME = {192},
      YEAR = {2022},
     PAGES = {Paper No. 105051, 22},
      ISSN = {0047-259X,1095-7243},
   MRCLASS = {60B20 (46L54 62H12)},
  MRNUMBER = {4438212},
       DOI = {10.1016/j.jmva.2022.105051},
       URL = {https://doi.org/10.1016/j.jmva.2022.105051},
}

@article{capitaine2014exact,
    AUTHOR = {Capitaine, Mireille},
     TITLE = {Exact separation phenomenon for the eigenvalues of large
              information-plus-noise type matrices, and an application to
              spiked models},
   JOURNAL = {Indiana Univ. Math. J.},
  FJOURNAL = {Indiana University Mathematics Journal},
    VOLUME = {63},
      YEAR = {2014},
    NUMBER = {6},
     PAGES = {1875--1910},
      ISSN = {0022-2518,1943-5258},
   MRCLASS = {15B52 (60B20)},
  MRNUMBER = {3298725},
MRREVIEWER = {Oleksiy\ Khorunzhiy},
       DOI = {10.1512/iumj.2014.63.5432},
       URL = {https://doi.org/10.1512/iumj.2014.63.5432},
}

@article{yang2021delocalization,
  title={Delocalization and quantum diffusion of random band matrices in high dimensions \text{I}: Self-energy renormalization},
  author={Yang, Fan and Yau, Horng-Tzer and Yin, Jun},
  journal={Preprint arXiv:2104.12048},
  year={2021}
}

@article{yang2022delocalization,
    AUTHOR = {Yang, Fan and Yau, Horng-Tzer and Yin, Jun},
     TITLE = {Delocalization and quantum diffusion of random band matrices
              in high dimensions {II}: {$T$}-expansion},
   JOURNAL = {Comm. Math. Phys.},
  FJOURNAL = {Communications in Mathematical Physics},
    VOLUME = {396},
      YEAR = {2022},
    NUMBER = {2},
     PAGES = {527--622},
      ISSN = {0010-3616,1432-0916},
   MRCLASS = {60B20 (82C22 82C44)},
  MRNUMBER = {4504930},
       DOI = {10.1007/s00220-022-04474-y},
       URL = {https://doi.org/10.1007/s00220-022-04474-y},
}

@article{xu2022bulk,
  author  = {Xu, Changji and Yang, Fan and Yau, Horng-Tzer and Yin, Jun},
  title   = {Bulk universality and quantum unique ergodicity for random band matrices in high dimensions},
  journal = {Ann. Probab.},
  volume  = {52},
  number  = {3},
  pages   = {765--837},
  year    = {2024},
  doi     = {10.1214/23-AOP1670}
}

@incollection {spencer2011random,
    AUTHOR = {Spencer, Thomas},
     TITLE = {Random banded and sparse matrices},
 BOOKTITLE = {The {O}xford handbook of random matrix theory},
     PAGES = {471--488},
 PUBLISHER = {Oxford Univ. Press, Oxford},
      YEAR = {2011},
      ISBN = {978-0-19-957400-1},
   MRCLASS = {60B20 (15B52)},
  MRNUMBER = {2932643},
MRREVIEWER = {Peter\ Eichelsbacher},
}

@article{shou2024localization,
    AUTHOR = {Shou, Laura and van Handel, Ramon},
     TITLE = {A localization-delocalization transition for nonhomogeneous
              random matrices},
   JOURNAL = {J. Stat. Phys.},
  FJOURNAL = {Journal of Statistical Physics},
    VOLUME = {191},
      YEAR = {2024},
    NUMBER = {2},
     PAGES = {Paper No. 26, 16},
      ISSN = {0022-4715,1572-9613},
   MRCLASS = {60B20},
  MRNUMBER = {4707401},
       DOI = {10.1007/s10955-024-03234-7},
       URL = {https://doi.org/10.1007/s10955-024-03234-7},
}

@incollection {van2017structured,
    AUTHOR = {van Handel, Ramon},
     TITLE = {Structured random matrices},
 BOOKTITLE = {Convexity and concentration},
    SERIES = {IMA Vol. Math. Appl.},
    VOLUME = {161},
     PAGES = {107--156},
 PUBLISHER = {Springer, New York},
      YEAR = {2017},
      ISBN = {978-1-4939-7004-9; 978-1-4939-7005-6},
   MRCLASS = {60B20 (15B52)},
  MRNUMBER = {3837269},
MRREVIEWER = {Steven\ Joel\ Miller},
}

@inproceedings{bourgade2018random,
    AUTHOR = {Bourgade, Paul},
     TITLE = {Random band matrices},
 BOOKTITLE = {Proceedings of the International Congress of Mathematicians: Rio de Janeiro 2018},
     PAGES = {2759--2784},
 PUBLISHER = {World Scientific},
      YEAR = {2018},
      ISBN = {978-981-3272-93-4; 978-981-3272-87-3},
   MRCLASS = {15B52},
  MRNUMBER = {3966510},
}

@article{bandeira2023matrix,
    AUTHOR = {Bandeira, Afonso S. and Boedihardjo, March T. and van Handel,
              Ramon},
     TITLE = {Matrix concentration inequalities and free probability},
   JOURNAL = {Invent. Math.},
  FJOURNAL = {Inventiones Mathematicae},
    VOLUME = {234},
      YEAR = {2023},
    NUMBER = {1},
     PAGES = {419--487},
      ISSN = {0020-9910,1432-1297},
   MRCLASS = {60B20 (15B52 46L53 46L54 60E15)},
  MRNUMBER = {4635836},
MRREVIEWER = {Florent\ Benaych-Georges},
       DOI = {10.1007/s00222-023-01204-6},
       URL = {https://doi.org/10.1007/s00222-023-01204-6},
}

@book{lando2004graphs,
    AUTHOR = {Lando, Sergei K. and Zvonkin, Alexander K.},
     TITLE = {Graphs on surfaces and their applications},
    SERIES = {Encyclopaedia of Mathematical Sciences},
    VOLUME = {141},
      NOTE = {With an appendix by Don B. Zagier,
              Low-Dimensional Topology, II},
 PUBLISHER = {Springer-Verlag, Berlin},
      YEAR = {2004},
     PAGES = {xvi+455},
      ISBN = {3-540-00203-0},
   MRCLASS = {14H55 (05-02 05C10 05C30 05C50 14H10 14H30 32G15)},
  MRNUMBER = {2036721},
MRREVIEWER = {Athanase\ Papadopoulos},
       DOI = {10.1007/978-3-540-38361-1},
       URL = {https://doi.org/10.1007/978-3-540-38361-1},
}

@incollection {khorunzhiy2008estimates,
    AUTHOR = {Khorunzhiy, Oleksiy},
     TITLE = {Estimates for moments of random matrices with {G}aussian
              elements},
 BOOKTITLE = {S\'eminaire de probabilit\'es {XLI}},
    SERIES = {Lecture Notes in Math.},
    VOLUME = {1934},
     PAGES = {51--92},
 PUBLISHER = {Springer, Berlin},
      YEAR = {2008},
      ISBN = {978-3-540-77912-4},
   MRCLASS = {60B20 (60G15)},
  MRNUMBER = {2483726},
MRREVIEWER = {Nizar\ Demni},
       DOI = {10.1007/978-3-540-77913-1\_3},
       URL = {https://doi.org/10.1007/978-3-540-77913-1_3},
}

@article{benaych2014localization,
  title={Localization and delocalization for heavy tailed band matrices},
  author={Benaych-Georges, Florent and P{\'e}ch{\'e}, Sandrine},
  journal={Annales de l'IHP Probabilit{\'e}s et statistiques},
  volume={50},
  number={4},
  year={2014},
  pages={1385--1403}
}

@article{kontsevich1992intersection,
    AUTHOR = {Kontsevich, Maxim},
     TITLE = {Intersection theory on the moduli space of curves and the
              matrix {A}iry function},
   JOURNAL = {Comm. Math. Phys.},
  FJOURNAL = {Communications in Mathematical Physics},
    VOLUME = {147},
      YEAR = {1992},
    NUMBER = {1},
     PAGES = {1--23},
      ISSN = {0010-3616,1432-0916},
   MRCLASS = {32G15 (14H15 58F07 81T40)},
  MRNUMBER = {1171758},
MRREVIEWER = {Claude\ Itzykson},
       URL = {http://projecteuclid.org/euclid.cmp/1104250524},
}

@article{tessler2023combinatorial,
    AUTHOR = {Tessler, Ran J.},
     TITLE = {The combinatorial formula for open gravitational descendents},
   JOURNAL = {Geom. Topol.},
  FJOURNAL = {Geometry \& Topology},
    VOLUME = {27},
      YEAR = {2023},
    NUMBER = {7},
     PAGES = {2497--2648},
      ISSN = {1465-3060,1364-0380},
   MRCLASS = {53D45},
  MRNUMBER = {4645483},
MRREVIEWER = {Alex\ Massarenti},
       DOI = {10.2140/gt.2023.27.2497},
       URL = {https://doi.org/10.2140/gt.2023.27.2497},
}

@article{buryak2017matrix,
    AUTHOR = {Buryak, Alexandr and Tessler, Ran J.},
     TITLE = {Matrix models and a proof of the open analog of {W}itten's
              conjecture},
   JOURNAL = {Comm. Math. Phys.},
  FJOURNAL = {Communications in Mathematical Physics},
    VOLUME = {353},
      YEAR = {2017},
    NUMBER = {3},
     PAGES = {1299--1328},
      ISSN = {0010-3616,1432-0916},
   MRCLASS = {14H10 (14H81 14N35)},
  MRNUMBER = {3652492},
MRREVIEWER = {Guangbo\ Xu},
       DOI = {10.1007/s00220-017-2899-5},
       URL = {https://doi.org/10.1007/s00220-017-2899-5},
}

@article{harer1986euler,
   AUTHOR = {Harer, John and Zagier, Don},
     TITLE = {The {E}uler characteristic of the moduli space of curves},
   JOURNAL = {Invent. Math.},
  FJOURNAL = {Inventiones Mathematicae},
    VOLUME = {85},
      YEAR = {1986},
    NUMBER = {3},
     PAGES = {457--485},
      ISSN = {0020-9910,1432-1297},
   MRCLASS = {32G15 (14H15 57R20)},
  MRNUMBER = {848681},
MRREVIEWER = {William\ Abikoff},
       DOI = {10.1007/BF01390325},
       URL = {https://doi.org/10.1007/BF01390325},
}
\end{spacing}

\end{document}